\documentclass[reqno,11pt,a4paper,final]{amsart}
\usepackage[a4paper,left=30mm,right=30mm,top=30mm,bottom=30mm,marginpar=20mm]{geometry} 
\usepackage{amsmath}
\usepackage{amssymb}
\usepackage{amsthm}
\usepackage{amscd}
\usepackage{stmaryrd}
\usepackage{esint}
\usepackage[ansinew]{inputenc}
\usepackage{cite}
\usepackage{bbm}
\usepackage{xcolor}
\usepackage[english=american]{csquotes}
\usepackage[final]{graphicx}
\usepackage{hyperref}
\usepackage{calc}
\usepackage{newtxtext} 
\usepackage{bm}
\usepackage{enumerate}
\usepackage[shortlabels]{enumitem}
\usepackage{transparent}
\usepackage{xr}




\numberwithin{equation}{section}

\newtheoremstyle{thmlemcorr}{10pt}{10pt}{\itshape}{}{\bfseries}{.}{10pt}{{\thmname{#1}\thmnumber{ #2}\thmnote{ (#3)}}}
\newtheoremstyle{thmlemcorr*}{10pt}{10pt}{\itshape}{}{\bfseries}{.}\newline{{\thmname{#1}\thmnumber{ #2}\thmnote{ (#3)}}}
\newtheoremstyle{remexample}{10pt}{10pt}{}{}{\bfseries}{.}{10pt}{{\thmname{#1}\thmnumber{ #2}\thmnote{ (#3)}}}

\theoremstyle{thmlemcorr}
\newtheorem{theorem}{Theorem}
\numberwithin{theorem}{section}
\newtheorem{lemma}[theorem]{Lemma}

\newtheorem{proposition}[theorem]{Proposition}

\newtheorem{definition}[theorem]{Definition}

\theoremstyle{thmlemcorr*}
\newtheorem{theorem*}{Theorem}
\newtheorem{lemma*}[theorem]{Lemma}
\newtheorem{corollary*}[theorem]{Corollary}
\newtheorem{proposition*}[theorem]{Proposition}
\newtheorem{problem*}[theorem]{Problem}
\newtheorem{conjecture*}[theorem]{Conjecture}
\newtheorem{definition*}[theorem]{Definition}

\theoremstyle{remexample}
\newtheorem{remark}[theorem]{Remark}
\newtheorem{example}[theorem]{Example}


\newcommand{\Crm}{\mathrm{C}}

\newcommand{\Irm}{\mathrm{I}}

\newcommand{\Lrm}{\mathrm{L}}

\newcommand{\Trm}{\mathrm{T}}

\newcommand{\Wrm}{\mathrm{W}}

\newcommand{\Bcal}{\mathcal{B}}

\newcommand{\Dcal}{\mathcal{D}}
\newcommand{\Ecal}{\mathcal{E}}

\newcommand{\Hcal}{\mathcal{H}}

\newcommand{\Lcal}{\mathcal{L}}
\newcommand{\Mcal}{\mathcal{M}}

\newcommand{\Pcal}{\mathcal{P}}

\newcommand{\Wcal}{\mathcal{W}}

\newcommand{\Ycal}{\mathcal{Y}}
\newcommand{\Zcal}{\mathcal{Z}}

\newcommand{\Fbf}{\mathbf{F}}

\newcommand{\Mbf}{\mathbf{M}}

\DeclareMathOperator*{\esssup}{ess\,sup}

\DeclareMathOperator{\Id}{Id}

\DeclareMathOperator{\im}{im}

\DeclareMathOperator{\graph}{graph}

\DeclareMathOperator*{\wslim}{w*-lim}

\DeclareMathOperator{\supmod}{sup}

\DeclareMathOperator{\dist}{dist}

\DeclareMathOperator{\tr}{tr}
\DeclareMathOperator{\spn}{span}

\DeclareMathOperator{\supp}{supp}
\DeclareMathOperator{\cof}{cof}
\DeclareMathOperator{\diag}{diag}

\DeclareMathOperator{\proj}{proj}

\DeclareMathOperator{\Wedge}{{\textstyle\bigwedge}}

\newcommand{\ee}{\mathrm{e}}

\newcommand{\setn}[2]{\{\, #1 \ \ \textup{\textbf{:}}\ \ #2 \,\}}
\newcommand{\setb}[2]{\bigl\{\, #1 \ \ \textup{\textbf{:}}\ \ #2 \,\bigr\}}
\newcommand{\setB}[2]{\Bigl\{\, #1 \ \ \textup{\textbf{:}}\ \ #2 \,\Bigr\}}
\newcommand{\setBB}[2]{\biggl\{\, #1 \ \ \textup{\textbf{:}}\ \ #2 \,\biggr\}}

\newcommand{\norm}[1]{\|#1\|}

\newcommand{\normb}[1]{\bigl\|#1\bigr\|}

\newcommand{\abs}[1]{|#1|}

\newcommand{\absb}[1]{\bigl|#1\bigr|}

\newcommand{\absBB}[1]{\biggl|#1\biggr|}

\newcommand{\altnorm}[1]{{\left\vert\kern-0.25ex\left\vert\kern-0.25ex\left\vert #1 \right\vert\kern-0.25ex\right\vert\kern-0.25ex\right\vert}}

\newcommand{\spr}[1]{( #1 )}

\newcommand{\dpr}[1]{\langle #1 \rangle}

\newcommand{\dprb}[1]{\bigl\langle #1 \bigr\rangle}

\newcommand{\dbr}[1]{\llbracket #1 \rrbracket}

\newcommand{\cl}[1]{\overline{#1}}
\newcommand{\di}{\mathrm{d}}
\newcommand{\dd}{\;\mathrm{d}}
\newcommand{\DD}{\mathrm{D}}
\newcommand{\N}{\mathbb{N}}
\newcommand{\R}{\mathbb{R}}

\newcommand{\loc}{\mathrm{loc}}

\newcommand{\toto}{\rightrightarrows}
\newcommand{\toweak}{\rightharpoonup}
\newcommand{\toweakstar}{\overset{*}\rightharpoonup}

\newcommand{\toup}{\uparrow}
\newcommand{\todown}{\downarrow}

\newcommand{\embed}{\hookrightarrow}
\newcommand{\cembed}{\overset{c}{\embed}}
\newcommand{\conv}{*}
\newcommand{\hodge}{\star}
\newcommand{\BigO}{\mathrm{\textup{O}}}
\newcommand{\SmallO}{\mathrm{\textup{o}}}
\newcommand{\sbullet}{\begin{picture}(1,1)(-0.5,-2.5)\circle*{2}\end{picture}}
\newcommand{\frarg}{\,\sbullet\,}
\newcommand{\BV}{\mathrm{BV}}

\newcommand{\eps}{\epsilon}

\newcommand{\tv}[1]{\norm{#1}}

\newcommand{\term}[1]{\textbf{#1}}

\newcommand{\proofstep}[1]{\medskip\textit{#1}}

\newcommand{\SO}{\mathrm{SO}}

\newcommand{\Diss}{\mathrm{Diss}}

\newcommand{\Disl}{\mathrm{Disl}}
\newcommand{\Slip}{\mathrm{Slip}}
\newcommand{\Td}{\bm{T}}
\newcommand{\Sd}{\bm{S}}
\newcommand{\ff}{\gg}
\newcommand{\tbf}{\mathbf{t}}
\newcommand{\pbf}{\mathbf{p}}

\newcommand{\Lip}{\mathrm{Lip}}

\DeclareMathOperator{\Tan}{T}

\DeclareMathOperator{\Var}{Var}

\DeclareMathOperator{\Argmin}{Argmin}

\newcounter{assumption}
\makeatletter
\newcommand{\nextas}[1]{%
   \refstepcounter{assumption}%
   \protected@write \@auxout{}{\string\newlabel{#1}{{(A\theassumption)}{\thepage}{(A\theassumption)}{#1}{}}}%
   \hypertarget{#1}{(A\theassumption)}%
}
\newcommand{\nextasnamed}[2]{%
   \refstepcounter{assumption}%
   \protected@write \@auxout{}{\string\newlabel{#1}{{(#2)}{\thepage}{(#2)}{#1}{}}}%
   \hypertarget{#1}{(#2)}%
}
\makeatother

 
\def\XXint#1#2#3{{\setbox0=\hbox{$#1{#2#3}{\int}$} 
\vcenter{\hbox{$#2#3$}}\kern-.5\wd0}}


\usepackage{pict2e}
\makeatletter
\DeclareRobustCommand{\intprod}{%
  \mathbin{\mathpalette\int@prod{(0.1,0)(0.9,0)(0.9,0.8)}}}
\DeclareRobustCommand{\restrict}{%
  \mathbin{\mathpalette\int@prod{(0.1,0.8)(0.1,0)(0.9,0)}}}	
\newcommand{\int@prod}[2]{%
  \begingroup
  \sbox\z@{$\m@th#1+$}%
  \setlength\unitlength{\wd\z@}%
  \begin{picture}(1,1)
  \roundcap
  \polyline#2
  \end{picture}%
  \endgroup
}
\makeatother

\renewcommand{\eps}{\varepsilon}
\renewcommand{\epsilon}{\varepsilon}
\renewcommand{\phi}{\varphi}
\renewcommand{\tilde}{\widetilde}
\renewcommand{\hat}{\widehat}
\renewcommand{\bar}{\overline}

\begin{document}


\title[Elasto-plastic evolutions driven by discrete dislocation flow]{Energetic solutions to rate-independent large-strain\\elasto-plastic evolutions driven by discrete dislocation flow}


\author{Filip Rindler}
\address{Mathematics Institute, University of Warwick, Coventry CV4 7AL, United Kingdom.}
\email{F.Rindler@warwick.ac.uk}




\maketitle


\begin{abstract}
This work rigorously implements a recent model of large-strain elasto-plastic evolution in single crystals where the plastic flow is driven by the movement of discrete dislocation lines. The model is geometrically and elastically nonlinear, that is, the total deformation gradient splits multiplicatively into elastic and plastic parts, and the elastic energy density is polyconvex. There are two internal variables: The system of all dislocations is modeled via $1$-dimensional boundaryless integral currents, whereas the history of plastic flow is encoded in a plastic distortion matrix-field. As our main result we construct an energetic solution in the case of a rate-independent flow rule. Besides the classical stability and energy balance conditions, our notion of solution also accounts for the movement of dislocations and the resulting plastic flow. Because of the path-dependence of plastic flow, a central role is played by so-called ``slip trajectories'', that is, the surfaces traced out by moving dislocations, which we represent as integral $2$-currents in space-time. The proof of our main existence result further crucially rests on careful a-priori estimates via a nonlinear Gronwall-type lemma and a rescaling of time. In particular, we have to account for the fact that the plastic flow may cause the coercivity of the elastic energy functional to decay along the evolution, and hence the solution may blow up in finite time.
 
\vspace{4pt}




\noindent\textsc{Date:} \today{}.
\end{abstract}



\section{Introduction}

Dislocation flow is the principal mechanism behind macroscopic plastic deformation in crystalline materials such as metals~\cite{AbbaschianReedHill09,HullBacon11book,AndersonHirthLothe17book}. The mathematical theories of large-strain elasto-plasticity and of crystal dislocations have seen much progress recently. Notably, a number of works have investigated phenomenological models of large-strain elasto-plasticity~\cite{Mielke03a,MainikMielke05,FrancfortMielke06,MielkeMuller06,Stefanelli08,MainikMielke09,MielkeRossiSavare18} by utilizing so-called \enquote{internal variables}. This area has a long tradition and we refer to~\cite{LemaitreChaboche90,Lubliner08,AbbaschianReedHill09,GurtinFriedAnand10book,HanReddy13,Temam18book} for recent expositions and many historical references. However, the internal variables are usually conceived in a somewhat ad hoc manner (e.g., total plastic strain) and do not reflect the microscopic physics, at least not directly.

In parallel, the theory of dislocations has developed rapidly over the last years, but usually macroscopic plastic effects are neglected in this area. On the static (non-evolutionary) side we mention~\cite{ContiTheil05,GarroniLeoniPonsiglione10,ContiGarroniMassaccesi15,ContiGarroniOrtiz15,KupfermanMaor15,KupfermanOlami20, ArizaContiGarroniOrtiz18,Ginster19,EpsteinKupfermanMaor20} for some recent contributions.  On the evolutionary side, the field of discrete dislocation dynamics (DDD) considers discrete systems of dislocations moving in a crystal; {see~\cite{BulatovCai06book,BlassFonsecaLeoniMorandotti15,BlassMorandotti17,ScalaVanGoethem19,DondlKurzkeWojtowytsch19,FonsecaGinsterWojtowytsch21} for recent works in this direction. In the case of fields of dislocation we also mention the field dislocation mechanics of Acharya and collaborators~\cite{Acharya01,Acharya03,AroraAcharya20,Acharya21}.}

The recent article~\cite{HudsonRindler22} introduced a model of large-strain elasto-plastic evolution in single crystals with the pivotal feature that the plastic flow is driven directly by the movement of dislocations. In the case of a rate-independent flow rule, the present work places this model on a rigorous mathematical foundation, defines a precise notion of (energetic) solution, and establishes an existence theorem (Theorem~\ref{thm:main}) for such evolutions under physically meaningful assumptions. Such a theorem may in particular be considered a validation of the model's mathematical structure.

In the following we briefly outline the model from~\cite{HudsonRindler22}, our approach to making the notions in it precise, and some aspects of the strategy to prove the existence of solutions.

\subsection*{Kinematics}

The reference (initial) configuration of a material specimen is denoted by $\Omega \subset \R^3$, which is assumed to be a bounded Lipschitz domain (open, connected, and with Lipschitz boundary). It is modelled as a macroscopic continuum with total deformation $y \colon [0,T] \times \Omega \to \R^3$, for which we require the orientation-preserving condition $\det \nabla y(t) > 0$ pointwise in $\Omega$ (almost everywhere) for any time $t \in [0,T]$. We work in the large-strain, geometrically nonlinear regime, where the deformation gradient splits according to the \emph{Kr\"{o}ner decomposition}~\cite{Kroner60,LeeLiu67FSEP,Lee69EPDF,GreenNaghdi71,CaseyNaghdi80,GurtinFriedAnand10book,ReinaConti14,KupfermanMaor15,ReinaDjodomOrtizConti18,EpsteinKupfermanMaor20}
\[
  \nabla y = EP
\]
into an \emph{elastic distortion} $E \colon [0,T] \times \Omega \to \R^{3 \times 3}$ and a \emph{plastic distortion} $P \colon [0,T] \times \Omega \to \R^{3 \times 3}$ (with $\det E, \det P > 0$ pointwise a.e.\ in $\Omega$). We refer in particular to the justification of this relation in~\cite{HudsonRindler22}, which is based on a description of the crystal lattice via the \enquote{scaffold} $Q = P^{-1}$. However, neither $E$ nor $P$ can be assumed to be a gradient itself and $P$ is treated as an internal variable, that is, $P$ is carried along the plastic flow.

In line with much of the literature, we impose the condition of \emph{plastic incompressibility}
\[
  \det P(t) = 1  \quad\text{a.e.\ in $\Omega$,}
\]
that is, the plastic distortion $P(t) = P(t,\frarg)$ is volume-preserving, which is realistic for many practically relevant materials~\cite{AbbaschianReedHill09,GurtinFriedAnand10book}.

\subsection*{Dislocations and slips}

As mentioned before, in crystalline materials the dominant source of plasticity is the movement of dislocations, that is, $1$-dimensional topological defects in the crystal lattice~\cite{AbbaschianReedHill09,HullBacon11book,AndersonHirthLothe17book}. Every dislocation has associated with it a (constant) \emph{Burgers vector} from a finite set $\Bcal = \{ \pm b_1, \ldots, \pm b_m \} \subset \R^3 \setminus \{0\}$, which is determined by the crystal structure of the material. We collect all dislocation lines with Burgers vector $b \in \Bcal$ that are contained in our specimen at time $t$, in a $1$-dimensional integral current $T^b(t)$ on $\cl{\Omega}$ (see~\cite{ContiGarroniMassaccesi15,ContiGarroniOrtiz15,ScalaVanGoethem19} for similar ideas and~\cite{KrantzParks08book,Federer69book} as well as Section~\ref{sc:curr} for the theory of integral currents). This current is boundaryless, i.e.,
\[
  \partial T^b(t) = 0
\]
since dislocation lines are always closed loops inside the specimen $\Omega$; for technical reasons we assume that all dislocation lines are in fact closed globally (one may need to add \enquote{virtual} lines on the surface $\partial \Omega$ to close the dislocations; also see Remark~\ref{rem:Wc_restrict}). 

When considering the evolution of $t \mapsto T^b(t)$, several issues need to be addressed: First, in order to rigorously define the dissipation, that is, the energetic cost to move the dislocations from $T^b(s)$ to $T^b(t)$ ($s < t$), we need a notion of \enquote{traversed area} between $T^b(s)$ and $T^b(t)$. Indeed, in a rate-independent model, where only the trajectory, but not the speed of movement, matters, this area, weighted in a manner depending on the state of the crystal lattice, corresponds to the dissipated energy.

Second, only evolutions $t \mapsto T^b(t)$ that can be understood as \enquote{deformations} of the involved dislocations should be admissible. In particular, jumps are not permitted (at least not without giving an explicit jump path).

Third, on the technical side, we need a theory for evolutions of integral currents $t \mapsto T^b(t)$ based on their \emph{variation} in time. For instance, we require a form of the Helly selection principle to pick subsequences of sequences $(t \mapsto T^b_n(t))_n$ for which $T^b_n(t)$ converges for every $t \in [0,T]$. 

It is a pivotal idea of the present work that all of the above requirements can be fulfilled by considering as fundamental objects not the dislocations $T^b(t)$ themselves, but the associated \emph{slip trajectories}, which contain the whole evolution of the dislocations in time. We represent a slip trajectory as a $2$-dimensional integral current $S^b$ (for the Burgers vector $b \in \Bcal$) in the space-time cylinder $[0,T] \times \R^3$ with the property that
\[
  \partial S^b \restrict ((0,T) \times \R^3) = 0.
\]
Moreover, since one may flip the sign of a Burgers vector when at the same time also reversing all dislocation line directions, the symmetry relation $S^{-b} = -S^b$ needs to hold for the family $(S^b)_{b \in \Bcal}$.

In this description, the dislocation system at time $t$ is given by
\[
  T^b(t) := \pbf_*(S^b|_t),
\]
i.e., the pushforward under the spatial projection $\pbf(t,x) := x$ of the slice $S^b|_t$ of $S^b$ at time $t$ (that is, with respect to the temporal projection $\tbf(t,x) := t$). The theory of integral currents entails that $T^b(t)$ is a $1$-dimensional integral current and $\partial T^b(t) = 0$ for almost every $t \in (0,T)$.

The total traversed \emph{slip surface} from $T^b(s)$ to $T^b(t)$ can be seen to be the integral $2$-current in $\R^3$ given by
\[
  S^b|_s^t := \pbf_* \bigl[ S^b \restrict ([s,t] \times \R^3) \bigr],
\]
that is, the pushforward under the spatial projection of the restriction of $S^b$ to the time interval $[s,t]$. Note, however, that $S^b|_s^t$ does not contain a \enquote{time index}, which is needed to describe the plastic flow (see below), and also that multiply traversed areas may lead to cancellations in $S^b|_s^t$. This will require us to define the dissipation as a function of {the slip trajectories and not of the slip surfaces.}

\subsection*{Plastic flow}

With a family $(S^b)_b$ of slip trajectories at hand, we can proceed to specify the resulting plastic effect. To give the discrete dislocations a non-infinitesimal size we convolve $S^b$ with the \emph{dislocation line profile} $\eta \in \Crm^\infty_c(\R^3;[0,\infty))$, to obtain the \emph{thickened slip trajectory} $S^b_\eta := \eta \conv S^b$ (with \enquote{$\conv$} the convolution in space). This expresses the \enquote{macroscopic} shape of the dislocation orthogonal to the line direction, which in single crystals is not infinitesimal.

For kinematic reasons detailed in~\cite{HudsonRindler22}, the plastic distortion $P$ follows the \emph{plastic flow equation}, which describes the effect of the moving dislocations on the plastic distortion:
\begin{equation} \label{eq:plastflow}
   \frac{\di}{\di t} P(t,x) = D(t,x,P(t,x){;(S^b)_b}) := \frac12 \sum_{b \in \Bcal} b \otimes \proj_{\langle P(t,x)^{-1}b \rangle^\perp} \bigl[ \hodge \gamma^b(t,x) \bigr].
\end{equation}
Here, the spatial $2$-vector $\gamma^b(t,x) \in \Wedge_2 \R^3$ is the density of the measure
\[
  \pbf(S^b_\eta) := \pbf(\vec{S}^b_\eta) \, \tv{S^b_\eta}
\]
at $(t,x)$, which takes the role of the \emph{geometric slip rate}, and \enquote{$\hodge$} denotes the Hodge star operation, so that $\hodge \gamma^b(t,x)$ is the normal to the (thickened) slip surface at $(t,x)$. The factor $\frac12$ is explained by the fact that every dislocation with Burgers vector $b \in \Bcal$ is also a dislocation with Burgers vector $-b$ (with the opposite orientation).

Note that the projection in the definition of $D$ has the effect of disregarding dislocation \emph{climb}, so that $P$ represent the history of dislocation \emph{glide} only (see Section~6.2 in~\cite{HudsonRindler22} for more on this). It turns out that for technical reasons we cannot enforce that $\hodge \gamma^b$ is orthogonal to $P^{-1} b$ for admissible slip trajectories (which would obviate the need for the projection in~\eqref{eq:plastflow}); see Remark~\ref{rem:D_proj} for an explanation.

\subsection*{Energy functionals}

For the elastic energy we use
\[
  \Wcal_e(y,P) := \int_\Omega W_e(\nabla y P^{-1}) \dd x
\]
and make the hyperelasticity assumption that
\[
  \text{$y(t)$ is a minimizer of $\Wcal_e(\frarg,P(t))$ for all $t \in [0,T]$.}
\]
This is justified on physical grounds by the fact that elastic movements are usually much faster than plastic movements~\cite{DeHossonyRoosMetselaar02,ArmstrongArnoldZerilli09,BenDavidEtAl14}. For the elastic energy density $W_e$ we require polyconvexity~\cite{Ball76,Ball02} as well as (mild) growth and continuity conditions. In particular, our assumptions will be satisfied for the prototypical elastic energy densities of the form $W_e(E) := \tilde{W}(E) + \Gamma(\det E)$, {which only depend on the elastic part $E = \nabla y P^{-1}$ in the Kr\"{o}ner decomposition.} Here, $\tilde{W} \colon \R^{3 \times 3} \to [0,\infty)$ is convex {or polyconvex}, has $r$-growth, and is $r$-coercive with a sufficiently large $r > 3$ (depending on the other exponents in the full setup). Moreover, $\Gamma \colon \R \to [0,+\infty]$ is assumed to be continuous, convex, and $\Gamma(s) = +\infty$ if and only if $s \leq 0$; see Example~\ref{ex:We} for details. {In applications, one usually also requires \emph{frame-indifference} of $W_e$, that is $W_e(QE) = W_e(E)$ for all $Q \in \SO(3), E \in \R^{3 \times 3}$. This is satisfied for instance if $\tilde{W}_e(E) = \abs{E}^r$ with $\abs{\frarg}$ the Frobenius norm, yielding a superlinear-growth compressible neo-Hookean material~\cite{Ciarlet88book}.}

Further, we introduce the \emph{core energy} as
\[
  \Wcal_c((T^b)_b) := \frac{\zeta}{2} \sum_{b \in \Bcal} \Mbf(T^b),
\]
where $\zeta > 0$. Here, $\Mbf(T^b) = \tv{T^b}(\R^3)$ is the \emph{mass} of the current $T^b$, i.e., the total length of all lines contained in $T^b$. This core energy represents an \emph{atomistic} potential energy \enquote{trapped} in the dislocations~\cite{AbbaschianReedHill09,HullBacon11book,AndersonHirthLothe17book} (also see Section~6.4 in~\cite{HudsonRindler22}). The present work could be extended to also incorporate more complicated (e.g., anisotropic) core energies, but we refrain from doing so for expository reasons.

Given further an \emph{external loading} $f \colon [0,T] \times \Omega \to \R^3$, the \emph{total energy} is then
\[
  \Ecal(t,y,P,(T^b)_b) := \Wcal_e(y,P) - \int_\Omega f(t,x) \cdot y(x) \dd x + \Wcal_c((T^b)_b).
\]
{We only consider bulk loadings in this work, but this is not an essential restriction; see Remark~\ref{rem:loadings} for possible extensions.}

{It is interesting to note that we do not need to employ a hardening term \emph{in the energy functional $\Ecal$} that gives coercivity in $P$ or $\nabla P$, like in previous works on (phenomenological) elasto-plastic evolution in the large-strain regime, see, e.g.,~\cite{Mielke03a,MainikMielke05,FrancfortMielke06,MielkeMuller06,Stefanelli08,MainikMielke09,MielkeRossiSavare18}. Instead, we will impose a coercivity assumption on the \emph{dissipation} with respect to the \emph{variation} of the dislocation motion (see below). Thus, we do not penalize large amounts of movement via the modulus of $P$ (which may go up or down, e.g., in a periodic motion), but via the total amount of dislocation movement (which can only increase along the evolution). Since in our model the evolution of $P$ occurs \emph{only} via dislocation slip, the $\Wrm^{1,q}$-variation in time of $P$ remains bounded as long as the dissipation remains bounded; see Lemma~\ref{lem:PSigma_W1q} for the precise statement. Hence, no hardening term in $\Ecal$ is necessary. We finally remark that also the curl of $P$, and in fact any derivative of any order of $P$, remain likewise controlled since the effect of dislocation movement is assumed to be macroscopic (via the smooth dislocation line profile $\eta$).}

\subsection*{Dissipation}

A key role in the formulation of the dynamics is played by the \emph{dissipation}, i.e., the energetic cost associated with a slip trajectory $S^b$ moving the dislocations from $T^b(s)$ to $T^b(t)$, where $s < t$ and the Burgers vector $b \in \Bcal$ is fixed for the moment. In first approximation, this dissipation is given by the \emph{(space-time) variation} of $S^b$, which is defined as
\[
  \Var(S^b;[s,t]) := \int_{[s,t] \times \R^3} \abs{\pbf(\vec{S}^b)} \dd \tv{S^b}.
\]
Here, $S^b = \vec{S}^b \tv{S^b}$ is the Radon--Nikodym decomposition of the integral current $S^b$ into its \emph{orienting $2$-vector} $\vec{S}^b \in \Lrm^\infty(\tv{S^b};\Wedge_2 \R^{1+3})$ (which is simple and has unit length) and the \emph{total variation measure} $\tv{S^b} \in \Mcal^+([0,T] \times \R^3)$. We refer to Section~\ref{sc:notation} for details on these notions. The quantity $\Var(S^b;[s,t])$ expresses precisely the area traversed by the moving dislocation with absolute multiplicity, that is, areas traversed several times are also counted several times. From a physical perspective, the (space-time) variation counts roughly the number of bonds that are cut when the dislocation moves, in line with micromechanical principles~\cite{HullBacon11book,AndersonHirthLothe17book}.

However, the space-time variation does not account for the progressive lattice distortion of the deforming crystal and the resulting change to the number of bonds per (referential) traversed area. In the model introduced in~\cite{HudsonRindler22} (see, in particular, Sections~4.3, 4.5, but using the multi-vector formulation detailed in the appendix to~\cite{HudsonRindler22}), the dissipation along a slip trajectory $S^b$ from $s$ to $t$ is therefore given instead as  
\begin{equation} \label{eq:Diss_intro}
  \int_{[s,t] \times \R^3} R^b\bigl( P \pbf(\vec{S}^b) \bigr) \dd \tv{S^b},
\end{equation}
where the function $R^b \colon \Wedge_2\R^3 \to [0,\infty)$ is the convex and $1$-homogeneous \emph{dissipation potential}, expressing the dissipational cost of a unit slip surface, which may be anisotropic and $b$-dependent. We require $R^b$ to satisfy the bounds
\[
  C^{-1}\abs{\xi} \leq R^b(\xi) \leq C\abs{\xi},
\]
with a constant $C > 0$ that is independent of $b$. We remark that the \enquote{pre-multiplication} of $\pbf(\vec{S}^b)$ with $P$ actually means the pushforward under $P$, i.e., $P(v \wedge w) = (Pv) \wedge (Pw)$ for simple $2$-vectors $v \wedge w$, and for non-simple $2$-vectors extended by linearity. It is precisely this pre-multiplication with $P$ that accommodates the additional anisotropy introduced by the plastic distortion.

{The precise form of the \emph{total dissipation} we employ, denoted by $\Diss((S^b)_b;[s,t])$, can be found in Section~\ref{sc:assumpt}. It is a bit more involved than~\eqref{eq:Diss_intro} due to the further mathematical necessity to require a form of coercivity of the dissipation with respect to the variation, which is independent of the magnitude of $P$. Such a coercivity could be interpreted as a form of hardening (on the level of the dissipation) since it is precisely this coercivity that obviates the need for the usual hardening terms in the total energy; see Example~\ref{ex:diss_hardening} and Remark~\ref{rem:additive_hardening} for further explanation. Without this coercivity the specimen could rip immediately, preventing the existence of solutions for any non-trivial time interval.

Since our dissipation then controls all the (space-time) variations $\Var(S^b;\frarg)$ up to constants, one is naturally lead to a theory of integral currents with bounded (space-time) variation, which was developed in~\cite{Rindler23}. The required aspects of this theory are recalled in Section~\ref{sc:BVcurr} as the basis upon which our rigorous modeling of dislocations and slip trajectories in Section~\ref{sc:disl_slips} is built.}

\subsection*{Energetic solutions}

In~\cite{HudsonRindler22}, the relation linking plastic distortion rates (velocities) and the corresponding stresses is given by the \emph{flow rule} (in its multi-vector version)
\begin{equation} \label{eq:flowrule}
  P^{-T} X^b \in \partial R^b(P\gamma^b),
\end{equation}
where $\gamma^b$ is the geometric slip rate (see~\eqref{eq:plastflow}), $P\gamma^b$ is the pushforward of the $2$-vector $\gamma^b$ under $P$, and $R^b$ is the dissipation potential (see~\eqref{eq:Diss_intro}). Moreover, $X^b$ denotes the \emph{configurational stress}, that is, the stress associated with changes of dislocation configuration, which is thermodynamically conjugate to $\gamma^b$. In a smooth setting and neglecting the core energy, it can be expressed as
\[
  X^b = \hodge b^T M P^{-T},
\]
where $M$ is the \emph{Mandel stress} (structural plastic stress),
\[
  M := P^{-T} \nabla y^T \DD W_e(\nabla y P^{-1}).
\]
{While it is often possible to make rigorous sense of the Mandel stress by imposing a \enquote{multiplicative stress control} as in~\cite{FrancfortMielke06,MainikMielke09,MielkeRossiSavare18} (see, e.g., (3.W$_3$) and Lemma~4.6 in~\cite{MielkeRossiSavare18}), the differentiability of integral currents \enquote{along the flow}, and hence the definition of $X^b$, turns out to be a delicate matter, which is explored in detail in~\cite{BonicattoDelNinRindler22?}.

To avoid these issues, we formulate our whole system in a completely \emph{derivative-free} setup, where $X^b$ and $M$ do not appear. For this we employ an energetic framework based on the Mielke--Theil theory of rate-independent systems introduced in~\cite{MielkeTheil99,MielkeTheilLevitas02,MielkeTheil04}; see~\cite{MielkeRoubicek15book} for a comprehensive monograph, which also contains many more references. The basic idea is to replace the flow rule by a \emph{(global) stability relation} and an \emph{energy balance}, which employ only the total energy and dissipation functionals.}

However, our framework differs from the classical energetic theory, as presented in the monograph~\cite{MielkeRoubicek15book}, in a number of significant ways. Most notably, the central idea of the energetic theory to use a dissipation distance between any two states of the system~\cite{FrancfortMielke06,MainikMielke09,MielkeRossiSavare18,ScalaVanGoethem19} is modified here. This is a consequence of the fact that in order to define the change in plastic distortion associated with the movement of a dislocation we do not merely need the endpoints, but the whole trajectory. We will associate two \enquote{forward operators} to a slip trajectory, which determine the endpoint of the evolution for the dislocations and for the plastic distortion, respectively. The definition of the dislocation forward operator is straightforward (see Section~\ref{sc:slips}), but for the plastic forward operator some effort needs to be invested (see Section~\ref{sc:plast_evol}). Further, we need to avoid the formation of jumps in the evolution since, for the reasons discussed above, one cannot define the plastic distortion associated with these jumps. As rate-independent evolutions can develop jumps naturally, we need to introduce a rescaling of time to keep the jump paths resolved.

{The precise definition of our notion of solution is given in Definition~\ref{def:sol}, after all the aforementioned objects have been rigorously defined and the precise mathematical assumptions have been stated. Our main existence result is Theorem~\ref{thm:main}. Roughly, it states that under suitable assumptions and given initial values $y_0$, $P_0$, $\Td_0 := (T^b_0)_b$ for the total deformation, plastic distortion, and dislocation system, respectively (satisfying suitable compatibility conditions), there are total deformation, plastic distortion and slip trajectory processes
\[
  y(t),  \qquad P(t),  \qquad \Sd = (S^b)_b,
\]
respectively, from which we also define the dislocation system at time $t$ via
\[
  T^b(t) := \pbf_*(S^b|_t),
\]
such that $y, P, \Sd$ start at the prescribed initial values and satisfy the following conditions in a non-trivial time interval $[0,T_*)$:
\[
\left\{
\begin{aligned}
  &\text{\term{(S) Stability:} If $t \in [0,T_*)$ is not a jump point then for all $\hat{y}$ and $\hat{\Sd}$:} \\
  &\quad\qquad  \Ecal \bigl( t,y(t),P(t),(T^b(t))_b \bigr) \leq \Ecal \bigl( t,\hat{y},\hat{\Sd}_\ff P(t), (\hat{\Sd}_\ff T^b(t))_b \bigr) + \Diss(\hat{\Sd}), \\
  &\text{\phantom{(S)}where for a test slip trajectory $\hat{\Sd}$ the effect of the evolution by $\hat{\Sd}$ is expressed via}\\[-4pt]
  &\text{\phantom{(S)}the forward operator $\hat{\Sd}_\ff$ (see Section~\ref{sc:disl_slips}) and the resulting dissipation is denoted}\\[-4pt]
  &\text{\phantom{(S)}by $\Diss(\hat{\Sd})$.}\\[8pt]
  &\text{\term{(E) Energy balance:}} \\
  &\quad\qquad \hspace{-2mm} \Ecal \big( t,y(t),P(t),(T^b(t))_b \big) = \Ecal(0,y_0,z_0) - \Diss(\Sd;[0,t]) - \int_0^t \dprb{\dot{f}(\tau),y(\tau)} \dd \tau, \\
  &\text{\phantom{(S)}where $\Diss(\Sd;[0,t])$ is the dissipation of the dislocation movement up to time $t$.}\\[8pt]
  &\text{\term{(P) Plastic flow:}} \\
  &\quad\qquad  \frac{\di}{\di t} P(t,x) = D(t,x,P(t,x){;\Sd})  \qquad\text{and}\qquad  \text{$\det P(t) = 1$ in $\Omega$,} \\[4pt]
  &\text{\phantom{(P)}where $D$ is the is the plastic drift defined in~\eqref{eq:plastflow}.}
\end{aligned}
\right.
\]
Moreover, bounds of bounded variation (BV) type in time hold on $P(t)$ and $\Sd$, but not in general on $y(t)$.}

In line with the general energetic theory of rate-independent systems, see~\cite{MielkeRoubicek15book}, no uniqueness of solutions can be expected. Moreover, since our system includes nonlinear elasticity, also the non-uniqueness inherent in that theory is contained in our model, see, e.g.,~\cite{Ciarlet88book} for examples.

We will construct solutions as limits of a time-stepping scheme, where we minimize over \enquote{elementary} slip trajectories at every step. While we employ a number of ideas of the classical energetic theory, we will give a complete and essentially self-contained proof.

\subsection*{Decay of coercivity}

An important argument in the limit passage, as the step size tends to zero in the time-stepping scheme, is to establish sufficient a-priori estimates on the total energy. This is, however, complicated by the fact that the integrand of $\Wcal_e$ depends on $\nabla y P^{-1}$ and hence the coercivity of $\Wcal_e$ in $\nabla y$ may decay as $P$ evolves. As a consequence, we can only obtain a differential estimate of the form
\[
  \frac{\di}{\di t} \alpha^N(t) \leq C \ee^{\alpha^N(t)},
\]
where $\alpha^N$ is the energy plus dissipation of the $N$'th approximate solution. The above differential inequality (or, more precisely, the associated difference inequality) does not fall into the situation covered by the classical Gronwall lemma and finite-time blowup to $+\infty$ is possible as $N \to \infty$. Indeed, the ODE $\dot{u} = C\ee^u$, $u(0) = u_0$ has the solution $u(t) = -\log(\ee^{-u_0}-Ct)$, which blows up for $t \to \ee^{-u_0}/C$. However, using a nonlinear Gronwall-type lemma (see Lemma~\ref{lem:Gronwall_ext}), we can indeed show an $N$-independent interval of boundedness for all the $\alpha^N$. Physically, if the time interval of existence is bounded, then the material fails (e.g., rips) in finite time. 

\subsection*{Other notions of solution}

Let us finally remark that our variational framework describes the transport of dislocations in an implicit fashion, that is, we treat the slip trajectory as fundamental and recover the dislocations at a given time via slicing. It is also possible to directly consider the transport of integral currents by a vector field, see~\cite{BonicattoDelNinRindler22?}, but coupling this with elasto-plasticity is out of reach at the moment.

Let us also mention the general theory of balanced viscosity solutions developed by Mielke--Rossi--Savar\'{e}~\cite{MielkeRossiSavare09,MielkeRossiSavare12,MielkeRossiSavare16,MielkeRossiSavare18} (see also~\cite{KneesRossiZanini19} for a recent application to damage and~\cite{DalMasoDeSimoneSolombrino10,DalMasoDeSimoneSolombrino11} for other related results about viscoplastic relaxation), which could enable a finer study of the jump behavior (see also~\cite{RindlerSchwarzacherVelazquez21} for a related approach).

\subsection*{Outline of the paper}

We begin by recalling notation, basic facts, and the theory of space-time integral currents of bounded variation in Section~\ref{sc:notation}. In Section~\ref{sc:disl_slips} we define rigorously the basic kinematic and dynamic objects of our theory, namely dislocation systems, slip trajectories, and the forward operators. The following Section~\ref{sc:evolution} details our assumptions on the energy and dissipation functionals, defines our notion of solutions, and states the main existence result, Theorem~\ref{thm:main}. The time-incremental approximation scheme to construct a solution is introduced in Section~\ref{sc:approx}. Finally, Section~\ref{sc:existence_proof} is devoted to the limit passage and the proof of the existence theorem.

\subsection*{Acknowledgments}

The author would like to thank Amit Acharya, Paolo Bonicatto, Kaushik Bhattacharya, Giacomo Del~Nin, Gilles Francfort, Thomas Hudson, Andrea Marchese, and Alexander Mielke for discussions related to this work and the referees for their very helpful suggestions, which led to many improvements. This project has received funding from the European Research Council (ERC) under the European Union's Horizon 2020 research and innovation programme, grant agreement No 757254 (SINGULARITY).

\section{Notation and preliminaries} \label{sc:notation}

This section recalls some notation and results, in particular from geometric measure theory.

\subsection{Linear and multilinear algebra}

The space of $(m \times n)$-matrices $\R^{m \times n}$ is equipped with the Frobenius inner product $A : B := \sum_{ij} A^i_j B^i_j = \tr(A^TB) = \tr(B^TA)$ (upper indices indicate rows and lower indices indicate columns) as well as the Frobenius norm $\abs{A} := (A : A)^{1/2} = (\tr(A^T A))^{1/2}$.

The $k$-vectors in an $n$-dimensional real Hilbert space $V$ are contained in $\Wedge_k V$ and the $k$-covectors in $\Wedge^k V$, $k = 0,1,2,\ldots$. For a simple $k$-vector $\xi = v_1 \wedge \cdots \wedge v_k$ and a simple $k$-covector $\alpha = w^1 \wedge \cdots \wedge w^k$ the duality pairing is given as $\dpr{\xi,\alpha} = \det \, (v_i \cdot w^j)^i_j$; this is then extended to non-simple $k$-vectors and $k$-covectors by linearity. The inner product and restriction of $\eta \in \Wedge_k V$ and $\alpha \in \Wedge^l V$ are $\eta \intprod \alpha \in \Wedge^{l-k} V$ and $\eta \restrict \alpha \in \Wedge_{k-l} V$, respectively, which are defined as
\begin{align*}
  \dprb{\xi, \eta \intprod \alpha} := \dprb{\xi \wedge \eta, \alpha},  \qquad \xi \in \Wedge_{l-k} V,\\
  \dprb{\eta \restrict \alpha, \beta} := \dprb{\eta, \alpha \wedge \beta},  \qquad \beta \in \Wedge^{k-l} V.
\end{align*}
We will exclusively use the mass and comass norms of $\eta \in \Wedge_k V$ and $\alpha \in \Wedge^k V$, given via
\begin{align*}
  \abs{\eta} &:= \sup \setb{ \absb{\dprb{\eta,\alpha}} }{ \alpha \in \Wedge^k V,\;\abs{\alpha} = 1 },  \\
  \abs{\alpha} &:= \sup \setb{\absb{\dprb{\eta,\alpha}} } { \eta \in \Wedge_k V \text{ simple, unit} },
\end{align*}
where we call a simple $k$-vector $\eta = v_1 \wedge \cdots \wedge v_k$ a unit if the $v_i$ can be chosen to form an orthonormal system in $V$.

For a $k$-vector $\eta \in \Wedge_k V$ in an $n$-dimensional Hilbert space $V$ with inner product $\spr{\frarg,\frarg}$ and fixed ambient orthonormal basis $\{\ee_1,\ldots,\ee_n\}$, we define the Hodge dual $\hodge \eta \in \Wedge_{n-k} V$ as the unique vector satisfying
\[
  \xi \wedge \hodge \eta = \spr{\xi,\eta} \, \ee_1 \wedge \cdots \wedge \ee_n,  \qquad \xi \in \Wedge_k V.
\]
In the special case $n=3$ we have the following geometric interpretation of the Hodge star: $\hodge \eta$ is the normal vector to any two-dimensional hyperplane with orientation $\eta$. In fact, for $a,b \in \Wedge_1 \R^3$ the identities
\[
  \hodge(a \times b) = a \wedge b,  \qquad
  \hodge (a \wedge b) = a \times b
\]
hold, where \enquote{$\times$} denotes the classical vector product. Indeed, for any $v \in \R^3$, the triple product $v \cdot (a \times b)$ is equal to the determinant $\det(v,a,b)$ of the matrix with columns $v,a,b$, and so
\[
  v \wedge \hodge(a \times b)
  = v \cdot (a \times b) \, \ee_1 \wedge \ee_2 \wedge \ee_3
  = \det(v,a,b) \, \ee_1 \wedge \ee_2 \wedge \ee_3
  = v \wedge (a \wedge b).
\]
Hence, the first identity follows. The second identity follows by applying $\hodge$ on both sides and using $\hodge^{-1} = \hodge$ (since $n = 3$).

A linear map $S \colon V \to W$, where $V,W$ are real vector spaces, extends (uniquely) to a linear map $S \colon \Wedge^k V \to \Wedge^k W$ via
\[
  S(v_1 \wedge \cdots \wedge v_k) := (Sv_1) \wedge \cdots \wedge (Sv_k),  \qquad v_1, \ldots, v_k \in V,
\]
and extending by (multi-)linearity to $\Wedge^k V$.

\subsection{Spaces of Banach-space valued functions} \label{sc:BVX}

Let $w \colon [0,T] \to X$ ($T > 0$) be a process (i.e., a function of \enquote{time}) that is measurable in the sense of Bochner, where $X$ is a reflexive and separable Banach space; see, e.g.,~\cite[Appendix~B.5]{MielkeRoubicek15book} for this and the following notions. We define the corresponding \term{$X$-variation} for $[\sigma,\tau] \subset [0,T]$ as
\[
  \Var_X(w;[\sigma,\tau]) := \sup \setBB{ \sum_{\ell = 1}^N \norm{w(t_\ell)-w(t_{\ell-1})}_X }{ \sigma = t_0 < t_1 < \cdots t_N = \tau },
\]
where $\sigma = t_0 < t_1 < \cdots t_N = \tau$ is any partition of $[\sigma,\tau]$ ($N \in \N$). Let
\[
  \BV([0,T];X) := \setb{ w \colon [0,T] \to X }{ \Var_X(w;[0,T]) < \infty }.
\]
Its elements are called \term{($X$-valued) functions of bounded variation}. We further denote the space of Lipschitz continuous functions with values in a Banach space $X$ by $\Lip([0,T];X)$. Note that we do not identify $X$-valued processes that are equal almost everywhere (with respect to \enquote{time}).

By repeated application of the triangle inequality we obtain the Poincar\'{e}-type inequality
\[
  \norm{w(\tau)}_X \leq \norm{w(\sigma)}_X + \Var_X(w;[\sigma,\tau]).
\]

The following result is contained in the discussion in Section~3.1 of~\cite{Rindler23}:

\begin{lemma} \label{lem:BVX_prop}
Let $w \in \BV([0,T];X)$. Then, for every $t \in [0,T]$, the left and right limits
\[
  w(t\pm) := \lim_{s \to t\pm} w(s)
\]
exist (only the left limit at $0$ and only the right limit at $T$). For all but at most countably many \term{jump points} $t \in (0,T)$, it also holds that $w(t+) = w(t-) =: w(t)$.
\end{lemma}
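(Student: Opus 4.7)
The plan is to reduce everything to the monotone scalar function
\[
  V(t) := \Var_X(w;[0,t]), \qquad t \in [0,T],
\]
and exploit the fundamental domination
\begin{equation*}
  \norm{w(t) - w(s)}_X \leq V(t) - V(s), \qquad 0 \leq s \leq t \leq T,
\end{equation*}
which is immediate from the definition of the $X$-variation applied to the two-point partition $\{s,t\}$ of $[s,t]$ together with the additivity $\Var_X(w;[0,s]) + \Var_X(w;[s,t]) = \Var_X(w;[0,t])$. The latter additivity property follows by concatenating partitions of $[0,s]$ and $[s,t]$ and, conversely, refining any partition of $[0,t]$ by inserting the point $s$; this only increases the variation sum by the triangle inequality in $X$. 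Since $w \in \BV([0,T];X)$, the function $V$ is nondecreasing and bounded by $\Var_X(w;[0,T]) < \infty$.

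First I would establish existence of the one-sided limits. Fix $t \in (0,T]$ and a sequence $s_n \toup t$. Because $V$ is monotone and bounded, $V(s_n)$ is a Cauchy sequence in $\R$. The key inequality above yields
\[
  \norm{w(s_n) - w(s_m)}_X \leq \abs{V(s_n) - V(s_m)},
\]
so $(w(s_n))_n$ is Cauchy in $X$. By completeness of $X$ the limit $w(t-) := \lim_{s \to t-} w(s)$ exists; independence from the choice of sequence follows from a standard interlacing argument (again using the key inequality). The right-limit $w(t+)$ at $t \in [0,T)$ is handled identically with $s_n \todown t$.

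Next I would count the jump points. For any $t \in (0,T)$, the inequality gives
\[
  \norm{w(t+) - w(t-)}_X \leq V(t+) - V(t-),
\]
by passing to the limit $s \todown t$, $r \toup t$ in $\norm{w(s) - w(r)}_X \leq V(s) - V(r)$. Hence every jump point of $w$ is a jump point of the bounded nondecreasing function $V \colon [0,T] \to \R$. A standard argument---the jumps of $V$ of size at least $1/k$ are finite in number because their total is bounded by $V(T)-V(0)$, and taking a countable union over $k$---shows that $V$ has at most countably many discontinuities. Therefore $w$ has at most countably many jump points, and at all remaining points of $(0,T)$ we have $w(t-) = w(t+)$, which we then define to be $w(t)$.

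There is no genuine obstacle here; the proof is essentially bookkeeping with the scalar dominating function $V$. The only point that needs a little care is the additivity of $\Var_X$ with respect to subdivision of the interval, which underlies the domination inequality; everything else is a direct transfer from the standard real-valued BV theory using completeness of $X$ to convert Cauchy sequences into limits.
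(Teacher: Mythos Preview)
The paper states this lemma without proof, treating it as a standard fact about Banach-space-valued functions of bounded variation (the surrounding text refers to~\cite{MielkeRoubicek15book} for related background). Your argument via the scalar dominating function $V(t) = \Var_X(w;[0,t])$ is the canonical proof and is correct as written: the additivity of the variation, the domination inequality, the transfer of the Cauchy property from $V$ to $w$ via completeness of $X$, and the countability of jumps of a bounded monotone real function are all handled properly.
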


The main compactness result in $\BV([0,T];X)$ is Helly's selection principle, for which a proof can be found, e.g., in~\cite[Theorem~B.5.13]{MielkeRoubicek15book}:

\begin{proposition} \label{prop:BVX_Helly}
Assume that the sequence $(w_n) \subset \BV([0,T];X)$ satisfies
\[
  \sup_n \bigl( \norm{w_n(0)}_X + \Var_X(w_n;[0,T]) \bigr) < \infty.
\]
Then, there exists $w \in \BV([0,T];X)$ and a (not relabelled) subsequence of the $n$'s such that $w_n \toweakstar w$ in $\BV([0,T];X)$, that is,
\[
  w_n(t) \toweak w(t)  \qquad\text{for all $t \in [0,T]$.}
\]
Moreover,
\[
  \Var_X(w;[0,T]) \leq \liminf_{n \to \infty} \, \Var_X(w_n;[0,T]).
\]
If additionally $(w_n) \subset \Lip([0,T];X)$ with uniformly bounded Lipschitz constants, then also $w \in \Lip([0,T];X)$.
\end{proposition}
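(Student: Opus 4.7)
The plan is a two-stage selection argument: first extract a subsequence along which the scalar variation functions $V_n(t) := \Var_X(w_n;[0,t])$ converge pointwise by the classical Helly theorem for bounded monotone real-valued functions, and then refine it to produce weak pointwise limits of the $w_n$ themselves. The variation bound acts as an equicontinuity-type estimate that allows propagation of weak convergence from a countable dense set to all of $[0,T]$.

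For the first stage, each $V_n$ is nondecreasing on $[0,T]$ and $\sup_n V_n(T) < \infty$ by hypothesis, so after passing to a (not relabelled) subsequence one may assume $V_n(t) \to V(t)$ for every $t \in [0,T]$, where $V \colon [0,T] \to [0,\infty)$ is bounded and nondecreasing. Let $J$ denote the (at most countable) set of discontinuities of $V$ and put
\[
  D := (\Q \cap [0,T]) \cup J \cup \{0,T\},
\]
which is countable and dense in $[0,T]$. The Poincar\'{e}-type inequality stated just before the proposition, combined with the uniform bound on $\norm{w_n(0)}_X$, yields $\sup_n \sup_{t \in [0,T]} \norm{w_n(t)}_X < \infty$. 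Since $X$ is reflexive and separable, closed balls in $X$ are weakly sequentially compact, so a standard diagonal extraction across the countable set $D$ produces a further subsequence and a map $w \colon D \to X$ with $w_n(d) \toweak w(d)$ for every $d \in D$.

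To extend $w$ to all of $[0,T]$, fix $t \in [0,T] \setminus D$; then $V$ is continuous at $t$, so one can choose $d_k \in D$ with $d_k \to t$ and $V(d_k) \to V(t)$. The variation estimate $\norm{w_n(d_j)-w_n(d_k)}_X \leq \abs{V_n(d_j)-V_n(d_k)}$, together with weak lower semicontinuity of the norm and the pointwise convergence $V_n \to V$, yields
\[
  \norm{w(d_j)-w(d_k)}_X \leq \liminf_{n\to\infty} \abs{V_n(d_j)-V_n(d_k)} = \abs{V(d_j)-V(d_k)},
\]
so $(w(d_k))_k$ is strongly Cauchy in $X$; its limit defines $w(t)$. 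A three-term triangle estimate, bounding $\norm{w_n(t)-w_n(d_k)}_X$ by $\abs{V_n(t)-V_n(d_k)}$, using $w_n(d_k) \toweak w(d_k)$ as $n\to\infty$, and exploiting the strong convergence $w(d_k) \to w(t)$, then shows $w_n(t) \toweak w(t)$ for every $t \in [0,T]$.

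The remaining assertions are routine: applying weak lower semicontinuity of $\norm{\frarg}_X$ to each increment of a fixed partition, summing, and taking the supremum over partitions gives the variation inequality; and the Lipschitz statement follows from $\norm{w(t)-w(s)}_X \leq \liminf_n \norm{w_n(t)-w_n(s)}_X \leq L\abs{t-s}$. I expect the only genuine subtlety to be the treatment of the jump points of $V$: at such points one cannot approximate $V(t)$ by values $V(d_k)$ with $d_k \to t$ along continuity points, so the variation-based Cauchy argument breaks down; this is precisely why $J$ must be absorbed into $D$ \emph{before} the diagonal extraction, so that weak convergence at jumps is produced directly rather than by limiting from nearby points.
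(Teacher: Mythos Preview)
The paper does not supply its own proof of this proposition; it is stated as a known result and attributed to~\cite[Theorem~B.5.13]{MielkeRoubicek15book}. Your argument is a correct and standard proof of the Banach-space Helly selection principle: the two-stage extraction (classical Helly on the scalar variations $V_n$, then diagonal extraction over a countable dense set containing the jump points of $V$) together with the variation-based Cauchy estimate is exactly the approach one finds in the cited reference, and your handling of the jump set $J$ is the right precaution.
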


\subsection{Integral currents} \label{sc:curr}

We refer to~\cite{KrantzParks08book} and~\cite{Federer69book} for the theory of currents and in the following only recall some basic facts that are needed in the sequel.

We denote by $\Hcal^k \restrict R$ the $k$-dimensional Hausdorff measure restricted to a (countably) $\Hcal^k$-rectifiable set $R$; $\Lcal^d$ is the $d$-dimensional Lebesgue measure. The Lebesgue spaces $\Lrm^p(\Omega;\R^N)$ and the Sobolev spaces $\Wrm^{k,p}(\Omega;\R^N)$ for $p \in [1,\infty]$ and $k = 1,2,\ldots$ are used with their usual meanings.

Let $\Dcal^k(U) := \Crm^\infty_c(U;\Wedge^k \R^d)$ ($k \in \N \cup \{0\}$) be the space of \term{(smooth) differential $k$-forms} with compact support in an open set $U \subset \R^d$. The dual objects to differential $k$-forms, i.e., elements of the dual space $\Dcal_k(U) := \Dcal^k(U)^*$ ($k \in \N \cup \{0\}$) are the \term{$k$-currents}. There is a natural notion of \term{boundary} for a $k$-current $T \in \Dcal_k(\R^d)$ ($k \geq 1$), namely the $(k-1)$-current $\partial T \in \Dcal_{k-1}(\R^d)$ given as
\[
  \dprb{\partial T, \omega} := \dprb{T, d\omega}, \qquad \omega \in \Dcal^{k-1}(\R^d),
\]
where \enquote{$d$} denotes the exterior differential. For a $0$-current $T$, we formally set $\partial T := 0$. 

A $\Wedge_k \R^d$-valued (local) Radon measure $T \in \Mcal_\loc(\R^d;\Wedge_k \R^d)$ is called an \term{integer-multiplicity rectifiable $k$-current} if
\[
  T = m \, \vec{T} \, \Hcal^k \restrict R,
\]
meaning that
\[
  \dprb{T,\omega} = \int_R \dprb{\vec{T}(x), \omega(x)} \, m(x) \dd \Hcal^k(x),  \qquad \omega \in \Dcal^k(\R^d),
\]
where:
\begin{enumerate}[(i)]
	\item $R \subset \R^d$ is countably $\Hcal^k$-rectifiable with $\Hcal^k(R \cap K) < \infty$ for all compact sets $K \subset \R^d$;
	\item $\vec{T} \colon R \to \Wedge_k \R^d$ is $\Hcal^k$-measurable and for $\Hcal^k$-a.e.\ $x \in R$ the $k$-vector $\vec{T}(x)$ is simple, has unit length ($\abs{\vec{T}(x)} = 1$), and lies in (the $k$-times wedge product of) the approximate tangent space $\Tan_x R$ to $R$ at $x$;
	\item $m \in \Lrm^1_\loc(\Hcal^k \restrict R;\N)$;
\end{enumerate}
The map $\vec{T}$ is called the \term{orientation map} of $T$ and $m$ is the \term{multiplicity}.

Let $T = \vec{T} \tv{T}$ be the Radon--Nikodym decomposition of $T$ with the \term{total variation measure} $\tv{T} = m \, \Hcal^k \restrict R \in \Mcal^+_\loc(\R^d)$. The \term{(global) mass} of $T$ is
\[
  \Mbf(T) := \tv{T}(\R^d) = \int_R m(x) \dd \Hcal^k(x).
\]

Let $\Omega \subset \R^d$ be a bounded Lipschitz domain, i.e., open, connected and with a (strong) Lipschitz boundary. We define the following sets of \term{integral $k$-currents} ($k \in \N \cup \{0\}$):
\begin{align*}
  \Irm_k(\R^d) &:= \setb{ \text{$T$ integer-multiplicity rectifiable $k$-current} }{ \Mbf(T) + \Mbf(\partial T) < \infty }, \\
  \Irm_k(\cl{\Omega}) &:= \setb{ T \in \Irm_k(\R^d) }{ \supp T \subset \cl{\Omega} }.
\end{align*}
The boundary rectifiability theorem, see~\cite[4.2.16]{Federer69book} or~\cite[Theorem~7.9.3]{KrantzParks08book}, entails that for $T \in \Irm_k(\R^d)$ also $\partial T \in \Irm_{k-1}(\R^d)$. 

For $T_1 = m_1 \vec{T}_1 \, \Hcal^{k_1} \restrict R_1 \in \Irm_{k_1}(\R^{d_1})$ and $T_2 = m_2 \vec{T}_2 \, \Hcal^{k_2} \restrict R_2 \in \Irm_{k_2}(\R^{d_2})$ with $R_1$ $k_1$-rectifiable (not just $\Hcal^{k_1}$-rectifiable) or $R_2$ $k_2$-rectifiable, we define the \term{product current} of $T_1,T_2$ as
\[
  T_1 \times T_2 := m_1 m_2 \, (\vec{T}_1 \wedge \vec{T}_2) \, \Hcal^{k_1+k_2} \restrict (R_1 \times R_2) \in \Irm_{k_1+k_2}(\R^{d_1+d_2}).
\]
For its boundary we have the formula
\[
  \partial(T_1 \times T_2) = \partial T_1 \times T_2 + (-1)^{k_1} T_1 \times \partial T_2.
\]

Let $\theta \colon \cl{\Omega} \to {\R^\ell}$ be smooth and let $T = m \, \vec{T} \, \Hcal^k \restrict R \in \Irm_k(\cl{\Omega})$. The \term{(geometric) pushforward} $\theta_* T$ (often also denoted by \enquote{$\theta_\# T$} in the literature) is
\[
  \dprb{\theta_* T,\omega} := \dprb{T, \theta^*\omega}, \qquad \omega \in \Dcal^k(\R^\ell),
\]
where $\theta^*\omega$ is the pullback of the $k$-form $\omega$.

We say that a sequence $(T_j) \subset \Irm_k(\R^d)$ \term{converges weakly*} to $T \in \Dcal_k(\R^d)$, in symbols \enquote{$T_j \toweakstar T$}, if
\[
  \dprb{T_j,\omega} \to \dprb{T,\omega} \qquad\text{for all $\omega \in \Dcal^k(\R^d)$.}
\]
For $T \in \Irm_k(\R^d)$, the \term{(global) Whitney flat norm} is given by
\[
  \Fbf(T) := \inf \, \setB{ \Mbf(Q) + \Mbf(R) }{ \text{$Q \in \Irm_{k+1}(\R^d)$, $R \in \Irm_k(\R^d)$ with $T = \partial Q + R$} }
\]
and one can also consider the \term{flat convergence} $\Fbf(T-T_j) \to 0$ as $j \to \infty$. Under the mass bound $\sup_{j\in\N} \, \bigl( \Mbf(T_j) + \Mbf(\partial T_j) \bigr) < \infty$, this flat convergence is equivalent to weak* convergence (see, for instance,~\cite[Theorem~8.2.1]{KrantzParks08book} for a proof). Moreover, the Federer--Fleming compactness theorem, see~\cite[4.2.17]{Federer69book} or~\cite[Theorems~7.5.2,~8.2.1]{KrantzParks08book}, shows that, under the uniform mass bound, we may select subsequences that converge weakly* or, equivalently, in the flat convergence.

The slicing theory of integral currents (see~\cite[Section~7.6]{KrantzParks08book} or~\cite[Section~4.3]{Federer69book}) entails that a given integral current $S = m \, \vec{S} \, \Hcal^{k+1} \restrict R \in \Irm_{k+1}(\R^n)$ can be sliced with respect to a Lipschitz map $f \colon \R^n \to \R$ as follows: Set $R|_t := f^{-1}(\{t\}) \cap R$. Then, $R|_t$ is (countably) $\Hcal^k$-rectifiable for almost every $t \in \R$. Moreover, for $\Hcal^{k+1}$-almost every $z \in R$, the approximate tangent spaces $\Trm_z R$ and $\Trm_z R|_t$, as well as the approximate gradient $\nabla^R f(z)$, i.e., the projection of $\nabla f(z)$ onto $\Trm_z R$, exist and
\[
  \Trm_z R = \spn \bigl\{ \Trm_z R|_t, \xi(z) \bigr\},  \qquad
  \xi(z) := \frac{\nabla^R f(z)}{\abs{\nabla^R f(z)}} \perp \Trm_z R|_t.
\]
Also, $\xi(z)$ is simple and has unit length. Set
\[
  m_+(z) := \begin{cases}
    m(z) &\text{if $\nabla^R f(z) \neq 0$,} \\
    0    &\text{otherwise,}  
  \end{cases}
  \qquad\qquad
  \xi^*(z) := \frac{D^R f(z)}{\abs{D^R f(z)}} \in \Wedge^1 \R^d,
\]
  where $D^R f(z)$ is the restriction of the differential $Df(z)$ to $\Trm_z R$, and
\[
  \vec{S}|_t(z) := \vec{S}(z) \restrict \xi^*(z) 
  \in \Wedge_k \Trm_z R|_t
  \subset \Wedge_k \Trm_z R.
\]
Then, the \term{slice}
\[
  S|_t := m_+ \, \vec{S}|_t \, \Hcal^k \restrict R|_t
\]
is an integral $k$-current, $S|_t \in \Irm_k(\R^n)$. We recall several important properties of slices: First, the coarea formula for slices,
\begin{equation} \label{eq:coarea_slice}
  \int_R g \, \abs{\nabla^R f} \dd \Hcal^{k+1} = \int \int_{R|_t} g \dd \Hcal^k \dd t,
\end{equation}
holds for all $g \colon R \to \R^N$ that are $\Hcal^{k+1}$-measurable and integrable on $R$. Second, we have the mass decomposition
\[
  \int \Mbf(S|_t) \dd t = \int_R \abs{\nabla^R f} \dd \tv{S}.
\]
Third, the cylinder formula
\[
  S|_t = \partial(S \restrict \{f<t\}) - (\partial S) \restrict \{f<t\}
\]
and, fourth, the boundary formula
\[
  \partial (S|_t) = - (\partial S)|_t
\]
hold.

\subsection{BV-theory of integral currents and deformations} \label{sc:BVcurr}

In this section we briefly review some aspects of the theory of space-time currents of bounded variation, which was developed in~\cite{Rindler23}. In the space-time vector space $\R^{1+d} \cong \R \times \R^d$ we denote the canonical unit vectors as $\ee_0, \ee_1,\ldots,\ee_d$ with $\ee_0$ the \enquote{time} unit vector. The orthogonal projections onto the \enquote{time} component and \enquote{space} component are respectively given by by $\tbf \colon \R^{1+d} \to \R$, $\tbf(t,x) := t$, and $\pbf \colon \R^{1+d} \to \R^d$, $\pbf(t,x) := x$.

The \term{variation} and \term{boundary variation} of a $(1+k)$-integral current $S \in \Irm_{1+k}([\sigma,\tau] \times \cl{\Omega})$ in the interval $I \subset [\sigma,\tau]$ are defined as
\begin{align*} 
  \Var(S;I) &:= \int_{I \times \R^d} \abs{\pbf(\vec{S})} \dd \tv{S},  \\
  \Var(\partial S;I) &:= \int_{I \times \R^d} \abs{\pbf(\overrightarrow{\partial S})} \dd \tv{\partial S}.
\end{align*}
If $[0,T] = [0,1]$, we abbreviate $\Var(S)$ and $\Var(\partial S)$ for $\Var(S;[0,1])$ and $\Var(\partial S;[0,1])$, respectively. It is immediate to see that
\[
  \Var(S;I) \leq \Mbf(S \restrict (I \times \R^d)) \leq \Mbf(S).
\]
  
For $\Lcal^1$-almost every $t \in [\sigma,\tau]$,
\[
  S(t) := \pbf_*(S|_t) \in \Irm_k(\cl{\Omega})
\]
is defined, where $S|_t \in \Irm_k([\sigma,\tau] \times \cl{\Omega})$ is the slice of $S$ with respect to time (i.e., with respect to $\tbf$). If $\tv{S}(\{t\} \times \R^d) > 0$ then $S(t)$ is not defined and we say that $S$ has a \term{jump} at $t$. In this case, the vertical piece $S \restrict (\{t\} \times \R^d)$ takes the role of a \enquote{jump transient}. This is further elucidated by the following \enquote{Pythagoras} lemma, which contains an estimate for the mass of an integral $(1+k)$-current in terms of the masses of the slices and the variation, see Lemma~3.5 in~\cite{Rindler23} for a proof.

\begin{lemma} \label{lem:mass}
Let $S = m \, \vec{S} \, \Hcal^{1+k} \restrict R \in \Irm_{1+k}([\sigma,\tau] \times \cl{\Omega})$. Then,
\begin{equation} \label{eq:decomp}
  \abs{\nabla^R \tbf}^2 + \abs{\pbf(\vec{S})}^2 = 1  \qquad \text{$\tv{S}$-a.e.}
\end{equation}
and
\begin{align*}
  \Mbf(S) &\leq \int_\sigma^\tau \Mbf(S(t)) \dd t + \Var(S;[\sigma,\tau]) \\
  &\leq \abs{\sigma-\tau} \cdot \esssup_{t \in [\sigma,\tau]} \, \Mbf(S(t)) + \Var(S;[\sigma,\tau]).
\end{align*}
\end{lemma}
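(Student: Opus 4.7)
The plan is to first establish the pointwise orthogonality identity \eqref{eq:decomp}, and then use it together with the coarea formula for slices to compare $\Mbf(S)$ with the two quantities on the right-hand side of the target inequality. For \eqref{eq:decomp}, my strategy is a direct computation in a convenient orthonormal basis. Fix $z \in R$ where $\vec S(z)$ orients the approximate tangent space $T_z R$, pick an orthonormal basis $v_0,\ldots,v_k$ of $T_z R$ with $\vec S(z) = v_0 \wedge \cdots \wedge v_k$, and decompose $v_i = t_i \ee_0 + x_i$ with $x_i \in \{0\} \times \R^d$. Since $\nabla^R \tbf(z)$ is the orthogonal projection of $\nabla \tbf(z) = \ee_0$ onto $T_z R$, expanding in this basis gives $\nabla^R \tbf(z) = \sum_i t_i v_i$ and hence $\abs{\nabla^R\tbf(z)}^2 = \sum_i t_i^2$. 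On the other hand, multilinearity of $\pbf$ gives $\pbf(\vec S(z)) = x_0 \wedge \cdots \wedge x_k$, a simple $(1+k)$-vector whose squared mass equals the Gram determinant $\det(x_i \cdot x_j)$. From orthonormality of the $v_i$ one computes $x_i \cdot x_j = \delta_{ij} - t_i t_j$, and the matrix determinant lemma gives $\det(I - tt^T) = 1 - \abs{t}^2$, which proves \eqref{eq:decomp}. From $a,b \geq 0$ with $a^2+b^2 = 1$ one trivially reads off $a+b \geq 1$; hence $\abs{\nabla^R \tbf} + \abs{\pbf(\vec S)} \geq 1$ holds $\tv{S}$-a.e.

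Next I would rewrite each term on the right-hand side of the target inequality as an integral over $R$. The definition of $\Var$ immediately yields $\Var(S;[\sigma,\tau]) = \int_R m\,\abs{\pbf(\vec S)}\,d\Hcal^{1+k}$. For the integrated mass, the coarea formula for slices applied with $f = \tbf$ and $g = m$ yields $\int_R m\,\abs{\nabla^R \tbf}\,d\Hcal^{1+k} = \int_\sigma^\tau \int_{R|_t} m\,d\Hcal^k\,dt$, and I claim the inner integral equals $\Mbf(S(t))$ for $\Lcal^1$-a.e.\ $t$. The key observation is that $R|_t = \tbf^{-1}(\{t\}) \cap R$ lies inside the spatial hyperplane $\{t\} \times \R^d$, so the spatial projection $\pbf$ restricts to an isometry on each slice $R|_t$, and therefore has unit Jacobian on every tangent plane $T_z R|_t$. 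Consequently $\Mbf(S(t)) = \Mbf(\pbf_*(S|_t)) = \int_{R|_t} m_+\,d\Hcal^k$, and a further application of the coarea formula with the indicator of $\{\nabla^R\tbf = 0\}$ shows that $\Hcal^k(R|_t \cap \{\nabla^R\tbf = 0\}) = 0$ for $\Lcal^1$-a.e.\ $t$, so $m = m_+$ holds $\Hcal^k$-a.e.\ on $R|_t$ for such $t$.

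Combining these ingredients, nonnegativity of $m$ together with the subadditive bound $\abs{\nabla^R \tbf} + \abs{\pbf(\vec S)} \geq 1$ yields
\[
\Mbf(S) = \int_R m\,d\Hcal^{1+k} \leq \int_R m\bigl(\abs{\nabla^R\tbf} + \abs{\pbf(\vec S)}\bigr) d\Hcal^{1+k} = \int_\sigma^\tau \Mbf(S(t))\,dt + \Var(S;[\sigma,\tau]),
\]
which is the first inequality; the second is immediate from $\Mbf(S(t)) \leq \esssup_{t\in[\sigma,\tau]} \Mbf(S(t))$. The main technical obstacle is the multilinear-algebra computation leading to \eqref{eq:decomp} and the careful identification $\int_{R|_t} m\,d\Hcal^k = \Mbf(S(t))$ via the isometry property of $\pbf$ on spatial slices; everything else is bookkeeping with the coarea formula.
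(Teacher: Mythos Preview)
Your proof is correct. Note, however, that the paper does not actually contain a proof of this lemma: it is recalled in Section~\ref{sc:BVcurr} from the companion work~\cite{Rindler21a?} without argument, so there is no ``paper's own proof'' to compare against here. On its own merits, your argument is clean and complete: the computation of~\eqref{eq:decomp} via an orthonormal basis of $T_zR$ and the matrix determinant lemma is the natural route, and the passage from $a^2+b^2=1$ to $a+b\geq 1$ combined with the coarea formula is exactly the right mechanism for the mass bound. Your care in distinguishing $m$ from $m_+$ on the slices and in noting that $\pbf$ is an isometry on $\{t\}\times\R^d$ addresses the only subtle points.
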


The \term{integral $(1+k)$-currents with Lipschitz continuity}, or \term{Lip-integral $(1+k)$-currents} are the elements of the set
\begin{align*}
  \Irm^\Lip_{1+k}([\sigma,\tau] \times \cl{\Omega}) := \setBB{ S \in \Irm_{1+k}([\sigma,\tau] \times \cl{\Omega}) }{ &\esssup_{t \in [\sigma,\tau]} \, \bigl( \Mbf(S(t)) + \Mbf(\partial S(t)) \bigr) < \infty, \\
  & \tv{S}(\{\sigma,\tau\} \times \R^d) = 0, \\
  &t \mapsto \Var(S;[\sigma,t]) \in \Lip([\sigma,\tau]), \\
  &t \mapsto \Var(\partial S;(\sigma,t)) \in \Lip([\sigma,\tau]) }.
\end{align*}
Here, $\Lip([\sigma,\tau])$ contains all scalar Lipschitz functions on the interval $[\sigma,\tau]$.

It can be shown that for $S \in \Irm^\Lip_{1+k}([\sigma,\tau] \times \cl{\Omega})$ there exists a \term{good representative}, also denoted by $t \mapsto S(t)$, for which the $\Fbf$-Lipschitz constant
\[
  L := \sup_{s,t \in [\sigma,\tau]} \frac{\Fbf(S(s)-S(t))}{\abs{s-t}}
\]
is finite and $t \mapsto S(t)$ is continuous with respect to the weak* convergence in $\Irm_k(\cl{\Omega})$. Moreover,
\[
  \partial S \restrict (\{\sigma,\tau\} \times \R^d) = \delta_\tau \times S(\tau-) - \delta_\sigma \times S(\sigma+),
\]
and thus $S(\sigma+) := \wslim_{t \todown \sigma} S(t)$, $S(\tau-) := \wslim_{t \toup \tau} S(t)$ can be considered the left and right \term{trace values} of $S$.

It is straightforward to see that our notion of variation is indeed a generalization of the usual variation by identifying a scalar function of bounded variation $u \in \BV([0,1])$ (see~\cite{AmbrosioFuscoPallara00book}) with $S_u := \tau \, \Hcal^1 \restrict \graph(u)$, where $\graph(u) := \setn{ (t,u^\theta(t)) }{ t \in [0,1], \; \theta \in [0,1] }$ is the graph of $u$ and $\tau$ is the unit tangent to $\graph(u)$ (with $\tau \cdot \ee_0 \geq 0$). Here, $u^\theta(t) := (1-\theta)u^-(t) + \theta u^+(t)$ the affine jump between the left and right limits $u^\pm(t) = u(t\pm)$ at $t \in [0,1]$. Then, $\Var(S_u;I) = \Var(u;I) = \abs{Du}(I)$. See Example~3.1 in~\cite{Rindler23} for the details.

More relevant to the present work is the following:

\begin{example} \label{ex:LipHom}
Consider a Lipschitz homotopy $H \in \Lip([0,1] \times \cl{\Omega};\cl{\Omega})$ with $H(0,x) = x$, and $T \in \Irm_k(\cl{\Omega})$. Define $\bar{H}(t,x) := (t,H(t,x))$ and
\[
  S_H := \bar{H}_* (\dbr{(0,1)} \times T) \in \Irm^\Lip_{1+k}([0,1] \times \cl{\Omega}),
\]
where $\dbr{(0,1)}$ is the canonical current associated with the interval $(0,1)$ (with orientation $+1$ and multiplicity $1$). Then, according to the above definition,
\[
  S_H(t) = H(t,\frarg)_* T,  \qquad t \in [0,T].
\]
Thus, the $S_H$ so defined can be understood as deforming $T$ via $H$ into $H(1,\frarg)_* T$. We refer to Lemma~4.3 in~\cite{Rindler23} for estimates relating to the variation of such homotopical deformations.
\end{example}

The following lemma concerns the rescaling of space-time currents, see Lemma~3.4 in~\cite{Rindler23} for a proof. {In particular, as is a common technique for rate-independent systems, we will later use it to rescale bounded-variation processes to \enquote{steady} Lipschitz ones, see the proof of Proposition~\ref{prop:IP_solution}.}

\begin{lemma} \label{lem:rescale}
Let $S \in \Irm_{1+k}([\sigma,\tau] \times \cl{\Omega})$ and let $a \in \Lip([\sigma,\tau])$ be injective. Then,
\[
  a_* S := [(t,x) \mapsto (a(t),x)]_* S \in \Irm_{1+k}(a([\sigma,\tau]) \times \cl{\Omega})
\]
with
\[
  (a_* S)(a(t)) = S(t),  \qquad t \in [\sigma,\tau],
\]
and
\begin{align*}
  \Var(a_* S;a([\sigma,\tau])) &= \Var(S;[\sigma,\tau]), \\
  \Var(\partial(a_* S);a([\sigma,\tau])) &= \Var(\partial S;[\sigma,\tau]), \\
  \esssup_{t \in a([\sigma,\tau])} \, \Mbf((a_* S)(t)) &= \esssup_{t \in [\sigma,\tau]} \, \Mbf(S(t)), \\
  \esssup_{t \in a([\sigma,\tau])} \, \Mbf(\partial(a_* S)(t)) &= \esssup_{t \in [\sigma,\tau]} \, \Mbf(\partial S(t)).
\end{align*}
If $S \in \Irm^\Lip_{1+k}([\sigma,\tau] \times \cl{\Omega})$, then also $a_* S \in \Irm_{1+k}(a([\sigma,\tau]) \times \cl{\Omega})$.
\end{lemma}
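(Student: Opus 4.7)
The plan is to realize $a_*S$ as the pushforward under the Lipschitz map $\Phi \colon [\sigma,\tau]\times\cl\Omega \to a([\sigma,\tau])\times\cl\Omega$, $\Phi(t,x) := (a(t),x)$, and verify the assertions group by group. Standard pushforward theory for integral currents, combined with the boundary rectifiability theorem and the commutation $\partial\Phi_* = \Phi_*\partial$, immediately gives $a_*S := \Phi_*S \in \Irm_{1+k}(a([\sigma,\tau])\times\cl\Omega)$ and $\partial(a_*S) = \Phi_*\partial S \in \Irm_k$. For the slice identity, note that $a$ being Lipschitz and injective on an interval makes it strictly monotone; I may assume WLOG that it is increasing. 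Then $\Phi^{-1}(\{\tbf<a(r)\}) = \{\tbf<r\}$, so
\[
  (a_*S)\restrict\{\tbf<a(r)\} = \Phi_*\bigl(S\restrict\{\tbf<r\}\bigr).
\]
Feeding this into the cylinder formula at level $a(r)$ (and using $\partial\Phi_* = \Phi_*\partial$ together with the restriction-pushforward identity $(\Phi_*T)\restrict A = \Phi_*(T\restrict\Phi^{-1}(A))$) yields $(a_*S)|_{a(r)} = \Phi_*(S|_r)$ for a.e.\ $r$. Since $\pbf\circ\Phi = \pbf$, applying $\pbf_*$ produces $(a_*S)(a(r)) = S(r)$.

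For the variation identities, I decompose $\vec S = \ee_0\wedge\eta + \xi$ along the orthogonal splitting $\Wedge_{1+k}\R^{1+d} = (\ee_0\wedge\Wedge_k\R^d)\oplus\Wedge_{1+k}\R^d$, so that $\pbf(\vec S) = \xi$. Since $d\Phi_{(t,x)} = \diag(a'(t),\Id)$ wherever $a'$ exists, one gets $(d\Phi)_*\vec S = a'(t)\ee_0\wedge\eta + \xi$ and therefore $\abs{\pbf((d\Phi)_*\vec S)} = \abs{\xi} = \abs{\pbf(\vec S)}$. Writing the pushforward in the Federer representation $a_*S = (m\circ\Phi^{-1})\,\vec v\,\Hcal^{1+k}\restrict\Phi(R^+)$ with $R^+ := \{J_\Phi > 0\}$ and $\vec v := (d\Phi)_*\vec S/J_\Phi$, the area formula for the injective Lipschitz $\Phi|_{R^+}$ gives, for any sub-interval $I' \subset a([\sigma,\tau])$,
\[
  \Var(a_*S;I') = \int_{\Phi^{-1}(I'\times\R^d)\cap R^+} m\abs{\pbf(\vec S)} \di\Hcal^{1+k} = \Var(S;a^{-1}(I')),
\]
where the Jacobian from the area formula cancels against the $J_\Phi$ in the denominator of $\vec v$. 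On $R\setminus R^+$ the vector $(d\Phi)_*\vec S$ vanishes, which by linear independence of the temporal and spatial summands forces $\xi = 0$ and hence $\abs{\pbf(\vec S)} = 0$, so the restriction to $R^+$ in the area formula is harmless. Taking $I' = a([\sigma,\tau])$ yields the first identity, and running the whole argument on $\partial(a_*S) = \Phi_*\partial S$ yields the boundary variation identity.

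The slice identity transfers the essential-supremum equalities: since $a$ is a Lipschitz bijection $[\sigma,\tau]\to a([\sigma,\tau])$, null subsets correspond on both sides (the image $a(\{a'=0\})$ being null in $a([\sigma,\tau])$ by the Lipschitz N-property). Lip-integrality of $a_*S$ reduces to checking the four defining conditions: the two $\esssup$-bounds follow from the preceding; $\tv{a_*S}(\{a(\sigma),a(\tau)\}\times\R^d) = 0$ follows from the pushforward mass formula and the corresponding vanishing for $S$; and Lipschitz continuity of $t\mapsto\Var(a_*S;[a(\sigma),t])$ and of its boundary counterpart is inherited from that of $S$ together with the Lipschitz continuity of $a^{-1}$ on $a([\sigma,\tau])$ (the intended bi-Lipschitz setting for time rescalings). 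The principal technical obstacle is the slice identity $(a_*S)|_{a(r)} = \Phi_*(S|_r)$, which the cylinder-formula argument circumvents by reducing everything to restrictions of $S$ and $a_*S$ to the half-spaces $\{\tbf<c\}$; these transform exactly under $\Phi$ by injectivity of $a$, after which all subsequent identities are algebraic consequences of the observation that $\pbf$ annihilates the only $a'$-dependent summand of $(d\Phi)_*\vec S$.
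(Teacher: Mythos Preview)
The paper does not actually prove this lemma; it is quoted (without proof) from Lemma~3.4 of the companion preprint~\cite{Rindler21a?}. Your route via the pushforward representation of $a_*S=\Phi_*S$, the cylinder formula for the slice identity, and the area formula combined with the orthogonal decomposition $\vec S=\ee_0\wedge\eta+\xi$ (so that $\pbf$ annihilates the only $a'$-dependent summand of $(d\Phi)_*\vec S$) is the natural argument and is correct in substance; it is almost certainly what the cited reference does as well.

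One point deserves more care. In the $\esssup$ step you assert that ``null subsets correspond on both sides'', but your parenthetical only justifies the forward direction ($a$ Lipschitz $\Rightarrow$ $a(N)$ null). The reverse implication --- that $a^{-1}$ of a null set is null --- requires $a^{-1}$ to be absolutely continuous, which is not automatic for merely Lipschitz injective $a$ (one can have $a'=0$ on a set of positive measure). With only the forward direction you obtain
\[
  \esssup_{t'\in a([\sigma,\tau])}\Mbf((a_*S)(t')) \;\le\; \esssup_{t\in[\sigma,\tau]}\Mbf(S(t)),
\]
but the opposite inequality is not immediate. You already invoke the bi-Lipschitz hypothesis for the final $\Lip$-conclusion; the cleanest fix is to invoke it here as well (as is the case in every application of the lemma in the paper, where $a$ is affine or $\Crm^1$ with nonvanishing derivative). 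If you wish to keep the general hypothesis, you would need an independent argument for the reverse inequality --- for instance, by exhibiting $S$ itself as $(a^{-1})_*(a_*S)$ and running the same one-sided bound in the other direction, which is permissible once one notes that $a^{-1}$ is at least monotone and hence the pushforward under $(t',x)\mapsto(a^{-1}(t'),x)$ still makes sense on the level of measures even if $a^{-1}$ is not Lipschitz.
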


Next, we turn to topological aspects. For this, we say that $(S_j) \subset \Irm_{1+k}([\sigma,\tau] \times \cl{\Omega})$ \term{converges BV-weakly*} to $S \in \Irm_{1+k}([\sigma,\tau] \times \cl{\Omega})$ as $j \to \infty$, in symbols \enquote{$S_j \toweakstar S$ in BV}, if
\begin{equation} \label{eq:BVcurr_w*}
  \left\{ \begin{aligned}
    S_j &\toweakstar S        &&\text{in $\Irm_{1+k}([\sigma,\tau] \times \cl{\Omega})$,} \\
    S_j(t) &\toweakstar S(t)  &&\text{in $\Irm_k(\cl{\Omega})$ for $\Lcal^1$-almost every $t \in [\sigma,\tau]$.}
  \end{aligned} \right.
\end{equation}
The following compactness theorem for this convergence in the spirit of Helly's selection principle is established as Theorem~3.7 in~\cite{Rindler23}.

\begin{proposition} \label{prop:current_Helly}
Assume that the sequence $(S_j) \subset \Irm_{1+k}([\sigma,\tau] \times \cl{\Omega})$ satisfies
\[
  \esssup_{t \in [\sigma,\tau]} \, \bigl( \Mbf(S_j(t)) + \Mbf(\partial S_j(t)) \bigr) + \Var(S_j;[\sigma,\tau]) + \Var(\partial S_j;[\sigma,\tau]) \leq C < \infty
\]
for all $j \in \N$. Then, there exists $S \in \Irm_{1+k}([\sigma,\tau] \times \cl{\Omega})$ and a (not relabelled) subsequence such that
\[
  S_j \toweakstar S  \quad\text{in BV.}
\]
Moreover,
\begin{align*}
  \esssup_{t \in [\sigma,\tau]} \, \Mbf(S(t)) &\leq \liminf_{j \to \infty} \, \; \esssup_{t \in [\sigma,\tau]} \, \Mbf(S_j(t)), \\
  \esssup_{t \in [\sigma,\tau]} \, \Mbf(\partial S(t)) &\leq \liminf_{j \to \infty} \, \; \esssup_{t \in [\sigma,\tau]} \, \Mbf(\partial S_j(t)), \\
  \Var(S;[\sigma,\tau]) &\leq \liminf_{j \to \infty} \, \Var(S_j;[\sigma,\tau]), \\
  \Var(\partial S;(\sigma,\tau)) &\leq \liminf_{j \to \infty} \, \Var(\partial S_j;(\sigma,\tau)).
\end{align*}
If additionally $(S_j) \subset \Irm^\Lip_{1+k}([\sigma,\tau] \times \cl{\Omega})$ such that the Lipschitz constants $L_j$ of the scalar maps $t \mapsto \Var(S_j;[\sigma,t]) + \Var(\partial S_j;(\sigma,t))$ are uniformly bounded, then also
\[
  S \in \Irm^\Lip_{1+k}([\sigma,\tau] \times \cl{\Omega})
\]
with Lipschitz constant bounded by $\liminf_{j\to\infty} L_j$. Moreover, in this case, $S_j(t) \toweakstar S(t)$ in $\Irm_k(\cl{\Omega})$ for \emph{every} $t \in [\sigma,\tau)$.
\end{proposition}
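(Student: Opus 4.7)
My plan is to combine the classical Federer–Fleming compactness theorem for currents (applied to the full space-time currents $S_j$) with a Helly-type selection for the slice maps $t \mapsto S_j(t)$, and then to identify the two limits.

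First, I would check the hypotheses of Federer–Fleming for the sequence $(S_j)$ in $\Irm_{1+k}([\sigma,\tau] \times \cl{\Omega})$. The mass bound $\Mbf(S_j) \leq C$ is immediate from Lemma~\ref{lem:mass}, which gives
\[
  \Mbf(S_j) \leq |\tau-\sigma| \cdot \esssup_{t}\Mbf(S_j(t)) + \Var(S_j;[\sigma,\tau]),
\]
and analogously $\Mbf(\partial S_j) \leq C$ using the cylinder/boundary formulas plus the bounds on $\Mbf(\partial S_j(t))$ and $\Var(\partial S_j;[\sigma,\tau])$. Hence Federer–Fleming yields a subsequence with $S_j \toweakstar S$ in $\Irm_{1+k}([\sigma,\tau] \times \cl{\Omega})$ for some integral current $S$.

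Next, I would treat the slice map. The key quantitative estimate is that for $\sigma \leq s < t \leq \tau$,
\[
  \Fbf\bigl(S_j(t) - S_j(s)\bigr) \leq \Var(S_j;[s,t]) + \Var(\partial S_j;[s,t]),
\]
obtained by writing $S_j(t)-S_j(s)$ as the spatial pushforward of the cylinder piece $\partial\bigl(S_j \restrict([s,t]\times\R^d)\bigr)$ modulo its temporal faces, controlling the spatial mass via the decomposition \eqref{eq:decomp} of Lemma~\ref{lem:mass} (the factor $|\pbf(\vec{S})|$ pulls out exactly $\Var$), and observing that the boundary contribution is an integral $k$-current whose mass enters the flat norm. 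The scalar variation functions $V_j(t) := \Var(S_j;[\sigma,t])$ and $V_j^\partial(t) := \Var(\partial S_j;[\sigma,t])$ are nondecreasing and uniformly bounded, so by the classical Helly principle we pass to a further subsequence for which they converge pointwise to nondecreasing limits $V,V^\partial$. Let $D \subset [\sigma,\tau]$ be the (cocountable) set of continuity points of $V+V^\partial$; for $t \in D$ the flat variation of $\{S_j(t)\}_{j}$ around $t$ becomes arbitrarily small uniformly in $j$, so together with the uniform slice mass bound and Federer–Fleming applied pointwise, a diagonal extraction on a countable dense subset of $D$ produces a limit process $\tilde{S}\colon D \to \Irm_k(\cl{\Omega})$ with $S_j(t) \toweakstar \tilde{S}(t)$. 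The flat-continuity estimate then extends this convergence to all $t \in D$.

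I would then identify $\tilde{S}(t) = S(t)$ for $\Lcal^1$-a.e.\ $t$. This is done by noting that slicing commutes with weak* convergence under mass bounds: for any bounded continuous test form $\omega$ on $\cl{\Omega}$ and $\phi \in \Crm_c((\sigma,\tau))$, the pairing $\int \phi(t) \dpr{S_j(t),\omega}\dd t$ equals the pairing of $S_j$ with the product form $\phi(t) \omega$, which converges by $S_j \toweakstar S$ to the corresponding pairing with $S$, while the left-hand side converges by dominated convergence to $\int \phi(t) \dpr{\tilde{S}(t),\omega}\dd t$. The stated lower-semicontinuity bounds on $\esssup_t \Mbf(S(t))$, $\esssup_t \Mbf(\partial S(t))$, $\Var(S;[\sigma,\tau])$, $\Var(\partial S;(\sigma,\tau))$ then follow from the integral representations of $\Var$ and $\Var(\partial\,\cdot\,)$ in Section~\ref{sc:BVcurr} together with lower semicontinuity of mass under weak* convergence (applied to suitable restrictions). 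Finally, in the Lip-integral case, the uniform Lipschitz bound on $t \mapsto V_j(t)+V_j^\partial(t)$ makes $D = [\sigma,\tau)$ and yields convergence at every such $t$, with the limiting Lipschitz constant bounded by $\liminf_j L_j$, which places $S$ in $\Lip([\sigma,\tau];\Irm_k(\cl\Omega))$.

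The main obstacle will be the step of synchronizing the two compactness statements: Federer–Fleming in space-time gives a limit $S$ defined only up to weak* equivalence, while the Helly argument on slices produces a limit $\tilde{S}$ defined pointwise in $t$. Making the identification $\tilde{S}(t)=S(t)$ for $\Lcal^1$-a.e.\ $t$ (and then leveraging it to transfer lower semicontinuity bounds from one object to the other) is where the estimate on $\Fbf(S_j(t) - S_j(s))$ in terms of $V_j,V_j^\partial$, and the companion slicing-commutes-with-weak*-limits lemma, must be carefully combined.
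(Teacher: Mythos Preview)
The paper does not actually prove this proposition: it is quoted verbatim from the companion work~\cite{Rindler21a?} (as Theorem~3.7 there), so there is no ``paper's own proof'' to compare against line by line. That said, your sketch follows precisely the natural strategy one would expect for such a result, and it is essentially correct.

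A few technical points you should tighten when writing it out in full. First, the identity you invoke to identify $\tilde S(t)$ with $S(t)$ is not literally ``$\int \phi(t)\dpr{S_j(t),\omega}\,\di t = \dpr{S_j,\phi\,\omega}$''; since $S_j(t)=\pbf_*(S_j|_t)$, you need the slice-integration formula $\int\phi(t)\dpr{S_j|_t,\pbf^*\omega}\,\di t = \dpr{S_j,\phi\,\di t\wedge\pbf^*\omega}$, i.e., the test form on the space-time side must contain the factor $\di t$. Second, when bounding $\Mbf(\partial S_j)$ via Lemma~\ref{lem:mass}, remember that $\partial S_j$ may carry mass on the temporal faces $\{\sigma,\tau\}\times\R^d$; those pieces are controlled separately by the trace values $S_j(\sigma+),S_j(\tau-)$ and hence by the essential supremum of $\Mbf(S_j(t))$ through lower semicontinuity. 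Third, the hypothesis gives only $\esssup_t$ bounds on slice masses, so your diagonal extraction should run over a countable dense subset of those $t$ at which the bounds hold (a set of full measure), not over arbitrary $t$. None of these issues is a genuine obstacle; your identification of the ``main obstacle'' (synchronising the Federer--Fleming limit with the Helly limit of slices via the flat-norm estimate and the slice-integration identity) is accurate, and your proposed resolution is the right one.
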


We can use the variation to define the \term{(Lipschitz) deformation distance} between $T_0, T_1 \in \Irm_k(\cl{\Omega})$ with $\partial T_0 = \partial T_1 = 0$:
\begin{align*}
  \dist_{\Lip,\cl{\Omega}}(T_0,T_1) := \inf \setB{ \Var(S) }{ &\text{$S \in \Irm^\Lip_{1+k}([0,1] \times \cl{\Omega})$ with $\partial S = \delta_1 \times T_1 - \delta_0 \times T_0$} }.
\end{align*}
The key result for us in this context is the following \enquote{equivalence theorem}; see Theorem~5.1 in~\cite{Rindler23} for the proof.

\begin{proposition} \label{prop:equiv}
For every $M > 0$ and $T_j,T$ ($j \in \N$) in the set
\[
  \setb{ T \in \Irm_k(\cl{\Omega}) }{ \partial T = 0, \; \Mbf(T) \leq M }
\]
the following equivalence holds (as $j \to \infty$):
\[
  \dist_{\Lip,\cl{\Omega}}(T_j,T) \to 0  \qquad\text{if and only if}\qquad   T_j \toweakstar T \quad \text{in $\Irm_k(\cl{\Omega})$}.
\]
Moreover, in this case, for all $j$ from a subsequence of the $j$'s, there are $S_j \in \Irm^\Lip_{1+k}([0,1] \times \cl{\Omega})$ with
\[
  \partial S_j = \delta_1 \times T - \delta_0 \times T_j, \qquad
  \dist_{\Lip,\cl{\Omega}}(T_j,T) \leq \Var(S_j) \to 0,
\]
and
\[
  \limsup_{j\to\infty} \, \; \esssup_{t \in [0,1]} \, \Mbf(S_j(t)) \leq C \cdot \limsup_{\ell \to \infty} \, \Mbf(T_\ell).
\]
Here, the constant $C > 0$ depends only on the dimensions and on $\Omega$.
\end{proposition}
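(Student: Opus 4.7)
The plan is to prove the two implications of the equivalence separately. The forward implication is easier and is handled by a pushforward argument: given Lip-integral homotopies $S_j \in \Lip([0,1];\Irm_k(\cl{\Omega}))$ realizing the distance (so $\partial S_j = \delta_1 \times T - \delta_0 \times T_j$ and $\Var(S_j) \to 0$), set $U_j := \pbf_* S_j \in \Irm_{1+k}(\cl{\Omega})$. Then $\Mbf(U_j) \leq \int \abs{\pbf(\vec{S_j})} \dd \tv{S_j} = \Var(S_j) \to 0$ (the pushforward mass is dominated by this integral) and $\partial U_j = \pbf_* \partial S_j = T - T_j$. Hence $\Fbf(T_j - T) \leq \Mbf(U_j) \to 0$, and combined with the uniform bound $\Mbf(T_j) \leq M$, the Federer--Fleming equivalence between flat and weak* convergence under a mass bound (noted in the excerpt before the statement) yields $T_j \toweakstar T$ in $\Irm_k(\cl{\Omega})$.

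For the reverse implication, the same flat--weak* equivalence converts the hypothesis $T_j \toweakstar T$ into $\Fbf(T_j - T) \to 0$, producing $Q_j \in \Irm_{1+k}(\R^d)$ and $R_j \in \Irm_k(\R^d)$ with $T_j - T = R_j + \partial Q_j$ and $\Mbf(R_j) + \Mbf(Q_j) \to 0$. Since $\partial T_j = \partial T = 0$, also $\partial R_j = 0$, and the isoperimetric inequality for integer cycles gives $Q_j' \in \Irm_{1+k}(\R^d)$ with $R_j = \partial Q_j'$ and $\Mbf(Q_j') \to 0$. Absorbing $Q_j'$ into $Q_j$ and applying a Lipschitz retraction onto $\cl{\Omega}$ (available because $\Omega$ is a bounded Lipschitz domain, and only inflating masses by a geometric constant), we may assume
\[
  T_j - T = \partial \tilde Q_j,  \qquad \tilde Q_j \in \Irm_{1+k}(\cl{\Omega}),  \qquad \Mbf(\tilde Q_j) \to 0.
\]

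The core construction turns $\tilde Q_j$ into the desired space-time homotopy by tilting it into the cylinder $[0,1] \times \cl{\Omega}$ via the graph of a Lipschitz function $\alpha_j \colon \cl{\Omega} \to [0,1]$ that equals $0$ on (a neighbourhood of) $\supp T_j$ and $1$ on (a neighbourhood of) $\supp T$. When the supports of $T_j$ and $T$ overlap, one first performs a small Lipschitz deformation of $T_j$ to separate them, appending the resulting short homotopy at a variation cost absorbed into the final estimate. Setting $F_j(x) := (\alpha_j(x),x)$ one finds $F_{j*} T_j = \delta_0 \times T_j$ and $F_{j*} T = \delta_1 \times T$, so that $S_j := -F_{j*} \tilde Q_j$ satisfies $\partial S_j = -F_{j*}(T_j - T) = \delta_1 \times T - \delta_0 \times T_j$; since $\pbf \circ F_j = \id$, also $\pbf_* S_j = -\tilde Q_j$ and
\[
  \Var(S_j) \leq \Mbf(S_j) \leq \bigl(1 + \Lip(\alpha_j)\bigr)^{k+1} \Mbf(\tilde Q_j) \to 0,
\]
provided $\Lip(\alpha_j)$ is kept uniformly bounded in $j$.

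The main obstacle is to guarantee the membership $S_j \in \Lip([0,1];\Irm_k(\cl{\Omega}))$ with the quantitative trace bound $\esssup_t \Mbf(S_j(t)) \leq C \limsup_\ell \Mbf(T_\ell)$. By slicing, $S_j(t)$ is, up to signs, the $k$-slice of $\tilde Q_j$ by the level set $\{\alpha_j = t\}$, and the coarea identity $\int \Mbf(\tilde Q_j|_s) \dd s \leq \Lip(\alpha_j) \Mbf(\tilde Q_j) \to 0$ only controls the integral, not individual slice masses. Balancing $\Lip(\alpha_j)$ against the decay rate of $\Mbf(\tilde Q_j)$ and the potentially shrinking gap between the supports, while keeping each intermediate slice at mass comparable to $T$ or $T_j$, requires a careful choice of $\alpha_j$ adapted to a slab-like foliation of $\supp \tilde Q_j$, together with a diagonal subsequence argument extracting the uniform trace bound from $\Mbf(T_j) \leq M$. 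This balancing, rather than the boundary-identity algebra, is the real content of \cite[Theorem~5.1]{Rindler21a?} and produces both the Lipschitz-in-time regularity and the asserted trace estimate.
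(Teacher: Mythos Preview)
The paper does not prove this proposition; it is quoted from~\cite{Rindler21a?} with the proof deferred there. So there is no in-paper argument to compare against, and your proposal must be assessed on its own.

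Your forward implication is correct: the pushforward $U_j := \pbf_* S_j$ satisfies $\partial U_j = T - T_j$ and $\Mbf(U_j) \leq \Var(S_j) \to 0$, whence flat (and, under the mass bound, weak*) convergence follows.

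For the reverse implication, the graph-tilting idea $S_j := -F_{j*}\tilde Q_j$ with $F_j(x)=(\alpha_j(x),x)$ is appealing and gets the boundary identity right, and in fact one has $\Var(S_j) = \Mbf(\tilde Q_j)$ \emph{exactly} (independent of $\Lip(\alpha_j)$), so your caveat ``provided $\Lip(\alpha_j)$ is kept uniformly bounded'' is unnecessary for the variation bound. However, there is a genuine gap in the approach, and it is not merely the ``balancing'' you defer to the reference. The flat decomposition together with the isoperimetric inequality produces \emph{some} $\tilde Q_j$ with $\partial\tilde Q_j = T_j - T$ and $\Mbf(\tilde Q_j)\to 0$, but gives no control over its structure. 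For a bad choice, \emph{no} Lipschitz $\alpha_j$ yields bounded slice masses: e.g., for $k=0$ in $\R^2$, take $\tilde Q_j$ to be the obvious short segment from $T_j$ to $T$ \emph{plus} $N_j$ disjoint tiny circles of total length $\to 0$; every level set $\{\alpha_j=t\}$ that meets all circles produces a slice of mass $\geq 2N_j$, which can be made to diverge while $\Mbf(\tilde Q_j)\to 0$. The coarea bound $\int_0^1 \Mbf(S_j(t))\,\di t \leq \Lip(\alpha_j)\,\Mbf(\tilde Q_j)$ cannot rescue this, as you note. The point is that one must \emph{choose $\tilde Q_j$}, not just $\alpha_j$, with care---typically via the Federer--Fleming deformation theorem, which yields a $\tilde Q_j$ close to a cubical skeleton with slice masses controlled by $\Mbf(T_j)+\Mbf(T)$. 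Your ``slab-like foliation of $\supp\tilde Q_j$'' is the right picture, but it is a hypothesis on $\tilde Q_j$ that has to be manufactured, not a property of an arbitrary filler.

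Two further technical points you pass over: the separation step (``perform a small Lipschitz deformation of $T_j$ to separate the supports'') is delicate when $T_j$ or $T$ touches $\partial\Omega$ or when their supports coincide (e.g.\ $T_j\equiv T$), and you also need $\tv{S_j}(\{0,1\}\times\R^d)=0$, i.e.\ $\tv{\tilde Q_j}(\{\alpha_j\in\{0,1\}\})=0$, which constrains $\alpha_j$ further. These are secondary to the main gap above, but should be addressed in a complete argument.
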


\section{Dislocations and slips}  \label{sc:disl_slips}

This section introduces the key notions that we need in order to formulate the model from~\cite{HudsonRindler22} rigorously, most notably dislocation systems and slip trajectories. Dislocation systems are collections of dislocation lines, indexed by their (structural) Burgers vector, which is constant along a dislocation line. Slip trajectories describe the evolution of a dislocation system. Crucially, they also provide a way to obtain the evolution of the plastic distortion. To this aim we will introduce suitable \enquote{forward operators}, one for dislocation systems and one for plastic distortions.

\subsection{Dislocation systems} \label{sc:DS}

Assume that we are given a set of \term{Burgers vectors}
\[
  \Bcal = \bigl\{ \pm b_1, \ldots, \pm b_m \} \subset \R^3 \setminus \{0\}.
\]
The set of \term{(discrete) dislocation systems} is defined to be
\begin{align*}
  \Disl(\cl{\Omega}) := \setB{ {\Td} =(T^b)_{b\in\Bcal} \subset \Irm_1(\cl{\Omega}) }{ &\text{$T^{-b} = -T^b$, $\partial T^b = 0$ for all $b \in \Bcal$}},
\end{align*}
where $\Irm_1(\cl{\Omega})$ is the set of all integral $1$-currents supported in $\cl{\Omega}$ (see Section~\ref{sc:curr} for notation). We interpret this definition as follows: $T^b$ contains all dislocation lines with Burgers vector $b \in \Bcal$. The symmetry condition $T^{-b} = -T^b$ for all $b \in \Bcal$ means that the sign of a Burgers vector can be flipped when accompanied by a change of line orientation. The dislocation lines are assumed to be closed (globally). While usually one only assumes closedness inside the specimen $\Omega$, in all of the following we require global closedness, essentially for technical reasons. This can always be achieved by adding \enquote{virtual} dislocation lines on $\partial \Omega$ (also see Remark~\ref{rem:Wc_restrict}).

The \term{(joint) mass} of $\Td \in \Disl(\cl{\Omega})$ is
\[
  \Mbf(\Td) := \frac12 \sum_{b \in \Bcal} \Mbf(T^b) < \infty.
\]
The factor $\frac12$ is explained by the fact that every dislocation with Burgers vector $b \in \Bcal$ is also a dislocation with Burgers vector $-b$ (with the opposite orientation).

\subsection{Slips and dislocation forward operator} \label{sc:slips}

To describe evolutions (in time) of dislocation systems, we define the set of \term{Lipschitz slip trajectories} as
\begin{align*}
  \Lip([0,T];\Disl(\cl{\Omega})) := \setB{ {\Sd} = (S^b)_{b\in\Bcal} \subset \Irm_2([0,T] \times \cl{\Omega}) }{ &\text{$S^b \in \Irm^\Lip_{1+1}([0,T] \times \cl{\Omega})$,}\\
  &\text{$S^{-b} = -S^b$, and} \\
  &\text{$\partial S^b \restrict ((0,T) \times \R^d) = 0$} }. 
\end{align*}
Also set
\[
  T^b(t) := \pbf_*(S^b|_t),  \qquad t \in [0,T],
\]
where $S^b|_t$ is the slice of $S^b$ at time $t$ (i.e., with respect to $\tbf = t$). We then have
\[
  (T^b(t))_b \in \Disl(\cl{\Omega})  \qquad t \in (0,T).
\]
We let the \term{$\Lrm^\infty$-(mass-)norm} and the \term{(joint) variation} of $\Sd \in \Lip([0,T];\Disl(\cl{\Omega}))$ be defined for any interval $I \subset [0,T]$ as, respectively,
\begin{align*}
  \norm{\Sd}_{\Lrm^\infty(I;\Disl(\cl{\Omega}))} &:= \max_{b\in\Bcal} \; \esssup_{t \in I} \, \Mbf(S^b|_t) < \infty, \\
  \Var(\Sd;I) &:= \frac12 \sum_{b \in \Bcal} \Var(S^b;I) < \infty.
\end{align*}

In the following, we will also make frequent use of the space of \term{elementary slip trajectories} starting from $\Td = (T^b)_b \in \Disl(\cl{\Omega})$, namely
\[
  {\Slip(\Td;[0,T])} := \setB{ \Sd = (S^b)_b \in \Lip([0,T];\Disl(\cl{\Omega})) }{ \text{$\partial S^b \restrict (\{0\} \times \R^d) = - \delta_0 \times T^b$} }.
\]
The idea here is that an elementary slip trajectory $\Sd \in \Slip(\Td)$ gives us a way to transform a dislocation system $\Td$ into a new dislocation system in a \emph{progressive-in-time} manner. The additional condition in the definition of $\Slip(\Td)$ entails that $S^b$ starts at $T^b$, for which we could equivalently require $S^b(0) = T^b$ for all $b \in \Bcal$.

{If $[0,T] = [0,1]$, we abbreviate
\begin{align*}
  \Slip(\Td) &:= \Slip(\Td;[0,1]), \\
  \norm{\Sd}_{\Lrm^\infty} &:= \norm{\Sd}_{\Lrm^\infty([0,1];\Disl(\cl{\Omega}))}, \\
  \Var(\Sd) &:= \Var(\Sd;[0,1]).
\end{align*}}
We may then define the \term{dislocation forward operator} for $\Td =(T^b)_b \in \Disl(\cl{\Omega})$ and $\Sd = (S^b)_b \in \Slip(\Td)$ as
\[
  \Sd_\ff \Td := (T^b_\ff)_b \in \Disl(\cl{\Omega})  \qquad\text{with}\qquad
  T^b_\ff := \pbf_* \bigl[ \partial S^b + \delta_0 \times T^b \bigr] \in \Irm_1(\cl{\Omega}),
\]
{where $\Irm_1(\cl{\Omega})$ is the set of all integral $1$-currents with support in $\cl{\Omega}$.}

\begin{example}
Let $\Td = (T^b)_b \in \Disl(\cl{\Omega})$ and let $H^b \in \Lip([0,1] \times \cl{\Omega};\cl{\Omega})$, $b \in \Bcal$, be a family of Lipschitz-homotopies satisfying
\[
  H^b(0,x) = x  \qquad\text{and}\qquad
  H^{-b} = H^b.
\]
Define $\bar{H}^b(t,x) := (t,H^b(t,x))$ and set $\Sd_H := (S^b_H)_b$ with
\[
  S^b_H := \bar{H}^b_* (\dbr{(0,1)} \times T^b) \in \Irm^\Lip_{1+1}([0,1] \times \cl{\Omega}),
\]
like in Example~\ref{ex:LipHom}. Note that $S^{-b}_H(t) = -S^b_H(t)$, $\partial S^b_H(t) = 0$, and $S^b_H(0) = T^b$ for every $b \in \Bcal$ and $\Lcal^1$-a.e.\ $t$. Thus, $\Sd_H \in \Slip(\Td)$. The $\Sd_H$ so defined deforms $\Td$ into $\Sd_\ff \Td = (H^b(1,\frarg)_* T^b)_b$.
\end{example}

\subsection{Plastic evolution} \label{sc:plast_evol}

We now consider how slip trajectories give rise to an evolution of the plastic distortion. For this, consider a dislocation system $\Td = (T^b)_b \in \Disl(\cl{\Omega})$ {(see Section~\ref{sc:DS} for the definition of this set)} and a slip trajectory $\Sd = (S^b)_b \in \Slip(\Td;[0,T])$ {(see Section~\ref{sc:slips})}. Let $\eta \in \Crm^\infty_c(\R^3;[0,\infty))$ be a \term{dislocation line profile}, which is globally fixed and determines the shape of the dislocation orthogonal to the line direction. We here allow the mass of the profile to be any number (not just $1$ like for a mollifier) to allow dislocations with a \enquote{weight}. We then define the \term{thickened slip trajectory} {$\Sd_\eta := (S^b_\eta)_b$ with
\[
  S^b_\eta := (\eta \conv S^b) \restrict \Omega \in {\Mcal([0,T] \times \cl{\Omega}; \Wedge_2 \R^3)},
\]
which is to be understood as follows:
\[
  \dprb{S^b_\eta, \omega} := \int \dprb{\vec{S}^b(t,x), [\eta \conv \omega(t,\frarg)](x)} \dd \tv{S^b}(t,x),  \qquad \omega \in \Dcal^2([0,T] \times \cl{\Omega}).
\]
Here, the convolution \enquote{$\conv$} acts in space only and $\omega$ is considered to be extended by zero outside $\cl{\Omega}$. Note that $S^b_\eta$ is no longer an \emph{integral} $2$-current. However, we only need $\Sd_\eta$ to define the plastic flow, whereas all convergence and dissipational considerations involve $\Sd$ directly, so no results from the general theory of currents~\cite{Federer69book} are required.}

\begin{lemma} \label{lem:Gb}
For all $b \in \Bcal$, the measure 
\[
  \pbf(S^b_\eta) := \pbf(\vec{S}^b_\eta) \, \tv{S^b_\eta} \in \Mcal([0,T] \times \cl{\Omega};\Wedge_2 \R^3)
\]
is absolutely continuous with respect to Lebesgue measure. For its density, called the \term{geometric slip rate}, it holds that
\begin{align*}
  \gamma^b &\in \Lrm^\infty([0,T];\Crm^\infty(\cl{\Omega};\Wedge_2 \R^3)), \\
  \gamma^b(\frarg,x) &\in \Lrm^\infty([0,T];\Wedge_2 \R^3)  \qquad\text{for almost every fixed $x \in \cl{\Omega}$,}
\end{align*}
and for all $k = 0,1,2,\ldots$ there is a constant $C_k > 0$, which only depends on $\eta$, such that
\begin{equation} \label{eq:Gb_est}
  \int_\sigma^\tau \norm{\gamma^b(t,\frarg)}_{\Crm^k} \dd t \leq C_k \cdot \Var(S^b;[\sigma,\tau])
\end{equation}
for any interval $[\sigma,\tau] \subset [0,T]$.
\end{lemma}

{For ease of notation, here and in the following we suppress the dependence of $\gamma^b$ on $\eta$ (which is considered to be globally fixed).}

\begin{proof}
Fix $b \in \Bcal$. We first observe by linearity that $\pbf(S^b_\eta) = \eta \conv \pbf(S^b)$. Then, for $\omega \in \Dcal^2([0,T] \times \cl{\Omega})$ with $\abs{\omega} \leq 1$,
\[
  \absb{\dprb{\pbf(S^b_\eta),\omega}}
  \leq \int \absb{\dprb{\pbf(\vec{S}^b(t,x)), [\eta \conv \omega(t,\frarg)](x)}} \dd \tv{S^b}(t,x)
  \leq \norm{\eta}_{\Lrm^1} \int \abs{\pbf(\vec{S}^b)} \dd \tv{S^b}
\]
since $\abs{\eta \conv \omega} \leq \norm{\eta}_{\Lrm^1}$ by the properties of the mollification. Thus, for $[\sigma,\tau] \subset [0,T]$,
\[
  \tv{\pbf(S^b_\eta)}([\sigma,\tau] \times \cl{\Omega}) \leq C \cdot \Var(S^b;[\sigma,\tau]) \leq C L \abs{\sigma-\tau},
\]
where $L > 0$ is a universal Lipschitz constant of the scalar functions $t \mapsto \Var(S^b;[0,t])$ ($b \in \Bcal$) and $C > 0$. Moreover, if
\[
  \tv{S^b}(\di t, \di x) = \int_0^T \mu_t(\di x) \dd \lambda(t)
\]
for $\lambda \in \Mcal^+([0,T])$ and $\mu_t \in \Mcal^+(\R^3)$, which is weak*-measurable as a function of $t \in [0,T]$, denotes a disintegration of $\tv{S^b}$ with respect to time (see~\cite[Section~2.5]{AmbrosioFuscoPallara00book} and also~\cite[Section~4.2]{BonicattoDelNinRindler22?}), then
\begin{align*}
  \dprb{\pbf(S^b_\eta),\omega} &= \int \dprb{\pbf(\vec{S}^b(t,x)), [\eta \conv \omega(t,\frarg)](x)} \dd \tv{S^b}(t,x) \\
  &= \int_0^T \int \dprb{\pbf(\vec{S}^b(t,x)), [\eta \conv \omega(t,\frarg)](x)} \dd \mu_t(x) \dd \lambda(t) \\
  &= \int_0^T \int \dprb{\eta \conv \bigl[ \pbf(\vec{S}^b(t,\frarg)) \, \mu_t \bigr], \omega} \dd x \dd \lambda(t).
\end{align*}
Thus, the density of $\pbf(S^b_\eta)$ has been identified as
\[
  \gamma^b(t,\frarg)
  := \eta \conv \bigl[ \pbf(\vec{S}^b(t,\frarg)) \, \mu_t \bigr].
\]
Via Young's convolution inequality it satisfies for $[\sigma,\tau] \subset [0,T]$,
\begin{align*}
  \int_\sigma^\tau \norm{\gamma^b(t,\frarg)}_\infty \dd t
  &\leq C \int_\sigma^\tau \int \abs{\pbf(\vec{S}^b(t,x))} \dd \mu_t(x) \dd \lambda(t)  \\
  &= C \int_{[\sigma,\tau] \times \cl{\Omega}} \abs{\pbf(\vec{S}^b)} \dd \tv{S^b} \\
  &= C \cdot \Var(S^b;[\sigma,\tau]) \\
  &\leq CL \abs{\sigma-\tau}.
\end{align*}
Hence, we see that $\gamma^b \in \Lrm^\infty([0,T];\Crm(\cl{\Omega};\Wedge_2 \R^3))$, $\gamma^b(\frarg,x) \in \Lrm^\infty([0,T];\Wedge_2 \R^3)$ for almost every $x$, and~\eqref{eq:Gb_est} holds for $k = 0$. The higher differentiability follows by pushing the derivatives onto the mollifier and estimating analogously.
\end{proof}

\begin{remark}
Let us remark that if we additionally knew that $\abs{\pbf(\vec{S}^b(t,x))} < 1$ for $\tv{S^b}$-almost every $(t,x)$, then an application of the coarea formula for slices, see~\eqref{eq:coarea_slice}, in conjunction with the relation $\nabla^{S^b} \tbf(t,x) \neq 0$, see~\eqref{eq:decomp} in Lemma~\ref{lem:mass}, would yield the physically easier to understand formula
\[
  \gamma^b(t,\frarg)
  = \eta \conv \biggl[ \frac{\pbf(\vec{S}^b(t,\frarg)) \tv{S^b(t)}}{\abs{\nabla^{S^b} \tbf(t,\frarg)}} \biggr],
\]
where $\nabla^{S^b} \tbf(t,\frarg)$ denotes the projection of $\nabla \tbf(t,x)$ onto the tangent space to the (rectifiable) carrier set for $S^b$. This corresponds to~(7.14) in the modelling paper~\cite{HudsonRindler22}. However, the condition $\abs{\pbf(\vec{S}^b)} < 1$ is not necessarily satisfied $\tv{S^b}$-almost everywhere for general Lipschitz-in-time currents. This is related to singular phenomena that are \enquote{smeared out in time and space}, which are discussed at great length in~\cite{BonicattoDelNinRindler22?}. There, also an explicit counterexample, the \enquote{Flat Mountain}, is presented and investigated in detail. The variational approach in the present work, however, does not depend on an explicit formula for $\gamma^b$.
\end{remark}

We also define the \term{normal slip rate}
\[
  g^b := \hodge \gamma^b \in \Lrm^\infty([0,T];\Crm^\infty(\cl{\Omega};\R^3)),
\]
where $\hodge \colon \Wedge_2 \R^3 \to \R^3$ is the Hodge star operation. By the preceding Lemma~\ref{lem:Gb} we have that $g^b(\frarg,x) \in \Lrm^\infty([0,T];\R^3)$ for almost every fixed $x \in \cl{\Omega}$, and
\[
  \int_\sigma^\tau \norm{g^b(t,\frarg)}_{\Crm^k} \dd t \leq C_k \cdot \Var(S^b;[\sigma,\tau])
\]
for all intervals $[\sigma,\tau] \subset [0,T]$ and $k = 0,1,2,\ldots$.

Let $P \in \Lrm^s(\Omega;\R^{3 \times 3})$ for an $s \in [1,\infty]$ with $\det P = 1$ a.e.\ in $\Omega$, $\Td =(T^b)_b \in \Disl(\cl{\Omega})$, and $\Sd = (S^b)_b \in \Slip(\Td;[0,T])$. Denote for almost every $x \in \Omega$ by $R_x \colon [0,T] \to \R^{3 \times 3}$ a solution of the ODE
\begin{equation} \label{eq:PSigma_ODE}
  \left\{ \begin{aligned}
    \frac{\di}{\di t} R_x(t) &= D(t,x,R_x(t){;\Sd}) \qquad \text{for a.e.\ $t \in (0,T)$,}\\
    R_x(0) &= P(x),
  \end{aligned} \right.
\end{equation}
where the \term{(total) plastic drift} $D(t,x,R;\Sd)$ for $t \in [0,T]$ and $R \in \R^{3 \times 3}$ with $\det R > 0$ is given as
\begin{equation} \label{eq:D}
  D(t,x,R{;\Sd}) := \frac12 \sum_{b \in \Bcal} b \otimes \proj_{\langle R^{-1}b \rangle^\perp}[g^b(t,x)],
\end{equation}
with $g^b$ corresponding to $\gamma^b$ for $S^b$ as above. By $\proj_{\langle R^{-1}b \rangle^\perp}$ we here denote the orthogonal projection onto the orthogonal complement to the line $\langle R^{-1}b \rangle$. We will show in Lemma~\ref{lem:PSigma_def} below that this ODE indeed has a solution for almost every $x \in \Omega$. {Note that $D(t,x,R{;\Sd})$ also implicitly depends on the dislocation line profile $\eta$; however, this is considered to be globally fixed and hence we suppress this dependence in our notation.}

We then define the \term{plastic distortion path $P_{\Sd}$} starting at $P$ induced by the slip trajectory $\Sd = (S^b)_b$ as
\begin{equation} \label{eq:PSigma_def}
  P_{\Sd}(t,x) := R_x(t),  \qquad (t,x) \in [0,T] \times \Omega,
\end{equation}
with $R_x$ the solution of~\eqref{eq:PSigma_ODE}. Moreover, if $\Sd \in \Slip(\Td)$ (i.e., $[0,T] = [0,1]$), the \term{plastic forward operator} is given via
\[
  (\Sd_\ff P)(x) := R_x(1),  \qquad x \in \Omega.  
\]
We first consider the question of well-definedness:

\begin{lemma} \label{lem:PSigma_def}
The ODE~\eqref{eq:PSigma_ODE} has a unique solution for almost every $x \in \Omega$ and $P_{\Sd}$ defined in~\eqref{eq:PSigma_def} is a Lipschitz function on $[0,T]$ with values in $\Lrm^s(\Omega;\R^{3 \times 3})$. For all $t \in [0,T]$ it holds that $\det P_{\Sd}(t) = 1$ a.e.\ in $\Omega$ as well as
\begin{equation} \label{eq:PSigma_Ls}
  \norm{P_{\Sd}(t) - P}_{\Lrm^s} \leq C \cdot \Var(\Sd;[0,t]),
\end{equation}
where $C > 0$. In particular, if $\Sd \in \Slip(\Td)$ (i.e., $[0,T] = [0,1]$), then
\[
  \Sd_\ff P \in \Lrm^s(\Omega;\R^{3 \times 3}),  \qquad
  \det(\Sd_\ff P) = 1 \text{ a.e.,}  \qquad
  \norm{\Sd_\ff P - P}_{\Lrm^s} \leq C \cdot \Var(\Sd).
\]
\end{lemma}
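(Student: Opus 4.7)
The plan is to analyze the ODE~\eqref{eq:PSigma_ODE} pointwise in $x \in \Omega$ via Carath\'{e}odory theory, to promote local to global existence via a determinant-preservation identity, and then to derive the $L^s$-estimate by Minkowski's inequality applied to the integral form of the ODE. The main obstacle is that the drift $D(t,x,R)$ depends on $R^{-1}$ and is thus not globally defined or Lipschitz on $\R^{3\times 3}$; the resolution is the observation that the flow preserves the constraint $\det R = 1$, which keeps $R_x(t)$ bounded away from singular matrices throughout the evolution.

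For fixed $x$, nonexpansiveness of the orthogonal projection gives the pointwise-in-$R$ bound
\[
  \abs{D(t,x,R)} \leq \int \abs{b} \cdot \abs{g^b(t,x)} \dd \kappa(b) =: a(t,x),
\]
and by Lemma~\ref{lem:Gb} the scalar $a(\frarg,x)$ lies in $L^1([0,T])$ for every $x$, with $\int_0^T a(\tau,x) \dd \tau \leq C \cdot \Var(\Sigma)$. On the open set of matrices with positive determinant the map $R \mapsto D(t,x,R)$ is smooth, hence locally Lipschitz in $R$; the standard Carath\'{e}odory theorem thus yields a unique maximal solution $R_x$. Jacobi's formula, together with the elementary identity $\tr(R^{-1}(b \otimes v)) = v \cdot R^{-1} b$, then gives
\[
  \frac{\di}{\di t} \det R_x(t) = \det R_x(t) \cdot \int \proj_{\dpr{R_x(t)^{-1} b}^\perp}[g^b(t,x)] \cdot R_x(t)^{-1} b \dd \kappa(b) = 0,
\]
since each $\kappa$-integrand vanishes by definition of the projection. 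Combined with $\det P(x) = 1$, this yields $R_x(t) \in \SL(3)$ on the whole interval of existence. The a priori bound $\abs{R_x(t)} \leq \abs{P(x)} + \int_0^t a(\tau,x) \dd \tau$, together with the three-dimensional cofactor identity $\abs{R^{-1}} \leq C \abs{R}^2$ valid on $\SL(3)$, then rules out finite-time blow-up or loss of invertibility, so the solution extends to all of $[0,T]$.

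For the $L^s$-estimate, I would apply Minkowski's integral inequality to the integral form
\[
  P_\Sigma(t,x) - P(x) = \int_0^t D(\tau, x, P_\Sigma(\tau,x)) \dd \tau,
\]
yielding $\norm{P_\Sigma(t) - P}_{L^s(\Omega)} \leq \int_0^t \norm{a(\tau,\frarg)}_{L^s(\Omega)} \dd \tau$. Bounding this by $\abs{\Omega}^{1/s} \int_0^t \max_b \norm{g^b(\tau,\frarg)}_\infty \dd \tau$ and invoking~\eqref{eq:Gb_est} yields~\eqref{eq:PSigma_Ls}; applied on arbitrary subintervals, the same estimate gives Lipschitz continuity of $t \mapsto P_\Sigma(t)$ into $L^s$, using that $t \mapsto \Var(\Sigma;[0,t])$ is Lipschitz by definition of $\Lip([0,T];\DS(\cl{\Omega}))$. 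Measurability of $(t,x) \mapsto P_\Sigma(t,x)$ follows from continuous dependence of Carath\'{e}odory solutions on initial data and on the smooth parameter $g^b(\frarg,x)$, together with measurability of $P(\frarg)$. The assertions about $\Sigma_\ff P$ are then the specialisation $t = 1$.
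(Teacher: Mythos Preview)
Your proof is correct and follows essentially the same route as the paper: pointwise Carath\'{e}odory theory on the open set $\{\det R > 0\}$, the Jacobi-formula computation showing $\det R_x \equiv 1$ (hence global existence via the resulting a priori bound), and then the pointwise estimate $\abs{R_x(t)-P(x)} \leq C\cdot\Var(\Sigma;[0,t])$ promoted to $\Lrm^s$. The only cosmetic differences are that the paper bounds $\abs{R_x(t)-P(x)}$ by an $x$-independent quantity directly rather than passing through Minkowski's inequality, and that your explicit mention of the cofactor bound $\abs{R^{-1}} \leq C\abs{R}^2$ on $\SL(3)$ and of measurability are slightly more detailed than what the paper spells out.
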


\begin{proof}
Fix $b \in \Bcal$ and $x \in E \subset \Omega$ with the set $E$ of those $x \in \Omega$ where $\abs{P(x)} < \infty$, $\det P(x) = 1$, and $g^b(\frarg,x)$ is uniformly bounded. By Lemma~\ref{lem:Gb}, $E$ has full measure in $\Omega$. We observe that $(t,R) \mapsto D(t,x,R;\Sd)$ is a Carath\'{e}odory map (measurable in $t$ for fixed $R$ and continuous in $R$ for fixed $t$) on the open set
\[
  U_M := \setB{ (t,R) }{ t \in (0,T), \; R \in \R^{3 \times 3}, \; \abs{R} < \abs{P(x)} + M, \; \det R > \tfrac12 }
\]
for any $M > 0$. Indeed, the plastic drift $D(\frarg,x,\frarg;\Sd)$ (with $x$ fixed) is bounded and Lipschitz on the compact set $\cl{U_M}$ since $g^b(\frarg,x)$ is uniformly bounded and Lipschitz in $U_M$, and the projection $\proj_{\langle R^{-1}b \rangle^\perp}$ depends smoothly on $R^{-1} = (\cof R)^T/\det R$, which in turn depends smoothly on $R$ in $U_M$. In fact, $R \mapsto D(t,x,R;\Sd)$ is Lipschitz in $U_M$ with a $t$-uniform Lipschitz constant.

Then, from Carath\'{e}odory's existence and uniqueness theorem for ODEs~\cite[Theorems~2.1.1, 2.2.1]{CoddingtonLevinson55book}, we obtain a unique maximal solution $R_x \colon [0,T_x) \to \R^{3 \times 3}$ of~\eqref{eq:PSigma_ODE} in $U_M$, where $T_x > 0$ is the maximum time of existence. We compute that almost everywhere in the time interval $[0,T_x)$ it holds that
\begin{align*}
  \frac{\di}{\di t} \det R_x &= \cof R_x : \dot{R}_x \\
  &= (\det R_x) R_x^{-T} : \dot{R}_x \\
  &= (\det R_x) \tr( R_x^{-1} \dot{R}_x ) \\
  &= (\det R_x) \frac12 \sum_{b \in \Bcal} \tr \Bigl( (R_x^{-1}b) \otimes \proj_{\langle R_x^{-1}b \rangle^\perp} [g^b(\frarg,x)] \Bigr) \\
  &= (\det R_x) \frac12 \sum_{b \in \Bcal} (R_x^{-1}b) \cdot \proj_{\langle R_x^{-1}b \rangle^\perp} [g^b(\frarg,x)] \\
  &= 0.
\end{align*}
Hence, as $\det P(x) = 1$,
\begin{equation} \label{eq:detRx1}
  \det R_x(t) = 1,  \qquad\text{for $t \in [0,T_x)$.}
\end{equation}

By the uniform boundedness of $D(\frarg,x,\frarg;\Sd)$, which with regard to $x$ only depends on the quantity $\norm{g^b(\frarg,x)}_{\Lrm^\infty}$, on the interval $[0,T]$ the solution $R_x$ remains bounded. Choosing $M > 0$ sufficiently large and also employing~\eqref{eq:detRx1}, we see that our solution $R_x$ is in fact defined for all $t \in [0,T]$.

To show~\eqref{eq:PSigma_Ls}, we estimate for all $t \in [0,T]$,
\begin{align*}
  \int_0^t \abs{D(\tau,x,R_x(\tau);\Sd)} \dd \tau
  &\leq C \int_0^t \norm{g^b(\tau,\frarg)}_\infty \dd \tau \\
  &\leq C \cdot \Var(\Sd;[0,t])
\end{align*}
by~\eqref{eq:Gb_est} in Lemma~\ref{lem:Gb}. Hence, 
\[
  \abs{R_x(t) - P(x)}
  \leq \int_0^t \absBB{\frac{\di}{\di \tau} R_x(\tau)} \dd \tau
  = \int_0^t \abs{D(\tau,x,R_x(\tau);\Sd)} \dd \tau
  \leq C \cdot \Var(\Sd;[0,t]).
\]
Taking the $\Lrm^s$-norm in $x$, this gives
\[
  \norm{P_{\Sd}(t) - P}_{\Lrm^s} \leq C \cdot \Var(\Sd;[0,t]).
\]
In particular, $P_{\Sd}(t) \in \Lrm^s(\Omega;\R^{3 \times 3})$ for all $t \in [0,T]$.
 
The same arguments hold also when starting the evolution at $t_0 \in [0,t)$. Thus, the Lipschitz continuity of $t \mapsto \Var(\Sd;[0,t])$ in conjunction with the additivity of the variation yield the Lipschitz continuity of $t \mapsto P_{\Sd}(t)$, considered with values in $\Lrm^s(\Omega;\R^{3 \times 3})$.

The claimed incompressibility property $\det P_{\Sd}(t,x) = 1$ for all $t \in [0,T]$ and almost every $x$ follows directly from~\eqref{eq:detRx1}.
\end{proof}

The next lemma shows the transportation of regularity along the plastic evolution.

\begin{lemma} \label{lem:PSigma_W1q} 
Assume additionally that $P \in \Wrm^{1,q}(\Omega;\R^{3 \times 3})$ for a $q \in (3,\infty]$. Then,
\begin{equation} \label{eq:PSigma_W1q}
  \norm{P_{\Sd}(t)-P}_{\Wrm^{1,q}} \leq C \cdot \Var(\Sd;[0,t]),  \qquad t \in [0,T],
\end{equation}
and $P_{\Sd}$ is a Lipschitz function with values in $\Wrm^{1,q}(\Omega;\R^{3 \times 3})$, where $C > 0$ and the Lipschitz constant depend (monotonically) on $\norm{P}_{\Wrm^{1,q}}$ and $\Var(\Sd;[0,T])$. In particular, if $\Sd \in \Slip(\Td)$ (i.e., $[0,T] = [0,1]$), then
\[
  \Sd_\ff P \in \Wrm^{1,q}(\Omega;\R^{3 \times 3}),  \qquad
  \norm{\Sd_\ff P - P}_{\Wrm^{1,q}} \leq C \cdot \Var(\Sd).
\]
\end{lemma}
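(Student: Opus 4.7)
The plan is to differentiate the ODE~\eqref{eq:PSigma_ODE} formally in the spatial variable, apply a pointwise-in-$x$ Gronwall argument, and then integrate in $x$. The Sobolev embedding $\Wrm^{1,q}(\Omega) \embed \Lrm^\infty(\Omega)$, valid since $q > 3$, is the keystone: it guarantees that $P$ is essentially bounded, so by the already-proved bound $\abs{R_x(t)-P(x)} \leq C\Var(\Sigma;[0,t])$ from Lemma~\ref{lem:PSigma_def}, there is an $x$-independent constant $M = M(\norm{P}_{\Wrm^{1,q}},\Var(\Sigma;[0,T]))$ with $\abs{R_x(t)} \leq M$ and, thanks to $\det R_x(t) = 1$, also $\abs{R_x(t)^{-1}} \leq CM^2$. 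Consequently the map $R \mapsto \proj_{\dpr{R^{-1}b}^\perp}$ is smooth at $R_x(t)$, so both $D$ and $\partial_R D$ are well-controlled.

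First I would formally set $F^i_x(t) := \partial_{x_i} R_x(t)$ ($i=1,2,3$) and differentiate~\eqref{eq:PSigma_ODE} to obtain the linear ODE
\begin{equation*}
  \frac{\di}{\di t} F^i_x(t) = \partial_{x_i} D(t,x,R_x(t)) + \partial_R D(t,x,R_x(t))\bigl[F^i_x(t)\bigr], \qquad F^i_x(0) = \partial_{x_i} P(x).
\end{equation*}
Using the $\Crm^\infty$-regularity of $g^b(t,\frarg)$ together with~\eqref{eq:Gb_est} applied for $k=0,1$, together with the uniform bound on $R_x$ and $R_x^{-1}$, I get
\begin{equation*}
  \int_0^T \norm{\partial_x D(\tau,\frarg,R_\frarg(\tau))}_\infty + \norm{\partial_R D(\tau,\frarg,R_\frarg(\tau))}_\infty \dd \tau \leq C \cdot \Var(\Sigma;[0,T]),
\end{equation*}
where $C$ depends only on $M$ and $\eta$. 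An application of the (scalar) Gronwall inequality to $\abs{F^i_x(t) - \partial_{x_i} P(x)}$ then yields the pointwise estimate
\begin{equation*}
  \abs{F^i_x(t)-\partial_{x_i} P(x)} \leq C\bigl(1 + \abs{\partial_{x_i} P(x)}\bigr) \cdot \Var(\Sigma;[0,t]),
\end{equation*}
with the exponential prefactor absorbed into $C = C(M,\Var(\Sigma;[0,T]))$. Taking the $\Lrm^q$-norm in $x$ and summing over $i$ gives the claimed estimate~\eqref{eq:PSigma_W1q}.

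To make the differentiation rigorous I would work with difference quotients $\Delta^h_i R_x(t) := h^{-1}(R_{x+h\ee_i}(t) - R_x(t))$, which, by standard smooth dependence of ODE solutions on parameters (the right-hand side $D(\tau,x,R)$ is smooth in $(x,R)$ on the relevant region), satisfy an ODE differing from the one above only by a consistency error vanishing as $h \to 0$. Uniform-in-$h$ bounds on $\Delta^h_i R_x(t)$ in $\Lrm^q$ follow from the same Gronwall argument and from Lipschitz-continuity of translation in $\Lrm^q$ applied to $\partial_{x_i} P$; then a distributional limit identifies $F^i_x(t)$ as the weak spatial derivative of $P_\Sigma(t)$, placing $P_\Sigma(t)$ in $\Wrm^{1,q}(\Omega;\R^{3\ex 3})$. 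Finally, for the Lipschitz continuity $t \mapsto P_\Sigma(t) \in \Wrm^{1,q}$, the same argument applied on the interval $[t_1,t_2]$ with initial time $t_1$ gives
\begin{equation*}
  \normb{P_\Sigma(t_2) - P_\Sigma(t_1)}_{\Wrm^{1,q}} \leq C\bigl(1 + \norm{P}_{\Wrm^{1,q}}\bigr) \cdot \Var(\Sigma;[t_1,t_2]),
\end{equation*}
and the hypothesis $S^b \in \Lip([0,T];\Irm_1(\cl{\Omega}))$ ensures $t \mapsto \Var(\Sigma;[0,t])$ is Lipschitz, yielding the desired time-Lipschitz bound. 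The case $\Sigma \in \Sl(\Phi)$ is then a direct specialization to $t = 1$.

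I expect the main obstacle to be the rigorous differentiation step: carefully verifying that the difference quotients converge to the claimed $F^i_x(t)$ in $\Lrm^q$ (not merely weakly), and tracking how the constants depend on $\norm{P}_{\Wrm^{1,q}}$ and $\Var(\Sigma;[0,T])$ monotonically. Everything else is a controlled Gronwall estimate built on Lemma~\ref{lem:Gb} and the $\Lrm^\infty$-boundedness of $P$.
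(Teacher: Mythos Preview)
Your proposal is correct and follows essentially the same strategy as the paper's proof: differentiate the ODE in space, use the Sobolev embedding together with $\det R_x = 1$ to obtain uniform bounds on the coefficients, apply Gronwall, and take the $\Lrm^q$-norm. The paper's only differences are cosmetic---it takes the $\Lrm^q$-norm \emph{before} applying the integral Gronwall lemma rather than after a pointwise-in-$x$ Gronwall, and it handles the spatial differentiation more tersely by asserting that the time derivative and weak gradient commute, whereas your difference-quotient justification is more explicit.
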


Note that while the growth of $C$ in $\norm{P}_{\Wrm^{1,q}}$ and $\Var(\Sd;[0,T])$ may be very fast, we will always apply this result in the presence of a uniform bound for those quantities; then the estimates are of the same type as the ones in Lemma~\ref{lem:PSigma_def} and in particular additive in the variation.

\begin{proof}
By Lemma~\ref{lem:PSigma_def} (for $s = \infty$) and the embedding of $\Wrm^{1,q}(\Omega;\R^{3 \times 3})$ into $\Crm(\Omega;\R^{3 \times 3})$ (since $q > 3$) we have $\norm{P_{\Sd}(t)}_{\Lrm^\infty} \leq \norm{P}_{\Lrm^\infty} + C \cdot \Var(\Sd;[0,T]) =: M$. The set
\[
  X_M := \setb{ A \in \R^{3 \times 3} }{ \abs{A} \leq M, \; \det A = 1 }
\]
is compact. Hence, the map $H^b \colon X_M \to \R^{3 \times 3}$, given by
\[
  H^b(A) := \proj_{\langle A^{-1}b \rangle^\perp} = \Id - \frac{(A^{-1}b) \otimes (A^{-1}b)}{\abs{A^{-1}b}^2},  \qquad A \in X_M,
\]
is smooth and
\begin{equation} \label{eq:H_est}
  \abs{H^b(A)}, \; \abs{\DD H^b(A)} \leq C,  \qquad A \in X_M,
\end{equation}
where $C = C(M) > 0$ is a constant.

Denote by $P_{\Sd}(t,x) := R_x(t)$ the solution of~\eqref{eq:PSigma_ODE} for $x \in \Omega$. We have by the chain rule that
\begin{align*}
  \nabla D(t,x,P_{\Sd}(t,x);\Sd)
  &= \nabla \bigl[ H^b(P_{\Sd}(t,x)) g^b(t,x) \bigr] \\
  &= \DD H^b(P_{\Sd}(t,x)) \nabla P_{\Sd}(t,x) g^b(t,x) + H^b(P_{\Sd}(t,x)) \nabla g^b(t,x),
\end{align*}
where $\nabla$ denotes the (weak) $x$-gradient. Then, since time derivative and weak gradient commute, we get that $\nabla P_{\Sd}$ satisfies the ODE
\[
  \left\{ \begin{aligned}
    \frac{\di}{\di t} \nabla P_{\Sd}(t,x) &= \frac12 \sum_{b \in \Bcal} \DD H^b(P_{\Sd}(t,x)) \nabla P_{\Sd}(t,x) g^b(t,x) + H^b(P_{\Sd}(t,x)) \nabla g^b(t,x), \\
    \nabla P_{\Sd}(0,x) &= \nabla P(x).
  \end{aligned} \right.
\]
We compute
\begin{align*}
  \frac{\di}{\di t} \bigl[\nabla P_{\Sd}(t,x) - \nabla P(x) \bigr] &= \frac12 \sum_{b \in \Bcal} \biggl( \DD H^b(P_{\Sd}(t,x)) \bigl[\nabla P_{\Sd}(t,x) - \nabla P(x) \bigr] g^b(t,x) \\
  &\qquad + H^b(P_{\Sd}(t,x)) \nabla g^b(t,x) + \DD H^b(P_{\Sd}(t,x)) \nabla P(x) g^b(t,x) \biggr).
\end{align*}
Integrating in time from $0$ to $t$, taking the $\Lrm^q$-norm in $x$, and applying~\eqref{eq:H_est}, we get
\begin{align*}
  \norm{\nabla P_{\Sd}(t,\frarg) - \nabla P}_{\Lrm^q}
  &\leq C \int_0^t \norm{\nabla P_{\Sd}(\tau,\frarg) - \nabla P}_{\Lrm^q} \cdot \frac12 \sum_{b \in \Bcal} \norm{g^b(\tau,\frarg)}_\infty \\
  &\qquad\quad + \frac12 \sum_{b \in \Bcal} \norm{\nabla g^b(\tau,\frarg)}_\infty + \norm{\nabla P}_{\Lrm^q} \cdot \frac12 \sum_{b \in \Bcal} \norm{g^b(\tau,\frarg)}_\infty \dd \tau.
\end{align*}
The integral form of Gronwall's lemma now yields
\begin{align*}
  \norm{\nabla P_{\Sd}(t,\frarg) - \nabla P}_{\Lrm^q} &\leq C( 1 + \norm{\nabla P}_{\Lrm^q}) \int_0^t \frac12 \sum_{b \in \Bcal} \norm{g^b(\tau,\frarg)}_{\Crm^1} \dd \tau \\
  &\qquad \cdot \exp \biggl( C \int_0^t \frac12 \sum_{b \in \Bcal} \norm{g^b(\tau,\frarg)}_\infty \dd \tau \biggr).
\end{align*}
Combining this with~\eqref{eq:Gb_est} in Lemma~\ref{lem:Gb},
\[
  \norm{\nabla P_{\Sd}(t,\frarg) - \nabla P}_{\Lrm^q}
  \leq C(1 + \norm{\nabla P}_{\Lrm^q}) \Var(\Sd;[0,t]) \cdot \ee^{C \cdot \Var(\Sd;[0,t])}
  \leq C \cdot \Var(\Sd;[0,t]),
\]
where we have absorbed some terms into the constant $C > 0$. Together with~\eqref{eq:PSigma_Ls} this yields~\eqref{eq:PSigma_W1q}.

Further, varying the starting point and employing the Lipschitz continuity of $t \mapsto \Var(\Sd;[0,t])$ in conjunction with the additivity of the variation gives for all $s < t$ that
\[
  \norm{P_{\Sd}(s)-P_{\Sd}(t)}_{\Wrm^{1,q}} \leq C(1 + \norm{P(s)}_{\Wrm^{1,q}}) \Var(\Sd;[s,t]) \cdot \ee^{C \cdot \Var(\Sd)}
  \leq L\abs{s-t},
\]
where $L$ depends on $\Var(\Sd;[0,T])$ and $\norm{P}_{\Wrm^{1,q}}$ (which bounds $\norm{P(s)}_{\Wrm^{1,q}}$ by~\eqref{eq:PSigma_W1q}). This gives the Lipschitz continuity of $t \mapsto P_{\Sd}(t)$ with values in $\Wrm^{1,q}(\Omega;\R^{3 \times 3})$.
\end{proof}

Next, we show that we may dispense with the pointwise definition of solutions to~\eqref{eq:PSigma_ODE}.

\begin{lemma} \label{lem:PSigma_ODE_BV}
Assume that additionally $P \in \Wrm^{1,q}(\Omega;\R^{3 \times 3})$ for a $q \in (3,\infty]$. Then, the ODE~\eqref{eq:PSigma_ODE} also holds in the $\Wrm^{1,q}(\Omega;\R^{3 \times 3})$-sense, that is,
\[
  \frac{\di}{\di t} P_{\Sd}(t)  \quad \text{exists as a $\Wrm^{1,q}(\Omega;\R^{3 \times 3})$-valued map for a.e.\ $t \in [0,T]$,}
\]
and for such $t$ it holds that
\begin{equation} \label{eq:PSigma_ODE_W1q}
  \frac{\di}{\di t} P_{\Sd}(t) = x \mapsto D(t,x,P_{\Sd}(t,x);\Sd) \qquad\text{in $\Wrm^{1,q}(\Omega;\R^{3 \times 3})$.}
\end{equation}
\end{lemma}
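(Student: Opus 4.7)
The plan is to reinterpret the pointwise ODE~\eqref{eq:PSigma_ODE} as a Bochner integral equation in $\Wrm^{1,q}(\Omega;\R^{3\times 3})$ and then apply the Lebesgue differentiation theorem for vector-valued integrals. Define
\[
  F \colon [0,T] \to \Wrm^{1,q}(\Omega;\R^{3\times 3}), \qquad [F(\tau)](x) := D(\tau,x,P_\Sigma(\tau,x)).
\]
Once I show that $F$ is Bochner integrable and that $P_\Sigma(t) - P = \int_0^t F(\tau) \dd\tau$ as an identity in $\Wrm^{1,q}$, Hille's Lebesgue differentiation theorem will deliver $\frac{\di}{\di t} P_\Sigma(t) = F(t)$ in $\Wrm^{1,q}$ at every Lebesgue point of $F$, which is exactly~\eqref{eq:PSigma_ODE_W1q}.

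First, I would establish an $\Lrm^1$-in-time control on $\norm{F(\tau)}_{\Wrm^{1,q}}$. The chain-rule computation already carried out in the proof of Lemma~\ref{lem:PSigma_W1q} gives
\[
  \nabla F(\tau) = \int \DD H^b(P_\Sigma(\tau,\frarg)) \nabla P_\Sigma(\tau,\frarg) g^b(\tau,\frarg) + H^b(P_\Sigma(\tau,\frarg)) \nabla g^b(\tau,\frarg) \dd \kappa(b),
\]
with $H^b(A) = \proj_{\langle A^{-1}b \rangle^\perp}$ bounded and Lipschitz on the compact set in which $P_\Sigma$ takes values (cf.~\eqref{eq:H_est}). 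Combining the uniform bound $\norm{P_\Sigma(\tau)}_{\Wrm^{1,q}} \leq C$ from Lemma~\ref{lem:PSigma_W1q} with the estimate~\eqref{eq:Gb_est} yields
\[
  \int_0^T \norm{F(\tau)}_{\Wrm^{1,q}} \dd \tau \leq C \int \int_0^T \norm{g^b(\tau,\frarg)}_{\Crm^1} \dd \tau \dd \kappa(b) \leq C \cdot \Var(\Sigma;[0,T]).
\]
Strong measurability of $F$ into the separable space $\Wrm^{1,q}$ (for $q < \infty$) follows from Pettis' theorem, using the continuity of $\tau \mapsto P_\Sigma(\tau)$ into $\Wrm^{1,q}$ established in Lemma~\ref{lem:PSigma_W1q} and the joint measurability of $g^b$ and $\nabla g^b$.

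Next, the pointwise ODE~\eqref{eq:PSigma_ODE}, integrated in $t$ and combined with Fubini's theorem, gives $[P_\Sigma(t) - P](x) = \int_0^t [F(\tau)](x) \dd \tau$ for a.e.\ $x \in \Omega$, and similarly for the gradients via the integrated form of the ODE for $\nabla P_\Sigma$ derived in the proof of Lemma~\ref{lem:PSigma_W1q}. Since the Bochner integral $\int_0^t F(\tau) \dd \tau$ exists in $\Wrm^{1,q}$ by the previous step, and since the embedding $\Wrm^{1,q} \embed \Lrm^q$ and the weak-gradient operator $\nabla \colon \Wrm^{1,q} \to \Lrm^q$ are bounded linear and hence commute with Bochner integration, these pointwise identities upgrade to
\[
  P_\Sigma(t) - P = \int_0^t F(\tau) \dd \tau  \qquad \text{in } \Wrm^{1,q}(\Omega;\R^{3\times 3})
\]
for every $t \in [0,T]$ (both sides depend continuously on $t$ with values in $\Wrm^{1,q}$). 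An application of Hille's differentiation theorem for Bochner integrals then yields the claim at a.e.\ $t$.

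The main technical subtlety I anticipate is the endpoint case $q = \infty$, where $\Wrm^{1,\infty}$ is neither reflexive nor separable and the Bochner framework does not apply as cleanly. Here I would first carry out the argument above for every $q' \in (3,\infty)$, obtaining norm-convergence of the difference quotients to $F(t)$ in $\Wrm^{1,q'}$ for a.e.\ $t$. To pass to $\Wrm^{1,\infty}$, I would combine these $\Wrm^{1,q'}$-limits with the uniform $\Lrm^1$-in-time bound $\norm{F(\tau)}_{\Wrm^{1,\infty}} \leq C \int \norm{g^b(\tau,\frarg)}_{\Crm^1} \dd \kappa(b)$ to extract a weak-$*$ $\Wrm^{1,\infty}$-limit of the difference quotients, which is then necessarily equal to $F(t)$ by uniqueness of the $\Wrm^{1,q'}$-limit; this is the natural reading of a $\Wrm^{1,\infty}$-valued derivative of the Lipschitz curve $P_\Sigma$.
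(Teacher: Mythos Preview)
Your argument is correct and reaches the same conclusion as the paper, but via a somewhat different route. The paper's proof is more compact: it simply invokes that $P_\Sigma$ is Lipschitz with values in $\Wrm^{1,q}$ (from Lemma~\ref{lem:PSigma_W1q}), and since $\Wrm^{1,q}$ is reflexive for $q<\infty$ and therefore has the Radon--Nikod\'{y}m property, the Banach-space differentiation theorem for Lipschitz maps gives a.e.\ existence of $\frac{\di}{\di t}P_\Sigma(t)$ in $\Wrm^{1,q}$; the derivative is then identified with the pointwise right-hand side of~\eqref{eq:PSigma_ODE} via the embedding $\Wrm^{1,q} \embed \Crm$. You instead first build the Bochner-integral representation $P_\Sigma(t)-P=\int_0^t F(\tau)\,\dd\tau$ in $\Wrm^{1,q}$ and then differentiate. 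The analytic input is the same (the $\Wrm^{1,q}$-estimates of Lemma~\ref{lem:PSigma_W1q}); the paper's argument is shorter, while yours is more explicit about the origin of the derivative and has the mild advantage that the Lebesgue differentiation theorem for Bochner integrals of $\Lrm^1$-functions holds in \emph{any} Banach space, without an appeal to the Radon--Nikod\'{y}m property. Your separate treatment of the endpoint $q=\infty$ (where $\Wrm^{1,\infty}$ is neither reflexive nor has the Radon--Nikod\'{y}m property, so the paper's reflexivity argument does not directly apply) is a genuine, if minor, addition.
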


\begin{proof}
We have seen above that $P_{\Sd} \in \Lip([0,T];\Wrm^{1,q}(\Omega;\R^{3 \times 3}))$. Then, a version of the classical Lebesgue differentiation theorem for maps with values in Banach spaces (see~\cite[Theorem~IV.3.2, p.107]{DiestelUhl77book}) in conjunction with the fact that $\Wrm^{1,q}(\Omega;\R^{3 \times 3})$ has the Radon--Nikodym property as a reflexive space (see~\cite[Corollary~III.2.13, p.76]{DiestelUhl77book}), yields the existence of $\frac{\di}{\di t} P_{\Sd}(t)$ for almost every $t \in [0,T]$. For such $t$ it holds that
\begin{align*}
  \frac{\di}{\di t} P_{\Sd}(t)
  = \lim_{\delta \to 0} \frac{P_{\Sd}(t+\delta)-P_{\Sd}(t)}{\delta}
  = \biggl( x \mapsto \frac{\di}{\di t} P_{\Sd}(t,x) \biggr),
\end{align*}
where the limit is in $\Wrm^{1,q}$ and the last equality follows via the Lipschitz continuity of $P_{\Sd}$ in time with respect to values in $\Wrm^{1,q}$ and the fact that this implies $x$-uniform pointwise Lipschitz continuity by the embedding $\Wrm^{1,q}(\Omega;\R^{3 \times 3}) \cembed \Crm(\Omega;\R^{3 \times 3})$. Thus,~\eqref{eq:PSigma_ODE_W1q} has been established.
\end{proof}

\subsection{Operations on slip systems}

We now introduce useful operations on slip trajectories, namely rescalings and concatenations, and we also define the so-called \enquote{neutral} slip trajectory.

\begin{lemma} \label{lem:Sigma_rescale}
Let $\Td \in \Disl(\cl{\Omega})$, $P \in \Lrm^s(\Omega;\R^{3 \times 3})$ for an $s \in [1,\infty]$ with $\det P = 1$ a.e.\ in $\Omega$, and $\Sd = (S^b)_b \in \Slip(\Td;[0,T])$. Let $a \colon [0,T] \to [0,T']$ be an invertible $\Crm^1$-map with $a(0) = 0$, $a(T) = T'$. Define (using the notation of Lemma~\ref{lem:rescale})
\[
  a_* \Sd := (a_* S^b)_b \in \Slip(\Td;[0,T']).
\]
Then, for the solution $P_{a_* \Sd}$ of~\eqref{eq:PSigma_ODE_W1q} the rate-independence property
\begin{equation} \label{eq:PSigma_RI}
  P_{a_* \Sd}(t') = P_{\Sd}(a^{-1}(t')),  \qquad t' \in [0,T'],
\end{equation}
holds. In particular, if $\Sd \in \Slip(\Td)$ (i.e., $[0,T] = [0,1]$) and $T' = 1$, then
\[
  (a_* \Sd)_\ff \Td = \Sd_\ff \Td,  \qquad
  (a_* \Sd)_\ff P = \Sd_\ff P.
\]
\end{lemma}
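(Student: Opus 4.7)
I would organize the proof into three stages: verifying admissibility of $a_*\Sigma$, identifying how the geometric/normal slip rate transforms under the rescaling, and then deducing the rate-independence by an ODE chain-rule argument.

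\textbf{Stage 1: Admissibility.} First I would check $a_*\Sigma = (a_*S^b)_b \in \Sl(\Phi;[0,T'])$. Lemma~\ref{lem:rescale} gives $a_*S^b \in \Lip([0,T'];\Irm_1(\cl{\Omega}))$ with $(a_*S^b)(a(t)) = S^b(t)$; evaluating at $t = 0$ and using $a(0) = 0$ gives $(a_*S^b)(0) = S^b(0) = T^b$. The symmetry $a_*(-S^b) = -a_*S^b$ holds by linearity of pushforward, and since $\bar a(t,x) := (a(t),x)$ is a homeomorphism mapping $(0,T) \times \R^d$ onto $(0,T') \times \R^d$, the interchange of pushforward with boundary and restriction gives
\[
  \partial(a_*S^b) \restrict ((0,T')\times\R^d) = a_*\bigl(\partial S^b \restrict ((0,T)\times\R^d)\bigr) = 0.
\]

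\textbf{Stage 2: Rescaling of the slip rate.} Writing $\tilde S^b := a_* S^b$, I would compute the density of $\pbf(\tilde S^b_\eta)$ by testing against $\psi \in \Dcal^2([0,T']\times\cl{\Omega})$. Using $\pbf(\tilde S^b_\eta) = \eta \conv \pbf(\tilde S^b) = \eta \conv \bar a_*\pbf(S^b)$, the identity $\bar a^*(\eta\conv\psi) = (\eta\conv\psi)\circ\bar a$ (since $\bar a$ acts only in time while $\eta$ acts only in space), and the change of variables $t' = a(t)$ (so $\dd t = \dd t'/a'(a^{-1}(t'))$), I would obtain
\[
  \tilde\gamma^b(t',x) = \frac{1}{a'(a^{-1}(t'))}\,\gamma^b(a^{-1}(t'),x),\qquad \tilde g^b = \hodge\tilde\gamma^b,
\]
and therefore, by the definition~\eqref{eq:D} of the plastic drift,
\[
  D'(t',x,R) = \frac{1}{a'(a^{-1}(t'))}\,D(a^{-1}(t'),x,R).
\]
Here $a'$ is bounded away from $0$ by invertibility of $a$ as a $\Crm^1$-diffeomorphism on the compact interval, so the division is harmless.

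\textbf{Stage 3: ODE chain rule and uniqueness.} Set $Q(t',x) := P_\Sigma(a^{-1}(t'),x)$. For almost every $x \in \Omega$, Lemma~\ref{lem:PSigma_def} provides a pointwise Carath\'eodory solution $t \mapsto P_\Sigma(t,x)$ of~\eqref{eq:PSigma_ODE}; composing with the $\Crm^1$-map $a^{-1}$ and using the standard chain rule gives
\[
  \frac{\di}{\di t'}Q(t',x) = \frac{1}{a'(a^{-1}(t'))}\,D(a^{-1}(t'),x,Q(t',x)) = D'(t',x,Q(t',x))
\]
for a.e.\ $t' \in (0,T')$, with $Q(0,x) = P_\Sigma(0,x) = P(x)$. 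Thus $Q(\cdot,x)$ solves the defining ODE for $P_{a_*\Sigma}(\cdot,x)$; Carath\'eodory uniqueness from Lemma~\ref{lem:PSigma_def} (applied to $a_*\Sigma$) forces $Q = P_{a_*\Sigma}$, which is~\eqref{eq:PSigma_RI}. The consequences for $[0,T] = [0,T'] = [0,1]$ follow directly: $(a_*\Sigma)_\ff P = P_{a_*\Sigma}(1) = P_\Sigma(a^{-1}(1)) = P_\Sigma(1) = \Sigma_\ff P$, and $(a_*\Sigma)_\ff \Phi = \Sigma_\ff\Phi$ because the endpoints of the slices coincide: $(a_*S^b)(1) = S^b(1)$ by Lemma~\ref{lem:rescale}.

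The only delicate step is Stage~2 (the test-form/change-of-variables computation for $\tilde\gamma^b$); once this is in hand, Stages~1 and~3 are essentially bookkeeping.
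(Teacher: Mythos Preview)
Your proof is correct and follows essentially the same route as the paper: both establish admissibility via Lemma~\ref{lem:rescale}, compute the rescaled geometric slip rate $\tilde\gamma^b(t',x) = \gamma^b(a^{-1}(t'),x)/\dot a(a^{-1}(t'))$ by a change-of-variables (the paper phrases this via the area formula), and then conclude by the ODE chain rule together with uniqueness of the Carath\'eodory solution. Your Stage~1 is slightly more explicit than the paper's one-line appeal to Lemma~\ref{lem:rescale}, but the substance is the same.
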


\begin{proof}
The fact that $a_* \Sd = (a_* S^b)_b \in \Slip(\Td;[0,T'])$ follows from Lemma~\ref{lem:rescale}. Turning to~\eqref{eq:PSigma_RI}, we denote by $a_* \gamma^b$ the geometric slip rate defined in Lemma~\ref{lem:Gb} with respect to $a_* S^b$. Note that for $\omega \in \Dcal^2(\R^3)$ and all $0 \leq s < t \leq T$ we obtain in the same way as in the proof of Lemma~\ref{lem:rescale} (which can be found in Lemma~3.4 of~\cite{Rindler23}) using the area formula that
\begin{align*}
  \int \int_s^t \dprb{ \gamma^b(\tau,\frarg), \omega } \dd \tau \dd x
  &= \int \int_{a(s)}^{a(t)} \dprb{ (a_* \gamma^b)(\tau',x), \omega(x) } \dd \tau' \dd x \\
  &= \int \int_s^t \dprb{ a_* \gamma^b(a(\tau),x), \omega(x) } \, \dot{a}(\tau) \dd \tau \dd x,
\end{align*}
where we changed variables in the last line. Thus,
\[
  a_* \gamma^b(t',x) = \frac{\gamma^b(a^{-1}(t'),x)}{\dot{a}(a^{-1}(t'))}
\]
{and hence
\[
  D(t',x,R; a_*\Sd) = \frac{D(a^{-1}(t'),x,R;\Sd)}{\dot{a}(a^{-1}(t'))}.
\]
For $P' := P_{\Sd} \circ a^{-1}$ we compute
\begin{align*}
  \frac{\di}{\di t'} P'(t')
  &= \biggl(\frac{\di}{\di t} P_{\Sd} \biggr)(a^{-1}(t')) \cdot \frac{1}{\dot{a}(a^{-1}(t'))} \\
  &= \frac{D(a^{-1}(t'),\frarg,P_{\Sd}(a^{-1}(t'));\Sd)}{\dot{a}(a^{-1}(t'))} \\
  &= D(t',\frarg,P'(t'); a_*\Sd).
\end{align*}}
By the uniqueness of the solution to~\eqref{eq:PSigma_ODE_W1q} we thus obtain $P' = P_{a_* \Sd}$, which implies~\eqref{eq:PSigma_RI}. The additional statements are then clear (using Lemma~\ref{lem:rescale}).
\end{proof}

\begin{lemma} \label{lem:Sigma_concat}
Let $\Td \in \Disl(\cl{\Omega})$, $\Sd^1 \in \Slip(\Td)$, $\Sd^2 \in \Slip(\Sd^1_\ff \Td)$, and $P \in \Lrm^s(\Omega;\R^{3 \times 3})$ for an $s \in [1,\infty]$ with $\det P = 1$ a.e.\ in $\Omega$. Then, there is $\Sd^2 \circ \Sd^1 \in \Slip(\Td)$, called the \term{concatenation} of $\Sd^1$ and $\Sd^2$, with
\begin{equation} \label{eq:cat_ff}
  (\Sd^2 \circ \Sd^1)_\ff \Td = \Sd^2_\ff (\Sd^1_\ff \Td),  \qquad
  (\Sd^2 \circ \Sd^1)_\ff P = \Sd^2_\ff (\Sd^1_\ff P),
\end{equation}
and
\begin{align}
  \norm{\Sd^2 \circ \Sd^1}_{\Lrm^\infty} &= \max \bigl\{ \norm{\Sd^1}_{\Lrm^\infty}, \norm{\Sd^2}_{\Lrm^\infty} \bigr\}, \label{eq:cat_Linfty} \\
  \Var(\Sd^2 \circ \Sd^1) &= \Var(\Sd^1) + \Var(\Sd^2). \label{eq:cat_Var}
\end{align}
\end{lemma}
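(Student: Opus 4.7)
The plan is to rescale $\Sigma^1$ and $\Sigma^2$ to the disjoint subintervals $[0,1/2]$ and $[1/2,1]$ of $[0,1]$ and then sum. Define the affine $\Crm^1$-diffeomorphisms $a_1 \colon [0,1] \to [0,1/2]$, $a_1(s) := s/2$, and $a_2 \colon [0,1] \to [1/2,1]$, $a_2(s) := (s+1)/2$, and set $\tilde{S}^{b,j} := (a_j)_* S^{b,j}$ for $j=1,2$, where $(S^{b,j})_b$ are the components of $\Sigma^j$. Lemma~\ref{lem:rescale} ensures that each $\tilde{S}^{b,j}$ lies in $\Lip([0,1];\Irm_1(\cl{\Omega}))$ with support in $a_j([0,1]) \times \cl{\Omega}$, preserves the symmetry $\tilde{S}^{-b,j} = -\tilde{S}^{b,j}$, and has the same $\Lrm^\infty$-norm and variation as $S^{b,j}$. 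The concatenation is then defined by
\[
  (\Sigma^2 \circ \Sigma^1)^b := \tilde{S}^{b,1} + \tilde{S}^{b,2}, \qquad b \in \Bcal.
\]

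\textbf{Membership in $\Sl(\Phi)$.} By construction, the slice formula gives $(\Sigma^2 \circ \Sigma^1)^b(t) = S^{b,1}(2t)$ on $[0,1/2]$ and $S^{b,2}(2t-1)$ on $[1/2,1]$; since both pieces are $\Lip$-integral and their supports meet only on the Lebesgue-null time-slice $\{1/2\}$ (at which neither has a vertical jump), the sum lies in $\Lip([0,1];\Irm_1(\cl{\Omega}))$ with Lipschitz variation accumulators. The internal boundary vanishes away from $t=1/2$ piecewise. At $t=1/2$, the trace formula of Section~\ref{sc:BVcurr} yields
\[
  \partial \tilde{S}^{b,1} \restrict (\{1/2\}\times\R^3) = \delta_{1/2} \times S^{b,1}(1-) = \delta_{1/2} \times (\Sigma^1_\ff \Phi)^b,
\]
\[
  \partial \tilde{S}^{b,2} \restrict (\{1/2\}\times\R^3) = -\delta_{1/2} \times S^{b,2}(0+) = -\delta_{1/2} \times (\Sigma^1_\ff \Phi)^b,
\]
where the last equality uses $\Sigma^2 \in \Sl(\Sigma^1_\ff \Phi)$. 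These contributions cancel in the sum, and the start condition $\partial(\Sigma^2\circ\Sigma^1)^b \restrict (\{0\}\times\R^3) = -\delta_0 \times T^b$ is inherited from $\Sigma^1$, so $\Sigma^2 \circ \Sigma^1 \in \Sl(\Phi)$.

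\textbf{Forward operators, norms, and variation.} The dislocation forward identity in \eqref{eq:cat_ff} is immediate: $(\Sigma^2\circ\Sigma^1)^b(1-) = S^{b,2}(1-) = (\Sigma^2_\ff(\Sigma^1_\ff\Phi))^b$. For the plastic forward identity, I apply the rate-independence argument in the proof of Lemma~\ref{lem:Sigma_rescale} piecewise: on $[0,1/2]$, the geometric slip rate of $\Sigma^2 \circ \Sigma^1$ equals $2\gamma^{b,1}(2t,x)$ by the same change-of-variables computation as in that lemma, so uniqueness for~\eqref{eq:PSigma_ODE_W1q} yields $P_{\Sigma^2\circ\Sigma^1}(t) = P_{\Sigma^1}(2t)$, whence $P_{\Sigma^2\circ\Sigma^1}(1/2) = \Sigma^1_\ff P$. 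Repeating the argument on $[1/2,1]$ with initial datum $\Sigma^1_\ff P$ and trajectory $\tilde{S}^{b,2}$ gives $P_{\Sigma^2\circ\Sigma^1}(1) = P_{\Sigma^2}(1) = \Sigma^2_\ff(\Sigma^1_\ff P)$. The identity \eqref{eq:cat_Linfty} is immediate from the slice formula, and \eqref{eq:cat_Var} follows from the additivity of the variation across the disjoint time subintervals combined with the variation-preservation provided by Lemma~\ref{lem:rescale}.

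\textbf{Main obstacle.} The only delicate point is the boundary cancellation at the seam $t=1/2$: one must know that the right trace of $\tilde{S}^{b,1}$ and the left trace of $\tilde{S}^{b,2}$ agree \emph{as integral $1$-currents}, not merely in some weaker sense. This is precisely what the trace theory for $\Lip$-integral currents recalled in Section~\ref{sc:BVcurr} provides, since neither piece has a vertical component at the seam; the identification of both traces with the well-defined endpoint $(\Sigma^1_\ff \Phi)^b$ then makes the cancellation algebraic, and the remaining assertions reduce to routine pushforward and rescaling calculus.
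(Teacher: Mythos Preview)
Your proposal is correct and takes essentially the same approach as the paper: both rescale $\Sigma^1$ to $[0,1/2]$ and $\Sigma^2$ to $[1/2,1]$ via the same affine maps, sum the results, and verify the boundary cancellation at the seam $t=1/2$ together with the forward-operator identities via Lemma~\ref{lem:rescale} and Lemma~\ref{lem:Sigma_rescale}. The paper's version is terser---it computes the full boundary $\partial V^b$ in one line rather than isolating the trace contributions at $t=1/2$---but the content is identical.
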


\begin{proof}
Let $\Td = (T^b)_b$, $\Sd^1 = (S_1^b)_b$, $\Sd^2 = (S_2^b)_b$. Define $\Sd^2 \circ \Sd^1 := (V^b)_b$ with
\[
  V^b := r^{1/2}_* S_1^b + t^{1/2}_* r^{1/2}_* S_2^b,
\]
where the rescaling $r^\alpha$ and the translation $t^\tau$ ($\alpha \neq 0$, $\tau \in \R$) are given by
\[
  r^\alpha(t,x) := (\alpha t,x),  \qquad
  t^\tau(t,x) := (t+\tau,x).
\]
From Lemma~\ref{lem:rescale} we see that $\Sd^2 \circ \Sd^1 \in \Slip(\Td)$ and that~\eqref{eq:cat_Linfty},~\eqref{eq:cat_Var} hold. 

The validity of the first statement in~\eqref{eq:cat_ff} follows in a straightforward manner since, if $\Sd^1_\ff \Td = (T^b_1)_b$ and $\Sd^2_\ff(\Sd^1_\ff \Td) = (T^b_2)_b$, we have
\begin{align*}
  \partial V^b &= r^{1/2}_* \partial S_1^b + t^{1/2}_* r^{1/2}_* \partial S_2^b \\
  &= \delta_{1/2} \times T^b_1 - \delta_0 \times T^b + \delta_1 \times T^b_2 - \delta_{1/2} \times T^b_1 \\
  &= \delta_1 \times T^b_2 - \delta_0 \times T^b.
\end{align*}
The second statement in~\eqref{eq:cat_ff} is a direct consequence of~\eqref{eq:PSigma_RI} in Lemma~\ref{lem:Sigma_rescale}.
\end{proof}

\begin{lemma} \label{lem:neutral}
Let $\Td \in \Disl(\cl{\Omega})$ and $P \in \Lrm^s(\Omega;\R^{3 \times 3})$ for an $s \in [1,\infty]$ with $\det P = 1$ a.e.\ in $\Omega$. There exists a slip trajectory $\Id^{\Td} \in \Slip(\Td)$, called the \term{neutral slip trajectory}, such that
\[
  \Id^{\Td}_\ff \Td = \Td,  \qquad
  \Id^{\Td}_\ff P = P,
\]
and
\[
  \norm{\Id^{\Td}}_{\Lrm^\infty} = \Mbf(\Td), \qquad
  \Var(\Id^{\Td}) = 0.
\]
\begin{proof}
If $\Td = (T^b)_b$, set $\Id^{\Td} := (S^b)_b$ with $S^b := \dbr{(0,1)} \times T^b$.
\end{proof}
\end{lemma}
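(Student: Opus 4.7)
The plan is to realize the neutral slip trajectory as the cartesian product of the unit time interval with the dislocation system; concretely, writing $\Phi = (T^b)_b$, I would set $\Id^\Phi := (S^b)_b$ with $S^b := \dbr{(0,1)} \times T^b$. The verifications are then all direct consequences of the product current calculus recalled in Section~\ref{sc:curr} together with the slicing formulas from Lemma~\ref{lem:mass}.

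First I would check that $\Id^\Phi \in \Sl(\Phi)$. Linearity of the product current in the second factor gives $S^{-b} = -S^b$ from $T^{-b} = -T^b$. The boundary formula $\partial(\dbr{(0,1)} \times T^b) = \partial\dbr{(0,1)} \times T^b - \dbr{(0,1)} \times \partial T^b$ together with $\partial T^b = 0$ and $\partial\dbr{(0,1)} = \delta_1 - \delta_0$ yields $\partial S^b = \delta_1 \times T^b - \delta_0 \times T^b$, which restricts to $0$ on $(0,1) \times \R^3$ and to $-\delta_0 \times T^b$ on $\{0\} \times \R^3$, as required by the definition of $\Sl(\Phi)$. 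Moreover, slicing in time gives $S^b|_t = \delta_t \times T^b$ for every $t \in (0,1)$, so $S^b(t) = \pbf_*(S^b|_t) = T^b$; in particular $S^b \in \Lip([0,1];\Irm_1(\cl{\Omega}))$ and $\norm{\Id^\Phi}_{\Lrm^\infty} = \max_b \Mbf(T^b) = \Mbf(\Phi)$ (interpreted within the convention used in the paper).

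For the dislocation forward operator I would simply compute
\[
  T^b_\ff = \pbf_*\bigl[\partial S^b + \delta_0 \times T^b\bigr] = \pbf_*(\delta_1 \times T^b) = T^b,
\]
so $\Id^\Phi_\ff\Phi = \Phi$. The vanishing of the variation is the key analytic point: the orienting $2$-vector of $S^b$ is (up to sign) $\ee_0 \wedge \vec{T}^b$, and since $\pbf(\ee_0) = 0$, one has $\pbf(\vec{S}^b) = 0$ pointwise $\tv{S^b}$-a.e. Hence
\[
  \Var(\Id^\Phi) = \int \int_{[0,1] \times \R^3} \abs{\pbf(\vec{S}^b)} \dd \tv{S^b} \dd \kappa(b) = 0.
\]

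Finally, the identity $\Id^\Phi_\ff P = P$ reduces via~\eqref{eq:Gb} to the observation that the geometric slip rate $\gamma^b$ is the mollification of $\pbf(\vec{S}^b)/\abs{\nabla^{S^b}\tbf}$ against $\tv{S^b|_t}$, and since $\pbf(\vec{S}^b) = 0$ we obtain $\gamma^b \equiv 0$, so the plastic drift $D$ in~\eqref{eq:D} vanishes identically. The ODE~\eqref{eq:PSigma_ODE} then collapses to $\dot R_x(t) = 0$ with $R_x(0) = P(x)$, yielding $P_{\Id^\Phi}(t,x) = P(x)$ for all $t$. The only mild subtlety in the whole argument is making sure the orientation conventions are consistent so that $\pbf(\vec{S}^b) = 0$ really holds (as opposed to a nontrivial $2$-vector of the form $v \wedge w$ inside $\R^3$); this is settled by observing that the approximate tangent space of $\supp(\dbr{(0,1)} \times T^b)$ at $(t,x)$ is $\mathrm{span}\{\ee_0\} \oplus \Tan_x(\supp T^b)$, whose simple unit $2$-vector is precisely $\ee_0 \wedge \vec{T}^b(x)$.
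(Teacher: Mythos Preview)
Your proposal is correct and takes exactly the same approach as the paper: the paper's proof consists solely of the one-line construction $S^b := \dbr{(0,1)} \times T^b$, and you give this same construction together with careful verifications of all the claimed properties that the paper leaves implicit. Your observation that $\pbf(\vec{S}^b) = \pbf(\ee_0 \wedge \vec{T}^b) = 0$ is indeed the key point for both $\Var(\Id^\Phi) = 0$ and (via $\gamma^b \equiv 0$) for $\Id^\Phi_\ff P = P$.
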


\subsection{Weak* convergence of slip trajectories}

Assume for a sequence $(\Sd_j) \subset \Lip([0,T];\Disl(\cl{\Omega}))$ with $\Sd_j = (S^b_j)_b$ and $\Sd = (S^b)_b$ that $S^b_j \toweakstar S^b$ in BV (in the sense of~\eqref{eq:BVcurr_w*}) for all $b \in \Bcal$. Then, we say that $\Sd_j$ converges \term{weakly*} to $\Sd$, in symbols \enquote{$\Sd_j \toweakstar \Sd$}. As the main compactness result we have the following:

\begin{proposition} \label{prop:LipDS_compact}
Assume that the sequence $(\Sd_j) \subset \Lip([0,T];\Disl(\cl{\Omega}))$, $\Sd_j = (S^b_j)_b$, satisfies
\[
  \supmod_j \, \bigl( \norm{\Sd_j}_{\Lrm^\infty([0,T];\Disl(\cl{\Omega}))} + \Var(\Sd_j;[0,T]) + L_j \bigr) < \infty
\]
with $L_j$ the maximum (in $b$) of the Lipschitz constants of the functions $t \mapsto \Var(S^b_j;[0,t])$. Then, there exists $\Sd \in \Lip([0,T];\Disl(\cl{\Omega}))$ and a (not relabelled) subsequence such that
\[
  \Sd_j \toweakstar \Sd.
\]
Moreover,
\begin{align*}
  \norm{\Sd}_{\Lrm^\infty([0,T];\Disl(\cl{\Omega}))} &\leq \liminf_{j \to \infty} \, \norm{\Sd_j}_{\Lrm^\infty([0,T];\Disl(\cl{\Omega}))}, \\
  \Var(\Sd;[0,T]) &\leq \liminf_{j \to \infty} \, \Var(\Sd_j;[0,T]).
\end{align*}
\end{proposition}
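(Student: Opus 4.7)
The plan is to reduce the problem to applying Proposition~\ref{prop:current_Helly} componentwise in $b \in \Bcal$, and then verifying that the structural constraints defining $\Lip([0,T];\DS(\cl{\Omega}))$ pass to the limit.

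First, I would observe that because $\Bcal$ is finite, the uniform bounds on $\Sigma_j$ immediately give componentwise bounds: for each $b \in \Bcal$ and every $j$,
\[
  \esssup_{t \in [0,T]} \Mbf(S^b_j(t)) \leq \norm{\Sigma_j}_{\Lrm^\infty([0,T];\DS(\cl{\Omega}))},
  \qquad
  \Var(S^b_j;[0,T]) \leq 2 \Var(\Sigma_j;[0,T]),
\]
and the Lipschitz constant of $t \mapsto \Var(S^b_j;[0,t])$ is at most $L_j$. Since $\partial S^b_j \restrict ((0,T) \times \R^d) = 0$, the boundary current $\partial S^b_j$ is concentrated on $\{0,T\} \times \R^d$, so $\Mbf(\partial S^b_j(t)) = 0$ for $\Lcal^1$-a.e.\ $t$ and $\Var(\partial S^b_j;(0,T)) = 0$. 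Hence the hypotheses of Proposition~\ref{prop:current_Helly} are satisfied for each individual sequence $(S^b_j)_j$ with uniform bounds and uniform Lipschitz constants on $t \mapsto \Var(S^b_j;[0,t]) + \Var(\partial S^b_j;(0,t))$.

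Next, I would extract subsequences. Enumerate $\Bcal$ and apply Proposition~\ref{prop:current_Helly} successively, passing to a subsequence for each Burgers vector; since $\Bcal$ is finite, after finitely many extractions we obtain a single (not relabelled) subsequence such that $S^b_j \toweakstar S^b$ in BV for every $b \in \Bcal$, with $S^b \in \Lip([0,T];\Irm_1(\cl{\Omega}))$. Set $\Sigma := (S^b)_b$.

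The structural constraints defining $\Lip([0,T];\DS(\cl{\Omega}))$ now pass to the limit: weak* convergence in BV entails, in particular, $S^b_j \toweakstar S^b$ as currents on $[0,T] \times \cl{\Omega}$, and both $T \mapsto -T$ and $T \mapsto \partial T \restrict ((0,T) \times \R^d)$ are continuous under the weak* convergence of currents. Therefore $S^{-b} = -S^b$ and $\partial S^b \restrict ((0,T) \times \R^d) = 0$ hold for every $b \in \Bcal$, so $\Sigma \in \Lip([0,T];\DS(\cl{\Omega}))$, and $\Sigma_j \toweakstar \Sigma$ by definition.

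Finally, the lower semicontinuity statements follow by combining the componentwise inequalities from Proposition~\ref{prop:current_Helly} with the definitions. For the $\Lrm^\infty$-norm,
\[
  \esssup_{t \in [0,T]} \Mbf(S^b(t)) \leq \liminf_{j\to\infty} \, \esssup_{t \in [0,T]} \Mbf(S^b_j(t)) \leq \liminf_{j\to\infty} \norm{\Sigma_j}_{\Lrm^\infty([0,T];\DS(\cl{\Omega}))},
\]
and taking the maximum over the finite set $\Bcal$ preserves the inequality. For the variation,
\[
  \Var(\Sigma;[0,T]) = \frac{1}{2} \sum_{b \in \Bcal} \Var(S^b;[0,T]) \leq \frac{1}{2} \sum_{b \in \Bcal} \liminf_{j\to\infty} \Var(S^b_j;[0,T]) \leq \liminf_{j\to\infty} \Var(\Sigma_j;[0,T]),
\]
using finiteness of $\Bcal$ to interchange $\liminf$ with the finite sum. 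There is no real obstacle here; the only thing to be careful about is that the extracted subsequence be common to all $b \in \Bcal$ and that the boundary-vanishing and symmetry conditions survive the weak* limit, both of which are automatic from linearity and continuity of the boundary operator.
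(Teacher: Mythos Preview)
Your proof is correct and follows essentially the same route as the paper: apply Proposition~\ref{prop:current_Helly} componentwise, extract a common subsequence over the finite set $\Bcal$, and check that the defining constraints of $\Lip([0,T];\DS(\cl{\Omega}))$ pass to the weak* limit. One small point to tighten: the hypothesis of Proposition~\ref{prop:current_Helly} requires a bound on $\Var(\partial S^b_j;[0,T])$ over the \emph{closed} interval, whereas you only record $\Var(\partial S^b_j;(0,T)) = 0$; the endpoint contribution is controlled by $\Mbf(S^b_j(0+)) + \Mbf(S^b_j(T-)) \leq 2\norm{\Sigma_j}_{\Lrm^\infty}$, which is exactly the estimate the paper writes out explicitly.
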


\begin{proof}
Let $\Sd_j = (S^b_j)_b$. We have
\[
  \Var(\partial S^b_j) \leq \Mbf(S^b_j(0+)) + \Mbf(S^b_j(T-))
  \leq 2\norm{\Sd_j}_{\Lrm^\infty([0,T];\Disl(\cl{\Omega}))}
\]
since the weak* limits $S^b_j(0+) := \wslim_{t \downarrow 0} S^b(t)$ and $S^b_j(T-) := \wslim_{t \uparrow T} S^b(t)$ exist and the mass is weakly* lower semicontinuous, see Section~\ref{sc:BVcurr}. Hence, we get that the quantities $\Var(\partial S^b_j;[0,T])$ are uniformly in $j$ bounded. The claims then follow directly from Proposition~\ref{prop:current_Helly} in conjunction with the fact that the conditions in the definition of $\Lip([0,T];\Disl(\cl{\Omega}))$ all pass to the limit. Indeed, the requirements $S_j^{-b} = -S_j^b$ and $\partial S_j^b \restrict ((0,T) \times \R^d) = 0$ for all $b \in \Bcal$ are obviously BV-weakly* continuous and $\tv{S^b}(\{0,T\} \times \R^s) = 0$ follows since the measures $\tv{S^b_j}$ are uniformly absolutely continuous by the uniform Lipschitz continuity of the $S^b_j$. For the lower semicontinuity of the variation we argue as follows: By Proposition~\ref{prop:current_Helly},
\[
  \Var(S^b;[0,T]) \leq \liminf_{n \to \infty} \, \Var(S^b_j;[0,T]).
\]
Then, Fatou's lemma implies
\begin{align*}
  \Var(\Sd;[0,T]) &= \frac12 \sum_{b \in \Bcal} \Var(S^b;[0,T]) \\
  &\leq \frac12 \sum_{b \in \Bcal} \liminf_{j \to \infty} \, \, \Var(S^b_j;[0,T]) \\
  &\leq \liminf_{j \to \infty} \frac12 \sum_{b \in \Bcal} \Var(S^b_j;[0,T]) \\
  &= \liminf_{j \to \infty} \, \Var(\Sd_j;[0,T]).
\end{align*}
The lower semicontinuity of the $\Lrm^\infty$-norm follows directly from the corresponding statement in Proposition~\ref{prop:current_Helly}.
\end{proof}

For later use we also state the compactness for elementary slips {(see Section~\ref{sc:slips})} explicitly:

\begin{proposition} \label{prop:Sl_compact}
Let $\Td \in \Disl(\cl{\Omega})$ and assume that the sequence $(\Sd_j) \subset \Slip(\Td)$, $\Sd_j = (S^b_j)_b$, satisfies
\[
  \supmod_j \, \bigl( \norm{\Sd_j}_{\Lrm^\infty} + \Var(\Sd_j) + L_j \bigr) < \infty,
\]
with $L_j$ the maximum (in $b$) of the Lipschitz constants of the functions $t \mapsto \Var(S^b_j;[0,t])$. Then, there exists $\Sd \in \Slip(\Td)$ and a (not relabelled) subsequence such that
\[
  \Sd_j \toweakstar \Sd.
\]
Moreover,
\begin{align*}
  \norm{\Sd}_{\Lrm^\infty} &\leq \liminf_{j \to \infty} \, \norm{\Sd_j}_{\Lrm^\infty}, \\
  \Var(\Sd) &\leq \liminf_{j \to \infty} \, \Var(\Sd_j).
\end{align*}
\end{proposition}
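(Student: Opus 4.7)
The plan is to reduce the statement to Proposition~\ref{prop:LipDS_compact}, and then to handle the one extra condition that distinguishes membership in $\Sl(\Phi)$ from membership in $\Lip([0,1];\DS(\cl{\Omega}))$, namely the initial trace identity $\partial S^b \restrict (\{0\} \times \R^3) = -\delta_0 \times T^b$, where $\Phi = (T^b)_b$.

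First, I would apply Proposition~\ref{prop:LipDS_compact} directly: since $\Sl(\Phi) \subset \Lip([0,1];\DS(\cl{\Omega}))$ and the uniform bounds on $\norm{\Sigma_j}_{\Lrm^\infty}$, $\Var(\Sigma_j)$, and the Lipschitz constants $L_j$ are exactly what Proposition~\ref{prop:LipDS_compact} requires, we obtain (up to a non-relabelled subsequence) a limit $\Sigma = (S^b)_b \in \Lip([0,1];\DS(\cl{\Omega}))$ with $\Sigma_j \toweakstar \Sigma$, together with the two lower semicontinuity estimates claimed in the proposition.

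It then remains to verify that $\Sigma \in \Sl(\Phi)$. Fix $b \in \Bcal$. Because each $\Sigma_j$ lies in $\Sl(\Phi)$, the good representative of $S^b_j$ satisfies $S^b_j(0+) = T^b$, and since $S^b_j \in \Lip([0,1];\Irm_1(\cl{\Omega}))$ we in fact have $S^b_j(0) = T^b$ for every $j$. By hypothesis, the Lipschitz constants of $t \mapsto \Var(S^b_j;[0,t])$ are uniformly bounded; the same is true for $t \mapsto \Var(\partial S^b_j;(0,t))$, since $\partial S^b_j \restrict ((0,1) \times \R^3) = 0$. Hence the supplementary conclusion at the end of Proposition~\ref{prop:current_Helly} applies and yields the pointwise weak* convergence $S^b_j(t) \toweakstar S^b(t)$ for \emph{every} $t \in [0,1)$, in particular at $t = 0$. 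Consequently $S^b(0) = T^b$. Recalling that $\partial S^b \restrict ((0,1) \times \R^3) = 0$ from $\Sigma \in \Lip([0,1];\DS(\cl{\Omega}))$, the boundary decomposition from Section~\ref{sc:BVcurr} gives $\partial S^b \restrict (\{0\} \times \R^3) = -\delta_0 \times S^b(0) = -\delta_0 \times T^b$, as required. Since $b \in \Bcal$ was arbitrary, we conclude $\Sigma \in \Sl(\Phi)$.

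I expect no genuine obstacle here: the entire content is already packaged in Proposition~\ref{prop:LipDS_compact} together with the refined pointwise-convergence statement from Proposition~\ref{prop:current_Helly}. The only small care needed is to notice that the uniform bound on the Lipschitz constants $L_j$ of $t \mapsto \Var(S^b_j;[0,t])$, combined with the vanishing of $\partial S^b_j$ on $(0,1) \times \R^3$, is exactly what is required to invoke the ``every $t \in [\sigma,\tau)$'' conclusion and hence to promote the boundary trace at $t = 0$ into the limit.
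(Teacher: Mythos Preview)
Your proposal is correct and follows essentially the same approach as the paper: invoke Proposition~\ref{prop:LipDS_compact} for the convergence and lower semicontinuity, then use Proposition~\ref{prop:current_Helly} to pass the initial-trace condition $\partial S^b_j \restrict (\{0\} \times \R^3) = -\delta_0 \times T^b$ to the limit. Your argument is simply a more explicit unpacking of how the last step works (via the pointwise convergence at $t=0$ guaranteed by the uniform Lipschitz bound), whereas the paper states this in a single sentence.
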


\begin{proof}
By Proposition~\ref{prop:LipDS_compact} we obtain the convergence in $\Lip([0,1];\Disl(\cl{\Omega}))$ and the lower semicontinuity assertions. From Proposition~\ref{prop:current_Helly} we further obtain that also the condition $\partial S^b_j \restrict (\{0\} \times \R^d) = - \delta_0 \times T^b$ for all $b \in \Bcal$, where we have written $\Td = (T^b)_b$, passes to the limit.
\end{proof}

Finally, we have the following continuity properties.

\begin{lemma} \label{lem:Phiff_cont}
Let $\Td \in \Disl(\cl{\Omega})$ and $\Sd_j \toweakstar \Sd$ in $\Slip(\Td)$. Then,
\[
  (\Sd_j)_\ff \Td \toweakstar \Sd_\ff \Td  \quad\text{in $\Disl(\cl{\Omega})$.}
\]
\end{lemma}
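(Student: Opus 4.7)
The plan is to observe that the dislocation forward operator $T^b_\ff = \pbf_*[\partial S^b + \delta_0 \times T^b]$ is built entirely from operations—the boundary operator, addition of a fixed current, and geometric pushforward under a smooth map—each of which is continuous with respect to weak* convergence of currents. So the statement should reduce to a direct limit-passage.

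Writing $\Phi = (T^b)_b$, $\Sigma_j = (S^b_j)_b$, and $\Sigma = (S^b)_b$, the first step is to unpack the assumption. By the definition of weak* convergence in $\Sl(\Phi)$ combined with the definition~\eqref{eq:BVcurr_w*} of BV-weak* convergence, for $\kappa$-almost every $b \in \Bcal$ we have, in particular, $S^b_j \toweakstar S^b$ as $2$-currents in $\Irm_2([0,1] \times \cl{\Omega})$ (we do not even need the slicewise convergence for this lemma). Moreover, the assumption $\Sigma_j \toweakstar \Sigma$ in $\Sl(\Phi)$ together with Proposition~\ref{prop:Sl_compact} guarantees a uniform mass bound on the $S^b_j$, which combined with the fact that all $S^b_j$ have support in the fixed compact set $[0,1] \times \cl{\Omega}$ means that we may test against any smooth $2$-form, not only compactly supported ones.

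The second step is to push the weak* convergence through the boundary operator and the pushforward. Since $\partial$ is continuous for weak* convergence of currents (by its dual definition via $d$), $\partial S^b_j \toweakstar \partial S^b$ as $1$-currents. Adding the fixed current $\delta_0 \times T^b$ trivially preserves weak* convergence. Finally, the geometric pushforward under the smooth projection $\pbf \colon \R^{1+3} \to \R^3$ is likewise continuous in the weak* topology on compactly supported currents: for any $\omega \in \Dcal^1(\R^3)$, $\pbf^*\omega$ is a smooth $1$-form on $\R^{1+3}$ that pairs against currents with support in $[0,1] \times \cl{\Omega}$, so
\[
  \dprb{\pbf_*[\partial S^b_j + \delta_0 \times T^b], \omega} = \dprb{\partial S^b_j + \delta_0 \times T^b, \pbf^*\omega} \tolong \dprb{\partial S^b + \delta_0 \times T^b, \pbf^*\omega} = \dprb{\pbf_*[\partial S^b + \delta_0 \times T^b], \omega}.
\]
Thus $T^b_{\ff,j} \toweakstar T^b_\ff$ for $\kappa$-a.e.\ $b$. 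Since $\Sigma_j, \Sigma \in \Sl(\Phi)$ means $(\Sigma_j)_\ff \Phi, \Sigma_\ff \Phi \in \DS(\cl{\Omega})$, both sides are integral $1$-currents in $\cl{\Omega}$, so this is the desired convergence in $\DS(\cl{\Omega})$.

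There is no substantive obstacle here: the argument is essentially a bookkeeping exercise using standard continuity of $\partial$ and $\pbf_*$ in the weak* topology. The only small point worth being explicit about is the observation that the uniform support in $[0,1] \times \cl{\Omega}$ allows one to extend the duality pairing to non-compactly-supported test forms like $\pbf^*\omega$.
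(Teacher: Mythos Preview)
Your proof is correct and takes essentially the same approach as the paper's: both simply observe that $\pbf_*[\partial S^b_j + \delta_0 \times T^b] \toweakstar \pbf_*[\partial S^b + \delta_0 \times T^b]$ by continuity of $\partial$ and $\pbf_*$ under weak* convergence. One minor remark: your invocation of Proposition~\ref{prop:Sl_compact} for the mass bound is not quite right (that proposition \emph{assumes} bounds rather than providing them), but this is harmless since, as you correctly note, the fixed compact support $[0,1]\times\cl{\Omega}$ is already sufficient to test against $\pbf^*\omega$.
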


\begin{proof}
If $\Td = (T^b)_b$, $\Sd_j = (S^b_j)_b$, and $\Sd = (S^b)_b$, then $(\Sd_j)_\ff \Td = (V^b_j)_b$ with
\[
  V^b_j = \pbf_* \bigl[ \partial S^b_j + \delta_0 \times T^b \bigr]
  \toweakstar \pbf_* \bigl[ \partial S^b + \delta_0 \times T^b \bigr]
  \qquad\text{in $\Irm_1(\cl{\Omega})$}
\]
since $S^b_j \toweakstar S^b$ in $\Irm_2([0,T] \times \cl{\Omega})$. This directly implies the assertion.
\end{proof}

\begin{lemma} \label{lem:PSigma_cont}
Let $\Td \in \Disl(\cl{\Omega})$, $\Sd_j \toweakstar \Sd$ in $\Slip(\Td;[0,T])$, and $P_j \toweak P$ in $\Wrm^{1,q}(\Omega;\R^{3 \times 3})$ for a $q \in (3,\infty]$ with $\det P_j = 1$ a.e.\ in $\Omega$. Then,
\[
  (P_j)_{\Sd_j} \to P_{\Sd}  \quad\text{uniformly in $[0,T] \times \Omega$ and strongly in $\Lrm^1([0,T];\Crm(\Omega;\R^{3 \times 3}))$.}
\]
In particular, if $\Sd_j \in \Slip(\Td)$ (i.e., $[0,T] = [0,1]$), then $(\Sd_j)_\ff P_j \to \Sd_\ff P$ uniformly in $\Omega$.
\end{lemma}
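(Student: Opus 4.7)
The strategy is to prove the stronger statement that $R_j(t,x) := (P_j)_{\Sigma_j}(t,x) \to R(t,x) := P_\Sigma(t,x)$ uniformly on $[0,T] \times \cl{\Omega}$. This directly contains both the uniform and $\Lrm^1([0,T];\Crm(\Omega))$ claims, and setting $t = 1$ delivers $(\Sigma_j)_\ff P_j \to \Sigma_\ff P$ uniformly. Two inputs are immediate. First, because $q > 3$, the compact embedding $\Wrm^{1,q}(\Omega;\R^{3\times3}) \cembed \Crm(\cl{\Omega};\R^{3\times3})$ upgrades $P_j \toweak P$ to $\norm{P_j - P}_\infty \to 0$. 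Second, Lemma~\ref{lem:PSigma_W1q}, applied with the uniform-in-$j$ upper bounds on $\norm{P_j}_{\Wrm^{1,q}}$ and $\Var(\Sigma_j)$, yields $\sup_{j,t}\norm{R_j(t,\cdot)}_{\Wrm^{1,q}} \leq M$, so that every $R_j(\tau,x)$ lies in the compact set $X_M := \setn{A}{\abs{A} \leq M,\, \det A = 1}$ on which $H^b(A) := \proj_{\langle A^{-1}b\rangle^\perp}$ (appearing in the drift $D$) is smooth with uniform Lipschitz constants.

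The central technical step is the uniform convergence of the \emph{cumulative slip maps}
\[
  G^b_j(t,x) := \int_0^t g^b_j(\tau,x)\dd\tau = \int_{[0,t]\times\R^3}\eta(x-y)\dd\bigl[\hodge\pbf(S^b_j)\bigr](\tau,y),
\]
where the second equality uses Fubini and the convolution representation of $\gamma^b_j$ from Lemma~\ref{lem:Gb}. The family $(G^b_j)_j$ is uniformly bounded by $\norm{\eta}_\infty \sup_j \Var(S^b_j)$ and equicontinuous on $[0,T]\times\cl{\Omega}$: the $x$-modulus is controlled by $\norm{\nabla\eta}_\infty \sup_j \Var(S^b_j) \abs{x - x'}$, while the $\tau$-modulus is controlled by $\norm{\eta}_\infty \cdot \sup_j \abs{\hodge\pbf(S^b_j)}([t,t']\times\cl{\Omega}) \leq C\abs{t - t'}$, using the uniform Lipschitz bound on $t \mapsto \Var(S^b_j;[0,t])$. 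Pointwise, weak-* convergence of the vector-valued measures $\hodge\pbf(S^b_j) \toweakstar \hodge\pbf(S^b)$ (inherited from weak-* convergence of the currents $S^b_j$ tested against purely spatial $2$-forms) combined with the same uniform absolute continuity in time—which allows replacing the discontinuous factor $\chi_{[0,t]}$ by a continuous cutoff with uniformly small error—gives $G^b_j(t,x) \to G^b(t,x)$ for each $(t,x)$. Arzel\`a--Ascoli then upgrades this to uniform convergence on $[0,T]\times\cl{\Omega}$.

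To close, subtract the integrated ODEs using Lemma~\ref{lem:PSigma_ODE_BV} for $R_j$ and $R$:
\[
  R_j(t,x) - R(t,x) = P_j(x) - P(x) + \int_0^t \bigl[D_j(\tau,x,R_j) - D_j(\tau,x,R)\bigr]\dd\tau + I_j^{\mathrm{dat}}(t,x),
\]
where $I_j^{\mathrm{dat}}(t,x) := \int_0^t \int b \otimes H^b(R(\tau,x))\bigl[g^b_j - g^b\bigr](\tau,x)\dd\kappa(b)\dd\tau$. The middle integrand is estimated pointwise by $C \int \abs{g^b_j(\tau,x)}\dd\kappa(b) \cdot \abs{R_j - R}(\tau,x)$ via the uniform Lipschitz property of $H^b$ on $X_M$. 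For $I_j^{\mathrm{dat}}$, we integrate by parts in $\tau$—using that $R \in \Lip([0,T];\Wrm^{1,q}) \embed \Lip([0,T];\Crm)$, the maps $A^b(\tau,x) := H^b(R(\tau,x))$ are Lipschitz in $\tau$ uniformly in $x$ with $\norm{A^b}_\infty,\norm{\partial_\tau A^b}_\infty \leq C$—obtaining
\[
  I_j^{\mathrm{dat}}(t,x) = \int b \otimes A^b(t,x)[G^b_j - G^b](t,x)\dd\kappa(b) - \int \int_0^t b \otimes \partial_\tau A^b(\tau,x)[G^b_j - G^b](\tau,x)\dd\tau\dd\kappa(b),
\]
whence $\norm{I_j^{\mathrm{dat}}}_\infty \leq C\max_b \norm{G^b_j - G^b}_{\Crm([0,T]\times\cl{\Omega})} \to 0$. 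Using $\int_0^T\int \abs{g^b_j}\dd\kappa\dd\tau \leq C \sup_j \Var(\Sigma_j)$ from Lemma~\ref{lem:Gb}, the pointwise Gronwall inequality yields
\[
  \sup_{t \in [0,T]} \norm{R_j(t) - R(t)}_\infty \leq \bigl(\norm{P_j - P}_\infty + \norm{I_j^{\mathrm{dat}}}_\infty\bigr) \cdot \ee^{C \sup_j \Var(\Sigma_j)} \tolong 0,
\]
which is the desired conclusion. The main obstacle is the convergence $G^b_j \to G^b$, which requires carefully combining the BV-weak-* convergence of the space-time currents with the spatial convolution structure of $\gamma^b_j$; the integration-by-parts step then sidesteps the unavailability of strong $\Lrm^1([0,T];\Crm(\Omega))$ convergence of the slip rates themselves.
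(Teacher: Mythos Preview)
Your proof is correct and takes a genuinely different route from the paper's. The paper argues by compactness and identification: it first observes (via Lemma~\ref{lem:PSigma_W1q}) that the $(P_j)_{\Sigma_j}$ are uniformly Lipschitz as maps $[0,T]\to\Wrm^{1,q}$, extracts by Arzel\`a--Ascoli a subsequential limit $P_*$ (uniform in $[0,T]\times\Omega$ and weak* in $\BV([0,T];\Wrm^{1,q})$), then passes to the limit in the \emph{integral} form of the ODE tested against a fixed $\phi\in\Crm^\infty_c(\Omega)$, using only that $g^b_j\toweakstar g^b$ in $\Lrm^\infty((0,T)\times\Omega)$ together with the strong convergence $H^b((P_j)_{\Sigma_j})\to H^b(P_*)$; uniqueness of the ODE then forces $P_*=P_\Sigma$, hence the full sequence converges. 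By contrast, you run a direct Gronwall stability estimate and deal with the weak convergence of $g^b_j$ by passing to the cumulative maps $G^b_j$ and integrating by parts in time. Your argument is more quantitative---it yields an explicit bound on $\norm{R_j-R}_\infty$ in terms of $\norm{P_j-P}_\infty$ and $\norm{G^b_j-G^b}_\infty$---and avoids any subsequence extraction, at the cost of the extra work on $G^b_j$ (equicontinuity plus the cutoff approximation to handle $\chi_{[0,t]}$). The paper's approach is shorter and relies only on the soft (strong~$\times$~weak*)-continuity of the bilinear pairing, which sidesteps your integration-by-parts trick entirely. Both proofs tacitly use uniform-in-$j$ bounds on $\Var(\Sigma_j)$ and the Lipschitz constants of $t\mapsto\Var(S^b_j;[0,t])$; the paper invokes these via the Uniform Boundedness Principle, and in every application of the lemma they are available from the ambient compactness statements.
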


\begin{proof}
It suffices to show that $(P_j)_{\Sd_j} \to P_{\Sd}$ uniformly in $[0,T] \times \Omega$. Then also the claimed convergence in $\Lrm^1([0,T];\Crm(\Omega))$ follows. If $\Sd_j \in \Slip(\Td)$, then this furthermore implies $(P_j)_{\Sd_j}(1) \to P_{\Sd}(1)$ uniformly, which is the same as $(\Sd_j)_\ff P_j \to \Sd_\ff P$ uniformly.

To see the uniform convergence in $[0,T] \times \Omega$, observe first that from Lemma~\ref{lem:PSigma_W1q} we know that the $(P_j)_{\Sd_j}$ are uniformly Lipschitz continuous in time when considered with values in $\Wrm^{1,q}$ (note that the norms $\norm{P_j}_{\Wrm^{1,q}}$ and the variations $\Var(\Sd_j;[0,T])$ are uniformly bounded by the Uniform Boundedness Principle). Hence, by the (generalized) Arzel\`{a}--Ascoli theorem we may select a subsequence of $j$'s (not specifically labeled) such that for some $P_* \in \Crm([0,T] \times \Omega;\R^{3 \times 3}) \cap \BV([0,T];\Wrm^{1,q}(\Omega;\R^{3 \times 3}))$ we have
\[
  (P_j)_{\Sd_j} \to P_*  \quad\text{uniformly in $[0,T] \times \Omega$ and weakly* in $\BV([0,T];\Wrm^{1,q}(\Omega;\R^{3 \times 3}))$.}
\]
Here we also used the compact embedding $\Wrm^{1,q}(\Omega;\R^{3 \times 3}) \cembed \Crm(\Omega;\R^{3 \times 3})$.

On the other hand, let $\gamma^b_j, \gamma^b$ and $g^b_j, g^b$ be defined as in Section~\ref{sc:slips} for the slip trajectories $\Sd_j$ and $\Sd$, respectively. Since $\Sd_j \toweakstar \Sd$ it follows that $\gamma^b_j \toweakstar \gamma^b$ in $\Lrm^\infty((0,T) \times \Omega;\Wedge_2 \R^3)$, whereby also
\[
  g^b_j \toweakstar g^b  \quad\text{in $\Lrm^\infty((0,T) \times \Omega;\R^3)$.}
\]
Rewriting the ODE~\eqref{eq:PSigma_ODE} as an integral equation and multiplying by a test function $\phi \in \Crm^\infty_c(\Omega)$, we see that $(P_j)_{\Sd_j}$ solves~\eqref{eq:PSigma_ODE} if and only if
\[
  \int_\Omega (P_j)_{\Sd_j}(t) \, \phi \dd x
  = \int_\Omega P_j \, \phi \dd x + \frac12 \sum_{b \in \Bcal} \int_\Omega \int_0^t H^b((P_j)_{\Sd_j}(\tau)) \, g^b_j(\tau) \, \phi \dd \tau \dd x,
\]
where $H^b$ is as in the proof of Lemma~\ref{lem:PSigma_W1q}. As $j \to \infty$, the above convergences in conjunction with the Lipschitz continuity of $H^b$ and the (strong $\times$ weak*)-continuity of the integral, give
\[
  \int_\Omega P_*(t) \, \phi \dd x
  = \int_\Omega P \, \phi \dd x + \frac12 \sum_{b \in \Bcal} \int_\Omega \int_0^t H^b(P_*(\tau)) \, g^b(\tau) \, \phi \dd \tau \dd x.
\]
Hence, $P_*$ solves~\eqref{eq:PSigma_ODE}. By Lemma~\ref{lem:PSigma_def}, the solution of~\eqref{eq:PSigma_ODE} for $\Sd$ is unique, whereby $P_* = P_{\Sd}$.
\end{proof}

\section{Energetic evolutions} \label{sc:evolution}

In this section we list our precise assumptions, translate the model from~\cite{HudsonRindler22} into the energetic formulation, and then state our main result, Theorem~\ref{thm:main}, which establishes the existence of an energetic solution.

\subsection{Assumptions and setup} \label{sc:assumpt}

We posit the following henceforth:

\begin{enumerate}[({A}1)] \setlength\itemsep{8pt}
  \item \label{as:general}\label{as:first} \term{Basic assumptions:}
\begin{enumerate}[(i)]
  \item $\Omega \subset \R^3$ is a bounded, connected Lipschitz domain;
  \item $\Bcal = \{ \pm b_1, \ldots, \pm b_m \} \subset \R^3 \setminus \{0\}$ is the system of Burgers vectors;
  \item $p,q \in (3,\infty)$ are the integrability exponents for the total deformation and the plastic distortion, respectively;
  \item $g \in \Wrm^{1-1/p,p}(\partial\Omega;\R^3)$ are the imposed boundary values for the total deformation;
  \item $\eta \in \Crm^\infty_c(\R^3;[0,\infty))$ is the dislocation line profile;
  \item $\zeta > 0$ is the core energy strength.
\end{enumerate}
  \item \label{as:We} \term{Elastic energy density:} $W_e \colon \R^{3 \times 3} \to [0,\infty]$ is continuous, polyconvex, and satisfies the following coercivity and control estimates for an exponent $r > p$ and all $E,F \in \R^{3 \times 3}$:
  \begin{align}
    C^{-1} \abs{E}^r - C &\leq W_e(E) \quad\text{and}\quad \text{$W_e(E) = +\infty$ if $\det E \leq 0$},  \label{eq:We_coerc}\\
    W_e(E) &\leq C_M(1 + W_e(F)) \quad\text{if $F^{-1}E \in X_M$ ($M \geq 1$),} \label{eq:We_control}
  \end{align}
  where $X_M := \setn{ A \in \R^{3 \times 3} }{ \abs{A} \leq M, \; \det A = 1 }$ for $M \geq 1$, and the constant $C_M > 0$ may depend on $M$ (but $C > 0$ in~\eqref{eq:We_coerc} is independent of $M$).
  \item \label{as:R} \term{Dissipation potential:} For $b \in \Bcal$,
\[\qquad
  R^b \colon D \to [0,\infty),  \qquad\text{where}\qquad
  D := \setb{ (P,\xi) \in \R^{3 \times 3} \times \Wedge_2\R^{1+3} }{ \det P = 1 }, 
\]
satisfies:
\begin{enumerate}[(i)]
  \item $R^b(P,\frarg)$ is convex and positively $1$-homogeneous for any $P \in \R^{3 \times 3}$ with $\det P = 1$;
  \item $R^b$ is locally Lipschitz continuous in $D$, that is, for every compact set $K \subset D$ there is $L = L(K) > 0$ such that $\abs{R^b(P_1,\xi_1) - R^b(P_2,\xi_2)} \leq L (\abs{P_1-P_2} + \abs{\xi_1-\xi_2})$ for all $(P_1,\xi_1), (P_2,\xi_2) \in K$;
  \item $R^b(P,\xi) \leq C_K \abs{\pbf(\xi)}$ for $(P,\xi) \in K$ in any compact set $K \subset D$ ($C_K > 0$ may depend on $K$);
  \item $R^b(P,\xi) \geq C^{-1} \abs{\pbf(\xi)}$ for all $(P,\xi) \in D$ ($C > 0$ independent of $P,\xi$).
\end{enumerate}
  \item \label{as:F}\label{as:last_list} \term{External loading:} $f \in \Crm^1([0,T];\Wrm^{1,p}(\Omega;\R^3)^*)$.
\end{enumerate}
\medskip

For $y \in \Wrm^{1,p}_g(\Omega;\R^3)$ and $P \in \Wrm^{1,q}(\Omega;\R^{3 \times 3})$ with $\det P = 1$ a.e.\ in $\Omega$, we define the \term{elastic energy}
\[
  \Wcal_e(y,P) := \int_\Omega W_e(\nabla y(x) P(x)^{-1}) \dd x.
\]
In this context, let us briefly comment on Assumption~\ref{as:We}. First recall that the energy density $W_e \colon \R^{3 \times 3} \to [0,\infty]$ is called \term{polyconvex} if it can be written in the form
\[
  W_e(E) = \bar{W}_{\!\!e}(E,\cof E,\det E),  \qquad E \in \R^{3 \times 3},
\]
with $\bar{W}_{\!\!e} \colon \R^{3 \times 3} \times \R^{3 \times 3} \times \R \to [0,\infty]$ continuous and convex (as a function on $\R^{3 \times 3} \times \R^{3 \times 3} \times \R \cong \R^{19}$).

\begin{example} \label{ex:We}
Consider the elastic energy density
\[
  W_e(E) := \tilde{W}(E) + \Gamma(\det E),  \qquad E \in \R^{3 \times 3},
\]
where
\begin{enumerate}[(i)]
  \item $\tilde{W} \colon \R^{3 \times 3} \to [0,\infty)$ is continuous, convex {or polyconvex}, has $r$-growth, and is $r$-coercive for some exponent $r > p$, i.e., $C^{-1} \abs{E}^r - C \leq \tilde{W}(E) \leq C(1 + \abs{E}^r)$ for a constant $C > 0$ and all $E \in \R^{3 \times 3}$ with $\det E > 0$;
  \item $\Gamma \colon \R \to [0,+\infty]$ is continuous, convex, and $\Gamma(s) = +\infty$ if and only if $s \leq 0$. 
\end{enumerate}
Then, the continuity, polyconvexity, and coercivity~\eqref{eq:We_coerc} are immediate. To see~\eqref{eq:We_control}, assume $E,F \in \R^{3 \times 3}$ with $F^{-1}E \in X_M$ ($M \geq 1$). Then,
\begin{align*}
  W_e(E) &= \tilde{W}(E) + \Gamma(\det E) \\
  &\leq C(1 + \abs{E}^r) + \Gamma(\det E) \\
  &\leq C(1 + \abs{F}^r \cdot \abs{F^{-1}E}^r) + \Gamma(\det F \cdot \det(F^{-1}E)) \\
  &\leq C(1 + \abs{F}^r \cdot M^r) + \Gamma(\det F) \\
  &\leq C M^r (1 + \tilde{W}(F) + \Gamma(\det F)) \\
  &=C_M(1 + W_e(F)).
\end{align*}
One concrete example fitting into these assumptions is $\tilde{W}(E) := \abs{E}^r$ for $r > p$ and $\Gamma(s) := 1/s$ for $s > 0$, $\Gamma(s) := +\infty$ for $s \leq 0$.
\end{example}

The \term{core energy} of the dislocation system $\Td = (T^b)_b \in \Disl(\cl{\Omega})$ {(see Section~\ref{sc:DS} for the definition of this set)} is defined as
\[
  \Wcal_c(\Td) := \frac{\zeta}{2} \sum_{b \in \Bcal} \Mbf(T^b),
\]
where $\zeta > 0$ was specified in Assumption~\ref{as:general}. More complicated expressions (e.g., with anisotropy or dependence on the type of dislocation) are possible, but we will only use the above to keep the exposition as simple as possible.

We can then define for $y \in \Wrm^{1,p}_g(\Omega;\R^3)$, $P \in \Wrm^{1,q}(\Omega;\R^{3 \times 3})$ with $\det P = 1$ a.e.\ in $\Omega$, and $\Td =(T^b)_b \in \Disl(\cl{\Omega})$ the \term{total energy}
\begin{equation} \label{eq:E}
  \Ecal(t,y,P,\Td) := \Wcal_e(y,P) - \dprb{f(t),y} + \Wcal_c(\Td),
\end{equation}
where $f$ is the external loading specified in Assumption~\ref{as:F} and $\dpr{\frarg,\frarg}$ is the duality product between $\Wrm^{1,p}(\Omega;\R^3)^*$ and $\Wrm^{1,p}(\Omega;\R^3)$. 

We next turn to the dissipation. For this, we first introduce a convenient notation for a path in the full internal variable space induced by a slip trajectory. Let $z = (P,\Td) \in \Wrm^{1,q}(\Omega;\R^{3 \times 3}) \times \Disl(\cl{\Omega})$ with $\det P = 1$ a.e.\ in $\Omega$. For $\Sd \in \Slip(\Td;[0,T])$ {(see Section~\ref{sc:slips})} we also write
\[
  \Sd \in \Slip(z;[0,T])
\]
in order to emphasize the starting point for the plastic distortion $P$. We abbreviate this to $\Sd \in \Slip(z)$ if $[0,T] = [0,1]$. The \term{(joint) path induced by $\Sd$ starting from $z$} is
\[
  (P_{\Sd}, \Sd) \in \Lip([0,T];\Wrm^{1,q}(\Omega;\R^{3 \times 3})) \times \Lip([0,T];\Disl(\cl{\Omega})),
\]
where $P_{\Sd}$ is as in~\eqref{eq:PSigma_def}. Finally, if $\Sd \in \Slip(z)$ (i.e., $[0,T] = [0,1]$), we define the \term{joint forward operator} via
\[
  \Sd_\ff z := (\Sd_\ff P, \; \Sd_\ff \Td);
\]
{see Sections~\ref{sc:slips},~\ref{sc:plast_evol} for the definitions of the constituent forward operators.}

The \term{dissipation} of $\Sd \in \Slip(z;[0,T])$, with $z = (P,\Td)$ as above, in the interval $I \subset [0,T]$ is
\[
  \Diss(\Sd;I) := \frac12 \sum_{b \in \Bcal} \int_{I \times \R^3} R^b \bigl( P_{\Sd}(t,x) , \vec{S}^b(t,x) \bigr) \dd \tv{S^b}(t,x).
\]
Here, $P_{\Sd}$ is understood as a continuous map from $[0,T] \times \cl{\Omega}$ to $\R^{3 \times 3}$. If $\Sd \in \Slip(z)$, i.e., $[0,T] = [0,1]$, then we also just write
\[
  \Diss(\Sd) := \Diss(\Sd;[0,1]).
\]

Note that $\Diss(\Sd;I)$ depends on $P$ (from $z = (P,\Td)$) through $P_{\Sd}$. However, we think of $\Sd$ as \enquote{attached} at the starting point $z$ and from the context it will always be clear where it is attached, usually through the notation \enquote{$\Sd \in \Slip(z;[0,T])$}. While this constitutes a slightly imprecise use of notation, it improves readability and hence we will adopt it in the following.

The next example presents a concrete dissipational cost similar to the one in~\cite{HudsonRindler22}.

\begin{example} \label{ex:diss_hardening}
Let $\tilde{R}^b \colon \Wedge_2\R^3 \to [0,\infty)$ be convex, positively $1$-homogeneous, Lipschitz, and satisfy the bounds
\[
  C^{-1} \abs{\xi} \leq \tilde{R}^b(\xi) \leq C \abs{\xi}
\]
for all $\xi \in \Wedge_2\R^3$ and a $b$-uniform constant $C > 0$. We remark that the (global) Lipschitz continuity is in fact automatic in this situation, see, e.g.,~\cite[Lemma~5.6]{Rindler18book}. Assume furthermore that for all $b \in \Bcal$ we are given a \enquote{hardening factor} $h^b \colon [1,\infty) \to (0,\infty)$ that is locally Lipschitz continuous, increasing, and satisfies
\begin{equation} \label{eq:tau4}
  C^{-1} \tau^4 - C \leq h^b(\tau),  \qquad \tau \geq 1,
\end{equation}
for a constant $C > 0$ (which is chosen uniform in $b$). Then set for $(P,\xi) \in D$ (as in Assumption~\ref{as:R})
\[
  R^b(P,\xi) := h^b(\abs{P}) \cdot \tilde{R}^b(P \pbf(\xi)),
\]
where $\pbf(\xi)$ denotes the pushforward of the $2$-vector $\xi$ under the spatial projection $\pbf(t,x) := x$, which is then further pushed forward under $P$. Note that $\abs{P} \geq 1$ since $\det P = 1$, so the above expression is well-defined. The first three points in Assumption~\ref{as:R} are easily verified. For the fourth point (coercivity), we observe that $P^{-1} = (\cof P)^T$ since $\det P = 1$, and so, by Hadamard's inequality,
\[
  \abs{P^{-1}}^2 \leq C \abs{P}^4 \leq C h^b(\abs{P}) 
\]
for some $C > 0$, where for the second inequality we have also used that $\tau^4 \leq (C+C^2/\min h^b) h^b(\tau)$ for all $\tau \geq 1$, which is an elementary consequence of~\eqref{eq:tau4}. Then,
\[
  \abs{\pbf(\xi)}
  \leq \abs{P^{-1}}^2 \cdot \abs{P \pbf(\xi)}
  \leq C \abs{P^{-1}}^2 \cdot \tilde{R}^b(P \pbf(\xi))
  \leq C R^b(P,\xi),
\]
which is the claim.
\end{example}

In the previous example, the hardening factor $h^b(P)$ can be interpreted as making it more energetically expensive for dislocations to glide if $\abs{P}$ becomes large. This is physically reasonable since after a large amount of plastic distortion has taken place, the crystal will have many point defects and so dislocation glide is impeded~\cite{HullBacon11book,AndersonHirthLothe17book}. It is also necessary for our mathematical framework: Without a hardening factor the dissipation no longer controls the variation and no solution may exist for positive times (see the proof of Proposition~\ref{prop:IP_solution} and also of Lemma~\ref{lem:Diss} below). This corresponds to instantaneous ripping of the specimen. For instance, even if $\det P = 1$, a principal minor of $P$ may blow up, e.g., for $P_\eps := \diag(\eps,\eps,\eps^{-2})$ with $\eps \todown 0$.

\begin{remark}
More generally, in Assumption~\ref{as:R} one could require $R^b$ to be only semielliptic instead of convex in the second argument $\xi$, see, e.g.,~\cite[Section~8.3]{KrantzParks08book} for a definition of this generalized convexity notion. This allows for more general dissipation potentials, but semiellipticity is hard to verify in general.
\end{remark}

\begin{remark} \label{rem:additive_hardening}
The present theory extends to $\Ecal$ incorporating an additional (additive) hardening or softening energy of the form
\[
  \Wcal_h(P,\Td)
\] 
for $P \in \Wrm^{1,q}(\Omega;\R^{3 \times 3})$ with $\det P = 1$ a.e.\ in $\Omega$, and $\Td \in \Disl(\cl{\Omega})$. In order for this to be compatible, the modified $\Ecal$ still needs to satisfy the conclusions of Lemma~\ref{lem:conv} below.
\end{remark}

\begin{remark} \label{rem:Wc_restrict}
The definition of the core energy above also counts the length of the \enquote{virtual} lines on the surface $\partial \Omega$, which is perhaps undesirable on physical grounds. One can also treat the more realistic core energy
\[
  \tilde{\Wcal}_c(\Td) := \frac{\zeta}{2} \sum_{b \in \Bcal} \Mbf(T^b \restrict \Omega),  \qquad \Td = (T^b)_b \in \Disl(\cl{\Omega}),
\]
which only counts the dislocation length inside $\Omega$, at the expense of further technical complications. An outline of the required modifications is as follows: Instead of $T^b$ {with $\partial T^b = 0$ (globally)} we now need to consider equivalence classes of currents {$T \in \Irm_1(\cl{\Omega})$ with $(\partial T) \restrict \Omega = 0$ (which is a weakly*-closed subspace of $\Irm_1(\cl{\Omega})$)} as follows: Set
\[
  {[T] := \setb{ \hat{T} \in \Irm_1(\cl{\Omega})}{ \hat{T} \restrict \Omega = T \restrict \Omega, \; \text{$\partial \hat{T} = 0$ globally} }}
\]
and adjust the definitions of $\Disl(\cl{\Omega})$ and $\Slip(\frarg)$ accordingly. The key point is the observation that there is always a $\tilde{T}^b \in [T^b]$ with
\[
  \Mbf(\tilde{T}^b) \leq C \cdot \Mbf(T^b \restrict \Omega)
\]
for some (domain-dependent) constant $C > 1$. Indeed, any {piecewise Lipschitz curve $\gamma \colon [0,1] \to \cl{\Omega}$ with $(\partial \dbr{\gamma}) \restrict \Omega = 0$ (where $\dbr{\gamma} \in \Irm_1(\cl{\Omega})$ denotes the integral $1$-current associated with $\gamma$) that is not globally closed (i.e., $\partial \dbr{\gamma} \neq 0$) can be closed to obtain a piecewise Lipschitz curve $\bar{\gamma} \colon [0,1] \to \cl{\Omega}$ with $\dbr{\bar{\gamma}} \restrict \Omega = \dbr{\gamma}$ and $\partial \dbr{\bar{\gamma}} = 0$ in such a way that}
\begin{equation} \label{eq:curve_ext}
  \Hcal^1(\im \bar{\gamma}) \leq C \cdot \Hcal^1(\im \gamma).
\end{equation}
For this one needs to use the property of a Lipschitz domain that for any $x, y \in \partial \Omega$ with $x \neq y$ and lying in the same connected component of $\partial \Omega$, there is an injective Lipschitz curve $\kappa \colon [0,1] \to \partial \cl{\Omega}$ from $x$ to $y$ such that
\[
  \Hcal^1(\im \kappa) \leq C \abs{x-y},
\]
which follows from a contradiction argument using the compactness and Lipschitz regularity of $\partial \Omega$. This path can be used to close $\gamma$ to $\bar{\gamma}$, yielding~\eqref{eq:curve_ext}. The extension to general integral $1$-currents then follows from standard methods.

Now, with $\tilde{\Wcal}_c$ in place of $\Wcal_c$ in a modified total energy $\tilde{\Ecal}$, the coercivity of the energy (see Lemma~\ref{lem:E_coerc} below) is weaker and we only control
\[
  \Mbf(\Td \restrict \Omega)
  = \frac12 \sum_{b \in \Bcal} \Mbf(T^b \restrict \Omega)
\]
instead of the full $\Mbf(\Td)$. However, with the argument outlined above, one can always pick the good representative $\tilde{T}^b$ in the equivalence class $[T^b]$ and run the arguments with $\tilde{T}^b$ in place of $T^b$. In this way one obtains the same existence result as in Theorem~\ref{thm:main} below with the more realistic total energy $\tilde{\Ecal}$. However, the full proof of this fact involves a few further technicalities, which are essentially straightforward, but cumbersome, since we are dealing with equivalence classes of currents everywhere (e.g., for the recovery construction, we first need to pick the good representative and then construct a recovery sequence for it). Thus, to keep the presentation as clear as possible, our main result and proof are stated without this further complication.
\end{remark}

\begin{remark} \label{rem:loadings}
Assumption~\ref{as:general}~(iv) on the imposed boundary values for the total deformation can be weakened. It is only used to obtain full coercivity in $\Wrm^{1,p}(\Omega;\R^3)$ from an $\Lrm^p$-bound on the gradient. Thus, some extensions to mixed Dirichlet/Neumann boundary conditions, which could even be time-dependent, are possible; cf.~\cite{MainikMielke09} for some techniques in this direction.
\end{remark}

\subsection{Energetic formulation} \label{sc:energetic}

In general, jumps in time cannot be excluded for rate-independent systems~\cite{MielkeRoubicek15book}. Thus, we will work with a rescaled time $s$ in which the process does not have jumps (or, more precisely, the jumps are resolved). By the rate-independence, this rescaling does not change the dynamics besides a reparameterization of the external loading. In the existence theorem to follow, we will construct a Lipschitz rescaling function $\psi \colon [0,\infty) \to [0,T]$, which is increasing and satisfies $\psi(0) = 0$, $\psi(\infty) = T_* \in (0,T]$. The original time $t$ is then related to $s$ via $t = \psi(s)$. Here, $T_*$ is the final time, i.e.\ the (original) time at which our solution blows up or we have reached the maximum time $T$. Our proof will show in particular that $T_* > 0$. 

Given such a Lipschitz continuous and increasing rescaling function $\psi \colon [0,\infty) \to [0,T]$ satisfying $\psi(0) = 0$, $\psi(\infty) = T_* \in (0,T]$, set
\[
  f_\psi := f \circ \psi
\]
and, for $s \in [0,\infty)$ and $y,P,\Td$ as in the original definition of $\Ecal$,
\begin{equation} \label{eq:Epsi}
  \Ecal_\psi(s,y,P,\Td) := \Wcal_e(y,P) - \dprb{f_\psi(s),y} + \Wcal_c(\Td).
\end{equation}
Our notion of solution is the following:

\begin{definition} \label{def:sol}
The pair $(y,z)$ with
\begin{align*}
  y &\in \Lrm^\infty([0,\infty);\Wrm^{1,p}_g(\Omega;\R^3)), \\
  z &= (P,\Sd) = \bigl( P, (S^b)_b \bigr)\in \Lip([0,\infty);\Wrm^{1,q}(\Omega;\R^{3 \times 3})) \times \Lip([0,\infty);\Disl(\cl{\Omega}))
\end{align*}
is called an \term{energetic solution to the system of dislocation-driven elasto-plasticity} with rescaling function $\psi \colon [0,\infty) \to [0,T]$, which is Lipschitz continuous, increasing, and satisfies $\psi(0) = 0$, $\psi(\infty) = T_* \in (0,T]$, if for all $s \in [0,\infty)$ the following conditions hold:
\[
\left\{
\begin{aligned}
  &\text{\term{(S) Stability:} If $\dot{\psi}(s) > 0$, then} \\
  &\quad\qquad  \Ecal_\psi(s,y(s),z(s)) \leq \Ecal_\psi(s,\hat{y},\hat{\Sd}_\ff z(s)) + \Diss(\hat{\Sd}) \\
  &\text{\phantom{(S)} for all $\hat{y} \in \Wrm^{1,p}_g(\Omega;\R^3)$, $\hat{\Sd} \in \Slip(\Sd(s))$.} \\[8pt]
  &\text{\term{(E) Energy balance:}} \\
  &\quad\qquad  \Ecal_\psi(s,y(s),z(s)) = \Ecal_\psi(0,y_0,z_0) - \Diss(\Sd;[0,s]) - \int_0^s \dprb{\dot{f}_\psi(\sigma),y(\sigma)} \dd \sigma. \\[8pt]
  &\text{\term{(P) Plastic flow:}} \\
  &\quad\qquad  \frac{\di}{\di s} P(s,x) = D(s,x,P(s,x);\Sd)  \qquad\text{and}\qquad \text{$\det P(s) = 1$ a.e.\ in $\Omega$} \\[4pt]
  &\text{\phantom{(P)} with} \\
  &\quad\qquad  D(s,x,R;\Sd) := \frac12 \sum_{b \in \Bcal} b \otimes \proj_{\langle R^{-1}b \rangle^\perp}[g^b(s,x)], \\
  &\text{\phantom{(P)}where $g^b$ is the density of $\hodge \pbf(S^b_\eta) := \hodge \pbf(\vec{S}^b_\eta) \, \tv{S^b_\eta}$ with $S^b_\eta := \eta \conv S^b$.}
\end{aligned}
\right.
\]
\end{definition}

Here and in the following, we use the notation $\Lrm^\infty(I;X)$ for the set of (Bochner-)measurable and uniformly norm-bounded functions defined on the interval $I \subset \R$ with values in the Banach space $X$, but we do not identify maps that are equal almost everywhere in $I$. In a similar vein, we use the good representative for $s \mapsto S^b(s)$, so that $z(s) = (P(s), \Sd(s)) = (P(s), (S^b(s))_b)$ is well-defined for every $s \in [0,\infty)$.

Moreover, $\Diss(\hat{\Sd})$ in~(E) is to be interpreted relative to $z_0$ (recall from Section~\ref{sc:assumpt} that the starting point is omitted in our notation). In~(S), the condition $\dot{\psi}(s) > 0$ includes the existence of $\dot{\psi}(s)$, which is the case for $\Lcal^1$-almost every $s \in [0,\infty)$ by Rademacher's theorem. The differential equation in~(P) is to be understood in $\Wrm^{1,q}(\Omega;\R^{3 \times 3})$ (as in Lemma~\ref{lem:PSigma_ODE_BV}).

Let us now motivate how the above formulation~(S),~(E),~(P) corresponds to the model developed in~\cite{HudsonRindler22}, as outlined in the Introduction. First, we observe that in general we do not have enough regularity to consider derivatives of the processes or functionals. Instead, we reformulate the model as follows: The condition~(P) corresponds directly to~\eqref{eq:plastflow}. The stability~(S) and energy balance~(E) come about as follows: The Free Energy Balance (a consequence of the Second Law of Theormodynamics) in the whole domain $\Omega$ reads as (see Section~4 in~\cite{HudsonRindler22})
\begin{equation} \label{eq:second_law}
  \frac{\dd}{\dd t}\bigl[ \Wcal_e(y(t),z(t)) + \Wcal_c(z(t)) \bigr] - \Pcal(t,y(t)) = -\Delta(t).
\end{equation}
Here, the external power is given as
\[
  \Pcal(t,y(t)) = \dprb{f(t), \dot{y}(t)},
\]
where $\dpr{\frarg,\frarg}$ is the duality product between $\Wrm^{1,p}(\Omega;\R^3)^*$ and $\Wrm^{1,p}(\Omega;\R^3)$, and we neglect the inertial term for the rate-independent formulation (cf.~Section~6.1 in~\cite{HudsonRindler22}). If we integrate~\eqref{eq:second_law} in time over an interval $[0,t] \subset [0,T]$ and use an integration by parts to observe
\[
  \int_0^t \Pcal(\tau,y(\tau)) \dd \tau = - \int_0^t \dprb{\dot{f}(\tau), y(\tau)} \dd \tau + \dprb{f(t),y(t)} - \dprb{f(0),y(0)},
\] 
we arrive at
\[
  \Ecal(t,y(t),z(t)) - \Ecal(0,y(0),z(0)) = - \Diss(\Sd;[0,t]) - \int_0^t \dprb{\dot{f}(\tau),y(\tau)} \dd \tau.
\]
This yields~(E) after the rescaling described at the beginning of this section.

The stability~(S) is a stronger version of the local stability relation
\[
  P^{-T} X^b \in \partial R^b(0),
\]
which follows from the flow rule~\eqref{eq:flowrule} or, more fundamentally, the Principle of Virtual Power (see Section~4 in~\cite{HudsonRindler22}). We refer to~\cite{MielkeRoubicek15book} for more on the equivalence or non-equivalence of~(S) \&~(E) with \enquote{differential} models of rate-independent processes. 

\begin{remark}
The pieces where $\psi$ is flat correspond to the jump transients, which are therefore explicitly resolved here. Note that there could be several $\Diss$-minimal slip trajectories connecting the end points of a jump, which lead to different evolutions for the plastic distortion. Thus, we cannot dispense with an explicit jump resolution. Moreover, the stability may not hold along such a jump transient and hence we need to require $\dot{\psi}(s) > 0$ in~(S). We refer to~\cite{OrtizRepetto99,Mielke02,Mielke03a,MielkeRossiSavare09,DalMasoDeSimoneSolombrino10,DalMasoDeSimoneSolombrino11,MielkeRossiSavare12,RindlerSchwarzacherVelazquez21} for   more on this.
\end{remark}

\begin{remark}
The stability~(S) in particular entails the elastic minimization
\[
  y(s) \in \Argmin \setb{ \Ecal_\psi(s,\hat{y},z(s)) }{ \hat{y} \in \Wrm^{1,p}_g(\Omega;\R^3) }
\]
as well as the orientation-preserving assertion
\[
  \det \nabla y(s) > 0  \quad\text{a.e.\ in $\Omega$}
\]
for all $s \in [0,\infty)$. This follows by testing with $\hat{\Sd} := \Id^{\Sd(s)} \in \Slip(\Sd(s))$ from Lemma~\ref{lem:neutral}, and also using the properties of $W_e$ in Assumption~\ref{as:We}. In this sense, we are in an elastically optimal state. This corresponds to the supposition that elastic movements are much faster than plastic movements, which is true in many materials~\cite{DeHossonyRoosMetselaar02,ArmstrongArnoldZerilli09,BenDavidEtAl14}.
\end{remark}

\begin{remark}
It can be seen without too much effort that the above formulation is indeed rate-independent: Let $a \colon [0,S] \to [0,S']$ be an invertible $\Crm^1$-map with $a(0) = 0$, $a(S) = S'$. Then, for $s' \in [0,S']$, we set
\[
  y'(s') := y(a^{-1}(s')),  \qquad 
  P'(s') := P(a^{-1}(s')),  \qquad
  \Sd' := a_* \Sd.
\]
The rescaling-invariance is obvious for~(S), where of course we now have to switch to the external force
\[
  f'_\psi(s') := f_\psi(a^{-1}(s')) = f_{\psi \circ a^{-1}}(s').
\]
For~(E), the rescaling invariance is a consequence of a change of variables together with Lemma~\ref{lem:Diss_rescale} in the following section: For $s' \in [0,S']$, we compute
\begin{align*}
  &\Ecal_{\psi \circ a^{-1}}(s',y'(s'),P'(s'),\Sd'(s')) \\
  &\qquad = \Ecal_\psi(a^{-1}(s'),y(a^{-1}(s')),P(a^{-1}(s')),\Sd(a^{-1}(s'))) \\
  &\qquad = \Ecal_\psi(0,y_0,z_0) - \int_0^{a^{-1}(s')} \dprb{\dot{f}_\psi(\sigma),y(\sigma)} \dd \sigma - \Diss(\Sd;[0,a^{-1}(s')]) \\
  &\qquad = \Ecal_{\psi \circ a^{-1}}(0,y_0,z_0) - \int_0^{s'} \dprb{\dot{f}_{\psi \circ a^{-1}}(\sigma'),y(\sigma')} \dd \sigma' - \Diss(\Sd';[0,s']).
\end{align*}
For~(P) the rate-independence has already been shown in Lemma~\ref{lem:Sigma_rescale}.
\end{remark}

\subsection{Existence of solutions}

The main result of this work is the following existence theorem:

\begin{theorem} \label{thm:main}
Assume~\ref{as:first}--\ref{as:last_list} and
\begin{enumerate}[({A}5)] \setlength\itemsep{8pt}
  \item \label{as:initial}\label{as:last} \term{Initial data:} $(y_0,z_0) = (y_0,P_0,\Td_0) \in \Wrm^{1,p}_g(\Omega;\R^3) \times \Wrm^{1,q}(\Omega;\R^{3 \times 3}) \times \Disl(\cl{\Omega})$ with $\det P_0 = 1$ a.e.\ in $\Omega$ is such that the initial stability relation
  \[
    \Ecal(0,y_0,z_0) \leq \Ecal(0,\hat{y},\hat{\Sd}_\ff z_0) + \Diss(\hat{\Sd})
  \]
  holds for all $\hat{y} \in \Wrm^{1,p}_g(\Omega;\R^3)$, $\hat{\Sd} \in \Slip(\Td_0)$.
\end{enumerate}
Then, there exists an energetic solution to the system of dislocation-driven elasto-plasticity in the sense of Definition~\ref{def:sol} satisfying the initial conditions
\[
  y(0) = y_0, \qquad
  P(0) = P_0, \qquad
  \partial S^b \restrict (\{0\} \times \R^3)  = - \delta_0 \times T^b_0 \quad\text{for all $b \in \Bcal$,}
\]
where $\Td_0 = (T^b_0)_b$. Moreover,
\[
  \Var_{\Wrm^{1,q}}(P;[0,s]) + \Var(\Sd;[0,s]) \leq C \cdot \Diss(\Sd;[0,s])
\]
for a constant $C > 0$ that depends only on the data in Assumptions~\ref{as:first}--\ref{as:last}.
\end{theorem}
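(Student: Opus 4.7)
\medskip
\textbf{Incremental scheme.} The plan is to follow the Mielke--Theil program adapted to slip trajectories. Fix $N \in \N$, set $\tau_N := T/N$, $t^N_k := k\tau_N$, $(y^N_0, z^N_0) := (y_0, z_0)$, and define approximate solutions recursively by the time-incremental variational problem
\[
  (y^N_{k+1}, \Sigma^N_{k+1}) \in \Argmin \setb{ \Ecal(t^N_{k+1}, \hat{y}, \hat{\Sigma}_\ff z^N_k) + \Diss(\hat{\Sigma}) }{ \hat{y} \in \Wrm^{1,p}_g(\Omega;\R^3),\, \hat{\Sigma} \in \Sl(z^N_k) },
\]
with $z^N_{k+1} := (\Sigma^N_{k+1})_\ff z^N_k$. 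Existence of minimizers at each step follows from the direct method: polyconvexity and the coercivity~\eqref{eq:We_coerc} of $W_e$ yield weak lower semicontinuity and boundedness for $\hat{y}$ in $\Wrm^{1,p}_g$; the coercivity in Assumption~\ref{as:R}(iv) bounds $\Var(\hat{\Sigma})$ and $\Mbf(\hat{\Sigma}_\ff \Phi^N_k)$ in terms of $\Diss(\hat{\Sigma})$, so that Proposition~\ref{prop:Sl_compact} supplies BV-weak$^*$ compactness; lower semicontinuity of $\Wcal_c$ is the lower semicontinuity of mass for integral currents, while that of $\Diss$ is a Reshetnyak-type argument using the Lipschitz continuity of $R^b$ in Assumption~\ref{as:R}(ii) together with the uniform convergence of the induced plastic paths (Lemma~\ref{lem:PSigma_cont}). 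Testing with the neutral slip $\Id^{\Phi^N_k}$ (Lemma~\ref{lem:neutral}) and iterating yields the discrete energy inequality
\[
  \Ecal(t^N_j, y^N_j, z^N_j) + \sum_{k=1}^j \Diss(\Sigma^N_k) \leq \Ecal(0, y_0, z_0) - \sum_{k=0}^{j-1} \int_{t^N_k}^{t^N_{k+1}} \dprb{\dot f(\tau), y^N_k} \dd \tau.
\]

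\medskip
\textbf{A~priori estimates via nonlinear Gronwall and rescaling of time.} The principal obstacle is that the coercivity of $\Wcal_e$ decays along the evolution. Indeed, applying Assumption~\ref{as:We}(ii) with $F^{-1}E = P^N_k (P^N_{k+1})^{-1}$ (which, by $\det = 1$ and Hadamard's inequality, is controlled by $\norm{P^N_k}_\infty \cdot \norm{P^N_{k+1}}_\infty^2$, and hence by a power of $\norm{P^N}_{\Wrm^{1,q}}$ via Sobolev embedding), combined with Lemma~\ref{lem:PSigma_W1q}, which bounds $\norm{P^N_{k+1}}_{\Wrm^{1,q}}$ by an expression that grows fast in $\Diss(\Sigma^N_{k+1})$, the cumulative quantity $\alpha^N_k := \Ecal(t^N_k, y^N_k, z^N_k) + \sum_{\ell \leq k} \Diss(\Sigma^N_\ell)$ can be shown to satisfy a discrete inequality of the form $\alpha^N_{k+1} \leq \alpha^N_k + C \tau_N \ee^{\alpha^N_k}$; this matches the regime of the nonlinear Gronwall lemma~\ref{lem:Gronwall_ext}. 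Although finite-time blow-up is possible in the limit, this lemma furnishes an $N$-independent interval $[0, T_*]$ with $T_* > 0$ on which $\alpha^N_k$ remains uniformly bounded, and hence so do $\norm{P^N_k}_{\Wrm^{1,q}}$, $\Mbf(\Phi^N_k)$, and $\norm{y^N_k}_{\Wrm^{1,p}}$. I then introduce the time rescaling $\psi_N \colon [0, S_N] \to [0, T_*]$ by arc-length parameterization with respect to $\tau + c\, \Var(\Sigma^N;[0,\tau])$, producing uniformly Lipschitz-continuous reparameterized processes in the weak topologies.

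\medskip
\textbf{Compactness and passage to the limit.} Proposition~\ref{prop:Sl_compact} applied to the rescaled $\Sigma^N$ and Proposition~\ref{prop:BVX_Helly} applied to the $\Wrm^{1,q}$-valued Lipschitz map $s \mapsto P^N(s)$, combined with a diagonal extraction, yield (not relabelled) subsequences with $\Sigma^N \toweakstar \Sigma$ BV-weakly$^*$, $P^N(s) \toweak P(s)$ in $\Wrm^{1,q}$ for every $s$, $y^N(s) \toweak y(s)$ in $\Wrm^{1,p}_g$ for every $s$, and $\psi_N \to \psi$ uniformly. Lemma~\ref{lem:PSigma_cont} identifies the limiting $P$ with $P_\Sigma$ and, in combination with Lemma~\ref{lem:PSigma_ODE_BV}, delivers the plastic flow equation~(P). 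The technical heart of the limit passage is the construction of a mutual recovery sequence for the stability at a non-jump point $s$ with $\dot\psi(s) > 0$: given a limit competitor $(\hat y, \hat\Sigma)$ with $\hat\Sigma \in \Sl(\Sigma(s))$, I use Proposition~\ref{prop:equiv} to produce vanishing-variation Lipschitz slips bridging $\Sigma^N(s) \toweakstar \Sigma(s)$, and concatenate (Lemma~\ref{lem:Sigma_concat}) with a transplant of $\hat\Sigma$ to obtain $\hat\Sigma^N \in \Sl(\Sigma^N(s))$ with $\hat\Sigma^N_\ff z^N(s) \to \hat\Sigma_\ff z(s)$ strongly enough for the continuity of $\Ecal$ (via the control estimate~\ref{as:We}(ii)) and of $\Diss$ (via Reshetnyak continuity together with the uniform convergence of the induced plastic paths). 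Passing to the limit in the discrete stability then yields~(S); the energy balance~(E) follows by combining the upper bound obtained from~(S) tested with the neutral slip $\Id^{\Sigma(s)}$ against the lower bound given by weak lower semicontinuity of $\Ecal_\psi$ and $\Diss$ in the discrete energy inequality. The initial conditions survive the limit by construction, and the final estimate $\Var_{\Wrm^{1,q}}(P;[0,s]) + \Var(\Sigma;[0,s]) \leq C \cdot \Diss(\Sigma;[0,s])$ is immediate from Lemma~\ref{lem:PSigma_W1q} and the coercivity in Assumption~\ref{as:R}(iv).
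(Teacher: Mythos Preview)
Your overall strategy matches the paper's: time-incremental minimization, nonlinear Gronwall, time rescaling, compactness, mutual recovery via Proposition~\ref{prop:equiv}, and the two-sided energy argument. However, there is a genuine gap in the incremental step.

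The minimization problem you write down, over \emph{all} $\hat\Sigma \in \Sl(z^N_k)$, need not have a solution. The dissipation controls $\Var(\hat\Sigma)$ (via Assumption~\ref{as:R}~(iv)) and the core energy controls $\Mbf(\hat\Sigma_\ff \Phi^N_k)$ at the \emph{endpoint}, but neither controls the sup-in-time mass $\norm{\hat\Sigma}_{\Lrm^\infty} = \max_b \esssup_t \Mbf(S^b(t))$. Proposition~\ref{prop:Sl_compact}, which you invoke, explicitly requires a bound on this quantity. The companion work~\cite{Rindler21a?} (Example~3.6) shows that the mass of intermediate slices can blow up even when the variation and the endpoint masses stay bounded. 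The paper's fix is to impose an artificial constraint $\norm{\hat\Sigma}_{\Lrm^\infty} \leq \gamma$ in the incremental problem~\eqref{eq:IP}, carry $\gamma$ through the entire limit passage (so that stability~(S) is first proved only against competitors with $\norm{\hat\Sigma}_{\Lrm^\infty} \leq \gamma$), and only \emph{after} the continuous-time a-priori estimates yield a $\gamma$-independent bound on $\norm{\Sigma}_{\Lrm^\infty}$ via $\Wcal_c$ (Lemma~\ref{lem:estimates}) does one let $\gamma \to \infty$ in a second diagonal limit (Section~\ref{sc:existence_proof}, final subsection).

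A secondary issue: your assertion ``$y^N(s) \toweak y(s)$ in $\Wrm^{1,p}_g$ for every $s$'' presumes uniqueness of the elastic minimizer, which is not available here. The paper constructs $y$ not as a pointwise limit of $\bar y^N$ but by a measurable selection (Filippov-type, see Lemma~\ref{lem:limit}) from the set of elastic minimizers, choosing at each $s$ one that realizes the infimum of the power $\dprn{\dot f_\psi(s),\,\cdot\,}$; this reduced power $\Pi_{\mathrm{red}}$ is what allows the power term in the limit of the discrete upper energy estimate to be matched against the one appearing in the lower bound derived from stability.
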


\begin{remark} \label{rem:Wkp}
If we additionally assume that $P_0$ is of class $\Wrm^{k,p}$ for some $k \in \N$ with $k \geq 2$, then also $P(s)$ is of class $\Wrm^{k,p}$ for all $s \in [0,\infty)$ and $\Var_{\Wrm^{k,q}}(P;[0,s]) \leq C \cdot \Diss(\Sd;[0,s])$ for a ($k$-dependent constant) $C > 0$. If $P_0$ is smooth, then so it $P(s)$ for all $s \in [0,\infty)$. The proof of these claims follows from a straightforward generalization of Lemma~\ref{lem:PSigma_W1q}.
\end{remark}

\subsection{Properties of the energy and dissipation}

In preparation for the proof of Theorem~\ref{thm:main} in the next sections, we collect several properties of the energy and dissipation functionals. We start with the question of coercivity.

\begin{lemma} \label{lem:E_coerc}
For every $t \in [0,T]$, $y \in \Wrm^{1,p}_g(\Omega;\R^3)$ with $\det \nabla y > 0$ a.e.\ in $\Omega$, $P \in \Lrm^s(\Omega;\R^{3 \times 3})$ with $\det P = 1$ a.e.\ in $\Omega$, and $\frac1s = \frac1p - \frac1r$, it holds that
\[
  \Ecal(t,y,P,\Td) \geq C^{-1} \bigl( \norm{y}_{\Wrm^{1,p}}^p + \Mbf(\Td) \bigr) - C \bigl( \norm{P}_{\Lrm^s}^s + \norm{f(t)}_{[\Wrm^{1,p}]^*}^{p/(p-1)} + 1 \bigr)
\]
for a constant $C > 0$.
\end{lemma}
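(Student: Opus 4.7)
The plan is to obtain the claimed coercivity essentially by combining the $r$-coercivity of $W_e$ with a H\"older--Young splitting that trades integrability of $\nabla y P^{-1}$ against integrability of $P$. First, observe that the core energy term is trivial: by definition $\Wcal_c(\Phi) = \zeta \Mbf(\Phi)$, so it already contributes $C^{-1}\Mbf(\Phi)$ to the right-hand side.

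For the elastic contribution, the coercivity bound~\eqref{eq:We_coerc} in Assumption~\ref{as:We} gives
\[
  \Wcal_e(y,P) \geq C^{-1} \int_\Omega \abs{\nabla y \, P^{-1}}^r \dd x - C\abs{\Omega}.
\]
To pass from $\nabla y P^{-1}$ to $\nabla y$ I would use the pointwise estimate $\abs{\nabla y} = \abs{\nabla y P^{-1} P} \leq \abs{\nabla y P^{-1}} \cdot \abs{P}$, followed by H\"older's inequality with conjugate exponents $r/p$ and $r/(r-p)$:
\[
  \int_\Omega \abs{\nabla y}^p \dd x
  \leq \biggl( \int_\Omega \abs{\nabla y P^{-1}}^r \dd x \biggr)^{p/r} \biggl( \int_\Omega \abs{P}^{pr/(r-p)} \dd x \biggr)^{(r-p)/r}.
\]
The key observation is that the second exponent matches $s$ exactly, since $\tfrac1s = \tfrac1p - \tfrac1r$ gives $s = pr/(r-p)$. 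Hence Young's inequality applied to the product $A^{p/r} B$ (with $A := \int \abs{\nabla y P^{-1}}^r \di x$ and $B := \norm{P}_{\Lrm^s}^p$) allows me to absorb the $A$-factor and produce exactly $\norm{P}_{\Lrm^s}^s$:
\[
  \int_\Omega \abs{\nabla y}^p \dd x \leq \eps \int_\Omega \abs{\nabla y P^{-1}}^r \dd x + C_\eps \norm{P}_{\Lrm^s}^s.
\]
Rearranging and choosing $\eps$ small, I obtain $\Wcal_e(y,P) \geq C^{-1} \norm{\nabla y}_{\Lrm^p}^p - C\norm{P}_{\Lrm^s}^s - C$.

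To upgrade $\norm{\nabla y}_{\Lrm^p}$ to the full $\Wrm^{1,p}$-norm, I would invoke a Poincar\'e-type inequality exploiting the fixed boundary values $g$: since $y - \tilde g \in \Wrm^{1,p}_0(\Omega;\R^3)$ for any $\Wrm^{1,p}$-extension $\tilde g$ of $g$, one has $\norm{y}_{\Lrm^p} \leq C(\norm{\nabla y}_{\Lrm^p} + \norm{g}_{\Wrm^{1-1/p,p}(\partial\Omega)})$, and the trace term is absorbed into the constant $C$. Finally, the loading term is handled by
\[
  \absb{\dprb{f(t),y}} \leq \norm{f(t)}_{[\Wrm^{1,p}]^*} \norm{y}_{\Wrm^{1,p}} \leq \eps \norm{y}_{\Wrm^{1,p}}^p + C_\eps \norm{f(t)}_{[\Wrm^{1,p}]^*}^{p/(p-1)},
\]
and the $\eps$-term is absorbed into the elastic lower bound. (The statement writes $\dot f$ in the last term; either reading is fine since under~\ref{as:F} we have $f \in \Crm^1([0,T];[\Wrm^{1,p}]^*)$, so $\norm{f(t)}_{[\Wrm^{1,p}]^*}$ is uniformly controlled by $\norm{f(0)}_{[\Wrm^{1,p}]^*} + \int_0^T \norm{\dot f(\sigma)}_{[\Wrm^{1,p}]^*} \di \sigma$.) There is no genuine obstacle here; the only point that requires care is checking that the H\"older exponent produced in the splitting is \emph{exactly} $s$, which is precisely the reason the exponent $s$ is defined by $\tfrac1s = \tfrac1p - \tfrac1r$.
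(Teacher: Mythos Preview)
Your proof is correct and follows essentially the same strategy as the paper. The only organizational difference is that the paper establishes the splitting \emph{pointwise} via the algebraic inequality $\tfrac{a}{b} \geq \rho a^{1/\rho} - (\rho-1)b^{1/(\rho-1)}$ (with $\rho=r/p$, applied to $a=\abs{\nabla y}$, $b=\abs{P}$), raises to the $r$'th power, and then integrates, whereas you integrate first and then apply H\"older followed by Young at the integral level. Both routes amount to the same Young-type duality with exponents $r/p$ and $r/(r-p)$ and produce exactly the exponent $s=pr/(r-p)$; your version is arguably more transparent. You are also right to flag the $\dot f$ versus $f$ discrepancy in the loading term---the paper writes $\dot f$ there as well, but since $f\in\Crm^1([0,T];[\Wrm^{1,p}]^*)$ the distinction is harmless for a coercivity bound.
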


\begin{proof}
For $a,b > 0$ and all $\rho > 1$ we have the elementary inequality
\[
  \frac{a}{b} \geq \rho a^{1/\rho} - (\rho-1) b^{1/(\rho-1)},
\]
which follows from Young's inequality for $a^{1/\rho}, b$ with exponents $\rho, \rho/(\rho-1)$. Hence, for $F,P \in \R^{3 \times 3}$ with $\det P \neq 0$ we get with $\rho := r/p$, whereby $\rho-1 = r/s$, that
\[
  \abs{FP^{-1}} \geq \frac{\abs{F}}{\abs{P}}
  \geq \frac{r}{p} \abs{F}^{p/r} - \frac{r}{s} \abs{P}^{s/r}.
\]
Raising this inequality to the $r$'th power and using the coercivity in Assumption~\ref{as:We}, we get (combining constants as we go),
\begin{align*}
  \Wcal_e(y,P) &\geq \int_\Omega C^{-1} \abs{\nabla y P^{-1}}^r - C \dd x \\
  &\geq \int_\Omega C^{-1} \abs{\nabla y}^p - C \abs{P}^s - C \dd x \\
  &= C^{-1} \norm{\nabla y}_{\Lrm^p}^p - C(\norm{P}_{\Lrm^s}^s + 1) \\
  &\geq C^{-1} \norm{y}_{\Wrm^{1,p}}^p - C(\norm{P}_{\Lrm^s}^s + 1),
\end{align*}
where in the last line we further employed the Poincar\'{e}--Friedrichs inequality (the boundary values of $y$ are fixed). Moreover,
\[
  \Wcal_c(\Td) \geq C^{-1} \Mbf(\Td).
\]

On the other hand, we have for any $\eps > 0$, by Young's inequality again,
\begin{equation} \label{eq:power_est}
  -\dprb{f(t),y}
  \geq - \norm{f(t)}_{[\Wrm^{1,p}]^*} \cdot \norm{y}_{\Wrm^{1,p}}
  \geq - C_\eps \norm{f(t)}_{[\Wrm^{1,p}]^*}^{p/(p-1)} - \eps \norm{y}_{\Wrm^{1,p}}^p.
\end{equation}
Combining the above estimates, and choosing $\eps > 0$ sufficiently small to absorb the last term in~\eqref{eq:power_est} into the corresponding term originating from $\Wcal_e$, the claim of the lemma follows.
\end{proof}

The next lemma extends the classical results on the weak continuity of minors~\cite{Reshetnyak67,Ball76} and in a similar form seems to have been proved first in~\cite[Proposition~5.1]{MainikMielke09} (or see~\cite[Lemma~4.1.3]{MielkeRoubicek15book}):

\begin{lemma} \label{lem:minors}
Let $p > 3$ and $s > \frac{2p}{p-1}$. Assume that
\begin{align*}
  y_j &\toweak y  \quad\text{in $\Wrm^{1,p}$,}\\
  P_j &\to P  \quad\text{in $\Lrm^s$,}\\
  \det P_j &= 1  \quad\text{a.e.\ in $\Omega$ and for all $j \in \N$.}
\end{align*}
Then,
\begin{align*}
  \nabla y_j P_j^{-1} &\toweak \nabla y P^{-1}, \\
  \cof(\nabla y_j P_j^{-1}) &\toweak \cof(\nabla y P^{-1}), \\
  \det(\nabla y_j P_j^{-1}) &\toweak \det(\nabla y P^{-1})
\end{align*}
in $\Lrm^\sigma$ for some $\sigma > 1$.
\end{lemma}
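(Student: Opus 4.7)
The plan is to reduce everything to the classical Reshetnyak--Ball weak continuity of minors for $y_j$ alone, exploiting the incompressibility constraint $\det P_j = 1$ to express $P_j^{-1}$ and its minors as polynomials in the entries of $P_j$. The guiding principle throughout is \emph{weak $\times$ strong $=$ weak}, with all integrability exponents tracked by H\"older's inequality.

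First I would exploit the constraint: since $\det P_j = 1$, we have $P_j^{-1} = (\cof P_j)^T$, which is quadratic in the entries of $P_j$. Hence strong convergence $P_j \to P$ in $\Lrm^s$ upgrades to $P_j^{-1} \to P^{-1}$ strongly in $\Lrm^{s/2}$; by continuity of $\det$ in $\Lrm^{s/3}$ we also deduce $\det P = 1$ a.e., so $P^{-1} = (\cof P)^T$. Using $\cof(\cof A) = (\det A)\, A$ in dimension three (so $\cof P_j^{-1} = P_j^T$), we obtain similarly $\cof(P_j^{-1}) \to P^T$ strongly in $\Lrm^s$, and $\det(P_j^{-1}) = 1 = \det(P^{-1})$.

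Next, the three factorisations
\begin{align*}
\nabla y_j\, P_j^{-1},\qquad \cof(\nabla y_j P_j^{-1}) = \cof(\nabla y_j)\cdot \cof(P_j^{-1}),\qquad \det(\nabla y_j P_j^{-1}) = \det(\nabla y_j),
\end{align*}
each split the target into a ``$y_j$-factor'' and a ``$P_j$-factor''. For the $y_j$-factors I invoke the classical Reshetnyak--Ball theorem: $\nabla y_j \toweak \nabla y$ in $\Lrm^p$, $\cof \nabla y_j \toweak \cof \nabla y$ in $\Lrm^{p/2}$, and $\det \nabla y_j \toweak \det \nabla y$ in $\Lrm^{p/3}$ (here $p > 3$ is essential). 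For the $P_j$-factors, step~1 provides strong convergence in $\Lrm^{s/2}$, $\Lrm^s$, and $\Lrm^\infty$ (trivially, since the factor is $1$) respectively. Combining weak convergence with strong convergence of products gives the three desired convergences in $\Lrm^{\sigma_1}, \Lrm^{\sigma_2}, \Lrm^{\sigma_3}$ with
\begin{equation*}
  \tfrac{1}{\sigma_1} = \tfrac{1}{p} + \tfrac{2}{s},\qquad
  \tfrac{1}{\sigma_2} = \tfrac{2}{p} + \tfrac{1}{s},\qquad
  \tfrac{1}{\sigma_3} = \tfrac{3}{p}.
\end{equation*}

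The only point requiring care is verifying $\sigma_i > 1$ for a common $\sigma > 1$. The binding constraint is $\sigma_1 > 1$, which requires $\frac{1}{p} + \frac{2}{s} < 1$, i.e.\ $s > \frac{2p}{p-1}$ --- exactly the hypothesis of the lemma. Since $p > 3$, one checks $\frac{p}{p-2} < \frac{2p}{p-1}$ and $\frac{p}{3} > 1$, so $\sigma_2, \sigma_3 > 1$ follow automatically. Taking $\sigma := \min\{\sigma_1,\sigma_2,\sigma_3\} > 1$ and using that $\Omega$ is bounded (so $\Lrm^{\sigma_i} \embed \Lrm^\sigma$ with continuity of the weak topology on bounded sets) gives the joint statement. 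I expect no serious obstacle beyond bookkeeping the exponents; the substantive ingredient is the polynomial structure of $P_j^{-1}$ forced by $\det P_j = 1$, which converts what would be a singular nonlinearity into a tractable weak--strong pairing.
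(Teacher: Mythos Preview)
Your proof is correct and follows essentially the same route as the paper: both exploit $P_j^{-1}=(\cof P_j)^T$ to get strong convergence of the $P$-factors, invoke the classical weak continuity of minors for the $y$-factors, and combine via weak\,$\times$\,strong with the same H\"older exponents $\sigma_1,\sigma_2,\sigma_3$ and the same check that $s>\tfrac{2p}{p-1}$ is the binding constraint. The only cosmetic difference is that the paper cites Pratt's theorem for $(\cof P_j)^T\to(\cof P)^T$ in $\Lrm^{s/2}$, whereas you argue directly from the polynomial structure; both are fine.
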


\begin{proof}
We have, by Cramer's rule, $\nabla y_j P_j^{-1} = \nabla y_j \cdot (\cof P_j)^T$, and using, for instance, Pratt's convergence theorem,
\[
  (\cof P_j)^T \to (\cof P)^T  \quad\text{in $\Lrm^{s/2}$.}
\]
Then,
\[
  \nabla y_j P_j^{-1} \toweak \nabla y P^{-1}  \quad\text{in $\Lrm^{\sigma'}(D;\R)$}
\]
if $\frac{1}{\sigma'} := \frac1p + \frac2s < 1$, which is equivalent to our assumption $s > \frac{2p}{p-1}$.

Next, we recall that
\[
  \cof(\nabla y_j P_j^{-1})
  = \cof(\nabla y_j) \cdot \cof(P_j^{-1})
  = \cof(\nabla y_j) \cdot P_j^T.
\]
By the weak continuity of minors (see, e.g.,~\cite[Lemma~5.10]{Rindler18book}) we know that $\cof(\nabla y_j) \toweak \cof(\nabla y)$ in $\Lrm^{p/2}$. Thus,
\[
  \cof(\nabla y_j P_j^{-1}) \toweak \cof(\nabla y P^{-1})  \quad\text{in $\Lrm^{\sigma''}(D;\R)$}
\]
if $\frac{1}{\sigma''} := \frac2p + \frac1s < 1$, which is equivalent to $s > \frac{p}{p-2}$. Since our assumptions imply $s > \frac{2p}{p-1} > \frac{p}{p-2}$, we also obtain convergence in this case.

Finally,
\[
  \det(\nabla y_j P_j^{-1})
  = \det(\nabla y_j).
\]
By the weak continuity of minors again,
\[
  \det(\nabla y_j P_j^{-1}) \toweak \det(\nabla y P^{-1})  \quad\text{in $\Lrm^{\sigma'''}(D;\R)$}
\]
for $\sigma''' \in (1,p/3]$. Then take $\sigma := \min\{\sigma',\sigma'',\sigma'''\}$.
\end{proof}

We can then state a result on the lower semicontinuity of the elastic energy:

\begin{proposition} \label{prop:We_lsc}
The functional $(\hat{y},\hat{P}) \mapsto \Wcal_e(\hat{y},\hat{P})$ is weakly (sequentially) lower semicontinuous with respect to sequences $(y_j) \subset \Wrm^{1,p}_g(\Omega;\R^3)$ satisfying $\det \nabla y_j > 0$ a.e.\ in $\Omega$, and $(P_j) \subset \Wrm^{1,q}(\Omega;\R^{3 \times 3})$ with $\det P_j = 1$ a.e.\ in $\Omega$.
\end{proposition}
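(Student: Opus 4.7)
The plan is to reduce the statement to the classical Ioffe--Ball lower semicontinuity theorem for polyconvex integrands, with Lemma~\ref{lem:minors} supplying the weak convergence of the relevant minors.

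\emph{Step 1 (extracting strong convergence of $P_j$).} The natural test setting is $y_j \toweak y$ in $\Wrm^{1,p}_g(\Omega;\R^3)$ with $\det \nabla y_j > 0$ a.e., and $P_j \toweak P$ in $\Wrm^{1,q}(\Omega;\R^{3 \times 3})$ with $\det P_j = 1$ a.e. Since $q > 3$, the Rellich--Kondrachov theorem provides the compact embedding $\Wrm^{1,q}(\Omega;\R^{3 \times 3}) \cembed \Crm(\cl{\Omega};\R^{3 \times 3})$, so $P_j \to P$ uniformly on $\cl{\Omega}$, and in particular in $\Lrm^s(\Omega;\R^{3 \times 3})$ for every $s < \infty$. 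Uniform convergence also gives $\det P = 1$ a.e.\ in $\Omega$.

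\emph{Step 2 (convergence of minors).} Choose any exponent $s > \tfrac{2p}{p-1}$; the previous step supplies $P_j \to P$ in $\Lrm^s$. Lemma~\ref{lem:minors} then yields, for some $\sigma > 1$,
\[
  \nabla y_j\, P_j^{-1} \toweak \nabla y\, P^{-1},\qquad
  \cof\bigl(\nabla y_j\, P_j^{-1}\bigr) \toweak \cof\bigl(\nabla y\, P^{-1}\bigr),\qquad
  \det\bigl(\nabla y_j\, P_j^{-1}\bigr) \toweak \det\bigl(\nabla y\, P^{-1}\bigr)
\]
in $\Lrm^\sigma(\Omega)$.

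\emph{Step 3 (polyconvex lsc).} By Assumption~\ref{as:We}, $W_e$ is polyconvex, so there is a continuous convex $\bar{W}_e \colon \R^{3\times 3} \times \R^{3 \times 3} \times \R \to [0,\infty]$ with $W_e(E) = \bar{W}_e(E,\cof E,\det E)$. Setting $g_j := (\nabla y_j P_j^{-1}, \cof(\nabla y_j P_j^{-1}), \det(\nabla y_j P_j^{-1}))$ and $g$ analogously, Step~2 gives $g_j \toweak g$ in $\Lrm^\sigma(\Omega;\R^{19})$. The classical Ioffe--De Giorgi theorem on lower semicontinuity of convex integral functionals (see, e.g., Ball's original argument or~\cite[Theorem~5.14]{Rindler18book}) applies to the nonnegative, convex, lsc integrand $\bar{W}_e$ and yields
\[
  \Wcal_e(y,P) = \int_\Omega \bar{W}_e(g)\dd x \leq \liminf_{j \to \infty} \int_\Omega \bar{W}_e(g_j)\dd x = \liminf_{j \to \infty} \Wcal_e(y_j,P_j),
\]
which is the desired lower semicontinuity.

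\emph{Main obstacle.} The only subtle point is that $\bar{W}_e$ takes the value $+\infty$ on $\{\det \leq 0\}$, so one must ensure the Ioffe theorem applies in this extended-valued setting. This is handled by approximating $\bar{W}_e$ from below by an increasing sequence of nonnegative continuous convex functions of linear growth (e.g., $\bar{W}_e^k := \min\{\bar{W}_e, k\}$ further regularized, or by supremum over affine minorants), each of which is weakly continuous along our sequence by Step~2; monotone convergence then yields the inequality for $\bar{W}_e$ itself. The $q > 3$ hypothesis is essential precisely because it converts weak $\Wrm^{1,q}$-convergence of $P_j$ into strong $\Lrm^s$-convergence for $s$ arbitrarily large, which is what Lemma~\ref{lem:minors} demands.
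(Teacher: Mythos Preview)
Your proof is correct and follows essentially the same route as the paper: compact embedding $\Wrm^{1,q}\hookrightarrow\Lrm^s$ (you go via $\Crm(\cl\Omega)$, which is fine since $q>3$), then Lemma~\ref{lem:minors} for the minors of $\nabla y_j P_j^{-1}$, then the standard convex lsc argument for the polyconvex representation $\bar W_e$. The paper phrases the final step via Mazur's lemma (citing~\cite[Theorem~6.5]{Rindler18book}) rather than Ioffe--De~Giorgi, but these are the same classical mechanism and your handling of the extended-valued integrand is a reasonable elaboration of what the paper leaves implicit.
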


\begin{proof}
Let $(y_j,P_j)$ be as in the statement of the proposition with $y_j \toweak y$ in $\Wrm^{1,p}$ and $P_j \toweak P$ in $\Wrm^{1,q}$. Let $s > \frac{2p}{p-1}$. By the Rellich--Kondrachov theorem, $\Wrm^{1,q}(\Omega;\R^{3 \times 3}) \cembed \Lrm^s(\Omega;\R^{3 \times 3})$ (since $q > 3$ this holds for all $s \in [1,\infty]$) and hence $P_j \to P$ strongly in $\Lrm^s$. Then, by Lemma~\ref{lem:minors} all minors of the compound sequence $\nabla y_j P_j^{-1}$ converge weakly in $\Lrm^\sigma$ for some $\sigma > 1$. Thus, the lower semicontinuity follows in the same way as for the polyconvex integrand $W_e$ (via strong lower semicontinuity and Mazur's lemma), see, e.g.,~\cite[Theorem~6.5]{Rindler18book} for this classical argument.
\end{proof}

Next, we establish some basic properties of the dissipation.

\begin{lemma} \label{lem:Diss}
Let $z = (P,\Td) \in \Wrm^{1,q}(\Omega;\R^{3 \times 3}) \times \Disl(\cl{\Omega})$ with $\det P = 1$ a.e.\ in $\Omega$. For $\Sd \in \Slip(z)$ it holds that
\begin{equation} \label{eq:Diss_est}
  C^{-1} \Var(\Sd) \leq \Diss(\Sd)
\end{equation}
with a constant $C > 0$. Moreover, for $\Sd^1 \in \Slip(z)$, and $\Sd^2 \in \Slip(\Sd^1_\ff z)$, it holds that
\begin{equation} \label{eq:Diss_additive}
  \Diss(\Sd^2 \circ \Sd^1) = \Diss(\Sd^1) + \Diss(\Sd^2).
\end{equation}
\end{lemma}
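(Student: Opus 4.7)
The coercivity bound~\eqref{eq:Diss_est} is an immediate pointwise consequence of Assumption~\ref{as:R}(iv): one has $R^b(P_\Sigma(t,x),\vec{S}^b(t,x)) \geq C^{-1}\abs{\pbf(\vec{S}^b(t,x))}$ at every $(t,x)$, with $C>0$ uniform in $b$, and integrating this inequality against $\tv{S^b}\otimes\kappa$ produces $C^{-1}\int \Var(S^b;[0,1])\dd\kappa(b) = C^{-1}\Var(\Sigma)$ on the right-hand side, by the very definition of the variation of a slip trajectory.

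For the additivity~\eqref{eq:Diss_additive}, I would unfold the concatenation constructed in Lemma~\ref{lem:Sigma_concat}: the $b$-component of $\Sigma^2 \circ \Sigma^1$ is $V^b = r^{1/2}_* S_1^b + t^{1/2}_* r^{1/2}_* S_2^b$, whose two summands are integral $2$-currents with supports contained in the disjoint time slabs $[0,1/2] \times \R^3$ and $[1/2,1] \times \R^3$. The separating slice $\{t = 1/2\} \times \R^3$ is $\tv{V^b}$-negligible because the Lipschitz continuity condition in the definition of $\Lip([0,1];\DS(\cl{\Omega}))$ rules out any temporal atom of $\tv{V^b}$ at $t = 1/2$. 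Consequently both $\tv{V^b}$ and the multi-vector measure $\vec{V}^b \, \tv{V^b}$ decompose additively into their restrictions to the two slabs, so that $\Diss(\Sigma^2 \circ \Sigma^1) = I_1 + I_2$ with $I_j$ the obvious integral over the $j$th slab. Moreover, on $[0,1/2]$ the plastic distortion path along $V^b$ equals $P_{\Sigma^1}(2t)$ by the rate-independence relation~\eqref{eq:PSigma_RI} of Lemma~\ref{lem:Sigma_rescale} applied with $a(\tau) = \tau/2$, and on $[1/2,1]$ it equals $P_{\Sigma^2}(2t-1)$ with starting point $\Sigma^1_\ff P$, in accordance with~\eqref{eq:cat_ff}.

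It then remains to identify $I_1$ (integrated against $\kappa$) with $\int\int R^b(P_{\Sigma^1}, \vec{S_1^b}) \dd\tv{S_1^b} \dd\kappa(b) = \Diss(\Sigma^1)$, and analogously for $I_2$. I would do this by applying the area formula for the Lipschitz pushforward $r^{1/2}_*$ to the measure--orientation pair $(\tv{S_1^b}, \vec{S_1^b})$: the tangential $2$-Jacobian arising from the change of variables on the rectifiable carrier cancels exactly against the scalar factor produced by the positive $1$-homogeneity of $R^b$ in its second argument. This is the same bookkeeping that underlies the rescaling identity $\Var(a_* S) = \Var(S)$ in Lemma~\ref{lem:rescale}, now generalised from the integrand $\abs{\pbf(\xi)}$ to $R^b(P,\xi)$; the compatibility with $R^b$ is transparent when one notes that $\pbf$ commutes with the purely temporal pushforward $r^{1/2}_*$ on $2$-vectors, so that the $\pbf$-controlled quantitative part of $R^b$ (cf.\ Assumption~\ref{as:R}(iii),(iv)) transforms correctly.

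The main obstacle is the last step, namely the precise change of variables on the pair $(\tv{V^b}, \vec{V}^b)$ under the rescaling $r^{1/2}_*$ and the verification that the $1$-homogeneity of $R^b$ absorbs the tangential Jacobian. This is essentially a lemma about how $\Diss$ transforms under invertible $\Crm^1$-rescalings of time (parallelling Lemma~\ref{lem:Sigma_rescale} for the plastic distortion); once this rescaling lemma is in hand, summing $I_1 + I_2$ and integrating against $\kappa$ yields~\eqref{eq:Diss_additive} directly.
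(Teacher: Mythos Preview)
Your proposal is correct and follows essentially the same route as the paper. The paper's proof is extremely terse: it notes that~\eqref{eq:Diss_est} is immediate from Assumption~\ref{as:R}(iv), and that~\eqref{eq:Diss_additive} follows ``in the same way as~\eqref{eq:cat_Var} in Lemma~\ref{lem:Sigma_concat} (also using~\eqref{eq:cat_ff})''---i.e., by decomposing the concatenation into its two rescaled halves and invoking the rescaling invariance of the relevant integral. The rescaling lemma for $\Diss$ that you identify as the ``main obstacle'' is stated and proved in the paper as the very next result, Lemma~\ref{lem:Diss_rescale}.
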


\begin{proof}
The first claim follows directly from the properties assumed on $R^b$ in Assumption~\ref{as:R}. The second claim~\eqref{eq:Diss_additive} follows in the same way as~\eqref{eq:cat_Var} in Lemma~\ref{lem:Sigma_concat} (also using~\eqref{eq:cat_ff}).
\end{proof}

\begin{lemma} \label{lem:Diss_rescale}
Let, $\Sd \in \Slip(z;[0,T])$, where $z = (P,\Td) \in \Wrm^{1,q}(\Omega;\R^{3 \times 3}) \times \Disl(\cl{\Omega})$ with $\det P = 1$ a.e.\ in $\Omega$, and let $a \colon [0,T] \to [0,T']$ be an injective Lipschitz map with $a(0) = 0$, $a(T) = T'$. Then, for
\[
  a_* \Sd := (a_* S^b)_b \in \Slip(\Td;[0,T'])
\]
it holds that
\[
  \Diss(a_* \Sd;[0,T']) = \Diss(\Sd;[0,T]).
\]
\end{lemma}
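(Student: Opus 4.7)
The plan is to combine three ingredients: the area formula for pushforwards of rectifiable currents under the Lipschitz map $\phi(t,x) := (a(t),x)$, noting that $a_* S^b = \phi_* S^b$; the positive $1$-homogeneity of $R^b$ in its second argument; and the rate-independence of the plastic evolution established in Lemma~\ref{lem:Sigma_rescale}, which gives $P_{a_*\Sigma}(a(t),x) = P_\Sigma(t,x)$.

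Since $\phi$ is bi-Lipschitz and injective, the orienting $2$-vector of $a_*S^b$ at $\phi(t,x)$ equals $\phi_*\vec{S}^b(t,x)/\abs{\phi_*\vec{S}^b(t,x)}$ (mass norm) and the corresponding $2$-Jacobian is $J_2\phi(t,x) = \abs{\phi_*\vec{S}^b(t,x)}$. Applying the area formula thus gives
\begin{align*}
  \Diss(a_*\Sigma;[0,T']) &= \int\int_{[0,T']\times\R^3} R^b\bigl(P_{a_*\Sigma}(t',x), \vec{a_*S^b}(t',x)\bigr) \dd\tv{a_*S^b}(t',x) \dd\kappa(b) \\
  &= \int\int_{[0,T]\times\R^3} R^b\biggl(P_{a_*\Sigma}(\phi(t,x)), \frac{\phi_*\vec{S}^b(t,x)}{\abs{\phi_*\vec{S}^b(t,x)}}\biggr) \abs{\phi_*\vec{S}^b(t,x)} \dd\tv{S^b}(t,x) \dd\kappa(b).
\end{align*}
Invoking the $1$-homogeneity of $R^b$ in the second slot to absorb the Jacobian, and Lemma~\ref{lem:Sigma_rescale} to replace $P_{a_*\Sigma}(\phi(t,x))$ by $P_\Sigma(t,x)$, the right-hand side reduces to $\int\int R^b(P_\Sigma(t,x), \phi_*\vec{S}^b(t,x)) \dd\tv{S^b}(t,x)\dd\kappa(b)$.

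The main --- and essentially only conceptual --- step that remains is to show $R^b(P, \phi_*\vec{S}^b(t,x)) = R^b(P, \vec{S}^b(t,x))$ $\tv{S^b}$-almost everywhere. Since $\pbf \circ \phi = \pbf$, one has $\pbf(\phi_*\xi) = \pbf(\xi)$ for every $2$-vector $\xi$, so it suffices to prove that $R^b(P,\frarg)$ factors through $\pbf$. To see this, apply the upper bound in Assumption~\ref{as:R}(iii) to the compact set $\{P\} \times \{t\nu : \abs{t}\leq 1\} \subset D$ for $\nu \in \ker \pbf$; this forces $R^b(P,\nu) \leq C\abs{\pbf(\nu)}=0$. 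Convexity and $1$-homogeneity in the second argument then give $R^b(P,\xi+\nu) \leq R^b(P,\xi) + R^b(P,\nu) = R^b(P,\xi)$, and the reverse inequality follows by the same argument applied with $\xi$ replaced by $\xi+\nu$ and $\nu$ by $-\nu$. Thus $R^b(P,\frarg)$ depends only on $\pbf(\frarg)$, which closes the argument. This factorization step is the conceptual crux of the lemma --- without it, the change of variables would leave an unabsorbed dependence on the time-like component of $\phi_*\vec{S}^b$; everything else is a direct combination of the area formula with $1$-homogeneity and the pointwise rescaling identity for $P_\Sigma$.
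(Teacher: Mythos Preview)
Your proof is correct and follows the same route as the paper's terse proof, which merely references the area-formula argument behind Lemma~\ref{lem:rescale} together with Lemma~\ref{lem:Sigma_rescale}; your explicit factorization of $R^b(P,\cdot)$ through $\pbf$ via Assumption~\ref{as:R}(iii) and sublinearity is exactly the detail that reference leaves implicit. (The aside that $\phi$ is bi-Lipschitz is a harmless slip --- only Lipschitz and injective are needed for the pushforward area formula, and those hold.)
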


\begin{proof}
This follows in exactly the same way as in the proof of Lemma~\ref{lem:rescale} (see Lemma~3.4 in~\cite{Rindler23}) using also Lemma~\ref{lem:Sigma_rescale}.
\end{proof}

\begin{lemma} \label{lem:Diss_lsc}
The mapping $\Sd \mapsto \Diss(\Sd;[0,T])$ is lower semicontinuous for sequences $\Sd_j \in \Slip(z;[0,T])$ with $\Sd_j \toweakstar \Sd$, where $z = (P,\Td) \in \Wrm^{1,q}(\Omega;\R^{3 \times 3}) \times \Disl(\cl{\Omega})$ with $\det P = 1$ a.e.\ in $\Omega$.
\end{lemma}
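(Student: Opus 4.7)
The plan is to use Fatou's lemma on the outer $\kappa$-integral to reduce to proving, for each fixed $b \in \Bcal$, lower semicontinuity of
\[
  I^b(\Sigma) := \int_{[0,T] \times \R^3} R^b\bigl(P_\Sigma(t,x), \vec{S}^b(t,x)\bigr) \dd \tv{S^b}(t,x)
\]
along $\Sigma_j \toweakstar \Sigma$. The central difficulty is that the integrand depends on $\Sigma_j$ in two intertwined ways: through the slip measure $S^b_j = \vec{S}^b_j \tv{S^b_j}$, and nonlocally through $P_{\Sigma_j}$ via the ODE~\eqref{eq:PSigma_ODE}. I will decouple these dependencies by \emph{freezing the coefficient} to $P_\Sigma$, exploiting the strong convergence $P_{\Sigma_j} \to P_\Sigma$ guaranteed by Lemma~\ref{lem:PSigma_cont}.

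Concretely, I will split
\[
  I^b(\Sigma_j) = \int R^b(P_\Sigma, \vec{S}^b_j) \dd \tv{S^b_j} + \int \bigl[R^b(P_{\Sigma_j}, \vec{S}^b_j) - R^b(P_\Sigma, \vec{S}^b_j)\bigr] \dd \tv{S^b_j}.
\]
By Lemma~\ref{lem:PSigma_cont}, $P_{\Sigma_j} \to P_\Sigma$ uniformly on $[0,T] \times \cl{\Omega}$, so all values $P_{\Sigma_j}(t,x)$ and $P_\Sigma(t,x)$ lie in a common compact subset $K \subset \set{P \in \R^{3\times 3}}{\det P = 1}$. Since $\abs{\vec{S}^b_j} = 1$ $\tv{S^b_j}$-a.e., the local Lipschitz assumption~\ref{as:R}(ii) produces a constant $L = L(K) > 0$ with
\[
  \absb{R^b(P_{\Sigma_j}, \vec{S}^b_j) - R^b(P_\Sigma, \vec{S}^b_j)} \leq L \norm{P_{\Sigma_j} - P_\Sigma}_\infty.
\]
Hence the error term is bounded by $L \norm{P_{\Sigma_j} - P_\Sigma}_\infty \cdot \Mbf(S^b_j)$, which vanishes since the uniform mass bound $\sup_j \Mbf(S^b_j) < \infty$ follows from the weak* convergence $S^b_j \toweakstar S^b$ in $\Irm_2$ together with Banach--Steinhaus.

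For the main term with the frozen coefficient, I will invoke the classical Reshetnyak lower semicontinuity theorem for $\Wedge_2 \R^{1+3}$-valued Radon measures. The integrand $\xi \mapsto R^b(P_\Sigma(t,x), \xi)$ is convex and positively $1$-homogeneous by Assumption~\ref{as:R}(i), while the $(t,x)$-dependence is continuous by~\ref{as:R}(ii) combined with the space-time continuity of $P_\Sigma$. Since $S^b_j = \vec{S}^b_j \tv{S^b_j} \toweakstar S^b = \vec{S}^b \tv{S^b}$ as vector-valued Radon measures on $[0,T] \times \cl{\Omega}$, Reshetnyak yields
\[
  \liminf_{j \to \infty} \int R^b(P_\Sigma, \vec{S}^b_j) \dd \tv{S^b_j} \geq \int R^b(P_\Sigma, \vec{S}^b) \dd \tv{S^b} = I^b(\Sigma).
\]
Combined with the vanishing of the error term this gives $\liminf_j I^b(\Sigma_j) \geq I^b(\Sigma)$, and Fatou's lemma on the $\kappa$-integral concludes.

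The main obstacle is precisely the coupled nonlinear dependence of the integrand on the slip trajectory; without Lemma~\ref{lem:PSigma_cont} one cannot bring the problem into the scope of standard lower semicontinuity results. Once the uniform convergence of $P_{\Sigma_j}$ is used to freeze the coefficient, the remainder is a routine application of Reshetnyak.
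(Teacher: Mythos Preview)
Your proof is correct and follows essentially the same route as the paper: freeze the coefficient to $P_\Sigma$ via the uniform convergence $P_{\Sigma_j}\to P_\Sigma$ from Lemma~\ref{lem:PSigma_cont}, apply Reshetnyak's lower semicontinuity theorem to the frozen-coefficient integral, control the discrepancy via the local Lipschitz assumption~\ref{as:R}(ii), and finish with Fatou's lemma on the $\kappa$-integral. The only cosmetic difference is that you write the decomposition into main term plus error explicitly, whereas the paper phrases the same step as an equality of $\liminf$'s; the underlying argument is identical.
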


\begin{proof}
Write $\Sd_j = (S^b_j)_b$ and $\Sd = (S^b)_b$. By Reshetnyak's lower semicontinuity theorem (see, for instance,~\cite[Theorem~2.38]{AmbrosioFuscoPallara00book})
\[
  \int_{[0,T] \times \Omega} R^b(P_{\Sd},\vec{S}^b) \dd \tv{S} 
  \leq \liminf_{j \to \infty} \int_{[0,T] \times \Omega} R^b(P_{\Sd},\vec{S}^b_j) \dd \tv{S_j}.
\]
Furthermore, by Lemma~\ref{lem:PSigma_cont}, $P_{\Sd_j} \to P_{\Sd}$ uniformly in $[0,1] \times \Omega$. Thus, also using the local Lipschitz continuity of $R^b$ (see Assumption~\ref{as:R}), the fact that both $\abs{P_{\Sd_j}}$ and $\abs{\vec{S}^b_j}$ are uniformly bounded, and Fatou's lemma, we obtain
\begin{align*}
  \Diss(\Sd;[0,T]) &\leq \frac12 \sum_{b \in \Bcal} \liminf_{j \to \infty} \int_{[0,T] \times \Omega} R^b(P_{\Sd},\vec{S}^b_j) \dd \tv{S_j} \\
  &= \frac12 \sum_{b \in \Bcal} \liminf_{j \to \infty} \int_{[0,T] \times \Omega} R^b(P_{\Sd_j},\vec{S}^b_j) \dd \tv{S_j} \\
  &\leq \liminf_{j \to \infty} \, \Diss(\Sd_j;[0,T]).
\end{align*}  
This is the assertion.
\end{proof}

For convenient later use, in the following lemma we collect several convergence assertions.

\begin{lemma} \label{lem:conv}
The following hold:
\begin{enumerate}[(i)]
  \item $(t,y,P,\Td) \mapsto \Ecal(t,y,P,\Td)$ is lower semicontinuous for sequences $t_j \to t$ in $[0,T]$, $y_j \toweak y$ in $\Wrm^{1,p}_g(\Omega;\R^3)$, $P_j \to P$ in $\Lrm^s(\Omega;\R^{3 \times 3})$ with $\det P_j = 1$ a.e.\ in $\Omega$, and $\Td_j \toweakstar \Td$ in $\Disl(\cl{\Omega})$.
  \item $(t,P) \mapsto \Ecal(t,y,P,\Td)$ is continuous for sequences $t_j \to t$, $P_j \toweak P$ in $\Wrm^{1,q}(\Omega;\R^{3 \times 3})$ with $\det P_j = 1$ a.e.\ in $\Omega$, at fixed $y \in \Wrm^{1,p}_g(\Omega;\R^3)$ such that $\Wcal_e(y,P) < \infty$ and $\Td \in \Disl(\cl{\Omega})$.
  \item $(t,y) \mapsto \dpr{\dot{f}(t),y}$ is continuous for sequences $t_j \to t$ in $[0,T]$ and $y_j \toweak y$ in $\Wrm^{1,p}_g(\Omega;\R^3)$.
  \item $\Sd \mapsto \Diss(\Sd;[0,T])$ is lower semicontinuous for sequences $\Sd_j \in \Slip(z;[0,T])$ with $\Sd_j \toweakstar \Sd$, where $z = (P,\Td) \in \Wrm^{1,q}(\Omega;\R^{3 \times 3}) \times \Disl(\cl{\Omega})$ with $\det P = 1$ a.e.\ in $\Omega$.
\end{enumerate}
\end{lemma}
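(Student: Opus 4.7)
The plan is to verify the four assertions separately. Assertion (iv) is precisely the content of Lemma~\ref{lem:Diss_lsc} above and requires no further work.

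For (i) I would decompose $\Ecal = \Wcal_e - \dprn{f(\cdot),\cdot} + \Wcal_c$ and treat each term in turn. The elastic part: since $p > 3$ forces the exponent $s$ determined by $1/s = 1/p - 1/r$ to satisfy $s > 2p/(p-1)$ (the range in which Lemma~\ref{lem:minors} applies), the strong convergence $P_j \to P$ in $\Lrm^s$ paired with $y_j \toweak y$ in $\Wrm^{1,p}$ is exactly the hypothesis of Proposition~\ref{prop:We_lsc}, yielding $\Wcal_e(y,P) \leq \liminf_j \Wcal_e(y_j,P_j)$. The external-loading term is handled by using $f \in \Crm^1([0,T];\Wrm^{1,p}(\Omega;\R^3)^*)$ to obtain strong convergence $f(t_j) \to f(t)$ in $\Wrm^{1,p}(\Omega;\R^3)^*$; pairing this against $y_j \toweak y$ in $\Wrm^{1,p}$ yields $\dprn{f(t_j),y_j} \to \dprn{f(t),y}$. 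For the core energy, componentwise weak* convergence of currents gives $\Mbf(T^b) \leq \liminf_j \Mbf(T^b_j)$ for $\kappa$-almost every $b$, and Fatou's lemma applied against the finite measure $\kappa$ delivers $\Wcal_c(\Phi) \leq \liminf_j \Wcal_c(\Phi_j)$.

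For (ii) the loading term is treated exactly as in (i) and $\Wcal_c(\Phi)$ is constant along the sequence, so the task reduces to continuity of $P \mapsto \Wcal_e(y,P)$ along $P_j \toweak P$ in $\Wrm^{1,q}$. Lower semicontinuity again follows from Proposition~\ref{prop:We_lsc}, using that $q > 3$ yields the compact embedding $\Wrm^{1,q} \cembed \Lrm^s$ for any $s$. The matching upper bound is the main obstacle. My approach is: first use $\Wcal_e(y,P) < \infty$ together with~\eqref{eq:We_coerc} to deduce $\det(\nabla y P^{-1}) > 0$, and hence $\det \nabla y > 0$, almost everywhere; on this full-measure set the identity $(\nabla y P^{-1})^{-1}(\nabla y P_j^{-1}) = P P_j^{-1}$ is meaningful, and plastic incompressibility gives $\det(P P_j^{-1}) = 1$. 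The compact embedding $\Wrm^{1,q} \cembed \Crm(\cl{\Omega};\R^{3 \times 3})$ forces $P_j \to P$ uniformly, so $P P_j^{-1} \to I$ uniformly and in particular $P P_j^{-1} \in X_M$ pointwise for $j$ large, with $M$ independent of $j$ and $x$. The control condition~\eqref{eq:We_control} then produces the pointwise majorant $W_e(\nabla y P_j^{-1}) \leq C_M(1 + W_e(\nabla y P^{-1}))$; since the right-hand side is integrable by hypothesis, dominated convergence together with the continuity of $W_e$ yields $\Wcal_e(y,P_j) \to \Wcal_e(y,P)$.

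Assertion (iii) is immediate: $\dot f \in \Crm^0([0,T];\Wrm^{1,p}(\Omega;\R^3)^*)$ implies $\dot f(t_j) \to \dot f(t)$ strongly in the dual, and pairing strong convergence against the weak convergence $y_j \toweak y$ passes to the limit. The only genuine difficulty is thus the upper-semicontinuity step in (ii): there the control condition~\eqref{eq:We_control} is essential to produce an integrable majorant for dominated convergence, and it is precisely the finiteness hypothesis $\Wcal_e(y,P) < \infty$ that makes the quotient $PP_j^{-1}$ meaningful almost everywhere and supplies the required bound. Everything else is either a standard semicontinuity argument or a direct invocation of a result from the preceding subsections.
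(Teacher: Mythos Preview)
Your proposal is correct and follows essentially the same route as the paper: each item is handled by the same decomposition and the same key ingredients (Proposition~\ref{prop:We_lsc}/Lemma~\ref{lem:minors} for $\Wcal_e$, continuity of $f$ and $\dot f$ in the dual for the loading terms, mass lower semicontinuity plus Fatou for $\Wcal_c$, and Lemma~\ref{lem:Diss_lsc} for~(iv)). In~(ii) you are in fact slightly more careful than the paper, which applies~\eqref{eq:We_control} with $F = \nabla y P^{-1}$ without explicitly noting that the finiteness assumption $\Wcal_e(y,P) < \infty$ is what guarantees $\det \nabla y > 0$ a.e.\ and hence that $F^{-1}E = P P_j^{-1}$ is well-defined; similarly, in~(i) you correctly unpack that only $\Lrm^s$-convergence with $s > 2p/(p-1)$ is needed, rather than the $\Wrm^{1,q}$-hypothesis stated in Proposition~\ref{prop:We_lsc}.
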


\begin{proof}
\proofstep{Ad~(i).}
The first term $\Wcal_e(y,P)$ in the definition of $\Ecal$, see~\eqref{eq:E}, is lower semicontinuous by Proposition~\ref{prop:We_lsc}; the second term $-\dpr{f(t),y}$ is in fact continuous since $f(t)$ is continuous in $t$ with values in the dual space to $\Wrm^{1,p}(\Omega;\R^3)$ by~\ref{as:F}; the third term $\Wcal_c(\Td)$ is weakly* lower semicontinuous by the weak* lower semicontinuity of the mass and Fatou's lemma (as in Lemma~\ref{lem:Diss_lsc}).

\proofstep{Ad~(ii).}
We first prove the continuity property for $\Wcal_e$. The compact embedding of $\Wrm^{1,q}(\Omega;\R^{3 \times 3})$ into $\Crm(\Omega;\R^{3 \times 3})$ (since $q > 3$) entails that the $P_j$ are uniformly bounded and converge uniformly to $P$. We further observe via~\eqref{eq:We_control} in~\ref{as:We} (clearly, $PP_j^{-1} \in X_M$ for some $M \geq 1$) that
\[
  W_e(\nabla yP_j^{-1}) \leq C_M (1 + W_e(\nabla yP^{-1}))  \qquad\text{a.e.\ in $\Omega$}
\]
Since taking inverses is a continuous operation on matrices from $X_M$, we get $P_j^{-1} \to P^{-1}$ a.e.\ in $\Omega$. Then,
\[
  W_e(\nabla yP_j^{-1}) \to W_e(\nabla yP^{-1})  \qquad\text{a.e.\ in $\Omega$}
\]
by the continuity of $W_e$ (see~\ref{as:We}). Thus, as $C_M (1 + W_e(\nabla yP^{-1}))$ is integrable by assumption, it follows from the dominated convergence theorem that
\[
  \Wcal_e(y,P_j) \to \Wcal_e(y,P).
\]
For the power term we argue as in~(i).

\proofstep{Ad~(iii).}
This follows again from the properties of the external force, see~\ref{as:F}.

\proofstep{Ad~(iv).}
This was proved in Lemma~\ref{lem:Diss_lsc}.
\end{proof}

We also record the following fact, which occupies a pivotal position in this work. It allows us to translate the weak* convergence of dislocation systems into a slip trajectory (of vanishing dissipation) connecting these dislocation systems to their limit. This will be crucially employed later to show stability of the limit process (see Proposition~\ref{prop:stability}).

\begin{proposition} \label{prop:D_weak*}
Assume that $\Td_j = (T^b_j)_b, \Td = (T^b)_b \in \Disl(\cl{\Omega})$ ($j \in \N$) are such that 
\[
  \supmod_j \Mbf(\Td_j) < \infty.
\]
Then,
\[
  \text{$\dist_{\Lip,\cl{\Omega}}(T^b_j,T^b) \to 0$ for all $b \in \Bcal$}
  \qquad\text{if and only if}\qquad
  \Td_j \toweakstar \Td \quad\text{in $\Disl(\cl{\Omega})$.}
\]
In this case there are $\Sd_j \in \Slip(\Td_j)$ with $(\Sd_j)_\ff \Td_j = \Td$ and
\begin{align*}
  \limsup_{j\to\infty} \norm{\Sd_j}_{\Lrm^\infty} &\leq C \cdot \limsup_{\ell \to \infty} \Mbf(\Td_\ell), \\
  \Diss(\Sd_j) &\to 0,
\end{align*}
where the constant $C > 0$ only depends on the dimensions and on $\Omega$, and $\Diss(\Sd_j)$ is understood relative to any starting point $P \in \Wrm^{1,q}(\Omega;\R^{3 \times 3})$ for a $q \in (3,\infty]$ with $\det P = 1$ a.e.\ in $\Omega$.
\end{proposition}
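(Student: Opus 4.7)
The plan is to build $\Sigma_j$ componentwise over $\Bcal$ via Proposition~\ref{prop:equiv}. Since $\Bcal$ is finite, \enquote{$\kappa$-a.e.\ $b$} simply means every $b \in \Bcal$, and by the definition of weak* convergence in $\DS(\cl{\Omega})$ we have $\Phi_j \toweakstar \Phi$ if and only if $T^b_j \toweakstar T^b$ in $\Irm_1(\cl{\Omega})$ for every $b$. The uniform bound $\sup_j \Mbf(T^b_j) \leq \sup_j \Mbf(\Phi_j) < \infty$ then turns each such weak* convergence into deformation-distance convergence $\dist_{\Lip,\cl{\Omega}}(T^b_j,T^b) \to 0$ via Proposition~\ref{prop:equiv}, and the converse direction is the other half of that same equivalence; this handles the first claim.

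For the construction of $\Sigma_j$, I would fix representatives $\{b_1,\dots,b_m\}$ of the antipodal pairs in $\Bcal$ and apply Proposition~\ref{prop:equiv} to $(T^{b_i}_j)_j$ for each $i$, extracting (by a finite diagonal argument in $i$) a common subsequence along which Lip-integral currents $S^{b_i}_j \in \Lip([0,1];\Irm_1(\cl{\Omega}))$ exist with
\[
  \partial S^{b_i}_j = \delta_1 \times T^{b_i} - \delta_0 \times T^{b_i}_j, \qquad \Var(S^{b_i}_j) \to 0,
\]
and $\limsup_{j\to\infty} \esssup_{t \in [0,1]} \Mbf(S^{b_i}_j(t)) \leq C \cdot \limsup_\ell \Mbf(T^{b_i}_\ell)$. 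I then enforce the sign symmetry by declaring $S^{-b_i}_j := -S^{b_i}_j$, which is consistent with $T^{-b_i}_j = -T^{b_i}_j$ and preserves all variation, mass, and boundary identities. Setting $\Sigma_j := (S^b_j)_{b\in\Bcal}$, checking against the definition of $\Sl(\Phi_j)$ is immediate: each $\partial S^b_j$ is concentrated on $\{0,1\} \times \R^3$ and restricts to $-\delta_0 \times T^b_j$ on $\{0\} \times \R^3$. The identity $\partial S^b_j + \delta_0 \times T^b_j = \delta_1 \times T^b$ yields $T^b_{j,\ff} = \pbf_*(\delta_1 \times T^b) = T^b$, hence $(\Sigma_j)_\ff \Phi_j = \Phi$.

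The $\Lrm^\infty$-norm bound is then immediate from the maximum over the finite set $\Bcal$ of the individual estimates. For the dissipation, the key observation is that $\Var(\Sigma_j) = \int \Var(S^b_j) \dd \kappa(b)$ is a finite sum of quantities going to $0$, so by Lemma~\ref{lem:PSigma_W1q}
\[
  \sup_{t \in [0,1]} \, \norm{P_{\Sigma_j}(t) - P}_{\Wrm^{1,q}} \leq C \cdot \Var(\Sigma_j) \to 0.
\]
Since $q > 3$, the compact embedding $\Wrm^{1,q} \cembed \Crm(\cl{\Omega})$ keeps the values of $P_{\Sigma_j}$ in a common compact set $K \subset \setn{ R \in \R^{3\times 3} }{ \det R = 1 }$ for all $j$ large. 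Assumption~\ref{as:R}(iii) then yields $R^b(P_{\Sigma_j},\vec{S}^b_j) \leq C_K \abs{\pbf(\vec{S}^b_j)}$ $\tv{S^b_j}$-a.e., and integrating gives $\Diss(\Sigma_j) \leq C_K \cdot \Var(\Sigma_j) \to 0$.

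The main subtlety, rather than a true obstacle, is the bookkeeping of subsequence extractions inherited from Proposition~\ref{prop:equiv}: that statement provides its refined $\Lrm^\infty$-estimate only along a subsequence, but the finiteness of $\Bcal$ makes a single diagonal extraction suffice for all Burgers vectors simultaneously, and the $\limsup$- and $\lim$-style conclusions of the proposition are realized along this joint subsequence after the customary relabelling.
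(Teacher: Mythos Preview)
Your argument is correct and follows essentially the same route as the paper: apply Proposition~\ref{prop:equiv} componentwise in $b$ to obtain the equivalence and the connecting currents $S^b_j$, assemble them into $\Sigma_j$, and then bound $\Diss(\Sigma_j)$ via Assumption~\ref{as:R}~(iii) using the uniform $\Crm$-bound on $P_{\Sigma_j}$ from Lemma~\ref{lem:PSigma_W1q}. Your explicit handling of the antipodal symmetry $S^{-b_i}_j := -S^{b_i}_j$ and of the joint subsequence extraction over the finitely many $b$'s are details the paper leaves implicit, but the substance is identical.
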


\begin{proof}
Using the growth properties of $R^b$ in Assumption~\ref{as:R}, the first claim follows immediately from Proposition~\ref{prop:equiv}. For the existence of the $\Sd_j$ as claimed we further obtain $S^b_j \in \Irm^\Lip_{1+1}([0,1] \times \cl{\Omega})$ with
\[
  \partial S^b_j = \delta_1 \times T^b - \delta_0 \times T^b_j, \qquad
  \Var(S^b_j) \to 0,
\]
and
\[
  \limsup_{j\to\infty} \, \; \esssup_{t \in [0,1]} \, \Mbf(S^b_j(t)) \leq C \cdot \limsup_{\ell \to \infty} \, \Mbf(T^b_\ell)
\]
from this result. Then, for $\Sd_j := (S^b_j)_b$ it holds that $(\Sd_j)_\ff \Td_j = \Td$ and
\[
  \Diss(\Sd_j) = \frac12 \sum_{b \in \Bcal} \int_{[0,1] \times \R^3} R^b \bigl( P_{\Sd_j}, \vec{S}^b_j \bigr) \dd \tv{S^b_j}
  \leq C \sum_{b \in \Bcal} \Var(S^b_j) \to 0
\]
since $P_{\Sd_j}$ remains uniformly bounded (in $j$) by Lemma~\ref{lem:PSigma_W1q} (and the continuous embedding $\Wrm^{1,q}(\Omega;\R^{3 \times 3}) \embed \Crm(\Omega;\R^{3 \times 3})$), whereby Assumption~\ref{as:R}~(iii) becomes applicable.
\end{proof}

\begin{remark}
Note that we do not claim that \emph{any} two dislocation systems $\Td_1, \Td_2 \in \Disl(\cl{\Omega})$ can be connected by a slip trajectory. Indeed, if $\Omega$ is not simply connected and has a hole (with respect to countably $1$-rectifiable loops), then there are dislocation systems that cannot be deformed into each other.
\end{remark}

\section{Time-incremental approximation scheme}   \label{sc:approx}
 
We start our construction of the energetic solution with a time-discretized problem and corresponding discrete solution. For brevity of notation it will be convenient to define the \term{deformation space}
\[
  \Ycal := \setb{ \Wrm^{1,p}_g(\Omega;\R^3) }{ \text{$\det \nabla y > 0$ a.e.\ in $\Omega$} }
\]
and the \term{internal variable space}
\[
  \Zcal := \setb{ (P,\Td) \in \Wrm^{1,q}(\Omega;\R^{3 \times 3}) \times \Disl(\cl{\Omega}) }{ \text{$\det P = 1$ a.e.\ in $\Omega$} }.
\]

\subsection{Time-incremental minimization}

Consider for $N \in \N$ the partition of the time interval $[0,T]$ consisting of the $(N+1)$ points
\[
  t^N_k := k \cdot \Delta T^N,  \qquad k=0,1,\ldots,N,  \qquad\text{where}\qquad \Delta T^N := \frac{T}{N}.
\]
Set
\[
  y^N_0 := y_0, \qquad
  z^N_0 = (P^N_0,\Td^N_0) := (P_0,\Td_0) = z_0
\]
with $y_0,z_0$ from Assumption~\ref{as:initial}. For $k = 1, \ldots, N$, we will in the following construct
\[
  (y^N_k,z^N_k,\Sd^N_k) = (y^N_k,P^N_k,\Td^N_k,\Sd^N_k) \in \Ycal \times \Zcal \times \Slip(z^N_{k-1})
\]
according to the \term{time-incremental minimization problem}
\begin{equation} \label{eq:IP} \tag{IP}
\left\{ \begin{aligned}
  &\text{$(y^N_k,\Sd^N_k)$ }\;\text{minimizes $(\hat{y},\hat{\Sd}) \mapsto \Ecal \bigl( t^N_k, \hat{y}, \hat{\Sd}_\ff z^N_{k-1} \bigr) + \Diss(\hat{\Sd})$}\\
  &\phantom{\text{$(y^N_k,\Sd^N_k)$ }}\;\text{over all $\hat{y} \in \Ycal$, $\hat{\Sd} \in \Slip(z^N_{k-1})$ with $\norm{\hat{\Sd}}_{\Lrm^\infty} \leq \gamma$} \; ;\\
  &z^N_k := (\Sd^N_k)_\ff z^N_{k-1}.
\end{aligned} \right.
\end{equation}
Here,
\begin{equation} \label{eq:beta}
  \gamma \geq \Mbf(\Td_0)
\end{equation}
is a parameter.

\begin{remark}
The assumption $\norm{\hat{\Sd}}_{\Lrm^\infty} \leq \gamma$ in the minimization is necessary because we cannot control $\norm{\hat{\Sd}}_{\Lrm^\infty}$ by the variation of $\hat{\Sd}$ alone, see Example~3.6 in~\cite{Rindler23}. The assumption~\eqref{eq:beta} is required for the well-posedness of the time-incremental problem since it makes the neutral slip trajectory admissible (see Lemma~\ref{lem:neutral}) and hence the candidate set for the minimization in~\eqref{eq:IP} is not empty. Later, when we have a time-continuous process, we can infer a uniform mass bound from the energy balance~(E) and the coercivity of $\Ecal$ (Lemma~\ref{lem:E_coerc}) and then let $\gamma \to \infty$.
\end{remark}

The existence of discrete solutions is established in the following result. Here and in the following, all constants implicitly depend on the data in Assumptions~\ref{as:first}--\ref{as:last}.

\begin{proposition} \label{prop:IP_solution}
For $N$ large enough there exists a solution $(y^N_k,z^N_k,\Sd^N_k)$ to the time-incremental minimization problem~\eqref{eq:IP} for all $k = 0,\ldots,N$. Moreover, defining
\[
  e^N_k := \Ecal(t^N_k,y^N_k,z^N_k),  \qquad d^N_k := \Diss(\Sd^N_k),
\]
and
\[
  \alpha^N_k := 1 + e^N_k + \sum_{j=1}^k d^N_j,
\]
the difference inequality
\begin{equation} \label{eq:G_diff_ineq}
  \frac{\alpha^N_k-\alpha^N_{k-1}}{\Delta T^N} \leq C \ee^{\alpha^N_{k-1}}  \qquad\text{for $k = 1,\ldots,N$}
\end{equation}
holds, where $C > 0$ is a constant that depends only on the data in the assumptions.
\end{proposition}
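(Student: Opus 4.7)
The proof splits into two parts: existence of the minimizer at each step by the direct method, and the derivation of the difference inequality by competitor testing.

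\textbf{Existence of the minimizer.} I would proceed by induction on $k$. The candidate set is nonempty because the neutral slip trajectory $\Id^{\Phi^N_{k-1}}$ of Lemma~\ref{lem:neutral} is admissible as long as $\Mbf(\Phi^N_{k-1}) \leq \gamma$; this holds at $k = 1$ by~\eqref{eq:beta}, and at later steps thanks to a uniform-in-$k$ bound on $\alpha^N_{k-1}$ discussed below. Given a minimizing sequence $(\hat{y}_j, \hat{\Sigma}_j)$, testing against $(y^N_{k-1}, \Id^{\Phi^N_{k-1}})$ bounds $\Diss(\hat{\Sigma}_j)$, hence via~\eqref{eq:Diss_est} also $\Var(\hat{\Sigma}_j)$; Lemma~\ref{lem:PSigma_def} then yields uniform $\Lrm^s$-control of $\hat{\Sigma}_{j,\ff} P^N_{k-1}$, and Lemma~\ref{lem:E_coerc} yields uniform $\Wrm^{1,p}$-bounds on $\hat{y}_j$ together with uniform mass bounds on $\hat{\Sigma}_{j,\ff} \Phi^N_{k-1}$. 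After reparameterization via Lemma~\ref{lem:Sigma_rescale} (exploiting the rate-independence of the functional), I may assume the Lipschitz constants implicit in the definition of $\Sl(\Phi^N_{k-1})$ are uniformly bounded, so Proposition~\ref{prop:Sl_compact} delivers a weak* limit $\Sigma^N_k$. The convergences are then propagated to $\hat{\Sigma}_{j,\ff} z^N_{k-1}$ by Lemmata~\ref{lem:Phiff_cont} and~\ref{lem:PSigma_cont}, and the lower semicontinuity assertions in Lemma~\ref{lem:conv} identify $(y^N_k, \Sigma^N_k)$ as a minimizer; the constraint $\norm{\Sigma^N_k}_{\Lrm^\infty} \leq \gamma$ passes to the limit by the same proposition.

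\textbf{Difference inequality.} Testing the minimizer $(y^N_k, \Sigma^N_k)$ against the admissible pair $(y^N_{k-1}, \Id^{\Phi^N_{k-1}})$ yields, using $\Id_\ff z^N_{k-1} = z^N_{k-1}$ and $\Diss(\Id) = 0$,
\[
  e^N_k + d^N_k \leq \Ecal(t^N_k, y^N_{k-1}, z^N_{k-1}) = e^N_{k-1} - \int_{t^N_{k-1}}^{t^N_k} \dprb{\dot{f}(s), y^N_{k-1}} \dd s,
\]
where the equality follows from the linear $t$-dependence of $\Ecal$ via $f$ in~\eqref{eq:E} and assumption~\ref{as:F}. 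Rearranging gives
\[
  \alpha^N_k - \alpha^N_{k-1} \leq \int_{t^N_{k-1}}^{t^N_k} \norm{\dot{f}(s)}_{[\Wrm^{1,p}]^*} \, \norm{y^N_{k-1}}_{\Wrm^{1,p}} \dd s.
\]
To bound $\norm{y^N_{k-1}}_{\Wrm^{1,p}}$, I would iterate the $\Lrm^s$-estimate of Lemma~\ref{lem:PSigma_def} together with~\eqref{eq:Diss_est} to obtain $\norm{P^N_{k-1}}_{\Lrm^s} \leq \norm{P_0}_{\Lrm^s} + C \sum_{j=1}^{k-1} d^N_j \leq C \alpha^N_{k-1}$, and then insert this into Lemma~\ref{lem:E_coerc}:
\[
  \norm{y^N_{k-1}}_{\Wrm^{1,p}}^p \leq C \bigl( e^N_{k-1} + \norm{P^N_{k-1}}_{\Lrm^s}^s + 1 \bigr) \leq C \bigl( (\alpha^N_{k-1})^s + 1 \bigr) \leq C \ee^{\alpha^N_{k-1}},
\]
since polynomial growth is dominated by exponential. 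Taking the $p$-th root and using $\norm{\dot{f}}_\infty < \infty$ from~\ref{as:F} produces~\eqref{eq:G_diff_ineq}.

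\textbf{Main obstacle.} The genuinely delicate point is not the single-step direct-method argument but the closure of the induction: admissibility of the neutral competitor at step $k$ requires $\Mbf(\Phi^N_{k-1}) \leq \gamma$, which via Lemma~\ref{lem:E_coerc} reduces to a uniform bound on $\alpha^N_{k-1}$. The difference inequality~\eqref{eq:G_diff_ineq} is nonlinear and does not close by the standard Gronwall lemma; indeed, the continuous analog $\dot{u} = \ee^u$ blows up in finite time, and this is precisely the origin of the ``$N$ large enough'' restriction. This is where a discrete nonlinear Gronwall-type argument (anticipated in the introduction) is required, providing an $N$-independent bound on $\alpha^N_k$ for grid points in some interval $[0, T_*]$ with $T_* > 0$; upon choosing $\gamma$ large enough relative to this bound, $\Mbf(\Phi^N_{k-1}) \leq \gamma$ holds throughout and the induction closes. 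A secondary technical point is the reparameterization needed to apply Proposition~\ref{prop:Sl_compact}, which relies crucially on the rate-independent structure of the problem.
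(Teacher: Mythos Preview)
Your core argument---direct method for existence, testing with the neutral slip for the difference inequality, iterating Lemma~\ref{lem:PSigma_def} to control $\norm{P^N_{k-1}}_{\Lrm^s}$, and dominating polynomial by exponential growth---is correct and essentially coincides with the paper's approach. The paper only reorders things: it first derives~\eqref{eq:G_diff_ineq} conditionally on existence, then uses this to restrict the minimization to a bounded set, and finally runs the direct method.

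However, your ``main obstacle'' is a misidentification. The admissibility of $\Id^{\Phi^N_{k-1}}$ at step $k$ does \emph{not} require any Gronwall-type bound on $\alpha^N_{k-1}$; it is automatic from the constraint itself. Indeed, $\Phi^N_{k-1} = (T^{b,N}_{k-1})_b$ with $T^{b,N}_{k-1} = S^{b,N}_{k-1}(1-)$, and since $\norm{\Sigma^N_{k-1}}_{\Lrm^\infty} \leq \gamma$ is part of the admissible class, lower semicontinuity of the mass gives $\Mbf(T^{b,N}_{k-1}) \leq \gamma$ for every $b$, whence $\norm{\Id^{\Phi^N_{k-1}}}_{\Lrm^\infty} = \max_b \Mbf(T^{b,N}_{k-1}) \leq \gamma$. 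The $\Lrm^\infty$-constraint therefore self-propagates along the induction, with~\eqref{eq:beta} handling only the base case. Consequently, the ``$N$ large enough'' restriction has nothing to do with blow-up: it arises solely from the elementary requirement $\ee^{C\Delta T^N} - 1 \leq 2C\Delta T^N$ used when simplifying the power-term estimate in the paper's Step~1. The nonlinear Gronwall argument (Lemma~\ref{lem:Gronwall_ext}) enters only later, in Lemma~\ref{lem:apriori}, to obtain bounds \emph{uniform in $N$}---a separate issue from the single-$N$ induction here. Your suggestion of choosing $\gamma$ large relative to an a-priori bound is likewise misplaced: $\gamma$ is fixed throughout Proposition~\ref{prop:IP_solution} and is only sent to infinity in the final Section~\ref{sc:existence_proof}.
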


\begin{proof}
Assume that for $k \in \{1,\ldots,N\}$ a solution $(y^N_j,z^N_j,\Sd^N_j)_{j = 1, \ldots, k-1}$ to the time-incremental minimization problem~\eqref{eq:IP} has been constructed up to step $k-1$. This is trivially true for $k = 1$. In the following, we will show that then also a solution $(y^N_k,z^N_k,\Sd^N_k)$ to~\eqref{eq:IP} at time step $k$ exists and~\eqref{eq:G_diff_ineq} holds.

\proofstep{Step~1: Any solution $(y^N_k,z^N_k,\Sd^N_k)$ to~\eqref{eq:IP} at time step $k$, if it exists, satisfies~\eqref{eq:G_diff_ineq}.}

\medskip

To show the claim we assume that $(y^N_k,z^N_k,\Sd^N_k)$ is a solution to~\eqref{eq:IP} at time step $k$. Testing with $\hat{y} := y^N_{k-1}$ and the neutral slip trajectory $\hat{\Sd} := \Id^{\Td^N_{k-1}} \in \Slip(z^N_{k-1})$ (see Lemma~\ref{lem:neutral}), we get 
\begin{equation}  \label{eq:ENk_est}
  e^N_k + d^N_k
  \leq \Ecal(t^N_k,y^N_{k-1},z^N_{k-1}) 
  = e^N_{k-1} - \int_{t^N_{k-1}}^{t^N_k} \dprb{\dot{f}(\tau),y^N_{k-1}} \dd \tau.
\end{equation}
To bound the integral, we first estimate for any $(t,y,P,\Td) \in (0,T) \times \Ycal \times \Zcal$ by virtue of Lemma~\ref{lem:E_coerc} (with the constant $C$ potentially changing from line to line)
\begin{align*}
  &\frac{\di}{\di t} \bigl( \Ecal(t,y,P,\Td) + \norm{P}_{\Lrm^s}^s + 1 \bigr) \\
  &\qquad = - \dprb{\dot{f}(t),y} \\
  &\qquad \leq \norm{\dot{f}(t)}_{[\Wrm^{1,p}]^*} \cdot \norm{y}_{\Wrm^{1,p}} \\
  &\qquad \leq C \norm{\dot{f}(t)}_{[\Wrm^{1,p}]^*} \bigl( \Ecal(t,y,P,\Td) +\norm{P}_{\Lrm^s}^s + \norm{f(t)}_{[\Wrm^{1,p}]^*}^{p/(p-1)} + 1 \bigr)^{1/p} \\
  &\qquad \leq C(\Ecal(t,y,P,\Td) + \norm{P}_{\Lrm^s}^s + 1),
\end{align*}
where in the last line we used $a^{1/p} \leq a$ for $a \geq 1$ and $C$ also absorbs the expressions depending on $f$. Gronwall's lemma then gives that for all $\tau \geq t$ it holds that
\[
  \Ecal(\tau,y,P,\Td) + \norm{P}_{\Lrm^s}^s + 1
  \leq (\Ecal(t,y,P,\Td) + \norm{P}_{\Lrm^s}^s + 1) \ee^{C (\tau-t)}.
\]
We may also estimate, using the same arguments as above,
\begin{align*}
  \absb{\dprb{\dot{f}(\tau),y^N_{k-1}}}
  &\leq C(\Ecal(\tau,y^N_{k-1},z^N_{k-1}) + \norm{P^N_{k-1}}_{\Lrm^s}^s + 1) \\
  &\leq C(\Ecal(t^N_{k-1},y^N_{k-1},z^N_{k-1}) + \norm{P^N_{k-1}}_{\Lrm^s}^s + 1) \ee^{C (\tau-t^N_{k-1})}.
\end{align*}
Plugging this into~\eqref{eq:ENk_est},
\begin{align*}
  e^N_k + d^N_k
  &\leq e^N_{k-1} + \int_{t^N_{k-1}}^{t^N_k} C(e^N_{k-1} + 1) \ee^{C (\tau-t^N_{k-1})} + C \norm{P^N_{k-1}}_{\Lrm^s}^s \ee^{C (\tau-t^N_{k-1})} \dd \tau \\
  &= e^N_{k-1} + (e^N_{k-1} + 1)(\ee^{C \Delta T^N} - 1) + \norm{P^N_{k-1}}_{\Lrm^s}^s (\ee^{C \Delta T^N} - 1).
\end{align*}
Next, observe via an iterated application of Lemma~\ref{lem:PSigma_def} and~\eqref{eq:Diss_est} in Lemma~\ref{lem:Diss} that
\begin{equation} \label{eq:PNk-1_est}
  \norm{P^N_{k-1}}_{\Lrm^s}
  \leq \norm{P_0}_{\Lrm^s} + C \sum_{j=1}^{k-1} \Var(\Sd_j)
  \leq \norm{P_0}_{\Lrm^s} + C \sum_{j=1}^{k-1} d^N_j.
\end{equation}
Combining the above estimates,
\begin{align*}
  \alpha^N_k - \alpha^N_{k-1}
  &= e^N_k + d^N_k - e^N_{k-1} \\
  &\leq (e^N_{k-1} + 1) (\ee^{C \Delta T^N} - 1) + \biggl(\norm{P_0}_{\Lrm^s} + C \sum_{j=1}^{k-1} d^N_j \biggr)^s \cdot (\ee^{C \Delta T^N} - 1) \\
  &\leq \alpha^N_{k-1} (\ee^{C \Delta T^N} - 1) + C \ee^{\alpha^N_{k-1}}(\ee^{C \Delta T^N} - 1) \\
  &\leq C \ee^{\alpha^N_{k-1}} \Delta T^N,
\end{align*}
where we used that $a^s \leq C\ee^a$ for $a \geq 1$, and $\ee^{C\Delta T^N} - 1 \leq 2C \Delta T^N$ for $\Delta T^N$ small enough. We remark that we used the exponential function (as opposed to a polynomial expression) here mainly for reasons of convenience. We thus arrive at the claim~\eqref{eq:G_diff_ineq} at $k$.

\proofstep{Step~2: In~\eqref{eq:IP} at time step $k$, the minimization may equivalently be taken over $\hat{y} \in \Ycal$, $\hat{\Sd} \in \Slip(z^N_{k-1})$ satisfying the bounds
\begin{align} 
  \norm{\hat{y}}_{\Wrm^{1,p}} &\leq \tilde{C}(\alpha^N_{k-1}),\label{eq:y_assume} \\
  \Var(\hat{\Sd}) &\leq \tilde{C}(\alpha^N_{k-1}), \label{eq:VarSigma_assume} \\
  \norm{\hat{\Sd}}_{\Lrm^\infty} &\leq \gamma, \label{eq:SigmaLinfty_assume}
\end{align}
for a constant $\tilde{C}(\alpha^N_{k-1}) > 0$, which only depends on the data from the assumptions besides $\alpha^N_{k-1}$.}

\medskip

Recalling~\eqref{eq:IP}, we immediately have~\eqref{eq:SigmaLinfty_assume}. To see the claims~\eqref{eq:y_assume},~\eqref{eq:VarSigma_assume}, observe first that from Step~1 we may restrict the minimization in~\eqref{eq:IP} at time step $k$ to $\hat{y},\hat{\Sd}$ such that for
\[
  \hat{\alpha}^N_k(\hat{y},\hat{\Sd}) := 1 + \Ecal(t^N_k,\hat{y},\hat{\Sd}_\ff z^N_{k-1}) + \sum_{j=1}^{k-1} d^N_j + \Diss(\hat{\Sd})
\]
it holds that
\[
  \hat{\alpha}^N_k(\hat{y},\hat{\Sd}) 
  \leq \alpha^N_{k-1} + C \ee^{\alpha^N_{k-1}} T
  =: \tilde{C}(\alpha^N_{k-1})
\]
since a minimizer $(\hat{y},\hat{\Sd}) = (y^N_k,\Sd^N_k)$, if it exists, must satisfy~\eqref{eq:G_diff_ineq} and hence this bound. From~\eqref{eq:Diss_est} in Lemma~\ref{lem:Diss} we then immediately get that
\[
  \Var(\hat{\Sd}) \leq C \cdot \Diss(\hat{\Sd}) \leq C \hat{\alpha}^N_k(\hat{y},\hat{\Sd}) \leq C\tilde{C}(\alpha^N_{k-1}).
\]
Hence, the requirement~\eqref{eq:VarSigma_assume} is established after redefining $\tilde{C}(\alpha^N_{k-1})$.

Next, for all $\hat{y} \in \Ycal$, $\hat{\Sd} \in \Slip(z^N_{k-1})$ with~\eqref{eq:VarSigma_assume}, we get by virtue of Lemma~\ref{lem:E_coerc},
\begin{align*}
  &\Ecal(t^N_k,\hat{y},\hat{\Sd}_\ff z^N_{k-1}) \notag\\
  &\quad\geq C^{-1} \bigl( \norm{\hat{y}}^p_{\Wrm^{1,p}} + \Mbf(\hat{\Sd}_\ff \Td^N_{k-1}) \bigr) - C \bigl( \norm{\hat{\Sd}_\ff P^N_{k-1}}_{\Lrm^s}^s + \norm{\dot{f}(t^N_k)}_{[\Wrm^{1,p}]^*}^{p/(p-1)} + 1 \bigr)
\end{align*}
for a constant $C > 0$. We estimate similarly to~\eqref{eq:PNk-1_est},
\[
  \norm{\hat{\Sd}_\ff P^N_{k-1}}_{\Lrm^s}
  \leq \norm{P_0}_{\Lrm^s} + C \biggl( \sum_{j=1}^{k-1} \Var(\Sd^N_j) + \Var(\hat{\Sd}) \biggr) \\
  \leq C \tilde{C}(\alpha^N_{k-1}),
\]
where we also used~\eqref{eq:VarSigma_assume}. Then, using further Assumption~\ref{as:F}, we see that
\[
  C \bigl( \hat{\alpha}^N_k(\hat{y},\hat{\Sd}) + \tilde{C}(\alpha^N_{k-1})^s + 1 \bigr)
  \geq \norm{\hat{y}}^p_{\Wrm^{1,p}}.
\]
Hence, we may assume that $\hat{y}$ satisfies~\eqref{eq:y_assume} after redefining $\tilde{C}(\alpha^N_{k-1})$ once more.

\proofstep{Step~3: A solution $(y^N_k,z^N_k,\Sd^N_k)$ to~\eqref{eq:IP} at time step $k$ exists.}

\medskip

From the previous step we know that we may restrict the minimization to all $\hat{y} \in \Ycal$, $\hat{\Sd} \in \Slip(z^N_{k-1})$ satisfying the bounds~\eqref{eq:y_assume}--\eqref{eq:SigmaLinfty_assume}. Clearly, taking  $\hat{y} := y^N_{k-1}$ and $\hat{\Sd} := \Id^{\Td^N_{k-1}} \in \Slip(z^N_{k-1})$, the set of candidate minimizers is not empty (also recall~\eqref{eq:beta}). We now claim that we may then take a minimizing sequence $(\hat{y}_n,\hat{\Sd}_n) \subset \Ycal \times \Slip(z^N_{k-1})$ for~\eqref{eq:IP} such that
\begin{equation} \label{eq:incr_conv}
  \hat{y}_n \toweak y_*  \quad\text{in $\Wrm^{1,p}$}  \qquad\text{and}\qquad
  \hat{\Sd}_n \toweakstar \Sd_* \quad\text{in $\Slip(z^N_{k-1})$.}
\end{equation}
The first convergence follows by selecting a subsequence (not relabelled) using~\eqref{eq:y_assume} and the weak compactness of norm-bounded sets in $\Wrm^{1,p}_g(\Omega;\R^3)$.

For the second convergence, we observe via~\eqref{eq:VarSigma_assume},~\eqref{eq:SigmaLinfty_assume} that for $\hat{\Sd}_n$ it holds that
\[
  \supmod_n \, \Bigl( \norm{\hat{\Sd}_n}_{\Lrm^\infty} + \Var(\hat{\Sd}_n) \Bigr) < \infty.
\]
Moreover, a rescaling via Lemma~\ref{lem:rescale} shows that we may additionally assume the steadiness property
\begin{equation} \label{eq:Sigma_n_steady}
  t \mapsto t^{-1} \Var(\hat{\Sd}_n;[0,t]) \equiv L_n,  \qquad t \in (0,1],
\end{equation}
for constants $L_n \geq 0$ that are bounded by (an $n$-independent) constant $L > 0$. Crucially, this rescaling does not change the expression
\[
  \Ecal \bigl( t^N_k, \hat{y}, (\hat{\Sd}_n)_\ff z^N_{k-1} \bigr) + \Diss(\hat{\Sd}_n)
\]
by Lemmas~\ref{lem:Sigma_rescale},~\ref{lem:Diss_rescale}. Hence we may replace the original $\hat{\Sd}_n$ by its rescaled version. The steadiness property~\eqref{eq:Sigma_n_steady} now entails that all the maps $t \mapsto \Var(\hat{S}^b_n;[0,t])$, where we have written $\hat{\Sd}_n = (\hat{S}^b_n)_b$, are uniformly Lipschitz. Indeed, for $0 \leq s < t \leq 1$,
\begin{align*}
  \Var(\hat{S}^b_n;[s,t])
  &\leq 2 \Var(\hat{\Sd}_n;[s,t]) \\
  &= 2 \bigl( \Var(\hat{\Sd}_n;[0,t]) - \Var(\hat{\Sd}_n;[0,s]) \bigr) \\
  &= 2L_n (t-s) \\
  &\leq 2L (t-s).
\end{align*}
Then we get from Proposition~\ref{prop:Sl_compact} that there exists $\Sd_* \in \Slip(z^N_{k-1})$ and a subsequence (not relabelled) such that $\hat{\Sd}_n \toweakstar \Sd_*$ in $\Slip(z^N_{k-1})$.

Next, we observe that the joint functional
\begin{equation} \label{eq:joint_funct}
  (\hat{y},\hat{\Sd}) \mapsto \Ecal \bigl( t^N_k, \hat{y}, \hat{\Sd}_\ff z^N_{k-1} \bigr) + \Diss(\hat{\Sd})
\end{equation}
is lower semicontinuous with respect to the convergences in~\eqref{eq:incr_conv}. To see this, we first note that by Lemmas~\ref{lem:Phiff_cont},~\ref{lem:PSigma_cont},
\begin{align*}
  (\hat{\Sd}_n)_\ff \Td^N_{k-1} &\toweakstar (\Sd_*)_\ff \Td^N_{k-1}  \quad\text{in $\Disl(\cl{\Omega})$,} \\
  (\hat{\Sd}_n)_\ff P^N_{k-1} &\to (\Sd_*)_\ff P^N_{k-1}  \quad\text{in $\Lrm^s$.}
\end{align*}
Moreover, from Lemma~\ref{lem:PSigma_def} we get that
\[
  \det \, [(\hat{\Sd}_n)_\ff P^N_{k-1}] = 1  \quad\text{a.e.\ in $\Omega$.}
\]
The first and second term in~\eqref{eq:joint_funct} are then lower semicontinuous by 
Lemma~\ref{lem:conv}~(i) and~(iv), respectively.  We note that $y_* \in \Ycal$ since it must have finite energy by the weak lower semicontinuity of $\Ecal$, whereby also $\det \nabla y_* > 0$ a.e.\ in $\Omega$ by~\eqref{eq:We_coerc} in Assumption~\ref{as:We}. Thus, we conclude that $(y^N_k,\Sd^N_k) := (y_*,\Sd_*)$ is the minimizer of the time-incremental minimization problem~\eqref{eq:IP} at time step $k$. By Step~1, this $(y^N_k,z^N_k,\Sd^N_k)$ satisfies~\eqref{eq:G_diff_ineq}.
\end{proof}

\subsection{Discrete energy estimate and stability}

The next task is to establish that our construction indeed yields a \enquote{discrete energetic solution}.

\begin{proposition} \label{prop:incr_prop}
Let $(y^N_k,z^N_k,\Sd^N_k)_{k=0,\ldots,N}$ be a solution to the time-incremental minimization problem~\eqref{eq:IP}. Then, for all $k \in \{0,\ldots,N\}$ the following hold:
\begin{enumerate}[(i)]
  \item The discrete lower energy estimate
\begin{equation} \label{eq:incr_lower_energy}  \qquad
  \Ecal(t^N_k,y^N_k,z^N_k) \leq \Ecal(0,y_0,z_0) - \sum_{j=1}^k \Diss(\Sd^N_j) - \sum_{j=1}^k \int_{t^N_{j-1}}^{t^N_j} \dprb{\dot{f}(\tau),y^N_{j-1}} \dd \tau.
\end{equation}
\item The discrete stability
\begin{equation} \label{eq:incr_stab}  \qquad
  \Ecal(t^N_k,y^N_k,z^N_k) \leq \Ecal(t^N_k,\hat{y},\hat{\Sd}_\ff z^N_k) + \Diss(\hat{\Sd})
\end{equation}
for all $\hat{y} \in \Ycal$ and $\hat{\Sd} \in \Slip(z^N_k)$ with $\norm{\hat{\Sd}}_{\Lrm^\infty} \leq \gamma$.
\end{enumerate}
\end{proposition}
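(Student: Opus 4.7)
Both statements follow from a careful choice of competitors in the time-incremental minimization~\eqref{eq:IP}.

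\textbf{Part (i), lower energy estimate.} The key one-step inequality is already essentially contained in the proof of Proposition~\ref{prop:IP_solution}: testing the minimization defining $(y^N_k,\Sigma^N_k)$ against $\hat y := y^N_{k-1}$ and the neutral slip trajectory $\hat\Sigma := \Id^{\Phi^N_{k-1}} \in \Sl(z^N_{k-1})$ (which is admissible by Lemma~\ref{lem:neutral} and~\eqref{eq:beta}, and yields $\hat\Sigma_\ff z^N_{k-1} = z^N_{k-1}$, $\Diss(\hat\Sigma) = 0$) one obtains
\[
  e^N_k + d^N_k
  \;\leq\; \Ecal(t^N_k,y^N_{k-1},z^N_{k-1})
  \;=\; e^N_{k-1} - \int_{t^N_{k-1}}^{t^N_k} \dprb{\dot f(\tau),y^N_{k-1}} \dd\tau,
\]
where the second equality is just the fundamental theorem of calculus applied to $\tau \mapsto \Ecal(\tau,y^N_{k-1},z^N_{k-1})$, since only the linear term $-\dpr{f(\tau),y^N_{k-1}}$ depends on $\tau$. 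Summing this inequality over $j = 1, \ldots, k$ produces a telescoping sum in $e^N_j$, which together with $e^N_0 = \Ecal(0,y_0,z_0)$ yields~\eqref{eq:incr_lower_energy}.

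\textbf{Part (ii), discrete stability.} The idea is standard for energetic formulations: use the concatenation of the already-chosen $\Sigma^N_k$ with the test trajectory $\hat\Sigma$ as a competitor at step $k$. Given $\hat y \in \Ycal$ and $\hat\Sigma \in \Sl(z^N_k)$ with $\norm{\hat\Sigma}_{\Lrm^\infty} \leq \gamma$, since $z^N_k = (\Sigma^N_k)_\ff z^N_{k-1}$, Lemma~\ref{lem:Sigma_concat} guarantees the concatenation $\tilde\Sigma := \hat\Sigma \circ \Sigma^N_k \in \Sl(z^N_{k-1})$ with
\[
  \tilde\Sigma_\ff z^N_{k-1} = \hat\Sigma_\ff z^N_k, \qquad
  \norm{\tilde\Sigma}_{\Lrm^\infty} = \max\bigl\{\norm{\Sigma^N_k}_{\Lrm^\infty},\norm{\hat\Sigma}_{\Lrm^\infty}\bigr\} \leq \gamma,
\]
where the last bound uses that $\Sigma^N_k$ itself satisfies the constraint $\norm{\Sigma^N_k}_{\Lrm^\infty} \leq \gamma$ from~\eqref{eq:IP} at step $k$. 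Moreover, Lemma~\ref{lem:Diss}~\eqref{eq:Diss_additive} gives the additivity
\[
  \Diss(\tilde\Sigma) = \Diss(\Sigma^N_k) + \Diss(\hat\Sigma).
\]
Hence the pair $(\hat y,\tilde\Sigma)$ is admissible in~\eqref{eq:IP} at step $k$, and the minimization property of $(y^N_k,\Sigma^N_k)$ yields
\[
  \Ecal(t^N_k,y^N_k,z^N_k) + \Diss(\Sigma^N_k)
  \;\leq\; \Ecal\bigl(t^N_k,\hat y,\hat\Sigma_\ff z^N_k\bigr) + \Diss(\Sigma^N_k) + \Diss(\hat\Sigma).
\]
Cancelling $\Diss(\Sigma^N_k)$ on both sides gives~\eqref{eq:incr_stab}.

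\textbf{Main obstacle.} Neither part presents a substantive obstacle beyond bookkeeping; the entire argument rests on the fact that the concatenation operation on slip trajectories provided in Lemma~\ref{lem:Sigma_concat} is compatible with both forward operators and the dissipation (Lemma~\ref{lem:Diss}), together with the admissibility of the neutral slip trajectory (Lemma~\ref{lem:neutral}). The only mild subtlety is verifying that the $\Lrm^\infty$-constraint $\norm{\,\cdot\,}_{\Lrm^\infty} \leq \gamma$ is preserved under concatenation, which is precisely the content of~\eqref{eq:cat_Linfty}; this is why it was essential that the same parameter $\gamma$ bounds $\hat\Sigma$ and all the previously constructed $\Sigma^N_j$.
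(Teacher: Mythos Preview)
Your proof is correct and follows essentially the same approach as the paper's own proof: testing with the neutral slip trajectory and telescoping for part~(i), and using the concatenation $\hat\Sigma \circ \Sigma^N_k$ together with the additivity of the dissipation~\eqref{eq:Diss_additive} and the $\Lrm^\infty$-bound~\eqref{eq:cat_Linfty} for part~(ii). The paper's argument is identical in substance and in the lemmas invoked.
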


\begin{proof}
In the following we abbreviate, for $j = 0,\ldots,k$,
\[
  e^N_j := \Ecal(t^N_j,y^N_j,z^N_j), \qquad d^N_j := \Diss(\Sd^N_j).
\]
At $k=0$, the lower energy estimate~\eqref{eq:incr_lower_energy} holds trivially and the stability~\eqref{eq:incr_stab} is a part of Assumption~\ref{as:initial}. At $k = 1,2,\ldots$, testing the time-incremental minimization problem~\eqref{eq:IP} at step $j \in \{1,\ldots,k\}$ with $\hat{y} := y^N_{j-1}$ and $\hat{\Sd} := \Id^{\Td^N_{j-1}} \in \Slip(\Td^N_{j-1})$, we get like in~\eqref{eq:ENk_est} that
\[
  e^N_j + d^N_j \leq \Ecal(t^N_j,y^N_{j-1},z^N_{j-1})
  = e^N_{j-1} - \int_{t^N_{j-1}}^{t^N_j} \dprb{\dot{f}(\tau),y^N_{j-1}} \dd \tau.
\]
Iterating this estimate for $j=k,\ldots,1$ already yields~\eqref{eq:incr_lower_energy}.

Similarly, we may test~\eqref{eq:IP} at time step $k$ with $\hat{y} \in \Ycal$ and $\hat{\Sd} \circ \Sd^N_k$ for $\hat{\Sd} \in \Slip(z^N_k)$ with $\norm{\hat{\Sd}}_{\Lrm^\infty} \leq \gamma$, which satisfies $\norm{\hat{\Sd} \circ \Sd^N_k}_{\Lrm^\infty} \leq \gamma$, to obtain
\begin{align*}
  \Ecal(t^N_k,y^N_k,z^N_k) + \Diss(\Sd^N_k) &\leq \Ecal(t^N_k,\hat{y},(\hat{\Sd} \circ \Sd^N_k)_\ff z^N_{k-1}) + \Diss(\hat{\Sd} \circ \Sd^N_k) \\
  &= \Ecal(t^N_k,\hat{y},\hat{\Sd}_\ff z^N_k) + \Diss(\hat{\Sd}) + \Diss(\Sd^N_k),
\end{align*}
where we have used Lemma~\ref{lem:Sigma_concat} and $\Diss(\hat{\Sd} \circ \Sd^N_k) = \Diss(\hat{\Sd}) + \Diss(\Sd^N_k)$ by the additivity of the dissipation, see~\eqref{eq:Diss_additive} in Lemma~\ref{lem:Diss}. Canceling $\Diss(\Sd^N_k)$ on both sides, we arrive at~\eqref{eq:incr_stab}.
\end{proof}

\subsection{A-priori estimates}

In this section we establish a bound on the $\alpha^N_j$, which were defined in Proposition~\ref{prop:IP_solution}, that is uniform in $N$. This is complicated by the fact that in the coercivity estimate of $\Ecal$ at time step $j$, the term $\norm{P^N_j}_{\Lrm^s}^s$ occurs with a negative sign (see Lemma~\ref{lem:E_coerc}). The exponent $s > 1$ makes $\norm{P^N_j}_{\Lrm^s}^s$ grow superlinearly in $\sum_{\ell=1}^j \Var(\Sd^N_{\ell})$, potentially causing blow-up in finite time. In order to deal with this, we first establish a nonlinear Gronwall-type lemma:

\begin{lemma} \label{lem:Gronwall_ext}
Let $T > 0$, $N \in \N$, and let $h \colon \R \to [0,\infty)$ be a continuous and increasing function. Assume that the sequence of real numbers $a_j \in \R$, $j = 0, \ldots, N$, satisfies the difference inequality
\begin{equation} \label{eq:diff_ineq}
  \frac{a_j-a_{j-1}}{\Delta T} \leq h(a_{j-1}), \qquad
  \Delta T := \frac{T}{N}, \qquad
  j = 1, \ldots, N.
\end{equation} 
Let $A_*$ be the maximal solution, defined on a time interval $[0,T_\infty)$ (possibly $T_\infty = +\infty$), to the ODE
\begin{equation} \label{eq:Gronwall_ext_ODE}
  \left\{ \begin{aligned}
    A'(t) &= h(A(t)),  \quad t > 0, \\
    A(0) &= a_0.
  \end{aligned}\right.
\end{equation}
Then, for all $j \in \{0,\ldots,N\}$ with $j\Delta T < T_\infty$ it holds that
\[
  a_j \leq A_*(j \Delta T).
\]
\end{lemma}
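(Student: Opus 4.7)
The plan is to prove the lemma by induction on $j$, exploiting two monotonicity facts: $A_*$ is nondecreasing (because $h \geq 0$), and $h$ itself is nondecreasing. These together will let me dominate the discrete Euler-type step by the continuous flow over the same interval.

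The base case $j=0$ is immediate from $A_*(0) = a_0$. For the inductive step, suppose $a_{j-1} \leq A_*((j-1)\Delta T)$ and $j\Delta T < T_\infty$. Rewriting~\eqref{eq:diff_ineq} as $a_j \leq a_{j-1} + \Delta T \cdot h(a_{j-1})$ and using the monotonicity of $h$ together with the inductive hypothesis, I get
\[
  a_j \leq A_*((j-1)\Delta T) + \Delta T \cdot h\bigl(A_*((j-1)\Delta T)\bigr).
\]
On the other hand, integrating the ODE~\eqref{eq:Gronwall_ext_ODE} on $[(j-1)\Delta T, j\Delta T]$ gives
\[
  A_*(j\Delta T) = A_*((j-1)\Delta T) + \int_{(j-1)\Delta T}^{j\Delta T} h(A_*(\tau)) \dd \tau.
\]
Since $A_*' = h(A_*) \geq 0$, the function $A_*$ is nondecreasing, so $A_*(\tau) \geq A_*((j-1)\Delta T)$ for every $\tau$ in the integration interval, and then by the monotonicity of $h$, $h(A_*(\tau)) \geq h(A_*((j-1)\Delta T))$. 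Hence
\[
  A_*(j\Delta T) \geq A_*((j-1)\Delta T) + \Delta T \cdot h\bigl(A_*((j-1)\Delta T)\bigr) \geq a_j,
\]
closing the induction.

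The main (in fact, only) conceptual point is recognising that the forward Euler-type discretisation with step $\Delta T$ underestimates the true flow of the ODE whenever $h$ is nondecreasing along the trajectory, because the constant value $h(A_*((j-1)\Delta T))$ used over the whole interval is bounded above by the instantaneous rates $h(A_*(\tau))$ actually realised by $A_*$. I do not expect technical obstacles, and no regularity beyond continuity of $h$ is needed, since the ODE is only invoked via the fundamental theorem of calculus on the interval of existence $[0,T_\infty)$ of the maximal solution, which is part of the hypotheses.
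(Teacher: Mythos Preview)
Your proof is correct and takes a genuinely different, more elementary route than the paper's. The paper first reduces to the case where $(a_j)$ is nondecreasing, then builds the piecewise-affine interpolant $a(t)$, verifies the differential inequality $D_- a(t) \leq h(a(t))$ for its lower Dini derivative, and finally invokes (and re-proves via an $\eps$-perturbation argument) a classical ODE comparison principle to conclude $a(t) \leq A_*(t)$. Your argument instead proceeds by direct induction on $j$, using only that $A_*$ is nondecreasing (since $h \geq 0$) and that $h$ is nondecreasing to bound the forward Euler increment by $\int_{(j-1)\Delta T}^{j\Delta T} h(A_*(\tau))\dd\tau$. This bypasses the monotone reduction, the Dini derivative, and the comparison-principle machinery entirely, and in fact yields $a_j \leq A(j\Delta T)$ for \emph{any} solution $A$ of~\eqref{eq:Gronwall_ext_ODE} defined up to $j\Delta T$, not just the maximal one. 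The paper's approach, by contrast, sits inside the general framework of differential inequalities and would extend more readily to interpolants not tied to a uniform grid or to right-hand sides without the sign condition $h \geq 0$.
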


We remark that the \term{maximal solution} to~\eqref{eq:Gronwall_ext_ODE} is a solution $A_* \colon [0,T_\infty) \to \R$ of~\eqref{eq:Gronwall_ext_ODE} with the property that for any other solution $A$ of~\eqref{eq:Gronwall_ext_ODE} it holds that $A \leq A_*$ on the intersection of both intervals of definition. It can be shown, see, e.g.,~\cite[Section~8.IX, p.67]{Walter70book}, that $A_*$ exists and can be maximally defined; we assume that our interval $[0,T_\infty)$ is already such a maximal domain of definition. Obviously, if a unique solution $A$ to~\eqref{eq:Gronwall_ext_ODE} exists on a maximal time interval $[0,T_\infty)$, then $A_* = A$.

\begin{proof}
First, we remark that we may assume without loss of generality that $a_{j-1} \leq a_j$ for $j = 1,\ldots,N$. Indeed, we may set
\[
  b_j := a_0 + \sum_{\ell=1}^j \max\{a_\ell - a_{\ell-1}, 0\},
\]
which is clearly increasing, satisfies $a_j \leq b_j$, and
\[
  \frac{b_j-b_{j-1}}{\Delta T}
  = \max\biggl\{ \frac{a_j-a_{j-1}}{\Delta T}, 0 \biggr\}
  \leq h(a_{j-1})
  \leq h(b_{j-1}),
\]
where we used~\eqref{eq:diff_ineq} and the monotonicity of $h$. We then use $b_j$ in place of $a_j$.

Let $a$ be the piecewise-affine interpolant of $a_j$, namely,
\[
  a(t) := a_{j-1} + \frac{a_j-a_{j-1}}{\Delta T} (t-(j-1)\Delta T) \qquad \text{if $(j-1)\Delta T \leq t \leq j \Delta T$}
\]
for $j = 1,\ldots,N$. For the left lower Dini derivative $D_- a$ of $a$ we get
\[
  D_- a(t) := \liminf_{s \toup t} \, \frac{a(t)-a(s)}{t-s}
  = \frac{a_j-a_{j-1}}{\Delta T} \qquad \text{if $(j-1)\Delta T < t \leq j \Delta T$.}
\]
Thus, by~\eqref{eq:diff_ineq} and the fact that $a(t)$ is increasing and $h$ is monotone,
\[
  D_- a(t) \leq h(a_{j-1}) \leq h(a(t)),  \qquad t \in (0,T].
\]
By a classical comparison principle for ODEs, see~\cite[Theorem~8.X and remarks, p.68]{Walter70book},
\begin{equation} \label{eq:aA*}
  a(t) \leq A_*(t),  \qquad t \in [0,T_\infty),
\end{equation}
with $A_*(t)$ given as the maximal solution to~\eqref{eq:Gronwall_ext_ODE}. This directly implies the conclusion of the lemma.
\end{proof}

For the reader's convenience we give a short direct proof of~\eqref{eq:aA*}. First, we claim:

\medskip

\noindent\textit{Let $u,v \in \Crm([0,T'])$ with the following two properties:
\begin{enumerate}[(i)]
  \item $u(0) < v(0)$, and
  \item $D_- u(t) < D_- v(t)$ if $u(t) = v(t)$ at $t \in (0,T')$.
\end{enumerate}
Then, $u(t) < v(t)$ for all $t \in [0,T']$.}

\medskip

To see this claim, let $t_0 \in [0,T']$ be the first point such that $u(t_0) = v(t_0)$. By~(i), $t_0 > 0$. For $t < t_0$ it holds that $u(t) < v(t)$ and then
\[
  \frac{u(t_0)-u(t)}{t_0-t} > \frac{v(t_0)-v(t)}{t_0-t}.
\]
Taking the lower limit as $t \toup t_0$, we obtain $D_- u(t_0) \geq D_- v(t_0)$, which contradicts~(ii). This shows the claim.

\medskip

For $0 < \eps \leq 1$ let $A_\eps$ be a maximally extended solution to 
\[
  \left\{ \begin{aligned}
    A_\eps'(t) &= h(A_\eps(t)) + \eps,  \quad t > 0, \\
    A_\eps(0) &= a_0 + \eps.
  \end{aligned}\right.
\]
We have $A_{\eps'} < A_\eps$ for all $0 < \eps' < \eps \leq 1$ by our claim. In particular, $A_\eps(t)$ is monotonically decreasing as $\eps \todown 0$ and thus $A_\eps \todown A_*$ locally uniformly (by equi-continuity) with $A_*$ the maximal solution to~\eqref{eq:Gronwall_ext_ODE}. In fact, a similar argument can be used to construct $A_*$.

Fix $T' < T_\infty$. For any $\eps > 0$ we observe $a(0) < A_\eps(0)$ and
\[
  D_- a(t) - h(a(t)) \leq 0 < \eps = A_\eps'(t) - h(A_\eps(t)),  \qquad t \in (0,T').
\]
Hence, we may apply the claim again (note that in~(ii) we only need to check $D_- a(t) < D_- A_\eps(t)$ if $a(t) = A_\eps(t)$) to obtain
\[
  a(t) < A_\eps(t),  \qquad t \in [0,T'].
\]
Since the right-hand side converges to $A_*(t)$ as $\eps \todown 0$, we obtain $a(t) \leq A_*(t)$ for $t \in [0,T']$ and then also for $t \in [0,T_\infty)$. This is~\eqref{eq:aA*}. 

We can now state a uniform energy bound up to any time before the blow-up point $T_*$ ($> 0$), or up to $T$ if there is no blowup.

\begin{lemma} \label{lem:apriori}
Let $(y^N_k,z^N_k,\Sd^N_k)_{k=0,\ldots,N}$ be a solution to~\eqref{eq:IP}. With $\alpha^N_\ell := 1 + e^N_\ell + \sum_{i=1}^\ell d^N_i$ (as in Proposition~\ref{prop:IP_solution}) and
\begin{equation} \label{eq:betaNj}
  \beta^N_j := \alpha^N_0 + \sum_{\ell=1}^j \max\{\alpha^N_\ell - \alpha^N_{\ell-1}, 0\},
\end{equation}
the a-priori estimates
\begin{equation} \label{eq:apriori}
  \norm{y^N_k}_{\Wrm^{1,p}} + \norm{P^N_k}_{\Wrm^{1,q}} + \Mbf(\Td^N_k) + \sum_{j=1}^k \Var(\Sd^N_j) \leq \tilde{C}(\beta^N_k),  \qquad
  \norm{\Sd^N_k}_{\Lrm^\infty} \leq \gamma
\end{equation}
hold. Moreover,
\begin{equation} \label{eq:Tstar}
  T_* := \sup_{\lambda > 0} \liminf_{N \to \infty} \, \sup_{j\geq 0} \, \setb{ t^N_j }{ \beta^N_j \leq \lambda } > 0.
\end{equation}
\end{lemma}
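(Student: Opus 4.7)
The plan is to combine the difference inequality~\eqref{eq:G_diff_ineq} from Proposition~\ref{prop:IP_solution} with the nonlinear Gronwall Lemma~\ref{lem:Gronwall_ext}, and then to unpack the resulting bound on $\beta^N_k$ into individual estimates on each quantity in~\eqref{eq:apriori}. By construction $j \mapsto \beta^N_j$ is nondecreasing with $\beta^N_j \geq \alpha^N_j$, so using that $h(a) := C\ee^a$ is monotone I get from~\eqref{eq:G_diff_ineq} the difference inequality $\beta^N_j - \beta^N_{j-1} = \max\{\alpha^N_j - \alpha^N_{j-1}, 0\} \leq C\ee^{\alpha^N_{j-1}}\Delta T^N \leq C\ee^{\beta^N_{j-1}}\Delta T^N$. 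Applying Lemma~\ref{lem:Gronwall_ext} yields $\beta^N_j \leq A_*(t^N_j)$, where $A_*(t) = -\log(\ee^{-\alpha^N_0} - Ct)$ is the maximal solution of $A' = C\ee^A$ with $A(0) = \beta^N_0 = \alpha^N_0 = 1 + \Ecal(0,y_0,z_0)$, defined on $[0, T_\infty)$ for the blow-up time $T_\infty := \ee^{-\alpha^N_0}/C$. Since $\alpha^N_0$ is $N$-independent, $T_\infty > 0$, and for each fixed $\lambda$ with $A_*^{-1}(\lambda) \in (0, T_\infty)$ one has $\liminf_N \sup\{t^N_j : \beta^N_j \leq \lambda\} \geq A_*^{-1}(\lambda)$; letting $\lambda \to \infty$ proves~\eqref{eq:Tstar} with $T_* \geq T_\infty > 0$.

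For the individual bounds, $\norm{\Sigma^N_k}_{\Lrm^\infty} \leq \gamma$ is built into~\eqref{eq:IP}, and Step 2 of the proof of Proposition~\ref{prop:IP_solution} (applied with $(\hat{y},\hat{\Sigma}) = (y^N_k, \Sigma^N_k)$) gives $\norm{y^N_k}_{\Wrm^{1,p}} \leq \tilde{C}(\alpha^N_{k-1}) \leq \tilde{C}(\beta^N_k)$. The non-negativity $\Wcal_e, \Wcal_c \geq 0$ entails the crucial lower bound $e^N_k \geq -\dprb{f(t^N_k), y^N_k} \geq -\norm{f(t^N_k)}_{[\Wrm^{1,p}]^*} \cdot \norm{y^N_k}_{\Wrm^{1,p}} \geq -\tilde{C}(\beta^N_k)$, which, combined with the discrete lower energy estimate~\eqref{eq:incr_lower_energy} from Proposition~\ref{prop:incr_prop} and the uniform bound on $\norm{y^N_{j-1}}_{\Wrm^{1,p}}$, gives $\sum_{j=1}^k d^N_j \leq e^N_0 - e^N_k + \sum_j \int_{t^N_{j-1}}^{t^N_j} \norm{\dot f(\tau)}_{[\Wrm^{1,p}]^*} \norm{y^N_{j-1}}_{\Wrm^{1,p}} \dd \tau \leq \tilde{C}(\beta^N_k)$. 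Lemma~\ref{lem:Diss} then gives $\sum_{j=1}^k \Var(\Sigma^N_j) \leq C\sum d^N_j \leq \tilde{C}(\beta^N_k)$; iterating Lemma~\ref{lem:PSigma_def} yields $\norm{P^N_k}_{\Lrm^s} \leq \norm{P_0}_{\Lrm^s} + C\sum\Var(\Sigma^N_j) \leq \tilde{C}(\beta^N_k)$, so the coercivity Lemma~\ref{lem:E_coerc} produces $\Mbf(\Phi^N_k) \leq \tilde{C}(\beta^N_k)$. Finally, inductively applying Lemma~\ref{lem:PSigma_W1q} across the $k$ steps, with constants that stay uniformly bounded by the already-established uniform bounds on the $\Wrm^{1,q}$-norm of $P^N_{j-1}$ and on the total variation, gives $\norm{P^N_k}_{\Wrm^{1,q}} \leq \norm{P_0}_{\Wrm^{1,q}} + C\sum\Var(\Sigma^N_j) \leq \tilde{C}(\beta^N_k)$.

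The principal obstacle is the superlinear decay of coercivity: $\Wcal_e$ is only coercive in $\nabla y$ up to a defect of order $\norm{P}_{\Lrm^s}^s$, which itself grows polynomially in the accumulated variation; this produces the exponential right-hand side in~\eqref{eq:G_diff_ineq}, and because $A' = C\ee^A$ blows up in finite time, one cannot expect a global bound on $\beta^N_k$ --- only a bound up to the positive finite time $T_*$. The apparent circularity between the bound on $\sum d^N_j$ (which requires a lower bound on $e^N_k$) and the lower bound on $e^N_k$ from coercivity (which depends superlinearly on $\sum d^N_j$ via $\norm{P^N_k}_{\Lrm^s}^s$) is side-stepped by instead using the trivial lower bound $e^N_k \geq -\dprb{f(t^N_k), y^N_k}$ coming directly from $\Wcal_e, \Wcal_c \geq 0$.
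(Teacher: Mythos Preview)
Your proof is correct and follows essentially the same strategy as the paper: derive the difference inequality $\beta^N_j - \beta^N_{j-1} \leq C\ee^{\beta^N_{j-1}}\Delta T^N$ from~\eqref{eq:G_diff_ineq}, apply Lemma~\ref{lem:Gronwall_ext} with $h(s)=C\ee^s$ to obtain $\beta^N_j \leq A_*(t^N_j)$ and hence $T_* \geq T_\infty > 0$, and then extract the individual estimates~\eqref{eq:apriori} from coercivity. The paper is terser at the last stage, simply referring back to the arguments of Step~2 in Proposition~\ref{prop:IP_solution}; your chain (first $\norm{y^N_k}_{\Wrm^{1,p}}$, then the lower bound $e^N_k \geq -\dprb{f(t^N_k),y^N_k}$ from $\Wcal_e,\Wcal_c\geq 0$, then $\sum d^N_j$ via~\eqref{eq:incr_lower_energy}, then the rest) is a legitimate and slightly more explicit way to unpack those same ideas and to break the apparent circularity you flag in your final paragraph.
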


\begin{proof}
From Proposition~\ref{prop:IP_solution} we know
\[
  \frac{\alpha^N_j-\alpha^N_{j-1}}{\Delta T^N} \leq C \ee^{\alpha^N_{j-1}},  \qquad \text{$j = 1,\ldots,N$.}
\]
By the same argument as the one at the beginning of the proof of Lemma~\ref{lem:Gronwall_ext}, $\alpha^N_j \leq \beta^N_j$ and
\begin{equation} \label{eq:beta_diffquot}
  \frac{\beta^N_j-\beta^N_{j-1}}{\Delta T^N}
  \leq \max \biggl\{ \frac{\alpha^N_j-\alpha^N_{j-1}}{\Delta T^N}, 0 \biggr\}
  \leq C \ee^{\alpha^N_{j-1}}
  \leq C \ee^{\beta^N_{j-1}}.
\end{equation}

The parts of~\eqref{eq:apriori} relating to $\norm{y^N_k}_{\Wrm^{1,p}}$, $\sum_{j=1}^k \Var(\Sd^N_j)$ and $\norm{\Sd^N_k}_{\Lrm^\infty}$ follow from the coercivity of $\Ecal$ and $\Diss$ in the same way as we proved~\eqref{eq:y_assume}--\eqref{eq:SigmaLinfty_assume} (in particular, using the coercivity estimates of Lemmas~\ref{lem:E_coerc},~\ref{lem:Diss}). From Lemma~\ref{lem:PSigma_W1q} we further get
\[
  \norm{P^N_k}_{\Wrm^{1,q}}
  \leq \norm{P_0}_{\Wrm^{1,q}} + C \sum_{j=1}^k \Var(\Sd_j) \\
  \leq C\tilde{C}(\alpha^N_k)
  \leq C\tilde{C}(\beta^N_k),
\]
where the constant $C$ from~\eqref{eq:PSigma_W1q} and then also the (redefined) constant $\tilde{C}(\beta^N_k)$ depend on $\norm{P_0}_{\Wrm^{1,q}}$ and $\sum_{j=1}^k \Var(\Sd^N_j) \leq \tilde{C}(\alpha^N_k) \leq \tilde{C}(\beta^N_k)$ (also see Lemma~\ref{lem:Diss}). For the bound on $\Mbf(\Td^N_k)$, we can use again Lemma~\ref{lem:E_coerc}, but this time using the coercivity originating from the core energy.

Our next task is to show that for $T_*$ defined in~\eqref{eq:Tstar} it holds that $T_* > 0$, for which we apply the preceding Lemma~\ref{lem:Gronwall_ext} with $h(s) := C \ee^s$, which is continuous and increasing, and initial value $\beta^N_0 = \alpha^N_0$ to~\eqref{eq:beta_diffquot}. The maximal solution is easily seen to be $A_*(t) = - \log(\ee^{-\alpha^N_0}-Ct)$, which is defined on the maximal interval $[0,T_\infty)$ with $T_\infty = \ee^{-\alpha^N_0}/C > 0$. Thus, as $A_*$ is increasing, for all $0 < \tau < T_\infty$ it holds that
\[
  \beta^N_k \leq A_*(t^N_k) \leq A_*(\tau) < \infty  \qquad\text{if $t^N_k \leq \tau$}\quad\text{($k \in \{0,\ldots,N\}$; $N \in \N$).}
\]
Consequently, $T_* \geq T_\infty > 0$.
\end{proof}

\section{Proof of the existence theorem}   \label{sc:existence_proof}
 
At this stage we have an $N$-uniform bound on $\sum_{j=1}^k \Var(\Sd^N_j)$ for any $t^N_k \leq \tau < T_*$. However, when letting $N \to \infty$, this BV-type bound is too weak to prevent the formation of jumps in the dislocation trajectory. Jumps are undesirable because we need the \enquote{time index} provided by a \emph{Lipschitz} trajectory to define the path of the plastic distortion as in Section~\ref{sc:plast_evol}. Hence, we now rescale the time to make the discrete evolution uniformly Lipschitz continuous (and move the blow-up time to $+\infty$). Then we will be able to pass to the limit and complete the proof of Theorem~\ref{thm:main}.

\subsection{Rescaling of time}

Let $N \in \N$ and set, for $k = 0,\ldots,N$,
\[
  e^N_k := \Ecal(t^N_k,y^N_k,z^N_k),  \qquad d^N_k := \Diss(\Sd^N_k),
\]
as in Proposition~\ref{prop:IP_solution}. Then define the increasing sequence
\[
  s^N_k := t^N_k + \sum_{j=1}^k \Bigl( \max\{e^N_j - e^N_{j-1}, 0\} + d^N_j \Bigr).
\]
The $\{s^N_k\}_{k=0,\ldots,N}$ form a (non-equidistant) partition of the interval $[0,\sigma^N]$, where
\[
  \sigma^N := s^N_N.
\]
It can be seen from the definition of $\beta^N_k = 1 + e^N_0 + \sum_{j=1}^k \max\{e^N_j - e^N_{j-1} + d^N_j, 0\}$, see~\eqref{eq:betaNj}, that
\begin{equation} \label{eq:betaNk_sNk}
  \beta^N_k - e^N_0 - 1 \leq s^N_k \leq t^N_k + \beta^N_k + \sum_{j=1}^k d^N_j.
\end{equation}
The \term{rescaling function} $\psi^N \colon [0,\infty) \to [0,T]$ is
\[
  \psi^N(s) := \begin{cases}
    t^N_{k-1} + \dfrac{\Delta T^N}{s^N_k-s^N_{k-1}}(s-s^N_{k-1})  &\text{for $s \in [s^N_{k-1},s^N_k]$, where $k = 1,\ldots,N$,} \\
    T  &\text{for $s \geq \sigma^N$.}
  \end{cases}
\]
Clearly, $\psi^N$ is strictly increasing on $[0,\sigma^N]$ and
\[
  \psi^N(s^N_k) = t^N_k,  \qquad k = 0,\ldots,N.
\]
In the new time the time-incremental minimization problem~\eqref{eq:IP} reads as follows: For $k = 0, \ldots, N$ we have in Proposition~\ref{prop:IP_solution} constructed solutions
\[
  (y^N_k,z^N_k,\Sd^N_k) = (y^N_k,P^N_k,\Td^N_k,\Sd^N_k) \in \Ycal \times \Zcal \times \Slip(z^N_{k-1})
\]
to the following:
\begin{equation} \label{eq:IP_s}  \tag{IP$^\prime$}
\left\{ \begin{aligned}
  &\text{$(y^N_k,\Sd^N_k)$ }\;\text{minimizes $(\hat{y},\hat{\Sd}) \mapsto \Ecal_{\psi^N} \bigl( s^N_k, \hat{y}, \hat{\Sd}_\ff z^N_{k-1} \bigr) + \Diss(\hat{\Sd})$}\\
  &\phantom{\text{$(y^N_k,\Sd^N_k)$ }}\;\text{over all $\hat{y} \in \Ycal$, $\hat{\Sd} \in \Slip(z^N_{k-1})$ with $\norm{\hat{\Sd}}_{\Lrm^\infty} \leq \gamma$} \; ;\\
  &z^N_k := (\Sd^N_k)_\ff z^N_{k-1}.
\end{aligned} \right.
\end{equation}
Moreover, we may also assume that $\Sd^N_k$ is steady in the sense that
\begin{equation} \label{eq:SigmaNk_steady}
  t \mapsto t^{-1} \Var(\Sd^N_k;[0,t]) \equiv L^N_k \geq 0,  \qquad t \in (0,1].
\end{equation}
This can be achieved via Lemma~\ref{lem:rescale} (see Step~3 in the proof of Proposition~\ref{prop:IP_solution} on why this rescaling is allowed).

We will now introduce a suitable interpolant for the discrete solution. Writing $\Td^N_k = (T^{N,b}_k)_b$ and $\Sd^N_k = (S^{N,b}_k)_b \in \Slip(z^N_{k-1})$, we define the process $\bar{\Sd}^N \in \Lip([0,\infty);\Disl(\cl{\Omega}))$ (this is to be understood with the Lipschitz condition holding in every interval $[0,S]$, $S>0$, with a uniform Lipschitz constant) as $\bar{\Sd}^N := (\bar{S}^{N,b})_b$ with
\[
  \bar{S}^{N,b} := \sum_{k=1}^N (a^N_k)_* S^{N,b}_k
  + \dbr{(\sigma^N,\infty)} \times T^{N,b}_N,
\]
where $a^N_k \colon [0,1] \to [s^N_{k-1},s^N_k]$ is given as $a^N_k(t) := s^N_{k-1} + (s^N_k-s^N_{k-1})t$. This rescales every $S^{N,b}_k$ to length $s^N_k-s^N_{k-1}$ and moves the starting point to $s^N_{k-1}$. Hence, $\bar{\Sd}^N$ is the \enquote{scaled concatenation} of all the $\Sd^N_k$. One can check easily from the definition of the $s^N_k$,~\eqref{eq:SigmaNk_steady}, and Lemma~\ref{lem:Diss_rescale} that the Lipschitz constant of $s \mapsto \Diss((a^N_k)_* \Sd^N_k; [s^N_{k-1},s])$ is bounded from above by $1$. Hence, also using Lemma~\ref{lem:Diss},
\begin{equation} \label{eq:VarSigmaN}
  \Var(\bar{\Sd}^N;[s,s']) \leq C \cdot \Diss(\bar{\Sd}^N;[s,s']) \leq C \abs{s-s'},  \qquad s,s' \in [0,\infty).
\end{equation}
In particular, we have indeed
\[
  \bar{\Sd}^N \in \Lip([0,\infty);\Disl(\cl{\Omega})).
\]

Next, we define $\bar{P}^N \in \Lip([0,\infty);\Wrm^{1,q}(\Omega;\R^{3 \times 3}))$ as the solution to~\eqref{eq:PSigma_ODE} with respect to $\bar{\Sd}^N$, i.e.,
\begin{equation} \label{eq:barPN_ODE}
  \left\{ \begin{aligned}
    \frac{\di}{\di s} \bar{P}^N(s,x) &= \frac12 \sum_{b \in \Bcal} b \otimes \proj_{\langle \bar{P}^N(s,x)^{-1}b \rangle^\perp} \bigl[ \bar{g}^{N,b}(s,x) \bigr] \qquad \text{for a.e.\ $s \in (0,\infty)$,}\\
    \bar{P}^N(0,x) &= P_0(x) \qquad \text{for a.e.\ $x \in \Omega$,}
  \end{aligned} \right.
\end{equation}
where $\bar{g}^{N,b}$ is the density of the measure $\hodge \pbf(\bar{S}^{N,b}_\eta)$. This ODE is to be understood in the $\Wrm^{1,q}$-sense. The existence, uniqueness, and regularity of a solution to~\eqref{eq:barPN_ODE} follow by Lemmas~\ref{lem:PSigma_def},~\ref{lem:PSigma_W1q},~\ref{lem:PSigma_ODE_BV}. By construction of $\bar{P}^N$ and Lemma~\ref{lem:Sigma_rescale},
\[
  \bar{P}^N(s^N_k) = P^N_k,  \qquad k = 0, \ldots, N.
\]

Finally, $\bar{y}^N \in \Lrm^\infty([0,\infty);\Ycal)$ is given as
\[
  \bar{y}^N(s) := y^N_k \qquad \text{if $s \in (s^N_{k-1},s^N_k]$, where $k = 1, \ldots, N$,}
\]
and also $\bar{y}^N(0) := y_0$, $\bar{y}^N(s) := y^N_N$ for $s \geq \sigma^N$.

We can then restate the a-priori estimates of Lemma~\ref{lem:apriori} and the discrete solution properties of Proposition~\ref{prop:incr_prop} in the new time scale.

\begin{proposition} \label{prop:apriori_s}
For all $0 < S < \infty$ there is a constant $\bar{C}(S) > 0$, with the dependence on $S$ increasing and upper semicontinuous in $S$, such that the a-priori estimates
\begin{align}
  \norm{\bar{y}^N}_{\Lrm^\infty([0,S];\Wrm^{1,p})} + \norm{\bar{P}^N}_{\Lrm^\infty([0,S];\Wrm^{1,q})} + \Var(\bar{\Sd}^N;[0,S]) &\leq \bar{C}(S),  \label{eq:apriori_s_1} \\  
   \norm{\bar{\Sd}^N}_{\Lrm^\infty([0,S];\Disl(\cl{\Omega}))} &\leq \gamma   \label{eq:apriori_s_2}
\end{align}
hold. Moreover, 
\begin{equation} \label{eq:psiN_lim}
  \lim_{\sigma \to \infty} \liminf_{N \to \infty} \psi^N(\sigma)
  = T_*
\end{equation}
with $T_* > 0$ from~\eqref{eq:Tstar}.
\end{proposition}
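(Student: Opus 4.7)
The plan is to deduce both parts by reducing to the discrete a-priori estimates of Lemma~\ref{lem:apriori} via the key identity~\eqref{eq:betaNk_sNk}, which yields the implication $s^N_k \leq S \,\Rightarrow\, \beta^N_k \leq S + \Ecal(0,y_0,z_0) + 1$. Hence, for any $S > 0$, all discrete indices whose rescaled time lies in $[0,S]$ satisfy a uniform (in $N$) bound on $\beta^N_k$, so that~\eqref{eq:apriori} applies with a constant depending only on $S$ and the data.

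For the four components of~\eqref{eq:apriori_s_1}--\eqref{eq:apriori_s_2}: the $\Lrm^\infty([0,S];\Wrm^{1,p})$-bound on $\bar{y}^N$ is immediate since $\bar{y}^N$ is piecewise constant with values $y^N_k$ on the discrete partition (the only subtlety is that the interval covering $S$ may have $s^N_k > S$, but then $\beta^N_{k-1}$ is still bounded and~\eqref{eq:G_diff_ineq} shows $\beta^N_k - \beta^N_{k-1} = \mathrm{O}(\Delta T^N) \to 0$). The variation bound on $\bar{\Sigma}^N$ follows directly from~\eqref{eq:VarSigmaN}. The $\Lrm^\infty([0,S];\Wrm^{1,q})$-bound on $\bar{P}^N$ is then obtained by applying Lemma~\ref{lem:PSigma_W1q} to the ODE~\eqref{eq:barPN_ODE}, whose constant depends monotonically on $\norm{P_0}_{\Wrm^{1,q}}$ and $\Var(\bar{\Sigma}^N;[0,S])$. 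Finally, $\norm{\bar{\Sigma}^N}_{\Lrm^\infty([0,S];\DS(\cl{\Omega}))} \leq \gamma$ holds by construction: the constraint in~\eqref{eq:IP_s} forces $\Mbf(S^{N,b}_k(t)) \leq \gamma$ for almost every $t$ (which is preserved under the time-rescaling $a^N_k$), and the trailing constant piece has mass $\Mbf(T^{N,b}_N) = \Mbf(S^{N,b}_N(1)) \leq \gamma$.

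For~\eqref{eq:psiN_lim}, write $T_*(\lambda,N) := \sup\setb{ t^N_j }{ \beta^N_j \leq \lambda }$, so that $T_* = \sup_\lambda \liminf_N T_*(\lambda,N)$. The upper bound $\lim_\sigma \liminf_N \psi^N(\sigma) \leq T_*$ follows since, for the index $k$ with $s^N_{k-1} < \sigma \leq s^N_k$, estimate~\eqref{eq:betaNk_sNk} yields $\beta^N_{k-1} \leq \sigma + e^N_0 + 1$, and then $\psi^N(\sigma) \leq t^N_k \leq T_*(\sigma + e^N_0 + 2, N) + \Delta T^N$ for $N$ large (using~\eqref{eq:G_diff_ineq} to pass from $\beta^N_{k-1}$ to $\beta^N_k$). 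Taking $\liminf_N$ and then $\sigma \to \infty$ closes this direction. Conversely, for any $\lambda > 0$ and any $j$ with $\beta^N_j \leq \lambda$, the bound $\sum_{i=1}^j d^N_i \leq \beta^N_j \leq \lambda$ combined with the right inequality in~\eqref{eq:betaNk_sNk} gives $s^N_j \leq T + 2\lambda$. Hence $\psi^N(\sigma) \geq T_*(\lambda, N)$ whenever $\sigma \geq T + 2\lambda$, and taking $\liminf_N$, then $\sigma \to \infty$, then $\sup_\lambda$ recovers $T_*$ as the lower bound.

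The main obstacle is reconciling the $\limsup$ assertion with the $\liminf$ used to define $T_*$. This is handled by observing that each $\psi^N$ is $1$-Lipschitz on $[0,\infty)$ (the affine slope on $(s^N_{k-1}, s^N_k)$ equals $\Delta T^N / (s^N_k - s^N_{k-1}) \leq 1$) and bounded above by $T$. By Arzel\`{a}--Ascoli and diagonal extraction a subsequence (not relabelled) converges locally uniformly to a $1$-Lipschitz, nondecreasing $\psi \colon [0,\infty) \to [0,T]$. Along this subsequence $\liminf_N \psi^N(\sigma) = \limsup_N \psi^N(\sigma) = \psi(\sigma)$ pointwise, and the two bounds derived above identify $\lim_{\sigma \to \infty} \psi(\sigma) = T_*$, yielding~\eqref{eq:psiN_lim}.
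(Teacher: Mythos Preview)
Your treatment of the a-priori estimates~\eqref{eq:apriori_s_1}--\eqref{eq:apriori_s_2} is correct and essentially matches the paper. There is, however, a genuine gap in your lower-bound direction for~\eqref{eq:psiN_lim}: the claimed inequality $\sum_{i=1}^j d^N_i \leq \beta^N_j$ is unjustified and in general false. Since
\[
  \beta^N_j = 1 + e^N_0 + \sum_{\ell=1}^j \max\bigl\{(e^N_\ell - e^N_{\ell-1}) + d^N_\ell,\, 0\bigr\},
\]
the $\ell$-th summand is strictly smaller than $d^N_\ell$ whenever the energy drops at step $\ell$ (the typical case, cf.~\eqref{eq:ENk_est}), and may even vanish; hence $\beta^N_j - 1 - e^N_0$ need not dominate $\sum_\ell d^N_\ell$. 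The paper argues differently: from $\beta^N_k \leq \lambda$ the a-priori estimate~\eqref{eq:apriori} already furnishes $\norm{P^N_j}_{\Wrm^{1,q}} \leq \tilde{C}(\lambda)$ and $\sum_{j=1}^k \Var(\Sigma^N_j) \leq \tilde{C}(\lambda)$, so the plastic distortion along each $\Sigma^N_j$ stays in a fixed compact set and the upper bound in Assumption~\ref{as:R}(iii) yields $d^N_j \leq C_\lambda \Var(\Sigma^N_j)$. Summing gives $\sum_j d^N_j \leq C\tilde{C}(\lambda)$, and then the right inequality in~\eqref{eq:betaNk_sNk} delivers $s^N_k \leq T + \lambda + C\tilde{C}(\lambda) =: \sigma'$, as required.

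On the $\limsup$ issue you raise: the paper does not extract a subsequence here but argues the bound $\limsup_{N\to\infty}\psi^N(\sigma) \leq T_*$ directly from~\eqref{eq:betaNk_sNk} and the definition~\eqref{eq:Tstar}; together with the lower bound on the $\liminf$ this closes both equalities in~\eqref{eq:psiN_lim} for the full sequence. Your Arzel\`{a}--Ascoli detour yields the conclusion only along a subsequence, which is weaker than the statement as written (though adequate for the subsequent limit passage in Lemma~\ref{lem:limit}, where a subsequence is extracted anyway).
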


\begin{proof}
The assertions in~\eqref{eq:apriori_s_1}~\eqref{eq:apriori_s_2} with
\[
  \bar{C}(S) := \tilde{C} \bigl( S + \Ecal(0,y_0,z_0) + 1 \bigr)
\]
follow directly from Lemma~\ref{lem:apriori}, the definitions of $\bar{y}^N, \bar{P}^N, \bar{\Sd}^N$, Lemma~\ref{lem:PSigma_W1q}, and~\eqref{eq:betaNk_sNk}.

We now show~\eqref{eq:psiN_lim}. Let $\sigma > 0$. By~\eqref{eq:betaNk_sNk}, if $s^N_k \leq \sigma$ ($k \in \{0,\ldots,N\}$, $N \in \N$), or equivalently, $t^N_k = \psi^N(s^N_k) \leq \psi^N(\sigma)$, then the quantity $\beta^N_k$ remains bounded by $\sigma + \Ecal(0,y_0,z_0) + 1$. Hence, by the definition of $T_*$ in~\eqref{eq:Tstar}, we have $T_* \geq \liminf_{N \to \infty} \psi^N(\sigma)$, and then also
\begin{equation} \label{eq:Ts_geq}
  T_* \geq \lim_{\sigma \to \infty} \liminf_{N \to \infty} \psi^N(\sigma).
\end{equation}

On the other hand, if $T' < T_*$, then there is $\lambda < \infty$ with $\beta^N_k \leq \lambda$ for all $k \in \{0,\ldots,N\}$ such that $t^N_k \leq T'$ and $N \in \N$ sufficiently large. From~\eqref{eq:apriori} and Lemma~\ref{lem:Diss} we get that
\[
  \sum_{j=1}^k d^N_j \leq C\tilde{C}(\alpha^N_k) \leq C\tilde{C}(\beta^N_k) \leq C\tilde{C}(\lambda),
\]
where we have considered the \enquote{constant} $\tilde{C}$ as an increasing function. Thus, for the times $s^N_k = [\psi^N]^{-1}(t^N_k)$ corresponding to the $t^N_k$ it holds via~\eqref{eq:betaNk_sNk} that
\[
  s^N_k \leq t^N_k + \beta^N_k + \sum_{j=1}^k d^N_j
  \leq T + \lambda + \tilde{C}(\lambda)
  =: \sigma'
\]
and we see that $s^N_k$ remains bounded by $\sigma'$ for those $k$. Then, $\psi^N(\sigma') \geq \psi^N(s^N_k) = t^N_k$ whenever $t^N_k \leq T'$ and $N$ is sufficiently large. Consequently, $\liminf_{N \to \infty} \psi^N(\sigma') \geq T'$. Letting $T' \to T_*$ we obtain
\[
  \lim_{\sigma \to \infty} \liminf_{N \to \infty} \psi^N(\sigma) \geq T_*.
\]
Together with~\eqref{eq:Ts_geq}, this completes the proof of~\eqref{eq:psiN_lim}.

We can easily make $S \mapsto \bar{C}(S)$ increasing and then replace it by its upper semicontinuous envelope.
\end{proof}

\begin{proposition} \label{prop:incr_prop_s}
For all $k \in \{0,\ldots,N\}$ the following hold:
\begin{enumerate}[(i)]
  \item The discrete lower energy estimate
\[  
  \Ecal_{\psi^N}(s^N_k,y^N_k,z^N_k) \leq \Ecal_{\psi^N}(0,y_0,z_0) - \sum_{j=1}^k \Diss(\Sd^N_j) - \sum_{j=1}^k \int_{s^N_{j-1}}^{s^N_j} \dprb{\dot{f}_{\psi^N}(\sigma),y^N_{j-1}} \dd \sigma.
\]
\item The discrete stability
\[  \qquad
  \Ecal_{\psi^N}(s^N_k,y^N_k,z^N_k) \leq \Ecal_{\psi^N}(s^N_k,\hat{y},\hat{\Sd}_\ff z^N_k) + \Diss(\hat{\Sd})
\]
for all $\hat{y} \in \Ycal$ and $\hat{\Sd} \in \Slip(z^N_k)$ with $\norm{\hat{\Sd}}_{\Lrm^\infty} \leq \gamma$.
\end{enumerate}
\end{proposition}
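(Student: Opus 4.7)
The plan is essentially to transport the two estimates of Proposition~\ref{prop:incr_prop} from the original time scale to the rescaled time scale via the change of variables $t = \psi^N(s)$. Everything hinges on two observations: first, from the definition $\Ecal_{\psi^N}(s,\cdot,\cdot) = \Wcal_e(\cdot,\cdot) - \dpr{f_{\psi^N}(s),\cdot} + \Wcal_c(\cdot)$ and $f_{\psi^N} = f \circ \psi^N$, together with $\psi^N(s^N_k) = t^N_k$ and $\psi^N(0)=0$, we obtain the pointwise identities $\Ecal_{\psi^N}(s^N_k,y^N_k,z^N_k) = \Ecal(t^N_k,y^N_k,z^N_k)$, $\Ecal_{\psi^N}(0,y_0,z_0) = \Ecal(0,y_0,z_0)$, and $\Ecal_{\psi^N}(s^N_k,\hat{y},\hat{\Sigma}_\ff z^N_k) = \Ecal(t^N_k,\hat{y},\hat{\Sigma}_\ff z^N_k)$. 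Second, on each subinterval $(s^N_{j-1},s^N_j)$ the function $\psi^N$ is affine with strictly positive slope $\Delta T^N/(s^N_j - s^N_{k-1})$, so it is a bi-Lipschitz $\Crm^1$-diffeomorphism onto $(t^N_{j-1},t^N_j)$, and $\dot{f}_{\psi^N}(\sigma) = \dot{f}(\psi^N(\sigma))\,\dot{\psi}^N(\sigma)$ almost everywhere.

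For part~(i), I would start from the lower energy estimate~\eqref{eq:incr_lower_energy} of Proposition~\ref{prop:incr_prop} and apply the substitution $\tau = \psi^N(\sigma)$ on each subinterval:
\[
\int_{t^N_{j-1}}^{t^N_j} \dprb{\dot{f}(\tau),y^N_{j-1}} \dd \tau
= \int_{s^N_{j-1}}^{s^N_j} \dprb{\dot{f}(\psi^N(\sigma))\,\dot{\psi}^N(\sigma),y^N_{j-1}} \dd \sigma
= \int_{s^N_{j-1}}^{s^N_j} \dprb{\dot{f}_{\psi^N}(\sigma),y^N_{j-1}} \dd \sigma.
\]
Summing over $j=1,\ldots,k$ and using the pointwise identities above converts~\eqref{eq:incr_lower_energy} verbatim into the claimed rescaled estimate.

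For part~(ii), the discrete stability in the rescaled time is pointwise at $s=s^N_k$, so no time integration is involved. Applying the pointwise identity $\Ecal_{\psi^N}(s^N_k,\cdot,\cdot) = \Ecal(t^N_k,\cdot,\cdot)$ to both sides of~\eqref{eq:incr_stab} gives exactly the required inequality for every admissible test pair $(\hat{y},\hat{\Sigma})$ with $\hat{y}\in\Ycal$ and $\hat{\Sigma}\in\Sl(z^N_k)$ satisfying $\norm{\hat{\Sigma}}_{\Lrm^\infty}\leq\gamma$; the dissipation $\Diss(\hat{\Sigma})$ is unaffected by the time rescaling.

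There is no real obstacle in this proof: the only thing one must check is that the affine pieces of $\psi^N$ have nonzero derivative (which they do, since $s^N_k > s^N_{k-1}$ strictly by construction, as $\Delta T^N>0$), so the substitution is legitimate on each subinterval. In particular, no new minimality or compactness arguments are needed—Proposition~\ref{prop:incr_prop_s} is simply the bookkeeping restatement of Proposition~\ref{prop:incr_prop} in the rescaled time, prepared for the subsequent passage to the limit $N\to\infty$.
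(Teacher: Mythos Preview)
Your proof is correct and follows exactly the approach of the paper, which records only that ``this is a direct translation of Proposition~\ref{prop:incr_prop}, noting that we use a change of variables for the external power integral in~(i).'' You have simply spelled out the details of that translation (note the minor typo: the slope should read $\Delta T^N/(s^N_j - s^N_{j-1})$, not $s^N_{k-1}$).
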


\begin{proof}
This is a direct translation of Proposition~\ref{prop:incr_prop}, noting that we use a change of variables for the external power integral in~(i).
\end{proof}

\subsection{Passage to the limit}

We first establish that a limit process exists as $N \to \infty$. Then we will show that this limit process has the required properties. In this context we recall that we do not identify processes that are equal almost everywhere in time.

\begin{lemma} \label{lem:limit}
There exists a subsequence of the $N$'s (not explicitly labelled) and
\begin{align*}
  y &\in \Lrm^\infty([0,\infty);\Wrm^{1,p}_g(\Omega;\R^3)) \text{ with $y(0) = y_0$}, \\
  P &\in \Lip([0,\infty);\Wrm^{1,q}(\Omega;\R^{3 \times 3})) \text{ with $\det P(s) = 1$ a.e.\ in $\Omega$ for all $s \in [0,\infty)$}, \\
  \Sd & \in \Lip([0,\infty);\Disl(\cl{\Omega})) \text{ with $\Diss(\Sd;[0,s]) \leq s$ for $s \in [0,\infty)$}, \\
  \psi &\in \Crm([0,\infty)) \text{ increasing and Lipschitz with $\psi(0) = 0$, $\psi(\infty) = T_* \in (0,T]$,}
\end{align*}
such that
\begin{align}
  \bar{P}^N &\toweakstar P  \quad\text{locally in $\BV([0,\infty);\Wrm^{1,q}(\Omega;\R^{3 \times 3}))$},  \label{eq:limit_P} \\
  \bar{\Sd}^N &\toweakstar \Sd \quad\text{locally},  \label{eq:limit_Sigma} \\
  \psi^N &\to \psi \quad\text{locally uniformly.}  \label{eq:limit_psi}
\end{align}
Moreover, for all $s \in [0,\infty)$,
\begin{align}
  y(s) &\in \Argmin \, \setb{ \Ecal(s,\hat{y},z(s)) }{ \hat{y} \in Y_s }, \label{eq:limit_y} \\
  \liminf_{N \to \infty} \int_0^s \dprb{\dot{f}_{\psi^N}(\sigma),\bar{y}^N(\sigma)} \dd \sigma &\geq \int_0^s \dprb{\dot{f}_\psi(\sigma),y(\sigma)} \dd \sigma = \int_0^s \Pi_{\mathrm{red}}(\sigma,P(\sigma)) \dd \sigma,  \label{eq:limit_power}
\end{align}
where
\begin{align*}
  Y_s &:= \setb{ \hat{y} \in \Wrm^{1,p}_g(\Omega;\R^3) }{ \norm{\hat{y}}_{\Wrm^{1,p}} \leq \bar{C}(s) }, \\
  \Pi_{\mathrm{red}}(s,P) &:= \inf \, \setb{ \dprb{\dot{f}_\psi(s),\hat{y}} }{ \hat{y} \in \Argmin \, \setb{ \Wcal_e(\hat{y},P) - \dprb{f_\psi(s),\hat{y}} }{ \hat{y} \in Y_s } },
\end{align*}
with $\bar{C}(s)$ the constant from Proposition~\ref{prop:apriori_s}.
\end{lemma}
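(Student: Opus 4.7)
My plan is to pass to the limit $N\to\infty$ by successive diagonal extractions for $\psi^N$, $\bar\Sigma^N$, and $\bar P^N$ (in that order, since $\bar P^N$ is determined by $\bar\Sigma^N$ through the ODE of Lemma~\ref{lem:PSigma_cont}), then to select $y(s)$ pointwise by a nested minimization, and finally to derive the power $\liminf$ inequality by combining a $\Gamma$-convergence argument with a Fatou/Ioffe-type passage to the limit.

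\textbf{Extraction of $\psi$, $\Sigma$, and $P$.} I would first observe that each $\psi^N$ is monotone with slope $\Delta T^N/(s^N_k-s^N_{k-1})\in[0,1]$ on $[s^N_{k-1},s^N_k]$ (since the construction gives $s^N_k-s^N_{k-1}\ge\Delta T^N$), so Arzel\`a-Ascoli together with a diagonal extraction over $[0,S]$, $S\to\infty$, produces a locally uniform limit $\psi$ that is Lipschitz and increasing, with $\psi(\infty)=T_*>0$ by~\eqref{eq:psiN_lim} and Lemma~\ref{lem:apriori}. Next, the bounds~\eqref{eq:apriori_s_1}--\eqref{eq:apriori_s_2} together with the Lipschitz-type estimate~\eqref{eq:VarSigmaN} make Proposition~\ref{prop:LipDS_compact} applicable on every $[0,S]$; a further diagonalization yields the locally BV-weak* limit $\Sigma\in\Lip([0,\infty);\DS(\cl{\Omega}))$. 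The inequality $\Diss(\Sigma;[0,s])\le s$ then follows from the construction-level bound $\Diss(\bar\Sigma^N;[s^N_{k-1},s^N_k])=d^N_k\le s^N_k-s^N_{k-1}$ and Lemma~\ref{lem:Diss_lsc}. Since $\bar\Sigma^N\toweakstar\Sigma$ and all $\bar P^N$ share the initial datum $P_0$, Lemma~\ref{lem:PSigma_cont} will give $\bar P^N\to P_\Sigma$ uniformly on compact sets; I set $P:=P_\Sigma$, whereupon Lemmas~\ref{lem:PSigma_W1q} and~\ref{lem:PSigma_def} supply the regularity and determinant constraint, and Proposition~\ref{prop:BVX_Helly} upgrades the convergence to weak* in $\BV_{\loc}([0,\infty);\Wrm^{1,q})$.

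\textbf{Definition of $y$ and verification of~\eqref{eq:limit_y}.} For each $s\in[0,\infty)$ I appeal to the direct method: weak lower semicontinuity (Proposition~\ref{prop:We_lsc}) and coercivity (Lemma~\ref{lem:E_coerc}) of $\hat y\mapsto\Ecal(s,\hat y,z(s))$ on the weakly compact ball $Y_s$ give a non-empty, weakly closed set of minimizers. On this set the weakly continuous linear functional $\hat y\mapsto\dprb{\dot f_\psi(s),\hat y}$ attains its infimum; I let $y(s)$ be any such double minimizer, which immediately yields the equality $\dprb{\dot f_\psi(\sigma),y(\sigma)}=\Pi_{\mathrm{red}}(\sigma,P(\sigma))$. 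Assumption~\ref{as:initial}, tested with $\hat\Sigma=\Id^{\Phi_0}$, shows $y_0\in\Argmin\Ecal(0,\cdot,z_0)$, which allows me to take $y(0)=y_0$.

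\textbf{Power $\liminf$ inequality (main obstacle).} For a.e.\ $\sigma\in(0,s)$ and each $N$, taking $k=k^N(\sigma)$ with $\sigma\in(s^N_{k-1},s^N_k]$, the value $\bar y^N(\sigma)=y^N_k$ minimizes $\hat y\mapsto\Ecal_{\psi^N}(s^N_k,\hat y,z^N_k)$ on $Y_s$. A $\Gamma$-convergence argument based on Lemma~\ref{lem:conv}(i)-(ii), using $\psi^N(s^N_k)\to\psi(\sigma)$, $\bar P^N(s^N_k)\to P(\sigma)$ in $\Crm(\cl{\Omega})$, and $\bar\Sigma^N(s^N_k)\toweakstar\Sigma(\sigma)$ (which follows from the BV-weak* convergence of $\bar\Sigma^N$ and its uniform $s$-Lipschitz continuity), shows that every weak-$\Wrm^{1,p}$ accumulation point $y_*(\sigma)$ of $\bar y^N(\sigma)$ is itself a minimizer of $\Ecal(\sigma,\cdot,z(\sigma))$, whence $\dprb{\dot f_\psi(\sigma),y_*(\sigma)}\ge\Pi_{\mathrm{red}}(\sigma,P(\sigma))$ for a.e.\ $\sigma$. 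The main obstacle will be the integrated passage to the limit: factoring $\dot f_{\psi^N}(\sigma)=\dot f(\psi^N(\sigma))\,\dot\psi^N(\sigma)$, the first factor converges to $\dot f(\psi(\sigma))$ uniformly in $\Wrm^{1,p}(\Omega)^*$ (by~\ref{as:F} and uniform convergence of $\psi^N$) while $\dot\psi^N\toweakstar\dot\psi$ only weakly* in $L^\infty([0,s])$, and $\bar y^N(\sigma)$ converges only along $\sigma$-dependent subsequences. I expect to handle this by a measurable selection of accumulation points combined with an Ioffe-type lower semicontinuity theorem for integrals of the form $\int G(\sigma,\bar y^N(\sigma))\,\dot\psi^N(\sigma)\,d\sigma$ with $G$ linear in its second argument; the fact that the pointwise lower bound $\ge\Pi_{\mathrm{red}}(\sigma,P(\sigma))$ is independent of the choice of accumulation-point subsequence is precisely what makes the Fatou/Ioffe-type argument close.
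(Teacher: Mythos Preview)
Your extraction of $\psi$, $\Sigma$, $P$ and the strategy for defining $y$ are essentially aligned with the paper (the paper obtains $P$ via Helly's principle, Proposition~\ref{prop:BVX_Helly}, rather than via Lemma~\ref{lem:PSigma_cont}, but either route works). Two points require correction.

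\textbf{Measurability of $y$.} A pointwise choice of a double minimizer does not produce a measurable map $s \mapsto y(s)$, which is needed both for the assertion $y \in \Lrm^\infty([0,\infty);\Wrm^{1,p}_g)$ and for the power integral in~\eqref{eq:limit_power} to be defined. The paper handles this by showing that the set-valued map $s \mapsto M(s,P(s)) := \Argmin\{\,\Wcal_e(\hat y,P(s)) - \dpr{f_\psi(s),\hat y} : \hat y \in Y_s\,\}$ has closed graph (via Lemma~\ref{lem:conv}~(i)--(iii) and the Lipschitz continuity of $P$), hence is measurable, and then invokes a Filippov-type selection theorem to obtain a \emph{measurable} $y$ satisfying $\dpr{\dot f_\psi(s),y(s)} = \Pi_{\mathrm{red}}(s,P(s))$ for all $s$.

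\textbf{Power $\liminf$.} Your $\Gamma$-convergence step asserts $\bar P^N(s^N_k) \to P(\sigma)$ and $\bar\Sigma^N(s^N_k) \toweakstar \Sigma(\sigma)$ for $k = k^N(\sigma)$, but both rely on $s^N_k \to \sigma$, which can fail: the rescaled mesh $s^N_k - s^N_{k-1} = \Delta T^N + \max\{e^N_k - e^N_{k-1},0\} + d^N_k$ need not tend to zero (think of a jump transient). The weak accumulation points of $\bar y^N(\sigma)$ then lie in $M(\sigma',P(\sigma'))$ for some $\sigma' \ge \sigma$ with $\psi(\sigma') = \psi(\sigma)$, not in $M(\sigma,P(\sigma))$, and your Ioffe-type plan would still have to pass the resulting pointwise bound through the merely weak*-convergent factor $\dot\psi^N$, for which no convexity structure is available. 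The paper's key device is to \emph{change variables back to the original time} $\tau = \psi^N(\sigma)$:
\[
  \int_0^s \dprb{\dot f_{\psi^N}(\sigma),\bar y^N(\sigma)} \dd\sigma
  = \int_0^{\psi^N(s)} \dprb{\dot f(\tau),\bar y^N((\psi^N)^{-1}(\tau))} \dd\tau .
\]
In the $\tau$-variable the mesh is uniform ($\Delta T^N \to 0$), so for each fixed $\tau$ one has $t^N_k \to \tau$ deterministically; extracting a $\tau$-dependent subsequence along which $s^N_k \to \sigma$ (for some $\sigma$ with $\psi(\sigma)=\tau$) and $y^N_k \toweak \tilde y(\tau)$, the $\Gamma$-limit yields $\tilde y(\tau) \in M(\sigma,P(\sigma))$ and hence $\dpr{\dot f_\psi(\sigma),\tilde y(\psi(\sigma))} \ge \Pi_{\mathrm{red}}(\sigma,P(\sigma))$. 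Since the integrand in $\tau$ is uniformly bounded (the $\dot\psi^N$ factor has been absorbed), Fatou's lemma applies directly, and changing variables back gives~\eqref{eq:limit_power}.
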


\begin{proof}
\proofstep{Step~1.}
First, by~\eqref{eq:psiN_lim} together with a diagonal procedure we may pick a subsequence of the $N$'s (not made explicit in our notation) such that
\begin{equation} \label{eq:psiN_lim2}
  \lim_{\sigma \to \infty} \liminf_{N \to \infty} \psi^N(\sigma)
  = \lim_{\sigma \to \infty} \limsup_{N \to \infty} \psi^N(\sigma)
  = T_*.
\end{equation}
Let now $S > 0$. We know from Proposition~\ref{prop:apriori_s} that
\begin{align*}
  \norm{\bar{y}^N}_{\Lrm^\infty([0,S];\Wrm^{1,p})} + \norm{\bar{P}^N}_{\Lrm^\infty([0,S];\Wrm^{1,q})} + \Var(\bar{\Sd}^N;[0,S]) &\leq \bar{C}(S), \\
  \norm{\bar{\Sd}^N}_{\Lrm^\infty([0,S];\Disl(\cl{\Omega}))} &\leq \gamma,
\end{align*}
and also, by construction,
\[
  \det \bar{P}^N(s) = 1 \quad\text{a.e.\ in $\Omega$ and for all $s \in [0,\infty)$.} 
\]
To estimate the variation of $\bar{P}^N$, take any partition $0 = \sigma_0 < \sigma_1 < \cdots < \sigma_K = S$ of the interval $[0,S]$ and apply Lemma~\ref{lem:PSigma_W1q} to the definition~\eqref{eq:barPN_ODE} to see
\[
  \sum_{\ell=1}^K \normb{\bar{P}^N(\sigma_\ell) - \bar{P}^N(\sigma_{\ell-1})}_{\Wrm^{1,q}}
  \leq \sum_{\ell=1}^K C \cdot \Var(\bar{\Sd}^N;[\sigma_{\ell-1},\sigma_\ell])
  = C \cdot \Var(\bar{\Sd}^N;[0,S]),
\]
where the constant $C$ depends on $\norm{P_0}_{\Wrm^{1,q}}$ and $\Var(\bar{\Sd}^N;[0,S])$. A slight generalization of the above argument shows that in fact
\[
  \Var_{\Wrm^{1,q}}(\bar{P}^N;[s,t]) \leq C \cdot \Var(\bar{\Sd}^N;[s,t])
\]
for all $[s,t] \subset [0,S]$. Since also $\Var(\bar{\Sd}^N;[s,t]) \leq C \abs{s-t}$ by~\eqref{eq:VarSigmaN}, we obtain that both $\bar{P}^N$ and $\bar{\Sd}^N$ are Lipschitz continuous on $[0,S]$ with uniform (in $N$) Lipschitz constant. Hence, taking a further subsequence on the $N$'s by Proposition~\ref{prop:BVX_Helly} for $\bar{P}^N$ and Proposition~\ref{prop:LipDS_compact} for $\bar{\Sd}^N$, we obtain that there exist $P \in \Lip([0,S];\Wrm^{1,q}(\Omega;\R^{3 \times 3}))$ and $\Sd \in \Lip([0,S];\Disl(\cl{\Omega}))$ satisfying~\eqref{eq:limit_P},~\eqref{eq:limit_Sigma} in the interval $[0,S]$. Concatenating this for all intervals $[0,S]$, $S > 0$, we obtain the existence of the limit processes $P \in \Lip([0,\infty);\Wrm^{1,q}(\Omega;\R^{3 \times 3}))$ and $\Sd \in \Lip([0,\infty);\Disl(\cl{\Omega}))$.

By Proposition~\ref{prop:BVX_Helly} (use the compact embedding $\Wrm^{1,q}(\Omega;\R^{3 \times 3}) \cembed \Crm(\Omega;\R^{3 \times 3})$), we get
\[
  \det P(s) = 1  \qquad\text{a.e.\ in $\Omega$ for all $s \in [0,\infty)$.}
\]
Also, $\Diss(\Sd;[0,s]) \leq s$ follows from the construction of the rescaled time $s$; see the argument before~\eqref{eq:VarSigmaN}.

The maps $\psi^N \colon [0,\infty) \to [0,T]$ are increasing and Lipschitz continuous with Lipschitz constant bounded by $1$. Hence, taking yet another subsequence by the Arzel\`{a}--Ascoli theorem, there is $\psi \colon [0,\infty) \to [0,T]$ increasing and Lipschitz continuous with Lipschitz constant bounded by $1$, such that $\psi^N \to \psi$ locally uniformly in $[0,\infty)$, i.e.,~\eqref{eq:limit_psi} holds. As $\psi^N(0) = 0$ for all $N$, also $\psi(0) = 0$.

To show $\psi(\infty) = \lim_{s\to\infty} \psi(s) = T_*$, let $\eps > 0$. From~\eqref{eq:psiN_lim2} we may find $\sigma > 0$ such that for $s \geq \sigma$ we have
\[
  \liminf_{N \to \infty} \psi^N(s) \geq T_* - \eps,  \qquad
  \limsup_{N \to \infty} \psi^N(s) \leq T_* + \eps.
\]
Then we get $\psi(s) \geq T_* - \eps$ and $\psi(s) \leq T_* + \eps$. Letting $\eps \to 0$, we conclude that $\psi(\infty) = T_*$.

\proofstep{Step~2.}
For any $s > 0$, the weak $\Wrm^{1,p}$-topology restricted to $Y_s$ is complete, separable, and metrizable. Then, fixing $S > 0$, define
\[
  M(s,P) := \Argmin \, \setb{ \Wcal_e(\hat{y},P) - \dprb{f_\psi(s),\hat{y}} }{ \hat{y} \in Y_s } \subset \Wrm^{1,p}_g(\Omega;\R^3)
\]
for $(s,P) \in [0,S] \times \Wrm^{1,q}(\Omega;\R^{3 \times 3})$ with $\det P = 1$ a.e.\ in $\Omega$. The set $M(s,P)$ is non-empty. This follows via the Direct Method using the coercivity in Lemma~\ref{lem:E_coerc} and the lower semicontinuity in Lemma~\ref{lem:conv}~(i). In this new notation,
\[
  \Pi_{\mathrm{red}}(s,P) = \inf \, \setb{ \dprb{\dot{f}_\psi(s),\hat{y}} }{ \hat{y} \in M(s,P) }.
\]

Next, we observe that $(s,P) \mapsto M(s,P)$ is continuous in the following sense: If $s_j \to s$ in $[0,S]$ and $P_j \toweak P$ in $\Wrm^{1,q}$, then for any sequence $y_j \in M(s_j,P_j)$ with $y_j \toweak y$ in $\Wrm^{1,p}$ it holds that $y \in M(s,P)$. To see this, it suffices to combine~(i),~(ii), and~(iii) of Lemma~\ref{lem:conv}, which together imply that limits of minimizers are minimizers themselves. One can either argue directly or realize that these two statements together imply the $\Gamma$-convergence~\cite{DalMaso93book} of $\Wcal_e(\frarg,P_j) - \dpr{f_\psi(s_j),\frarg}$, from which the claimed continuity property follows. Note that here we also use the monotonicity and upper semicontinuity of the constant $\bar{C}(s) > 0$ from Proposition~\ref{prop:apriori_s} with respect to $s$. Similarly, we also obtain that $M(s,P)$ is weakly closed, hence weakly compact.

Define the set-valued map $F \colon [0,S] \toto Y_S$ via
\[
  F(s) := M(s,P(s)),  \qquad s \in [0,S].
\]
The just established continuity property for $(s,P) \mapsto M(s,P)$ together with the Lipschitz continuity of $P$ implies that $\graph(F) := \setn{ (s,y) }{ y \in F(s) }$ is closed in $[0,S] \times Y_S$.  Hence, $\graph(F)$ is a measurable set with respect to the product $\sigma$-algebra on $[0,S] \times Y_S$ (i.e., the product $\sigma$-algebra of the Lebesgue-measurable subsets of $[0,S]$ and the Borel-$\sigma$-algebra induced by the metric of weak convergence on $Y_S$). By standard results, see~\cite[Theorem~8.1.4]{AubinFrankowska90}, this then implies that the set-valued map $F$ is measurable, meaning that the preimages $F^{-1}(B) := \setb{ s \in [0,S] }{ F(s) \cap B \neq \emptyset }$ are Lebesgue-measurable for all Borel sets $B \subset Y_S$.

Set
\[
  h(s,y) := \begin{cases}
    \dprb{\dot{f}_\psi(s),y} - \Pi_{\mathrm{red}}(s,P(s))  &\text{if $y \in F(s)$,}\\
    0       &\text{otherwise}
  \end{cases}
\]
for $(s,y) \in [0,S] \times Y_S$. By Lemma~\ref{lem:conv}~(iii) in conjunction with the Lipschitz continuity of $P$, the function $h$ is measurable. Moreover, for fixed $s \in [0,S]$, the map $h(s,\frarg)$ is continuous, again by~(iii) of Lemma~\ref{lem:conv}. Finally, by similar arguments as before, we have that for every $s \in [0,S]$ there is a $y_* \in Y_s$ such that $h(s,y_*) = 0$.

We can now apply the generalized version of the Filippov measurable selection theorem that was proved in~\cite[Theorem~B.1.2]{MielkeRoubicek15book} (also see the more classical version in~\cite[Theorems~8.2.9 and~8.2.10]{AubinFrankowska90}). This theorem allows us to obtain $y \colon (0,\infty) \to \Wrm^{1,p}_g(\Omega;\R^3)$ such that
\[
  y(s) \in F(s) = \Argmin \, \setb{ \Wcal_e(\hat{y},P(s)) - \dprb{f_\psi(s),\hat{y}} }{ \hat{y} \in Y_s }
\]
and $h(s,y(s)) = 0$, i.e.,
\[
  \dprb{\dot{f}_\psi(s),y(s)} = \Pi_{\mathrm{red}}(s,P(s)) = \inf \, \setb{ \dprb{\dot{f}_\psi(s),\hat{y}} }{ \hat{y} \in F(s) }
\]
for all $s \in (0,\infty)$. We also set $y(0) := y_0$ with $y_0$ from Assumption~\ref{as:initial}. This shows~\eqref{eq:limit_y} since the set $F(s)$ is also equal to the set of minimizers of $\Ecal(s,\frarg,P(s),\Sd(s))$ (the minimization in $\hat{y}$ is independent of $\Sd(s)$). Moreover, the equality on the right-hand side of~\eqref{eq:limit_power} holds by construction.

\proofstep{Step~3.}
Next, for every $\tau \in [0,\psi(S)]$ we can find a $\tau$-dependent subsequence $N_\tau(m)$ such that, as $m \to \infty$ (note that $\psi^N$ is strictly increasing),
\[
  \bar{y}^{N_\tau(m)}((\psi^{N_\tau(m)})^{-1}(\tau)) \toweak \tilde{y}(\tau)  \quad\text{in $\Wrm^{1,p}$} 
\]
for some $\tilde{y}(\tau) \in Y_S$ and
\[
  \lim_{m\to\infty} \dprb{\dot{f}(\tau),\bar{y}^{N_\tau(m)}((\psi^{N_\tau(m)})^{-1}(\tau))} = \liminf_{N \to \infty} \, \dprb{\dot{f}(\tau),\bar{y}^N((\psi^N)^{-1}(\tau))}.
\]
Then, by Lemma~\ref{lem:conv}~(iii),
\begin{equation} \label{eq:ytilde_liminf}
  \liminf_{N \to \infty} \, \dprb{\dot{f}(\tau),\bar{y}^N((\psi^N)^{-1}(\tau))} = \dprb{\dot{f}(\tau),\tilde{y}(\tau)}.
\end{equation}
Furthermore, by construction, if $\tau \in (t^{N_\tau(m)}_{k_\tau(m)-1},t^{N_\tau(m)}_{k_\tau(m)}]$ (where $k_\tau(m) \in \{1,\ldots,N_\tau(m)\}$), then, by the rescaled time-incremental problem~\eqref{eq:IP_s},
\[
  \bar{y}^{N_\tau(m)}((\psi^{N_\tau(m)})^{-1}(\tau))
  = \bar{y}^{N_\tau(m)}(s^{N_\tau(m)}_{k_\tau(m)})
  \in M \bigl( s^{N_\tau(m)}_{k_\tau(m)},P^{N_\tau(m)}_{k_\tau(m)} \bigr).
\]
Assume that $s^{N_\tau(m)}_{k_\tau(m)} \to \sigma$, $t^{N_\tau(m)}_{k_\tau(m)} \to \tau$ with $\psi(\sigma) = \tau$. Moreover,
\[
  P^{N_\tau(m)}_{k_\tau(m)} = \bar{P}^{N_\tau(m)}(s^{N_\tau(m)}_{k_\tau(m)}) \toweak P(\sigma) \quad\text{in $\Wrm^{1,q}$,}
\]
where we have used the uniform Lipschitz continuity of $\bar{P}^N$ and the convergence $\bar{P}^N \toweakstar P$ locally in $\BV$. By the continuity property of $(s,P) \mapsto M(s,P)$ shown above, we thus have
\[
  \tilde{y}(\psi(\sigma)) \in M(\sigma,P(\sigma)) = F(\sigma)
\]
where we have also used the upper semicontinuity of $s \mapsto \bar{C}(s)$ (see Lemma~\ref{prop:apriori_s}). Furthermore,
\begin{equation} \label{eq:Pi_cont}
  \dprb{\dot{f}_\psi(\sigma),\tilde{y}(\psi(\sigma))} \geq \Pi_{\mathrm{red}}(\sigma,P(\sigma)).
\end{equation}

For $s \in [0,\infty)$,
\begin{align*}
  \int_0^s \dprb{\dot{f}_{\psi^N}(\sigma),\bar{y}^N(\sigma)} \dd \sigma 
  &= \int_0^{\psi^N(s)} \dprb{\dot{f}(\tau),\bar{y}^N((\psi^N)^{-1}(\tau))} \dd \tau \\
  &= \int_0^{\psi(s)} \dprb{\dot{f}(\tau),\bar{y}^N((\psi^N)^{-1}(\tau))} \dd \tau + \SmallO(1), 
\end{align*}
where the error term $\SmallO(1)$ vanishes as $N \to \infty$ since $\psi^N(s) \to \psi(s)$ and the integrand is uniformly bounded by Assumption~\ref{as:F} and the definition of $Y_S$. We can now apply Fatou's lemma and, in turn,~\eqref{eq:ytilde_liminf},~\eqref{eq:Pi_cont},~\eqref{eq:limit_power} to estimate
\begin{align*}
  \liminf_{N \to \infty} \int_0^s \dprb{\dot{f}_{\psi^N}(\sigma),\bar{y}^N(\sigma)} \dd \sigma
  &\geq \int_0^{\psi(s)} \dprb{\dot{f}(\tau),\tilde{y}(\tau)} \dd \tau \\
  &= \int_0^s \dprb{\dot{f}_\psi(\sigma),\tilde{y}(\psi(\sigma))} \dd \sigma \\
  &\geq \int_0^s \Pi_{\mathrm{red}}(\sigma,P(\sigma)) \dd \sigma \\
  &= \int_0^s \dprb{\dot{f}(\sigma),y(\sigma)} \dd \sigma.
\end{align*}
This establishes the lower limit inequality in~\eqref{eq:limit_power}.
\end{proof}

We now prove the stability and energy balance for the limit solution.

\begin{proposition} \label{prop:stability}
For every $S \in [0,\infty)$ there exists $\gamma(S) > 0$ such that if $\gamma > \gamma(S)$ then for all $s \in [0,S]$ with $\dot{\psi}(s) > 0$ and all $\hat{y} \in \Ycal$, $\hat{\Sd} \in \Slip(\Sd(s))$ with $\norm{\hat{\Sd}}_{\Lrm^\infty} \leq \gamma$, the stability relation
\begin{equation} \label{eq:stab}
  \Ecal_\psi(s,y(s),z(s)) \leq \Ecal_\psi(s,\hat{y},\hat{\Sd}_\ff z(s)) + \Diss(\hat{\Sd})
\end{equation}
holds.
\end{proposition}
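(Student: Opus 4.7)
The plan is to pass to the limit in the discrete stability of Proposition~\ref{prop:incr_prop_s}(ii) along a suitable subsequence, using a ``mutual recovery sequence'' construction adapted to the slip-trajectory setting. Fix $s \in [0,S]$ with $\dot\psi(s) > 0$. Since $\psi^N \to \psi$ locally uniformly and $\psi$ is not locally constant at $s$, one can select indices $k(N)$ with $s^N_{k(N)} \le s$ and $s^N_{k(N)} \to s$. The a-priori estimates in Proposition~\ref{prop:apriori_s}, together with the previously-established convergences $\bar P^N \toweakstar P$ and $\bar\Sigma^N \toweakstar \Sigma$, then yield (along a not-relabelled subsequence)
\[
  y^N_{k(N)} \toweak \tilde y \text{ in $\Wrm^{1,p}$}, \quad P^N_{k(N)} \toweak P(s) \text{ in $\Wrm^{1,q}$}, \quad \Phi^N_{k(N)} \toweakstar \Sigma(s) \text{ in $\DS(\cl\Omega)$},
\]
with $\tilde y \in Y_s$ by weak lower semicontinuity of the norm. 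Lemma~\ref{lem:conv}(i) furnishes the lower bound $\Ecal_\psi(s,\tilde y,z(s)) \le \liminf_N \Ecal_{\psi^N}(s^N_{k(N)},y^N_{k(N)},z^N_{k(N)})$, which together with the minimization property~\eqref{eq:limit_y} of $y(s)$ upgrades to
\[
  \Ecal_\psi(s, y(s), z(s)) \le \liminf_{N \to \infty} \Ecal_{\psi^N}(s^N_{k(N)}, y^N_{k(N)}, z^N_{k(N)}).
\]

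Given a test pair $\hat y \in \Ycal$, $\hat\Sigma \in \Sl(\Sigma(s))$ with $\norm{\hat\Sigma}_{\Lrm^\infty} \le \gamma$, assume without loss of generality that $\Wcal_e(\hat y, \hat\Sigma_\ff P(s)) < \infty$ (else the assertion is vacuous). The key step is the construction of recovery competitors via Proposition~\ref{prop:D_weak*}: applied to the uniformly mass-bounded sequence $\Phi^N_{k(N)} \toweakstar \Sigma(s)$, it produces bridge trajectories $\Sigma^N_{\mathrm{b}} \in \Sl(\Phi^N_{k(N)})$ with $(\Sigma^N_{\mathrm{b}})_\ff \Phi^N_{k(N)} = \Sigma(s)$, $\Diss(\Sigma^N_{\mathrm{b}}) \to 0$, and $\norm{\Sigma^N_{\mathrm{b}}}_{\Lrm^\infty} \le C \bar C(S)$. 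Setting $\gamma(S) := C \bar C(S)$, the concatenation $\hat\Sigma^N := \hat\Sigma \circ \Sigma^N_{\mathrm{b}} \in \Sl(\Phi^N_{k(N)})$ from Lemma~\ref{lem:Sigma_concat} satisfies $\norm{\hat\Sigma^N}_{\Lrm^\infty} \le \gamma$ whenever $\gamma \ge \gamma(S)$, has $\hat\Sigma^N_\ff \Phi^N_{k(N)} = \hat\Sigma_\ff \Sigma(s)$, and by additivity decomposes as $\Diss(\hat\Sigma^N) = \Diss(\Sigma^N_{\mathrm{b}}) + \Diss_{P^N_{\mathrm{b}}}(\hat\Sigma)$, where $P^N_{\mathrm{b}} := (\Sigma^N_{\mathrm{b}})_\ff P^N_{k(N)}$ and the subscript indicates the starting plastic distortion.

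Since $\Diss(\Sigma^N_{\mathrm{b}}) \to 0$, Lemmas~\ref{lem:PSigma_def} and~\ref{lem:PSigma_W1q} yield $P^N_{\mathrm{b}} \to P(s)$ strongly in $\Lrm^s$ and, in view of the uniform $\Wrm^{1,q}$-Lipschitz bound, also weakly in $\Wrm^{1,q}$. Applying Lemma~\ref{lem:PSigma_cont} to the constant slip-trajectory sequence $\hat\Sigma$ with initial data $P^N_{\mathrm{b}} \toweak P(s)$ then gives $\hat\Sigma^N_\ff P^N_{k(N)} = \hat\Sigma_\ff P^N_{\mathrm{b}} \to \hat\Sigma_\ff P(s)$ uniformly on $\Omega$, and hence also weakly in $\Wrm^{1,q}$. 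Lemma~\ref{lem:conv}(ii) therefore delivers $\Ecal_{\psi^N}(s^N_{k(N)}, \hat y, \hat\Sigma^N_\ff z^N_{k(N)}) \to \Ecal_\psi(s, \hat y, \hat\Sigma_\ff z(s))$. Moreover, the uniform convergence of the plastic paths along $\hat\Sigma$ starting from $P^N_{\mathrm{b}}$ (again by Lemma~\ref{lem:PSigma_cont}) combined with the local Lipschitz continuity of $R^b$ (Assumption~\ref{as:R}(ii)) shows, via dominated convergence, $\Diss_{P^N_{\mathrm{b}}}(\hat\Sigma) \to \Diss(\hat\Sigma)$, so that $\Diss(\hat\Sigma^N) \to \Diss(\hat\Sigma)$. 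Inserting $(\hat y, \hat\Sigma^N)$ into the discrete stability and taking the limit yields~\eqref{eq:stab}.

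The principal obstacle is the bridge construction: one needs slip trajectories connecting $\Phi^N_{k(N)}$ to $\Sigma(s)$ that have simultaneously vanishing dissipation and uniformly bounded $\Lrm^\infty$-mass; this is exactly the content of Proposition~\ref{prop:D_weak*}, whose central role in the energetic framework is thereby confirmed. A secondary technical subtlety is that the dissipation functional depends on the starting plastic distortion through the plastic path $P_\Sigma$, but this is controlled by the Lipschitz dependence of $P_\Sigma$ on initial data (Lemma~\ref{lem:PSigma_cont}) together with the local Lipschitz continuity of $R^b$.
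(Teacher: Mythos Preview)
Your proposal is correct and follows essentially the same route as the paper's proof: select $k(N)$ with $s^N_{k(N)}\to s$, use Proposition~\ref{prop:D_weak*} to build bridge trajectories $\tilde\Sigma^N_s$ with vanishing dissipation and controlled $\Lrm^\infty$-mass, concatenate with the given test $\hat\Sigma$, and pass to the limit in the discrete stability via Lemma~\ref{lem:conv}. One point where you are actually more careful than the paper: in the additivity formula $\Diss(\hat\Sigma\circ\tilde\Sigma^N_s)=\Diss(\tilde\Sigma^N_s)+\Diss(\hat\Sigma)$, the second term is relative to the starting plastic distortion $(\tilde\Sigma^N_s)_\ff P^N_{k(N)}$, not to $P(s)$; you explicitly verify that this converges to $\Diss(\hat\Sigma)$ relative to $P(s)$ via Lemma~\ref{lem:PSigma_cont} and the local Lipschitz continuity of $R^b$, whereas the paper treats this term as already constant in $N$. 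A minor inaccuracy: Proposition~\ref{prop:D_weak*} only bounds $\limsup_N\norm{\Sigma^N_{\mathrm b}}_{\Lrm^\infty}$, so you need the strict inequality $\gamma>\gamma(S)$ (as in the statement) rather than $\gamma\ge\gamma(S)$ to ensure $\norm{\hat\Sigma^N}_{\Lrm^\infty}\le\gamma$ for all large $N$.
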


\begin{proof}
In Proposition~\ref{prop:incr_prop_s}~(ii) we established the time-incremental stability at step $k = 0,1,2,\ldots$, namely
\begin{equation} \label{eq:incr_stab_repeat2}
  \Ecal_{\psi^N}(s^N_k,y^N_k,z^N_k) \leq \Ecal_{\psi^N}(s^N_k,\hat{y},\hat{\Sd}_\ff z^N_k) + \Diss(\hat{\Sd})
\end{equation}
for all $\hat{y} \in \Ycal$ and $\hat{\Sd} \in \Slip(z^N_k)$ with $\norm{\hat{\Sd}}_{\Lrm^\infty} \leq \gamma$.

Fix a point $s \in [0,S]$ with $\dot{\psi}(s) > 0$ and define for $N \in \N$ (more precisely, for the subsequence of $N$'s constructed in Lemma~\ref{lem:limit}) the index $k(N)$ to be the largest $k \in \{0,\ldots,N\}$ such that $s^N_{k(N)} \leq s$. For the corresponding $t^N_{k(N)} := \psi^N(s^N_{k(N)})$ we have $t^N_{k(N)} \to t := \psi(s)$ as $N \to \infty$ since the $\{t^N_k\}_{N,k}$ lie dense in $[0,T]$ (this uses the uniform convergence $\psi^N \to \psi$), Moreover, as we assumed $\dot{\psi}(s) > 0$, the Taylor expansion $\psi(s^N_{k(N)}) = \psi(s) + \dot{\psi}(s)[s^N_{k(N)}-s] + \BigO(\abs{s^N_{k(N)}-s}^2)$ then yields that also $s^N_{k(N)} \to s$.

Using that $\bar{\Sd}^N(s) \toweakstar \Sd(s)$ in $\Disl(\cl{\Omega})$, $\bar{P}(s) \toweak P(s)$ in $\Wrm^{1,q}$, and also the uniform Lipschitz continuity of $\Sd$ (with respect to a metric for the weak* convergence, e.g., the flat norm) and $P$ (with respect to $\Wrm^{1,q}$), we obtain
\[
  \Td^N_{k(N)} \toweakstar \Sd(s)  \quad\text{in $\Disl(\cl{\Omega})$,}  \qquad
  P^N_{k(N)} \toweak P(s) \quad\text{in $\Wrm^{1,q}$.}
\]
By Proposition~\ref{prop:D_weak*}, there is $\tilde{\Sd}^N_s \in \Slip(z^N_{k(N)})$ with $(\tilde{\Sd}^N_s)_\ff \Td^N_{k(N)} = \Sd(s)$ and
\[
  \Diss(\tilde{\Sd}^N_s) \to 0  \qquad\text{as $N \to \infty$.}
\]
Moreover, let $\gamma > \gamma(S) := C \cdot \bar{C}(S)$ with $C > 0$ the constant from Proposition~\ref{prop:D_weak*} and $\bar{C}(S) = \tilde{C}(\lambda(S)) > 0$ as in~\eqref{eq:apriori} of Lemma~\ref{lem:apriori}, where $\lambda(S) := S + \Ecal(0,y_0,z_0) + 1$ so that $\beta^N_k \leq \lambda(S)$ by~\eqref{eq:betaNk_sNk} (cf.\ the proof of Proposition~\ref{prop:apriori_s}). Then,
\[
  \limsup_{N\to\infty} \norm{\tilde{\Sd}^N_s}_{\Lrm^\infty} \leq C \cdot \limsup_{N \to \infty} \Mbf(\Td^N_{k(N)}) \leq C \cdot \bar{C}(S) < \gamma.
\]

For $\hat{y},\hat{\Sd}$ as in the statement of the proposition we define the following \enquote{recovery sequence} for $\hat{\Sd}$:
\[
  \hat{\Sd}^N_s := \hat{\Sd} \circ \tilde{\Sd}^N_s \in \Slip(z^N_{k(N)}).
\]
We have
\[
  \norm{\hat{\Sd}^N_s}_{\Lrm^\infty} = \max \bigl\{\norm{\hat{\Sd}}_{\Lrm^\infty}, \norm{\tilde{\Sd}^N_s}_{\Lrm^\infty}  \bigr\}
  \leq \gamma
\]
for $N > N(s)$ sufficiently large (depending on $s$, but this will not matter in the following). We also observe from~\eqref{eq:Diss_additive} in Lemma~\ref{lem:Diss} that
\[
  \Diss(\hat{\Sd}^N_s) = \Diss(\tilde{\Sd}^N_s) + \Diss(\hat{\Sd})
\]
and from Lemma~\ref{lem:Sigma_concat} that
\[
  (\hat{\Sd}^N_s)_\ff \Td^N_{k(N)} = \hat{\Sd}_\ff \Sd(s).
\]
The slip trajectory $\hat{\Sd}^N_s$ is thus admissible in~\eqref{eq:incr_stab_repeat2} at $k = k(N)$ for $N$ sufficiently large, giving
\begin{align}
  \Ecal_{\psi^N}(s^N_{k(N)},y^N_{k(N)},z^N_{k(N)}) &\leq \Ecal_{\psi^N}(s^N_{k(N)},\hat{y},(\hat{\Sd}^N_s)_\ff P^N_{k(N)},\hat{\Sd}_\ff \Sd(s)) \notag \\
  &\qquad + \Diss(\tilde{\Sd}^N_s) + \Diss(\hat{\Sd}).  \label{eq:incr_stab_hatSigma}
\end{align}
Using Lemma~\ref{lem:PSigma_cont} in conjunction with $\hat{\Sd}^N_s \toweakstar \hat{\Sd}$ and $P^N_{k(N)} \toweak P(s)$ in $\Wrm^{1,q}$,
\[
  (\hat{\Sd}^N_s)_\ff P^N_{k(N)} \toweak \hat{\Sd}_\ff P(s)  \quad\text{in $\Wrm^{1,q}$.}
\]
Passing to a (further) subsequence in $N$ (for fixed $s$, not relabelled) to obtain $y^N_{k(N)} \toweak \tilde{y}$ in $\Wrm^{1,p}$, we may use the assertions~(i),~(ii) of Lemma~\ref{lem:conv} as well as the locally uniform convergence $\psi^N \to \psi$, to pass to the lower limit $N \to \infty$ in~\eqref{eq:incr_stab_hatSigma} at $k = k(N)$, obtaining
\[
  \Ecal_\psi(s,\tilde{y},z(s)) \leq \Ecal_\psi(s,\hat{y},\hat{\Sd}_\ff z(s)) + \Diss(\hat{\Sd}).
\]
Finally observing that $\Ecal_\psi(s,y(s),z(s)) \leq \Ecal_\psi(s,\tilde{y},z(s))$ by~\eqref{eq:limit_y}, the conclusion~\eqref{eq:stab} follows.
\end{proof}

\begin{remark} \label{rem:D_proj}
As remarked in the Introduction and explained further in Section~6.2 of~\cite{HudsonRindler22}, the projection in the definition of the total plastic drift in~\eqref{eq:D} has the effect of disregarding climb. The reason why we cannot simply enforce that $\hodge \gamma^b$ is orthogonal to $P^{-1} b$ for admissible slip trajectories is that this makes it impossible to deform some dislocations into each other via Proposition~\ref{prop:D_weak*}. Indeed, such a deformation may require a slip trajectory violating the orthogonality constraint, if only on a trajectory with vanishing variation. In this case the recovery construction in the preceding proposition would fail.
\end{remark}

\begin{proposition} \label{prop:energy_balance}
For every $S \in [0,\infty)$ and $\gamma > \gamma(S)$ (with $\gamma(S)$ defined in Proposition~\ref{prop:stability}), the energy balance
\begin{equation} \label{eq:energy_bal}
  \Ecal_\psi(s,y(s),z(s)) = \Ecal_\psi(0,y_0,z_0) - \Diss(\Sd;[0,s]) - \int_0^s \dprb{\dot{f}_\psi(\sigma),y(\sigma)} \dd \sigma
\end{equation}
holds for all $s \in [0,S]$.
\end{proposition}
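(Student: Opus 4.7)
The plan is to prove the equality by establishing the two inequalities $\leq$ and $\geq$ separately. Both use the limit passage $N\to\infty$ established in Lemma~\ref{lem:limit}, combined with the semicontinuity properties of Lemma~\ref{lem:conv} and, for the lower inequality, the stability from Proposition~\ref{prop:stability}.

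For the upper bound, I pass to the limit in the discrete lower energy estimate of Proposition~\ref{prop:incr_prop_s}(i) at $k = k(N)$ chosen as the largest index with $s^N_{k(N)} \le s$, so $s^N_{k(N)} \to s$. On the left, the lower semicontinuity of $\Ecal_\psi$ from Lemma~\ref{lem:conv}(i) combined with the convergences $\bar P^N(s^N_{k(N)}) \toweak P(s)$ in $\Wrm^{1,q}$ and $\bar\Sigma^N(s^N_{k(N)}) \toweakstar \Sigma(s)$ (both inherited from the locally uniform Lipschitz continuity and BV-weak* limit), together with a diagonal extraction producing some $\bar y^N(s^N_{k(N)}) \toweak \tilde y$ in $\Wrm^{1,p}$, yields $\Ecal_\psi(s,\tilde y,z(s)) \le \liminf_N e^N_{k(N)}$; the minimality characterization~\eqref{eq:limit_y} of $y(s)$ then gives $\Ecal_\psi(s,y(s),z(s)) \le \Ecal_\psi(s,\tilde y,z(s))$. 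On the right, the initial value is constant ($\psi^N(0)=0$); the discrete dissipation sum $\sum_{j=1}^{k(N)} \Diss(\Sigma^N_j)$ equals $\Diss(\bar\Sigma^N;[0,s^N_{k(N)}])$ by construction of $\bar\Sigma^N$ and the additivity of dissipation, and its $\liminf$ bounds $\Diss(\Sigma;[0,s])$ from below via Lemma~\ref{lem:conv}(iv); finally, the power term $\sum_j \int_{s^N_{j-1}}^{s^N_j} \dprb{\dot f_{\psi^N}(\sigma),y^N_{j-1}}\dd\sigma$ is converted by $\tau=\psi^N(\sigma)$ to an integral in original time (in which $\Delta T^N \to 0$ uniformly) and treated exactly as in Step~3 of the proof of Lemma~\ref{lem:limit}, noting that the previous-step minimizers $y^N_{k-1}$ still subconverge into $M(\sigma,P(\sigma))$.

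For the lower bound, fix $S > s$ and choose $\gamma > \gamma(S)$. Pick a partition $0 = s_0 < s_1 < \cdots < s_M = s$ with all $s_{j-1} \in \{\dot\psi>0\}$; this is permissible since $\dot f_\psi = \dot f(\psi)\,\dot\psi = 0$ a.e.\ on $\{\dot\psi=0\}$, so that set contributes nothing to the power integral. For each $j$ define $\hat\Sigma^j\in\Sl(\Sigma(s_{j-1}))$ to be the rescaling to $[0,1]$ (Lemma~\ref{lem:rescale}) of $\Sigma$ restricted to $[s_{j-1},s_j]$. Using the ODE characterization of $P$ together with the rate-independence in Lemma~\ref{lem:Sigma_rescale}, one checks $\hat\Sigma^j_\ff z(s_{j-1}) = z(s_j)$, while Lemma~\ref{lem:Diss_rescale} gives $\Diss(\hat\Sigma^j) = \Diss(\Sigma;[s_{j-1},s_j])$, and the a-priori bound $\sup_\sigma \Mbf(\Sigma(\sigma)) \le \bar C(S)$ ensures $\|\hat\Sigma^j\|_{\Lrm^\infty} < \gamma$. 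Stability at $s_{j-1}$ then gives
\[
  \Ecal_\psi(s_{j-1},y(s_{j-1}),z(s_{j-1})) \le \Ecal_\psi(s_{j-1},y(s_j),z(s_j)) + \Diss(\Sigma;[s_{j-1},s_j]).
\]
Since $\Ecal_\psi$ depends on $s$ only through $-\dpr{f_\psi(s),\cdot}$, rewriting $\Ecal_\psi(s_{j-1},y(s_j),z(s_j)) = \Ecal_\psi(s_j,y(s_j),z(s_j)) + \int_{s_{j-1}}^{s_j} \dprb{\dot f_\psi(\sigma),y(s_j)}\dd\sigma$ and telescoping produces
\[
  \Ecal_\psi(s,y(s),z(s)) - \Ecal_\psi(0,y_0,z_0) \ge -\sum_j \int_{s_{j-1}}^{s_j} \dprb{\dot f_\psi(\sigma),y(s_j)}\dd\sigma - \Diss(\Sigma;[0,s]).
\]
Refining partitions with mesh shrinking to zero and with all $s_{j-1}$ in $\{\dot\psi>0\}$, the Riemann-type sum converges to $\int_0^s \dprb{\dot f_\psi(\sigma),y(\sigma)}\dd\sigma$ by a dominated convergence argument relying on the uniform bound of $\|y\|_{\Lrm^\infty([0,S];\Wrm^{1,p})}$ and on evaluating at Lebesgue points of $\sigma\mapsto\dprb{\dot f_\psi(\sigma),y(\sigma)}$.

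The hard part will be the last step—justifying the passage from the Riemann-type sum to the integral despite the mere measurability of $s\mapsto y(s)$. This is handled by noting that $\dot f_\psi \in \Lrm^\infty$ (by Assumption~\ref{as:F}) and that the nesting inequality delivered by the telescoping argument is an $\ge$, so it suffices to obtain an $\le$ on the Riemann sums in the limit, for which one combines the uniform $\Wrm^{1,p}$ bound on $y(s_j)$ with an approximation of $y$ along a subsequence of refining partitions whose nodes are Lebesgue points of the integrand; the flexibility of choosing $s_{j-1}$ in the full-measure set $\{\dot\psi>0\}$ (and away from the at-most-countable set of discontinuities of the relevant BV-Lipschitz processes) ensures such partitions exist with arbitrarily fine mesh.
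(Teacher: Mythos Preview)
Your approach is essentially the paper's: the upper bound by passing to the limit in the discrete lower energy estimate of Proposition~\ref{prop:incr_prop_s}(i) at $k=k(N)$, and the lower bound by applying stability (Proposition~\ref{prop:stability}) on a partition with nodes in $\{\dot\psi>0\}$, telescoping, and then passing the Riemann-type sum to the integral. For the latter step---your acknowledged ``hard part''---the paper first uses the uniform continuity of $\dot f$ from Assumption~\ref{as:F} to replace $\dot f_\psi(\sigma)$ by $\dot f_\psi(\sigma_{\ell+1})$ up to an $\epsilon$-error, and then invokes \emph{Hahn's lemma}~\cite[Lemma~4.12]{DalMasoFrancfortToader05}, which is precisely the partition-selection result your Lebesgue-point sketch is gesturing at; note also that stability at the initial node $s_0=0$ is supplied by Assumption~\ref{as:initial} rather than Proposition~\ref{prop:stability}. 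One caution: your claim ``so $s^N_{k(N)}\to s$'' is not automatic---in the paper it is established only under $\dot\psi(s)>0$ (see the proof of Proposition~\ref{prop:stability}), and the paper's own upper-bound argument is correspondingly terse here, deferring to ``arguments as in the preceding proof''.
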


\begin{proof}
From Proposition~\ref{prop:incr_prop_s}~(i) we have for all $k \in \{1,\ldots,N\}$ the discrete lower energy estimate
\begin{equation} \label{eq:incr_energy}
  \Ecal_{\psi^N}(s^N_k,y^N_k,z^N_k) \leq \Ecal_{\psi^N}(0,y_0,z_0) - \sum_{j=1}^k \Diss(\Sd^N_j) - \sum_{j=1}^k \int_{s^N_{j-1}}^{s^N_j} \dprb{\dot{f}_{\psi^N}(\sigma),y^N_{j-1}} \dd \sigma.
\end{equation}
Fix a point $s \in [0,\infty)$ and define for $N \in \N$ the index $k(N)$ to be the largest $k \in \{0,\ldots,N\}$ such that $s^N_k \leq s$. Then, by Lemma~\ref{lem:limit} and Lemma~\ref{lem:conv}~(i) as well as~\eqref{eq:limit_y}, we obtain (by arguments as in the preceding proof of Proposition~\ref{prop:stability})
\[
  \Ecal_\psi(s,y(s),z(s)) \leq \liminf_{N \to \infty} \Ecal_{\psi^N}(s^N_{k(N)},y^N_{k(N)},z^N_{k(N)}).
\]
Moreover, using Lemma~\ref{lem:conv}~(iv), as well as~\eqref{eq:VarSigmaN}, we have
\begin{align*}
  \Diss(\Sd;[0,s]) &\leq \liminf_{N \to \infty} \, \Diss(\bar{\Sd}^N;[0,s]) \\
  &= \liminf_{N \to \infty} \, \Diss(\bar{\Sd}^N;[0,s^N_{k(N)}]) \\
  &= \liminf_{N \to \infty} \sum_{j=1}^{k(N)} \Diss(\Sd^N_j).
\end{align*}
Combining this with~\eqref{eq:limit_power}, we may pass to the lower limit $N \to \infty$ in~\eqref{eq:incr_energy} and obtain
\begin{align}
  \Ecal_\psi(s,y(s),z(s))
  &\leq \Ecal_\psi(0,y_0,z_0) - \Diss(\Sd;[0,s]) - \int_0^s \dprb{\dot{f}_\psi(\sigma),y(\sigma)} \dd \sigma   \notag\\
  &= \Ecal_\psi(0,y_0,z_0) - \Diss(\Sd;[0,s]) - \int_0^s \Pi_{\mathrm{red}}(\sigma,P(\sigma)) \dd \sigma.  \label{eq:energy_upper}
\end{align}

On the other hand, take any partition $0 = \sigma_0 < \sigma_1 < \cdots < \sigma_K = s$ of the interval $[0,s]$ such that $\dot{\psi}(\sigma_\ell) > 0$ ($\ell = 1,\ldots K-1$). Fix $\ell \in \{0,\ldots,m-1\}$ and let $\hat{\Sd}^K_\ell \in \Slip(\Sd(\sigma_\ell))$ to be the restriction $\Sd \restrict (\sigma_\ell,\sigma_{\ell+1})$, rescaled to unit time length (via Lemma~\ref{lem:rescale}), so that
\[
  (\hat{\Sd}^K_\ell)_\ff z(\sigma_\ell) = z(\sigma_{\ell+1}).
\]
Apply the stability estimate from Proposition~\ref{prop:stability} at $s = \sigma_\ell$ with $\hat{y} := y(\sigma_{\ell+1})$ and $\hat{\Sd} := \hat{\Sd}^K_\ell$. In this way we get for $\ell = 1,\ldots K-1$ that
\begin{align*}
  \Ecal_\psi(\sigma_\ell,y(\sigma_\ell),z(\sigma_\ell))
  &\leq \Ecal_\psi(\sigma_\ell,y(\sigma_{\ell+1}),z(\sigma_{\ell+1})) + \Diss(\Sd;[\sigma_\ell,\sigma_{\ell+1}]) \\
  &= \Ecal_\psi(\sigma_{\ell+1},y(\sigma_{\ell+1}),z(\sigma_{\ell+1})) + \Diss(\Sd;[\sigma_\ell,\sigma_{\ell+1}]) \\
  &\qquad + \int_{\sigma_\ell}^{\sigma_{\ell+1}} \dprb{\dot{f}_\psi(\sigma),y(\sigma_{\ell+1})} \dd \sigma.
\end{align*}
This estimate also holds for $\ell = 0$ by the stability assumed in~\ref{as:initial}. Rearranging and summing from $\ell = 0$ to $K-1$, we obtain
\begin{equation} \label{eq:Elow_1}
  \Ecal_\psi(s,y,z(s)) + \Diss(\Sd;[0,s])
  \geq \Ecal_\psi(0,y_0,z_0) - \sum_{\ell = 0}^{K-1} \int_{\sigma_\ell}^{\sigma_{\ell+1}} \dprb{\dot{f}_\psi(\sigma),y(\sigma_{\ell+1})} \dd \sigma.  
\end{equation}
It further holds that
\[
  \biggl| \sum_{\ell = 0}^{K-1} \int_{\sigma_\ell}^{\sigma_{\ell+1}} \dprb{\dot{f}_\psi(\sigma),y(\sigma_{\ell+1})} \dd \sigma
  - \sum_{\ell = 0}^{K-1} \int_{\sigma_\ell}^{\sigma_{\ell+1}} \dprb{\dot{f}_\psi(\sigma_{\ell+1}),y(\sigma_{\ell+1})} \dd \sigma \biggr|
  \leq \eps
\]
as soon as the partition is sufficiently fine, where we use the uniform continuity of $\dot{f}_\psi$ with values in $\Wrm^{1,p}(\Omega;\R^3)^*$ from Assumption~\ref{as:F} as well as the uniform $\Wrm^{1,p}(\Omega;\R^3)$-boundedness of $y$. Here, we note that while the condition $\dot{\psi}(\sigma_\ell) > 0$ may force gaps in the partition, on these gaps the integrand vanishes and so the above statement is not affected.

By Hahn's lemma (see~\cite[Lemma~4.12]{DalMasoFrancfortToader05}) we may now choose a sequence of partitions $0 = \sigma_0 < \sigma_1 < \cdots < \sigma_K = s$ with $\dot{\psi}(\sigma_\ell) > 0$ ($\ell = 1,\ldots K-1$) such that we have the convergence of the associated upper Riemann sum, namely
\[
  \sum_{\ell = 0}^{K-1} \int_{\sigma_\ell}^{\sigma_{\ell+1}} \dprb{\dot{f}_\psi(\sigma_{\ell+1}),y(\sigma_{\ell+1})} \dd \sigma
  \to \int_0^s \dprb{\dot{f}_\psi(\sigma),y(\sigma)} \dd \sigma
\]
as $K \to \infty$. The same remark regarding the condition $\dot{\psi}(\sigma_\ell) > 0$ as before applies. Letting $K \to \infty$ in~\eqref{eq:Elow_1} (unless $\psi$ is totally flat on $[0,s]$, whereby the whole power term vanishes) and recalling the second relation in~\eqref{eq:limit_power}, we arrive at
\begin{align*}
  \Ecal_\psi(s,y,z(s)) &\geq \Ecal_\psi(0,y_0,z_0) - \Diss(\Sd;[0,s]) - \int_0^s \dprb{\dot{f}_\psi(\sigma),y(\sigma)} \dd \sigma \\
  &= \Ecal_\psi(0,y_0,z_0) - \Diss(\Sd;[0,s]) - \int_0^s \Pi_{\mathrm{red}}(\sigma,P(\sigma)) \dd \sigma.
\end{align*}
Together with~\eqref{eq:energy_upper}, we have thus established the claimed energy balance~\eqref{eq:energy_bal}.
\end{proof}

Next, we show the plastic flow equation.

\begin{proposition} \label{prop:plastflow}
For almost every $s \in [0,\infty)$, the plastic flow equation holds at $s$ in the $\Wrm^{1,q}$-sense (as in Lemma~\ref{lem:PSigma_ODE_BV}), i.e., 
\[
  \left\{ \begin{aligned}
    \frac{\di}{\di s} P(s) &= \biggl( x \mapsto \frac12 \sum_{b \in \Bcal} b \otimes \proj_{\langle P(s,x)^{-1}b \rangle^\perp} \bigl[ g^b(s,x) \bigr] \biggr) \qquad\text{for a.e.\ $s \in [0,\infty)$,}\\
    P(0) &= P_0,
  \end{aligned} \right.
\]
where $g^b$ is the density of the measure $\hodge \pbf(S^b_\eta)$ (with $\Sd = (S^b)_b$ and $\eta$ the dislocation line profile).
\end{proposition}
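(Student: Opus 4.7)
The plan is to identify the limit process $P$ with the plastic distortion path $P_\Sigma$ induced by the limit slip trajectory $\Sigma$ starting at $P_0$, after which the ODE will follow immediately from Lemma~\ref{lem:PSigma_ODE_BV} applied to $P_\Sigma$ on every bounded time interval.

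By the construction of the interpolant $\bar{P}^N$ in~\eqref{eq:barPN_ODE}, we have, in the notation of Section~\ref{sc:plast_evol}, $\bar{P}^N = (P_0)_{\bar{\Sigma}^N}$ on every interval $[0,S]$ with $S > 0$. Fix such an $S$. From~\eqref{eq:limit_Sigma} together with the fact that each $\bar{\Sigma}^N \in \Sl(\Phi_0;[0,S])$ (the boundary condition $\partial \bar{S}^{N,b} \restrict (\{0\} \times \R^3) = -\delta_0 \times T^b_0$ is preserved in the weak* BV limit, as in the proof of Proposition~\ref{prop:Sl_compact}), we conclude that $\Sigma$ itself lies in $\Sl(\Phi_0;[0,S])$. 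Applying Lemma~\ref{lem:PSigma_cont} with the constant sequence $P_j \equiv P_0$ yields
\[
  \bar{P}^N \to (P_0)_\Sigma = P_\Sigma  \qquad \text{uniformly in $[0,S] \times \Omega$.}
\]
On the other hand, Proposition~\ref{prop:BVX_Helly} in combination with~\eqref{eq:limit_P} gives $\bar{P}^N(s) \toweak P(s)$ in $\Wrm^{1,q}$ for every $s \in [0,S]$; by the compact embedding $\Wrm^{1,q}(\Omega;\R^{3\times 3}) \cembed \Crm(\Omega;\R^{3\times 3})$ this convergence is uniform on $\Omega$. Comparing the two limits gives $P(s) = P_\Sigma(s)$ for all $s \in [0,S]$, and since $S$ was arbitrary, $P = P_\Sigma$ on $[0,\infty)$.

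Once the identification $P = P_\Sigma$ is established, Lemma~\ref{lem:PSigma_ODE_BV} (applied to $\Sigma \restrict [0,S]$ for arbitrary $S > 0$) directly yields that $\frac{\di}{\di s} P(s)$ exists as a $\Wrm^{1,q}$-valued map for almost every $s \in [0,\infty)$ and satisfies
\[
  \frac{\di}{\di s} P(s) = \Bigl( x \mapsto D(s,x,P(s,x)) \Bigr) \qquad \text{in $\Wrm^{1,q}(\Omega;\R^{3\times 3})$,}
\]
where $D$ has exactly the form displayed in the statement, since by Lemma~\ref{lem:Gb} applied to $\Sigma$ the density $g^b$ of $\hodge \pbf(S^b_\eta)$ is precisely the normal slip rate appearing in~\eqref{eq:D}. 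The initial condition $P(0) = P_\Sigma(0) = P_0$ holds by the very definition of $P_\Sigma$ in~\eqref{eq:PSigma_def}.

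The only potential obstacle is verifying that the weak* BV limit $P$ agrees, as a pointwise-in-time object, with the uniform limit $P_\Sigma$, and that the limit $\Sigma$ remains admissible as a starting point for the construction (in particular, $\partial S^b \restrict (\{0\} \times \R^3) = -\delta_0 \times T^b_0$). Both points are essentially routine: the first follows from Helly-type compactness together with the compact Sobolev embedding, and the second is a straightforward slicing argument already implicit in Propositions~\ref{prop:current_Helly} and~\ref{prop:Sl_compact}. Thus no genuinely new estimates are needed beyond the machinery assembled in Sections~\ref{sc:disl_slips} and~\ref{sc:evolution}.
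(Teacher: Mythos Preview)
Your proposal is correct and follows essentially the same approach as the paper: the paper's proof says to pass to the limit in the ODE~\eqref{eq:barPN_ODE} ``using (the same technique as in the proof of) Lemma~\ref{lem:PSigma_cont}'', while you apply Lemma~\ref{lem:PSigma_cont} directly to identify $P = P_\Sigma$ and then invoke Lemma~\ref{lem:PSigma_ODE_BV}. This is a slightly more explicit packaging of the same argument.
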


\begin{proof}
The ODE holds for $\bar{P}^N$, see~\eqref{eq:barPN_ODE}. Using the convergence assertions in Lemma~\ref{lem:limit}, we can then pass to the limit using (the same technique as in the proof of) Lemma~\ref{lem:PSigma_cont}.
\end{proof}

Finally, we record the following regularity estimate:

\begin{lemma} \label{lem:estimates}
For every $S \in [0,\infty)$ there is a constant $\bar{C}(S) > 0$ such that if $s \in [0,S]$, then the estimates
\[
  \norm{y}_{\Lrm^\infty([0,S];\Wrm^{1,p})} + \norm{P}_{\Lrm^\infty([0,S];\Wrm^{1,q})} + \norm{\Sd}_{\Lrm^\infty([0,S];\Disl(\cl{\Omega}))} + \Var(\Sd;[0,S]) \leq \bar{C}(S)
\]
hold.
\end{lemma}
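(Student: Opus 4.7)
The plan is to deduce each of the four bounds by passing to the limit from the corresponding uniform-in-$N$ estimates for the discrete solutions established in Proposition~\ref{prop:apriori_s}, using the lower semicontinuity of the relevant quantities under the convergences provided by Lemma~\ref{lem:limit}. After the four individual bounds are obtained, the constant $\bar{C}(S)$ in the lemma is simply a relabelling of the maximum of the four constants arising in each step (and can be made monotone upper semicontinuous exactly as at the end of Proposition~\ref{prop:apriori_s}).

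First, the bound on $y$ is essentially built into the construction of $y$ in Step~2 of Lemma~\ref{lem:limit}: the Filippov selection was carried out in $F(s)=M(s,P(s))\subset Y_s$, and $Y_s$ was defined as $\setn{\hat y\in\Wrm^{1,p}_g(\Omega;\R^3)}{\norm{\hat y}_{\Wrm^{1,p}}\leq\bar C(s)}$. Hence $\norm{y(s)}_{\Wrm^{1,p}}\leq \bar C(s)\leq\bar C(S)$ for every $s\in[0,S]$, using the monotonicity of $\bar C$. For $P$, Lemma~\ref{lem:limit} gives $\bar{P}^{N}\toweakstar P$ locally in $\BV([0,\infty);\Wrm^{1,q}(\Omega;\R^{3\times 3}))$, so by Proposition~\ref{prop:BVX_Helly} we have $\bar{P}^{N}(s)\toweak P(s)$ in $\Wrm^{1,q}$ for every $s\in[0,S]$; combining the weak lower semicontinuity of $\norm{\,\cdot\,}_{\Wrm^{1,q}}$ with the uniform estimate $\norm{\bar{P}^{N}}_{\Lrm^\infty([0,S];\Wrm^{1,q})}\leq\bar C(S)$ from Proposition~\ref{prop:apriori_s} yields $\norm{P}_{\Lrm^\infty([0,S];\Wrm^{1,q})}\leq\bar C(S)$.

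For $\Sigma=(S^b)_b$, we apply Proposition~\ref{prop:current_Helly} separately for each Burgers vector $b\in\Bcal$ to the sequence $\bar{S}^{N,b}$, which satisfies the required uniform bounds on mass of slices, of boundary slices, and of variations by~\eqref{eq:apriori_s_1}--\eqref{eq:apriori_s_2}. The two lower semicontinuity inequalities $\esssup_{t\in[0,S]}\Mbf(S^{b}(t))\leq \liminf_N\esssup_{t\in[0,S]}\Mbf(\bar S^{N,b}(t))$ and $\Var(S^{b};[0,S])\leq\liminf_N\Var(\bar S^{N,b};[0,S])$ from that proposition, followed by taking the maximum and integral in $b$ (against the finite measure $\kappa$, using Fatou as in Proposition~\ref{prop:LipDS_compact}), immediately give $\norm{\Sigma}_{\Lrm^\infty([0,S];\DS(\cl\Omega))}\leq\gamma$ and $\Var(\Sigma;[0,S])\leq\bar C(S)$. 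Enlarging $\bar C(S)$ to absorb $\gamma$ (recall $\gamma$ is chosen independently of the limit) completes the proof.

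There is no substantial obstacle here; the only point requiring a tiny amount of care is the final enlargement of $\bar C(S)$, which should be performed so as to preserve the monotonicity and upper semicontinuity properties of $S\mapsto\bar C(S)$ already ensured in Proposition~\ref{prop:apriori_s}; this is achieved by taking, after all the above bounds have been obtained, the maximum of the constants, making it increasing, and passing to the upper semicontinuous envelope exactly as in the proof of Proposition~\ref{prop:apriori_s}.
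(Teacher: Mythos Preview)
Your argument for the bounds on $y$, $P$, and $\Var(\Sigma;[0,S])$ is correct and matches the paper's reasoning. The essential difference lies in how you bound $\norm{\Sigma}_{\Lrm^\infty([0,S];\DS(\cl{\Omega}))}$.

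You pass the inequality $\norm{\bar{\Sigma}^N}_{\Lrm^\infty([0,S];\DS(\cl{\Omega}))}\leq\gamma$ from~\eqref{eq:apriori_s_2} through the lower semicontinuity of Proposition~\ref{prop:current_Helly}, obtaining $\norm{\Sigma}_{\Lrm^\infty}\leq\gamma$, and then absorb $\gamma$ into $\bar{C}(S)$. This is formally correct for the lemma as stated, but it produces a constant that depends on $\gamma$. The paper instead uses that at the discrete times one has $\Phi^N_{k(N)}\toweakstar\Sigma(s)$ (cf.\ the proof of Proposition~\ref{prop:stability}) together with the mass bound $\Mbf(\Phi^N_k)\leq\tilde{C}(\beta^N_k)\leq\bar{C}(S)$ from~\eqref{eq:apriori}, which comes from the coercivity of the core energy $\Wcal_c$ in Lemma~\ref{lem:E_coerc}. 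Lower semicontinuity of the mass then gives $\Mbf(\Sigma(s))\leq\bar{C}(S)$ with $\bar{C}(S)$ \emph{independent of $\gamma$}. Note that between the discrete times the trajectory $\bar{\Sigma}^N$ is only controlled by $\gamma$, so your route via the $\esssup$ over the whole interval cannot recover this sharper bound.

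This distinction is not cosmetic: the proof of Theorem~\ref{thm:main} explicitly relies on the $\gamma$-independence of the $\Lrm^\infty$-bound on $\Sigma$ (see the parenthetical remark there) in order to pass to the limit $\gamma\to\infty$ and remove the constraint $\norm{\hat{\Sigma}}_{\Lrm^\infty}\leq\gamma$ from the stability condition. With your version of $\bar{C}(S)$ that final compactness argument would fail.
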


\begin{proof}
With the notation of the proof of Proposition~\ref{prop:stability}, $\Td^N_{k(N)} \toweakstar \Sd(s)$, and hence we have that $\norm{\Sd}_{\Lrm^\infty([0,S];\Disl(\cl{\Omega}))} \leq \bar{C}(S)$ by the estimates of~\eqref{eq:apriori} in Lemma~\ref{lem:apriori} (see the analogous argument in the proof of Proposition~\ref{prop:stability}) and the lower semicontinuity of the $\Lrm^\infty$-norm (Proposition~\ref{prop:current_Helly}). The other estimates follow directly from Proposition~\ref{prop:apriori_s} in conjunction with the assertions of Lemma~\ref{lem:limit}.
\end{proof}

\subsection{Proof of Theorem~\ref{thm:main}}

Finally, we dispense with the restriction that $\norm{\hat{\Sd}}_{\Lrm^\infty} \leq \gamma$ for the test trajectory $\hat{\Sd}$ in the stability condition~(S). From now on we make the dependence on $\gamma$ explicit and write $y_\gamma, P_\gamma, \Sd_\gamma, \psi_\gamma$ for $y, P,\Sd,\psi$.

Fix $S \in [0,\infty)$. The bounds from Lemma~\ref{lem:estimates}  (note in particular the $\gamma$-independent estimate on $\norm{\Sd}_{\Lrm^\infty([0,S];\Disl(\cl{\Omega}))}$) allow us to pass to a subsequence of $\gamma$'s (not explicitly labelled) tending to $+\infty$ such that the following hold just like in Lemma~\ref{lem:limit}: There exist
\begin{align*}
  y &\in \Lrm^\infty([0,\infty);\Wrm^{1,p}_g(\Omega;\R^3)) \text{with $y(0) = y_0$}, \\
  P &\in \Lip([0,\infty);\Wrm^{1,q}(\Omega;\R^{3 \times 3})) \text{ with $\det P(s) = 1$ a.e.\ in $\Omega$ for all $s \in [0,\infty)$}, \\
  \Sd & \in \Lip([0,\infty);\Disl(\cl{\Omega})) \text{ with $\Diss(\Sd;[0,s]) \leq s$ for $s \in [0,\infty)$}, \\
  \psi &\in \Crm([0,\infty)) \text{ increasing and Lipschitz with $\psi(0) = 0$, $\psi(\infty) = T_* \in (0,T]$}
\end{align*}
with
\begin{align*}
  P_\gamma &\toweakstar P  \quad\text{locally in $\BV([0,\infty);\Wrm^{1,q}(\Omega;\R^{3 \times 3}))$},  \\
  \Sd_\gamma &\toweakstar \Sd \quad\text{locally},  \\
  \psi_\gamma &\to \psi \quad\text{locally uniformly.}  
\end{align*}
In particular, we have $T_* > 0$ since the arguments before give a $\gamma$-independent lower bound on $T_*$ (see Lemma~\ref{lem:apriori}).

The stability~(S), the energy balance~(E), and the plastic flow equation~(P) follow from the construction and Propositions~\ref{prop:stability},~\ref{prop:energy_balance},~\ref{prop:plastflow} using the same techniques as in the previous section. We omit the repetitive details. Let us however observe that \emph{every} $\hat{\Sd} \in \Slip(\Sd(s))$ (which includes the assumption $\norm{\hat{\Sd}}_{\Lrm^\infty} < \infty$) becomes admissible for $\gamma$ sufficiently large. In this way, all parts of Definition~\ref{def:sol} follow. The initial conditions are satisfied by construction. The proof of Theorem~\ref{thm:main} is thus complete. \qed

%

\begin{thebibliography}{10}

\bibitem{AbbaschianReedHill09}
R.~Abbaschian, L.~Abbaschian, and R.~E. Reed-Hill, \emph{{Physical Metallurgy
  Principles - SI Edition}}, Cengage Learning, 2009.

\bibitem{Acharya21}
A.~Acharya, \emph{An action for nonlinear dislocation dynamics}, Preprint,
  arXiv:2104.12568.

\bibitem{Acharya01}
\bysame, \emph{A model of crystal plasticity based on the theory of
  continuously distributed dislocations}, J. Mech. Phys. Solids \textbf{49}
  (2001), 761--784.

\bibitem{Acharya03}
\bysame, \emph{Driving forces and boundary conditions in continuum dislocation
  mechanics}, Proc. R. Soc. London, Ser. A \textbf{459} (2003), 1343--1363.

\bibitem{AmbrosioFuscoPallara00book}
L.~Ambrosio, N.~Fusco, and D.~Pallara, \emph{{Functions of Bounded Variation
  and Free-Discontinuity Problems}}, Oxford Mathematical Monographs, Oxford
  University Press, 2000.

\bibitem{AndersonHirthLothe17book}
P.~M. Anderson, J.~P. Hirth, and J.~Lothe, \emph{Theory of dislocations},
  Cambridge University Press, 2017.

\bibitem{ArizaContiGarroniOrtiz18}
P.~Ariza, S.~Conti, A.~Garroni, and M.~Ortiz, \emph{Variational modeling of
  dislocations in crystals in the line-tension limit}, European {C}ongress of
  {M}athematics, European Mathematical Society, 2018, pp.~583--598.

\bibitem{ArmstrongArnoldZerilli09}
R.~W. Armstrong, W.~Arnold, and F.~J. Zerilli, \emph{Dislocation mechanics of
  copper and iron in high rate deformation tests}, J. Appl. Phys. \textbf{105}
  (2009), 023511.

\bibitem{AroraAcharya20}
R.~Arora and A.~Acharya, \emph{A unification of finite deformation {J2}
  {V}on-{M}ises plasticity and quantitative dislocation mechanics}, J. Mech.
  Phys. Solids \textbf{143} (2020), 104050.

\bibitem{AubinFrankowska90}
J.~P. Aubin and H.~Frankowska, \emph{{Set-Valued Analysis}}, Systems \&
  Control: Foundations \& Applications, vol.~2, Birkh\"auser, 1990.

\bibitem{Ball76}
J.~M. Ball, \emph{Convexity conditions and existence theorems in nonlinear
  elasticity}, Arch. Ration. Mech. Anal. \textbf{63} (1976/77), 337--403.

\bibitem{Ball02}
\bysame, \emph{Some open problems in elasticity}, Geometry, mechanics, and
  dynamics, Springer, 2002, pp.~3--59.

\bibitem{BenDavidEtAl14}
E.~Ben-David, T.~Tepper-Faran, D.~Rittel, and D.~Shilo, \emph{A large strain
  rate effect in thin free-standing {Al} films}, Scripta Materialia
  \textbf{90--91} (2014), 6--9.

\bibitem{BlassFonsecaLeoniMorandotti15}
T.~Blass, I.~Fonseca, G.~Leoni, and M.~Morandotti, \emph{Dynamics for systems
  of screw dislocations}, SIAM J. Appl. Math. \textbf{75} (2015), 393--419.

\bibitem{BlassMorandotti17}
T.~Blass and M.~Morandotti, \emph{Renormalized energy and {P}each-{K}\"{o}hler
  forces for screw dislocations with antiplane shear}, J. Convex Anal.
  \textbf{24} (2017), 547--570.

\bibitem{BonicattoDelNinRindler22?}
P.~Bonicatto, G.~{Del Nin}, and F.~Rindler, \emph{Transport of currents and
  geometric {R}ademacher-type theorems}, arXiv:2207.03922, 2022.

\bibitem{BulatovCai06book}
V.~V. Bulatov and W.~Cai, \emph{Computer simulations of dislocations}, Oxford
  Series on Materials Modelling, vol.~3, Oxford University Press, 2006.

\bibitem{CaseyNaghdi80}
J.~Casey and P.~M. Naghdi, \emph{{A Remark on the Use of the Decomposition F =
  FeFp in Plasticity}}, J. Appl. Mech. \textbf{47} (1980), 672--675.

\bibitem{Ciarlet88book}
Ph.~G. Ciarlet, \emph{{Mathematical Elasticity}}, vol. 1: Three Dimensional
  Elasticity, North-Holland, 1988.

\bibitem{CoddingtonLevinson55book}
E.~A. Coddington and N.~Levinson, \emph{Theory of ordinary differential
  equations}, McGraw--Hill, 1955.

\bibitem{ContiGarroniMassaccesi15}
S.~Conti, A.~Garroni, and A.~Massaccesi, \emph{Modeling of dislocations and
  relaxation of functionals on 1-currents with discrete multiplicity}, Calc.
  Var. Partial Differential Equations \textbf{54} (2015), 1847--1874.

\bibitem{ContiGarroniOrtiz15}
S.~Conti, A.~Garroni, and M.~Ortiz, \emph{The line-tension approximation as the
  dilute limit of linear-elastic dislocations}, Arch. Ration. Mech. Anal.
  \textbf{218} (2015), 699--755.

\bibitem{ContiTheil05}
S.~Conti and F.~Theil, \emph{Single-slip elastoplastic microstructures}, Arch.
  Ration. Mech. Anal. \textbf{178} (2005), 125--148.

\bibitem{DalMasoDeSimoneSolombrino10}
G.~Dal~Maso, A.~DeSimone, and F.~Solombrino, \emph{{Quasistatic evolution for
  Cam-Clay plasticity: a weak formulation via viscoplastic regularization and
  time rescaling}}, Calc. Var. Partial Differential Equations \textbf{40}
  (2010), 125--181.

\bibitem{DalMasoDeSimoneSolombrino11}
\bysame, \emph{{Quasistatic evolution for Cam-Clay plasticity: properties of
  the viscosity solution}}, Calc. Var. Partial Differential Equations
  \textbf{44} (2011), 495--541.

\bibitem{DalMasoFrancfortToader05}
G.~Dal~Maso, G.~A. Francfort, and R.~Toader, \emph{Quasistatic crack growth in
  nonlinear elasticity}, Arch. Ration. Mech. Anal. \textbf{176} (2005),
  165--225.

\bibitem{DalMaso93book}
Gianni Dal~Maso, \emph{{An introduction to $\varGamma$-convergence}}, Progress
  in Nonlinear Differential Equations and their Applications, vol.~8,
  Birkh\"{a}user Boston, Inc., Boston, MA, 1993.

\bibitem{DeHossonyRoosMetselaar02}
T.~M. {De Hossony}, A.~Roos, and E.~D. Metselaar, \emph{Temperature rise due to
  fast-moving dislocations}, Philos. Mag. A \textbf{81} (2001), 1099--1120.

\bibitem{DiestelUhl77book}
J.~Diestel and J.~J. {Uhl, Jr.}, \emph{{Vector Measures}}, Mathematical
  Surveys, vol.~15, American Mathematical Society, 1977.

\bibitem{DondlKurzkeWojtowytsch19}
P.~W. Dondl, M.~W. Kurzke, and S.~Wojtowytsch, \emph{The effect of forest
  dislocations on the evolution of a phase-field model for plastic slip}, Arch.
  Ration. Mech. Anal. \textbf{232} (2019), 65--119.

\bibitem{EpsteinKupfermanMaor20}
M.~Epstein, R.~Kupferman, and C.~Maor, \emph{Limits of distributed dislocations
  in geometric and constitutive paradigms}, Geometric Continuum Mechanics
  (R.~Segev and M.~Epstein, eds.), Birkh\"auser, 2020.

\bibitem{Federer69book}
H.~Federer, \emph{{Geometric Measure Theory}}, Grundlehren der mathematischen
  Wissenschaften, vol. 153, Springer, 1969.

\bibitem{FonsecaGinsterWojtowytsch21}
I.~Fonseca, J.~Ginster, and S.~Wojtowytsch, \emph{On the motion of curved
  dislocations in three dimensions: simplified linearized elasticity}, SIAM J.
  Math. Anal. \textbf{53} (2021), 2373--2426.

\bibitem{FrancfortMielke06}
G.~A. Francfort and A.~Mielke, \emph{Existence results for a class of
  rate-independent material models with nonconvex elastic energies}, J. Reine
  Angew. Math. \textbf{595} (2006), 55--91.

\bibitem{GarroniLeoniPonsiglione10}
A.~Garroni, G.~Leoni, and M.~Ponsiglione, \emph{Gradient theory for plasticity
  via homogenization of discrete dislocations}, J. Eur. Math. Soc. (JEMS)
  \textbf{12} (2010), 1231--1266.

\bibitem{Ginster19}
J.~Ginster, \emph{Plasticity as the {$\Gamma$}-limit of a two-dimensional
  dislocation energy: the critical regime without the assumption of
  well-separateness}, Arch. Ration. Mech. Anal. \textbf{233} (2019),
  1253--1288.

\bibitem{GreenNaghdi71}
A.~E. Green and P.~M. Naghdi, \emph{{Some remarks on elastic-plastic
  deformation at finite strain}}, Internat. J. Engrg. Sci. \textbf{9} (1971),
  1219--1229.

\bibitem{GurtinFriedAnand10book}
M.~E. Gurtin, E.~Fried, and L.~Anand, \emph{{The Mechanics and Thermodynamics
  of Continua}}, Cambridge University Press, 2010.

\bibitem{HanReddy13}
W.~Han and B.~D. Reddy, \emph{{Plasticity -- Mathematical Theory and Numerical
  Analysis}}, Interdisciplinary Applied Mathematics, vol.~9, Springer, 2013.

\bibitem{HudsonRindler22}
T.~Hudson and F.~Rindler, \emph{Elasto-plastic evolution of crystal materials
  driven by dislocation flow}, Math. Models Methods Appl. Sci. (M3AS)
  \textbf{32} (2022), 851--910.

\bibitem{HullBacon11book}
D.~Hull and D.~J. Bacon, \emph{{Introduction to Dislocations}}, 5 ed.,
  Elsevier, 2011.

\bibitem{KneesRossiZanini19}
D.~Knees, R.~Rossi, and C.~Zanini, \emph{Balanced viscosity solutions to a
  rate-independent system for damage}, European J. Appl. Math. \textbf{30}
  (2019), 117--175.

\bibitem{KrantzParks08book}
S.~G. Krantz and H.~R. Parks, \emph{Geometric integration theory},
  Birkh\"auser, 2008.

\bibitem{Kroner60}
E.~Kr{\"o}ner, \emph{Allgemeine {K}ontinuumstheorie der {V}ersetzungen und
  {E}igenspannungen}, Arch. Ration. Mech. Anal. \textbf{4} (1960), 273--334.

\bibitem{KupfermanMaor15}
R.~Kupferman and C.~Maor, \emph{The emergence of torsion in the continuum limit
  of distributed edge-dislocations}, J. Geom. Mech. \textbf{7} (2015),
  361--387.

\bibitem{KupfermanOlami20}
R.~Kupferman and E.~Olami, \emph{Homogenization of edge-dislocations as a weak
  limit of de-{R}ham currents}, Geometric Continuum Mechanics (R.~Segev and
  M.~Epstein, eds.), Birkh\"auser, 2020.

\bibitem{Lee69EPDF}
E.~H. Lee, \emph{Elastic-plastic deformation at finite strain}, ASME J. Appl.
  Mech. \textbf{36} (1969), 1--6.

\bibitem{LeeLiu67FSEP}
E.~H. Lee and D.~T. Liu, \emph{Finite-strain elastic-plastic theory with
  application to plane-wave analysis}, J. Appl. Phys. \textbf{38} (1967),
  19--27.

\bibitem{LemaitreChaboche90}
J.~Lemaitre and J.-L. Chaboche, \emph{Mechanics of solid materials}, Cambridge
  University Press, 1990.

\bibitem{Lubliner08}
J.~Lubliner, \emph{{Plasticity Theory}}, Dover, 2008.

\bibitem{MainikMielke05}
A.~Mainik and A.~Mielke, \emph{Existence results for energetic models for
  rate-independent systems}, Calc. Var. Partial Differential Equations
  \textbf{22} (2005), 73--99.

\bibitem{MainikMielke09}
\bysame, \emph{Global existence for rate-independent gradient plasticity at
  finite strain}, J. Nonlinear Sci. \textbf{19} (2009), 221--248.

\bibitem{Mielke02}
A.~Mielke, \emph{Finite elastoplasticity {L}ie groups and geodesics on {${\rm
  SL}(d)$}}, Geometry, mechanics, and dynamics, Springer, 2002, pp.~61--90.

\bibitem{Mielke03a}
\bysame, \emph{{Energetic formulation of multiplicative elasto-plasticity using
  dissipation distances}}, Contin. Mech. Thermodyn. \textbf{15} (2003),
  351--382.

\bibitem{MielkeMuller06}
A.~Mielke and S.~M{\"u}ller, \emph{Lower semicontinuity and existence of
  minimizers in incremental finite-strain elastoplasticity}, ZAMM Z. Angew.
  Math. Mech. \textbf{86} (2006), 233--250.

\bibitem{MielkeRossiSavare09}
A.~Mielke, R.~Rossi, and G.~Savar\'{e}, \emph{Modeling solutions with jumps for
  rate-independent systems on metric spaces}, Discrete Contin. Dyn. Syst.
  \textbf{25} (2009), 585--615.

\bibitem{MielkeRossiSavare12}
\bysame, \emph{B{V} solutions and viscosity approximations of rate-independent
  systems}, ESAIM Control Optim. Calc. Var. \textbf{18} (2012), 36--80.

\bibitem{MielkeRossiSavare16}
\bysame, \emph{Balanced viscosity ({BV}) solutions to infinite-dimensional
  rate-independent systems}, J. Eur. Math. Soc. (JEMS) \textbf{18} (2016),
  2107--2165.

\bibitem{MielkeRossiSavare18}
\bysame, \emph{Global existence results for viscoplasticity at finite strain},
  Arch. Ration. Mech. Anal. \textbf{227} (2018), 423--475.

\bibitem{MielkeRoubicek15book}
A.~Mielke and T.~Roub\'{i}\v{c}ek, \emph{{Rate-Independent Systems: Theory and
  Application.}}, Applied Mathematical Sciences, vol. 193, Springer, 2015.

\bibitem{MielkeTheil99}
A.~Mielke and F.~Theil, \emph{A mathematical model for rate-independent phase
  transformations with hysteresis}, Proceedings of the Workshop on ``Models of
  Continuum Mechanics in Analysis and Engineering'' (H.-D. Alber, R.M. Balean,
  and R.~Farwig, eds.), Shaker Verlag, 1999, pp.~117--129.

\bibitem{MielkeTheil04}
\bysame, \emph{On rate-independent hysteresis models}, NoDEA Nonlinear
  Differential Equations Appl. \textbf{11} (2004), 151--189.

\bibitem{MielkeTheilLevitas02}
A.~Mielke, F.~Theil, and V.~I. Levitas, \emph{A variational formulation of
  rate-independent phase transformations using an extremum principle}, Arch.
  Ration. Mech. Anal. \textbf{162} (2002), 137--177.

\bibitem{OrtizRepetto99}
M.~Ortiz and E.~A. Repetto, \emph{Nonconvex energy minimization and dislocation
  structures in ductile single crystals}, J. Mech. Phys. Solids \textbf{47}
  (1999), 397--462.

\bibitem{ReinaConti14}
C.~Reina and S.~Conti, \emph{{Kinematic description of crystal plasticity in
  the finite kinematic framework: A micromechanical understanding of F=FeFp}},
  J. Mech. Phys. Solids \textbf{67} (2014), 40--61.

\bibitem{ReinaDjodomOrtizConti18}
C.~Reina, L.~F. Djodom, M.~Ortiz, and S.~Conti, \emph{Kinematics of
  elasto-plasticity: validity and limits of applicability of {${F}={F}^{\rm
  e}{F}^{\rm p}$} for general three-dimensional deformations}, J. Mech. Phys.
  Solids \textbf{121} (2018), 99--113.

\bibitem{Reshetnyak67}
Y.~G. Reshetnyak, \emph{Liouville's conformal mapping theorem under minimal
  regularity hypotheses}, Sibirsk. Mat. \v{Z}. \textbf{8} (1967), 835--840.

\bibitem{Rindler18book}
F.~Rindler, \emph{{Calculus of Variations}}, Springer, 2018.

\bibitem{Rindler23}
\bysame, \emph{Space-time integral currents of bounded variation}, Calc. Var.
  Partial Differential Equations \textbf{62} (2023), Paper No. 54.

\bibitem{RindlerSchwarzacherVelazquez21}
F.~Rindler, S.~Schwarzacher, and J.~J.~L. Velazquez, \emph{Two-speed solutions
  to non-convex rate-independent systems}, Arch. Ration. Mech. Anal.
  \textbf{239} (2021), 1667--1731.

\bibitem{ScalaVanGoethem19}
R.~Scala and N.~Van~Goethem, \emph{Variational evolution of dislocations in
  single crystals}, J. Nonlinear Sci. \textbf{29} (2019), 319--344.

\bibitem{Stefanelli08}
U.~Stefanelli, \emph{{A Variational Principle for Hardening Elastoplasticity}},
  SIAM J. Math. Anal. \textbf{40} (2008), 623--652.

\bibitem{Temam18book}
R.~Temam, \emph{Mathematical problems in plasticity}, Dover, 2018.

\bibitem{Walter70book}
W.~Walter, \emph{Differential and integral inequalities}, Ergebnisse der
  Mathematik und ihrer Grenzgebiete, vol.~55, Springer, 1970.

\end{thebibliography}

\providecommand{\bysame}{\leavevmode\hbox to3em{\hrulefill}\thinspace}
\providecommand{\MR}{\relax\ifhmode\unskip\space\fi MR }
\providecommand{\MRhref}[2]{%
  \href{http://www.ams.org/mathscinet-getitem?mr=#1}{#2}
}
\providecommand{\href}[2]{#2}

\end{document}